\newtheorem{theorem}{Theorem}[section]
\newtheorem{lemma}[theorem]{Lemma}
\newtheorem{proposition}[theorem]{Proposition}
\newtheorem{corollary}[theorem]{Corollary}
\theoremstyle{definition}
\newtheorem{definition}{Definition}[section]
\theoremstyle{remark}
\newtheorem{remark}[definition]{Remark}
\numberwithin{equation}{section}
\newcommand\set[1]{\left\{\,#1\,\right\}}		% set
\newcommand\abs[1]{\left|#1\right|}				% modulus
\newcommand\norm[1]{\left\Vert#1\right\Vert}	% norm
\newcommand{\lamax}{\lambda_{\text{max}}}       % lambda_max
\newcommand{\lamin}{\lambda_{\text{min}}}       % lambda_min
\newcommand{\R}{\mathbb{R}}
\newcommand{\N}{\mathbb{N}}
\newcommand{\cA}{{\mathcal A}}
\newcommand{\cB}{{\mathcal B}}
\newcommand{\cC}{{\mathcal C}}
\newcommand{\cD}{{\mathcal D}}
\newcommand{\cE}{{\mathcal E}}
\newcommand{\cF}{{\mathcal F}}
\newcommand{\cL}{{\mathcal L}}
\newcommand{\cS}{{\mathcal S}}
\newcommand{\cV}{{\mathcal V}}
\newcommand{\fF}{{\mathfrak F}}
\DeclareMathOperator{\dist}{dist}				% distance
\DeclareMathOperator{\sign}{sign}				% signum
\DeclareMathOperator{\id}{id}					% identity
\DeclareMathOperator{\supp}{supp}				% support
\DeclareMathOperator{\tr}{tr}					% trace
\DeclareMathOperator{\divv}{div}				% div
\DeclareMathOperator{\osc}{osc}				% div
\DeclareMathOperator*{\esslim}{ess\,lim}
\title[On a degenerate elliptic problem]{On a degenerate elliptic problem arising in the least action principle for Rayleigh-Taylor subsolutions}
\author{bj\"orn gebhard, jonas hirsch, j\'ozsef j. kolumb\'an}
\begin{document}
\maketitle
\begin{abstract}
    We address a degenerate elliptic variational problem arising in the application of the least action principle to averaged solutions of the inhomogeneous Euler equations in Boussinesq approximation emanating from the horizontally flat Rayleigh-Taylor configuration. We give a detailed derivation of the functional starting from the differential inclusion associated with the Euler equations, i.e. the notion of an averaged solution is the one of a subsolution in the context of convex integration, and illustrate how it is linked to the generalized least action principle introduced by Brenier in \cite{Brenier89,Brenier18}. Concerning the investigation of the functional itself, we use a regular approximation in order to show the existence of a minimzer enjoying partial regularity, as well as other properties important for the construction of actual Euler solutions induced by the minimizer. Furthermore, we discuss to what extent such an application of the least action principle to subsolutions can serve as a selection criterion.
\end{abstract}

\section{Introduction}\label{sec_introduction}
Since its introduction in 2009 by De Lellis and Sz\'ekelyhidi \cite{DeL-Sz-Annals} to the context of fluid dynamics, the method of convex integration has been a powerful tool to show ill-posedness of initial value problems and to provide counterexamples to the conservation of physical quantities in a low enough regularity regime, we refer to \cite{Buckmaster_Vicol_survey,DeL-Sz-Survey} for recent surveys. Besides being an engine for counterexamples, due to their highly oscillatory nature, the solutions obtained by convex integration have shortly after also been utilized to describe turbulent behavior in situations where a regular solution simply can not exist due to irregular initial data. Examples include vortex sheets in the homogeneous two-dimensional Euler equations \cite{Mengual_Sz_vortex_sheet,Sz-KH}, as well as the Muskat problem for the incompressible porous media equation \cite{Castro_Cordoba_Faraco,Castro_Faraco_Mengual,Castro_Faraco_Mengual_2,Cordoba,Foerster-Sz,Hitruhin-Lindberg,Mengual,Noisette-Sz,Sz-Muskat}, and the horizontally flat Rayleigh-Taylor instability in the inhomogeneous Euler equations \cite{GKBou,GKSz}. 

The existence of solutions emanating in the stated situations relies on a general convex integration theorem for the corresponding system, saying that a subsolution, which can be seen as an averaged solution, induces infinitely many turbulent solutions that are close in a weak sense to the subsolution, see Theorem \ref{thm:CI} below for example. Therefore, having such a theorem at hand, it remains to construct a suitable subsolution. However, typically  there are plenty of admissible subsolutions emanating from the same initial data, i.e. also on the level of averaged motion a vast amount of evolutions differing for instance in the quantitative size of the induced turbulence of the solutions is possible. One therefore has to choose a particular subsolution based on a meaningful selection criterion. Up to now essentially two strategies have been used: reduction after some ansatzes to a hyperbolic conservation law and selection of the unique entropy solution \cite{GKSz,Sz-KH,Sz-Muskat}, and in the case of the Euler equations  short time selection by means of maximal initial energy dissipation \cite{GKBou,Mengual_Sz_vortex_sheet}. 

In the case of the incompressible porous media equation with flat initial configuration the former strategy has been applied by Sz\'ekelyhidi \cite{Sz-Muskat}, yielding a family of subsolutions out of which the one with ``maximal mixing'' coincides with the unique solution of a different relaxation given by Otto \cite{otto-ipm} based on gradient flows. Regarding non-flat initial interfaces local in time subsolutions of different types have been constructed in \cite{Castro_Cordoba_Faraco,Foerster-Sz,Noisette-Sz}, also in the only partially unstable case \cite{Castro_Faraco_Mengual_2}. Properties of the subsolutions selected in the flat case by either of the above mentioned relaxations have been incorporated in these constructions, see for instance \cite[Remark 4.2]{Castro_Cordoba_Faraco}.

The latter criterion, i.e. maximal initial energy dissipation in the case of the Euler equations, was motivated by the entropy rate admissibility criterion of Dafermos \cite{Dafermos_entropy_rate} and before also discussed in the context of solutions obtained by convex integration for the compressible Euler equations, \cite{Chiodaroli_Kreml_energy_dissipation,Feireisl}.  

Motivated by the search for a global in time selection criterion and the well-known fact that the Euler equations can formally be derived from the least action principle, see Section \ref{sec_action_functional} for more details, the present article originates from the question of what happens if one imposes the least action principle on the level of subsolutions.

We follow this question in the case of the Euler equations in Boussinesq approximation
\begin{align}\label{eq:bou}
\begin{split}
\partial_t v +\divv (v\otimes v) +\nabla p&=-\rho gA e_n,\\
\divv v&=0,\\
\partial_t \rho + \divv (\rho v)&=0,
\end{split}
\end{align}
stated on $(0,T)\times\mathcal D $ with $T>0$, $\cD:=(0,1)^{n-1}\times(-L,L)\subset\R^n$ and with initial data
\begin{align}\label{eq:initial_data}
\rho(0,x)=\sign(x_n),\quad v(0,x)=0,\quad x\in\cD.
\end{align}
On the boundary of $\cD$ we set
the usual no-penetration boundary condition
\begin{equation}\label{eq:boundary_condition}
v\cdot \nu =0 \quad \text{on }\partial\cD\times[0,T),
\end{equation}
in order to complement the incompressibility condition. Here $\nu$ denotes the exterior unit normal of $\partial\cD$.

System \eqref{eq:bou},\eqref{eq:initial_data},\eqref{eq:boundary_condition} models two incompressible ideal fluids with homogeneous densities $0<\rho_-<\rho_+$, with $\rho_+-\rho_-$ small, initially at rest and separated by a horizontally flat interface under the influence of gravity, i.e. in one of the most classical occurrences of the Rayleigh-Taylor instability. The unknowns are the normalized fluid density $\rho:[0,T)\times\mathcal D \rightarrow\R$, i.e. $\rho\in\{\pm 1\}$ a.e., the velocity field $v:[0,T)\times\mathcal D\rightarrow\R^n$ and the pressure of the fluid $p:[0,T)\times\mathcal D\rightarrow\R$. Furthermore, $e_n\in\R^n$ is used to denote the $n$th coordinate vector, $g>0$ the gravitational constant and 
$
A:=\frac{\rho_+-\rho_-}{\rho_++\rho_-}
$ is the Atwood number.

The article splits into two essentially independent parts. The first part consisting of Section \ref{sec_action_functional} and complemented by appendices \ref{sec_brenier}, \ref{sec_convex_integration} addresses the application of the least action principle.
In Section \ref{sec_action_functional} we first of all recall the relaxation of \eqref{eq:bou} seen as a differential inclusion and then illustrate how the least action principle imposed on a suitable class of one-dimensional subsolutions gives rise to a variational problem. In appendix \ref{sec_brenier} we show how this problem relates to the relaxation of the least action principle by Brenier \cite{Brenier89,Brenier18} in terms of generalized flows. Appendix \ref{sec_convex_integration} contains details regarding an adaptation to our particular setting (in terms of integrability) of the usual convex integration result, stated in Theorem \ref{thm:CI}. 

The variational problem derived in Section \ref{sec_action_functional} is of the type
\begin{align}\label{eq:variational_problem_in_introduction}
    \text{minimize }\cA(u)\text{ for } u\in X,
\end{align}
where the functional $\cA$ is given by
\begin{align}\label{eq:functional_in_introduction}
    \cA(u)=\int_\Omega F(\nabla u)-V(x_2,u)\:dx
\end{align}
with $\Omega:=(0,T)\times(-L,L)\subset\R^2$, $F:\R^2\rightarrow [0,+\infty]$,
\begin{align}\label{eq:definition_of_F_in_introduction}
F(p):=\begin{cases}
0,&\text{if }p_1=0,\\
+\infty,&\text{if }p_1\neq 0,~\abs{p_2}\geq 1,\\
\frac{p_1^2}{2(1-p_2^2)},&\text{else},
\end{cases}
\end{align}
and $V:[-L,L]\times \R\rightarrow \R$, $(x_2,z)\mapsto V(x_2,z)$ being a suitable nonlinear potential. To give an example the reader may think of 
\begin{align}\label{eq:example_for_V_in_introduction}
    V(x_2,z)=-gAz+\frac{3gA}{4L}(z-(\abs{x_2}-L))^2.
\end{align}
The (affine) space of functions under consideration reads
\begin{align}\label{eq:definition_of_X_in_introduction}
    X:=\set{u\in H^1(\Omega):u(x_1,\pm L)=0,~u(0,x_2)=-u(T,x_2)=\abs{x_2}-L}.
\end{align}

The functional $\cA$ is elliptic but degenerate in the sense that the minimal eigenvalue of $D^2F(p)$ is vanishing for $p_1=0$, while the maximal eigenvalue of $D^2F(p)$ becomes infinite as $\abs{p_2}$ approaches $1$ from below. The functional will be further introduced and investigated in Sections \ref{sec_main_result}-\ref{sec_further_properties}, which render the second part of our paper. A reader only interested in the variational problem itself might directly jump to these sections, which are readable without the background given in Section \ref{sec_action_functional}. We like to point out though that some aspects of our investigation in particular in Section \ref{sec_further_properties} stem their motivation from the usability of the minimizer as a subsolution for the Boussinesq system \eqref{eq:bou},\eqref{eq:initial_data},\eqref{eq:boundary_condition}.

The two parts of our investigation are brought together again in the final Section \ref{sec_summary_questions} where we summarize our results and discuss some open problems. Here a rough version of our main theorem can be written as follows.
\begin{theorem}\label{thm:introduction}
Under suitable conditions on the potential $V$, problem \eqref{eq:variational_problem_in_introduction} has a minimizer $u\in X\cap\cC^0(\overline{\Omega})$ enjoying $\cC^2$ regularity on a nonempty open set $\Omega'\subset \Omega$ of which every connected component is simply connected. On $\Omega'$ there holds $\partial_{x_1}u>0$, $\abs{\partial_{x_2} u}<1$, while outside $\Omega'$ we have $\partial_{x_1}u=0$, $\abs{\partial_{x_2}u}\leq 1$ a.e..
The gradient of $u$ can be used to define a one-dimensional subsolution of \eqref{eq:bou},\eqref{eq:initial_data},\eqref{eq:boundary_condition} and thus induces infinitely many solutions via convex integration.
\end{theorem}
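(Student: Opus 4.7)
The plan is to attack \eqref{eq:variational_problem_in_introduction} by a regular approximation. I would fix a family $F_\epsilon:\R^2\to[0,\infty)$ for $\epsilon\in(0,1)$ of smooth, strictly convex integrands of quadratic growth with $F_\epsilon\downarrow F$ pointwise; a natural choice is to replace $1-p_2^2$ in \eqref{eq:definition_of_F_in_introduction} by a smoothed version of $\max(1-p_2^2,\epsilon)$ and add an $\epsilon|p|^2$ term for uniform ellipticity. Under the assumed hypotheses on $V$ (in particular rendering the potential part of the action bounded below and coercive on $X$, which is easy for the quadratic example \eqref{eq:example_for_V_in_introduction}), the direct method yields a minimizer $u_\epsilon\in X$ of the regularized functional $\cA_\epsilon(u):=\int_\Omega F_\epsilon(\nabla u)-V(x_2,u)\,dx$. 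Since $F_\epsilon$ is smooth and uniformly elliptic, standard quasilinear elliptic theory applied to the Euler-Lagrange equation $-\divv DF_\epsilon(\nabla u_\epsilon)=\partial_z V(x_2,u_\epsilon)$ gives $u_\epsilon\in\cC^{2,\alpha}(\overline\Omega)$, using that the Lipschitz boundary data is admissible.

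The second step is to derive $\epsilon$-uniform bounds. Comparing $\cA_\epsilon(u_\epsilon)$ with the value of $\cA_\epsilon$ on a piecewise-affine competitor realizing the boundary data (for which $F_\epsilon(\nabla u)$ stays bounded in $L^1$), one obtains $F(\nabla u_\epsilon)\in L^1$ uniformly and thus $\partial_{x_1}u_\epsilon\in L^2$ and $|\partial_{x_2}u_\epsilon|\leq 1+o(1)$ uniformly. Combined with the prescribed boundary trace, this gives uniform $H^1\cap L^\infty$ bounds. Extracting a subsequence with $u_\epsilon\rightharpoonup u$ in $H^1$ and $u_\epsilon\to u$ in $L^2$, the monotonicity $F_\epsilon\downarrow F$ and convexity of $F$ yield $\int F(\nabla u)\,dx\leq\liminf\int F_\epsilon(\nabla u_\epsilon)\,dx$, while the potential part is continuous by dominated convergence. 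Hence $u\in X$ minimizes $\cA$ and satisfies $|\partial_{x_2}u|\leq 1$ a.e.; continuity of $u$ on $\overline\Omega$ follows because $u$ is $1$-Lipschitz in $x_2$ and the boundary data is continuous, using an absolutely-continuous-lines argument in $x_1$.

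For the partial regularity, I would set $\Omega':=\{x\in\Omega:\partial_{x_1}u(x)>0,\ |\partial_{x_2}u(x)|<1\}$. On $\Omega'$ the integrand $F$ is smooth and locally uniformly elliptic, so a standard difference-quotient and Schauder bootstrap applied to the Euler-Lagrange equation for $\cA$ yields $u\in\cC^2(\Omega')$, and in particular $\Omega'$ is open (after choosing the $\cC^2$ representative there). Non-emptiness follows because $u(0,\cdot)\neq u(T,\cdot)$ forces $\partial_{x_1}u$ to be positive on a set of positive measure, and by the smoothness of $u$ on $\Omega'$ this set contains an open piece. Outside $\Omega'$ one has $\partial_{x_1}u=0$ and $|\partial_{x_2}u|\leq 1$ a.e.\ by definition and the $L^\infty$ bound. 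For simply-connectedness of the connected components of $\Omega'$, I would differentiate the Euler-Lagrange equation in $x_1$ to obtain a linear elliptic equation (without zero-order term, after suitable arrangement) for $w:=\partial_{x_1}u$ on $\Omega'$, with $w=0$ on $\partial\Omega'\cap\Omega$; a maximum principle combined with the strict positivity of $w$ inside $\Omega'$ excludes interior holes, since $w$ would have to vanish on an interior boundary.

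Finally, the construction in Section \ref{sec_action_functional} translates $\nabla u$ on $\Omega'$ into a one-dimensional subsolution of \eqref{eq:bou}-\eqref{eq:boundary_condition}, and Theorem \ref{thm:CI} then yields the infinite family of induced Euler solutions. I expect the main obstacle to be the combined limit passage and identification of $\Omega'$: the uniform bounds directly give only the non-strict inequalities $\partial_{x_1}u\geq 0$ and $|\partial_{x_2}u|\leq 1$ in the limit, so upgrading these to \emph{strict} inequalities on an open set and then controlling the topology of its components requires delicate use of the Euler-Lagrange equation precisely where the problem becomes degenerate, which is the heart of the difficulty in this quasi-linear degenerate elliptic setting.
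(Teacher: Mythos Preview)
Your overall architecture---regularize, pass to the limit, identify a ``good'' set---matches the paper, but the partial regularity step contains a genuine circularity that the paper avoids by a different mechanism.

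You define $\Omega':=\{x:\partial_{x_1}u(x)>0,\ |\partial_{x_2}u(x)|<1\}$ and then invoke ``standard difference-quotient and Schauder bootstrap'' on $\Omega'$. But at this stage $\nabla u$ is only an $L^2$ function, so $\Omega'$ is a measurable set defined only up to null sets; there is no open set on which to run elliptic theory, and no a~priori reason the Euler--Lagrange equation for $u$ itself is even uniformly elliptic on any neighborhood. The paper circumvents this by never applying regularity theory to $u$ directly. Instead it works with the \emph{smooth} approximants $u_\varepsilon$ and appeals to Savin's small-perturbation theorem: whenever $\fint_{B_r(x_0)}|\nabla u-p_0|^2<\delta$ for some nondegenerate $p_0$, a Young-measure argument transfers this to $\nabla u_\varepsilon$, and then Savin's result gives a $\cC^{2,\alpha}(B_{r/2})$ bound on $u_\varepsilon$ that is \emph{uniform in $\varepsilon$}, hence inherited by the limit. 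This is exactly the ``upgrading to strict inequalities on an open set'' that you flag as the main obstacle but do not resolve.

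Two secondary points. First, the inequality $\partial_{x_1}u\ge 0$ is not a consequence of the uniform bounds; the paper proves it at the $\varepsilon$-level via Hopf's lemma (which is why the boundary data is perturbed to $X_\varepsilon$) together with a first-eigenfunction argument, and this monotonicity is then the key input for continuity of $u$ via a Courant--Lebesgue oscillation estimate---being $1$-Lipschitz in $x_2$ alone is not enough in $H^1(\R^2)$. Second, the differentiated Euler--Lagrange equation for $w=\partial_{x_1}u$ carries the zeroth-order term $\partial_z^2 V(x_2,u)\,w$; it does not disappear ``after suitable arrangement'', and the one-sided maximum principle needed for simple connectedness requires the sign condition $\partial_z^2 V\ge 0$, which the paper imposes explicitly.
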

As said, a more detailed version including statements on energy dissipation and attainment of initial and boundary data can be found in Section \ref{sec_summary_questions}. There we also summarize the ansatzes we make and reflect upon our initial question regarding a selection criterion for subsolutions emanating from Rayleigh-Taylor initial data. 

%The short answer at this point is that the search for a truly satisfactory criterion remains open. 

%\comm{underselling it a bit?}

\section{The action functional for subsolutions}\label{sec_action_functional}

The main point of this section is the derivation of the variational problem \eqref{eq:variational_problem_in_introduction}. We begin with a short review of related previous work for equation \eqref{eq:bou} before recalling the notion of a subsolution.

\subsection{The Boussinesq system}
For sufficiently regular initial data, system \eqref{eq:bou}, \eqref{eq:boundary_condition} is locally well-posed, see \cite{Chae-Nam,Danchin,Elgindi}, where in \cite{Elgindi} it also has been shown that finite time singularity formation occurs for smooth data.
We note however that the initial data of our interest \eqref{eq:initial_data}, with $\rho_0$ being only essentially bounded, does not fall into the regularity classes considered in \cite{Chae-Nam,Danchin,Elgindi}.

Contrary to local well-posedness for lower regularity classes the existence of infinitely many weak solutions can be shown by means of convex integration.  Relying on the convex integration method for the homogeneous Euler equations from \cite{DeL-Sz-Annals} the existence of infinitely many solutions with compact space-time support for \eqref{eq:bou} without the influence of gravity, i.e. $g=0$, has been shown in \cite{Bronzi}. Moreover, the paper 
\cite{Chiodaroli_Michalek} addresses system \eqref{eq:bou} under the additional influence of the Coriolis force in the momentum balance and dissipation in the continuity equation. For this dissipative Boussinesq system it is shown that for a given initial density $\rho_0\in \cC^2\cap L^\infty$ there exists an irregular initial velocity field $v_0$ inducing infinitely many solutions which are admissible in the sense that,  for almost every positive time, the total energy of the solutions does not exceed the total initial energy. This is an important property both from a physical point of view and for the mathematical weak-strong uniqueness property, see \cite{Wiedemann} for an overview and \cite{cheng} for the particular case of system \eqref{eq:bou}.

In order to obtain existence of turbulently mixing solutions emanating from the actual Rayleigh-Taylor interface \eqref{eq:initial_data}, Sz\'ekelyhidi, the first and third author have established in \cite{GKSz} the full relaxation of the inhomogeneous incompressible Euler equations (without Boussinesq approximation) allowing them to construct admissible solutions to the corresponding initial value problem under the condition that the quotient of the two fluid densities satisfies
\[
\frac{\rho_+}{\rho_-}\geq \left(\frac{4+2\sqrt{10}}{3}\right)^2.
\]
This translates to an Atwood number $A\geq 0.845$, i.e. in the so called ``ultra high'' regime.

In view of this Atwood number condition the first and third author thereafter have addressed in \cite{GKBou} the Euler equations in Boussinesq approximation, which is applicable for low Atwood number. In the latter paper the full relaxation could also be explicitly given and, in contrast to \cite{GKSz}, admissible turbulent solutions for \eqref{eq:bou},\eqref{eq:initial_data},\eqref{eq:boundary_condition} be constructed, respectively selected, without restrictions on the size of $A>0$. Although of course the equations can not be seen as a reasonable physical system for larger $A$. The selection of these subsolutions, cf. Definition \ref{def:subs} below, is based on imposing maximal initial energy dissipation in the class of one-dimensional self-similar subsolutions. Imposing these requirements leads to a variational problem for the self-similar density profile and the initial speed of the opening of the mixing zone. The problem could be solved explicitly, giving a possible small time selection of subsolutions within the stated class. In the following we will derive a global in time variational problem based on the relaxation given in \cite{GKBou}.

\subsection{Relaxation as a differential inclusion}\label{sec_relaxation_boussinesq}
We first of all rephrase \eqref{eq:bou} as a differential inclusion.
Let as before $n\geq 2$, $L>0$, $T>0$, $\mathcal D =(0,1)^{n-1}\times (-L,L)$ and set $Z:=\R\times\R^n\times\R^n\times\cS^{n\times n}_0\times \R$, where $\cS^{n\times n}_0$ denotes the set of symmetric trace free matrices. 

Consider the linear system
\begin{align}\label{eq:lin_syst}
    \begin{split}
        \partial_t v + \divv \sigma + \nabla p &=-\rho gA e_n,\\
        \divv v&=0,\\
        \partial_t \rho + \divv m&=0,
    \end{split}
\end{align}
in $(0,T)\times\mathcal D$, with boundary conditions
\begin{align}\label{eq:lin_syst_boundary_condition}
    m\cdot\nu =0,\quad v\cdot \nu=0
\end{align}
 on $(0,T)\times\partial\mathcal D$, as well as for given functions $e_0,e_1:(0,T)\times\mathcal D\to \R$ with $e_0\pm e_1\geq 0$ the family of sets $K_{(t,x)}\subset Z$, $(t,x)\in (0,T)\times \cD$ defined by $(\rho,v,m,\sigma,p)\in K_{(t,x)}$ if and only if
 \begin{align}\label{eq:definition_of_K}
     |\rho|=1,\quad m=\rho v,\quad v\otimes v-\sigma=(e_0(t,x)+\rho e_1(t,x))\id.
 \end{align}

It is easy to see that if a tuple $z:=(\rho,v,m,\sigma,p)$ of $L^1_{loc}$ functions satisfies system \eqref{eq:lin_syst},\eqref{eq:lin_syst_boundary_condition} distributionally and if there holds
\begin{align}\label{eq:Kconstr}
    z(t,x)\in K_{(t,x)}
\end{align}
for almost every $(t,x)\in (0,T)\times\mathcal D$, then $(\rho,v)$ is a solution of the inviscid Boussinesq system with local energy density function
\begin{align}\label{eq:local_energy_density}
\begin{split}
    \mathcal E(t,x)&=\frac{1}{2}|v(t,x)|^2+\rho(t,x) gA x_n\\
    &=\frac{n}{2}(e_0(t,x)+\rho(t,x) e_1(t,x))+\rho(t,x) gA x_n.
\end{split}
\end{align}
Similarly in the other direction, if $(\rho,v)\in L^\infty((0,T)\times\cD)\times L^2((0,T)\times \cD)$ solves \eqref{eq:bou}, \eqref{eq:boundary_condition} and $\rho\in\set{-1,1}$ a.e., then one can set $m=\rho v$, $\sigma=(v\otimes v)^\circ$ to see that there exists $e_0,e_1$ and a pressure, such that one obtains a solution to the differential inclusion \eqref{eq:lin_syst},\eqref{eq:lin_syst_boundary_condition},\eqref{eq:Kconstr} with $K_{(t,x)}$ defined with respect to the functions $e_0,e_1$. 

Note that at this point there is no unique or canonical choice for $e_0$ and $e_1$. As can be seen from \eqref{eq:local_energy_density} they dictate the form of the local kinetic energy and should be continuous for the application of convex integration, cf. Theorem \ref{thm:CI} below. The introduction of two such continuous functions in \cite{GKBou} instead of only one in the case of the homogeneous Euler equations \cite{DeL-Sz-Adm} allows the kinetic energy of the solutions to oscillate along the oscillations of the density $\rho$. This additional flexibility turned out to be advantageous in setting up variational problems on the level of subsolutions and will also be exploited in Sections \ref{sec_1D_subsolutions}, \ref{sec:applying_least_action} below.

For the relaxation of the Boussinesq system in the sense of differential inclusions condition \eqref{eq:Kconstr} is replaced by requiring the tuple $z(t,x)$ to take values in the interior of the convex hull $K^{co}_{(t,x)}$, or more generally of the $\Lambda$-convex hull $K^\Lambda_{(t,x)}$, instead. In the case of \eqref{eq:lin_syst}, \eqref{eq:Kconstr} the two notions of convex hulls coincide and its interior is given by 
\begin{multline}\label{eq:Uconstr}
      U_{(t,x)}:=\left\{z=(\rho,v,m,\sigma,p):\ |\rho|<1,\
      \frac{|m+v|^2}{n(\rho+1)^2}<e_0(t,x)+e_1(t,x),\right.\\
      \frac{|m-v|^2}{n(\rho-1)^2}<e_0(t,x)-e_1(t,x),\\ 
      \left.  \lamax\left( \frac{(m-\rho v)\otimes (m-\rho v)}{1-\rho^2}+v\otimes v-\sigma\right)<e_0(t,x)+\rho e_1(t,x)
      \right\},
\end{multline}
see \cite[Proposition 3.6]{GKBou}.
\begin{definition}\label{def:subs}
We say that $z=(\rho,v,m,\sigma,p)$ is a subsolution with respect to a pair of measurable functions $e_0$, $e_1$ iff. $\rho\in L^\infty((0,T)\times\mathcal D)$, $v,m\in L^2((0,T)\times\mathcal D)$, $\sigma\in L^1((0,T)\times\mathcal D)$, $p$ is a distribution, 
$z$ satisfies the linear system \eqref{eq:lin_syst} with boundary and initial data \eqref{eq:lin_syst_boundary_condition}, \eqref{eq:initial_data} in the sense of distributions, there holds
\begin{gather}\label{eq:technical_conditions_for_convex_integration_1}
    e_0+\rho e_1\in L^1((0,T)\times\mathcal D),\quad (1-\rho^2)e_1\in L^1((0,T)\times\cD),\\
    \rho e_1\leq 0 \text{ a.e. on }(0,T)\times \cD \label{eq:technical_conditions_for_convex_integration_2}
\end{gather}
and in addition there exists an open set $\mathscr U\subset  (0,T)\times\mathcal D$ such that
\begin{itemize}
    \item[(a)] the functions $e_0,e_1,\rho,v,m,\sigma$ are continuous on $\mathscr U$, and $z(t,x)\in U_{(t,x)}$ for every  $(t,x)\in \mathscr U$;
    \item[(b)] $z(t,x)\in K_{(t,x)}$ for almost every  $(t,x)\in ((0,T)\times\mathcal D)\setminus \mathscr U$.
\end{itemize}
\end{definition}
We call the set $\mathscr U$ the mixing zone associated with $z$. Of course the definition extends to other initial conditions than \eqref{eq:initial_data} compatible with $\abs{\rho_0}=1$ a.e. and $\divv v_0=0$ in $\cD$, $v_0\cdot\nu=0$ on $\partial\cD$. 

Note that condition \eqref{eq:technical_conditions_for_convex_integration_1} is a slight generalization of the one from \cite{GKBou} where it was assumed that $e_0,e_1$ are essentially bounded. This is done in view of the subsolutions we will obtain in this article for which it is not clear that even $e_0,e_1\in L^1((0,T)\times\cD)$ holds true. However, condition \eqref{eq:technical_conditions_for_convex_integration_2} allows such less integrable subsolutions to still induce infinitely many weak solutions whose energy can be controlled. More precisely, there holds the following convex integration theorem, whose proof, i.e. the necesseray modifications due to \eqref{eq:technical_conditions_for_convex_integration_1} instead of $e_0,e_1\in L^\infty$, can be found in Appendix \ref{sec_convex_integration}.

\begin{theorem}\label{thm:CI} Given an arbitrary error function $\delta:[0,T]\rightarrow \R$ with $\delta(0)=0$ and $\delta(t)>0$ for $t>0$, and a subsolution $z_{sub}$, there exist infinitely many solutions $(\rho_{sol},v_{sol})$ of \eqref{eq:bou}, \eqref{eq:initial_data}, \eqref{eq:boundary_condition} coinciding with $(\rho_{sub},v_{sub})$ outside $\mathscr U$, and on $\mathscr{U}$ having the local energy density
\begin{align}\label{eq:solen}
\begin{split}
    &\mathcal E_{sol}(t,x)=\frac{n}{2}(e_0(t,x)+\rho_{sol}(t,x) e_1(t,x))+\rho_{sol}(t,x) gA x_n\\
    &\phantom{h}=\frac{n}{2}(e_0(t,x)+\rho_{sub}(t,x) e_1(t,x))+\rho_{sub}(t,x) gA x_n+\mathcal E^1_\delta(t,x)+\cE^2_\delta(t,x),
\end{split}
\end{align}
with $\cE^1_\delta:=\frac{n}{2}e_1(\rho_{sol}-\rho_{sub})$, $\cE^2_\delta:=gAx_n(\rho_{sol}-\rho_{sub})$ satisfying 
\begin{align}\label{eq:error_in_energies}
\abs{\int_{\cD}\cE^i_\delta(t,x)\:dx}\leq \delta(t)\quad \text{ for a.e. }t\in(0,T),~i=1,2.
\end{align}
Moreover, the solutions can be found arbitrarily close to $(\rho_{sub},v_{sub})$ in the weak $L^2(\mathscr U)$ topology.
\end{theorem}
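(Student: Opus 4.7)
The argument follows the Baire category convex integration scheme for the Boussinesq system developed in \cite{GKBou}; the only adaptation concerns the weakened integrability assumption \eqref{eq:technical_conditions_for_convex_integration_1} combined with the sign condition \eqref{eq:technical_conditions_for_convex_integration_2} in place of the $L^\infty$ bound on $(e_0,e_1)$ used there. The plan is to run the convex integration locally on compact subsets of $\mathscr U$, where the bounded regime of \cite{GKBou} is available verbatim, and to calibrate the construction to meet the prescribed error $\delta$.

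To implement this I would write $\mathscr U=\bigcup_j \mathscr U_j$ as an increasing union of relatively compact open sets with $\overline{\mathscr U_j}\subset \mathscr U$. By Definition \ref{def:subs}(a) the functions $e_0,e_1,\rho_{sub},v_{sub},m_{sub},\sigma_{sub}$ are continuous, hence bounded, on each $\overline{\mathscr U_j}$, and $\sup_{\mathscr U_j}\abs{\rho_{sub}}<1$. On each $\mathscr U_j$ the standard construction applies: introduce a space $X_j$ of perturbed subsolutions of the form $z=z_{sub}+\tilde z$ with $\tilde z$ compactly supported in $\mathscr U_j$, satisfying the linear system \eqref{eq:lin_syst} and taking values pointwise in $U_{(t,x)}$, equipped with a metric inducing weak-$L^2$ convergence. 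The quantitative plane-wave perturbation lemma from \cite{GKBou} and the weak lower semicontinuity of $z\mapsto \int_{\mathscr U_j}\abs{v}^2$ apply unchanged, and the standard Baire argument yields a residual subset of the completion $\overline{X_j}$ consisting of maps with $z(t,x)\in K_{(t,x)}$ a.e.\ on $\mathscr U_j$.

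The new point is the uniform-in-$t$ bound \eqref{eq:error_in_energies}, which must be arranged even though $e_1$ is not globally bounded. A natural implementation is to proceed inductively: fix a summable sequence $(\epsilon_j)\subset(0,1)$ with $\sum_j \epsilon_j\leq 1$ and construct $z^{(j)}$ by perturbing $z^{(j-1)}$ inside the annulus $\mathscr U_j\setminus\mathscr U_{j-1}$, on which $e_1$ and $gAx_n$ are bounded. The stagewise amplitude can then be chosen small enough that for a.e.\ $t$
\[
\Big|\int_{\{t\}\times\cD}e_1(\rho^{(j)}-\rho^{(j-1)})\,dx\Big|+\Big|\int_{\{t\}\times\cD}gAx_n(\rho^{(j)}-\rho^{(j-1)})\,dx\Big|\leq \epsilon_j\delta(t).
\]
Telescoping and passing to the limit $j\to\infty$ delivers a solution $z_{sol}$ with $\rho_{sol}\in\set{\pm 1}$ a.e.\ on $\mathscr U$ and the desired bounds on $\cE^i_\delta$. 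The sign condition \eqref{eq:technical_conditions_for_convex_integration_2} enters to guarantee that the limit energy density $e_0+\rho_{sol}e_1$ remains non-negative a.e.: together with the strict inclusion $z_{sub}(t,x)\in U_{(t,x)}$ and \eqref{eq:technical_conditions_for_convex_integration_1}, it gives a.e.\ the inequality $e_0\geq\abs{\rho_{sub}e_1}$, which is what allows the admissible oscillations of $\rho$ to reach the extremal values $\pm 1$ without violating the pointwise constraints coming from $K_{(t,x)}$.

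The main obstacle is the calibration of the stagewise amplitudes: $\norm{e_1}_{L^\infty(\mathscr U_j\setminus\mathscr U_{j-1})}$ may diverge as $j\to\infty$, so $\epsilon_j$ must be chosen at each stage both to preserve weak-$L^2_{loc}$ convergence of the iterates and to absorb this divergent factor in the energy bound. The freedom of amplitude choice afforded by the Baire construction makes this achievable; the full details are the content of Appendix \ref{sec_convex_integration}.
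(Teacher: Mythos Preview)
Your exhaustion-and-telescope scheme matches the paper's architecture: the paper also writes $\mathscr U=\bigcup_j V_j$ with $V_j\Subset V_{j+1}$, convex-integrates on the annuli $V_{j+1}\setminus\overline{V_j}$, and enforces a stagewise error $2^{-(j+1)}\delta(t)$ by building it directly into the definition of the perturbation space at stage $j+1$.

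However, you misidentify where condition \eqref{eq:technical_conditions_for_convex_integration_2} enters, and this leaves a genuine gap. The inequality $e_0>|e_1|$ on $\mathscr U$ already follows from the definition of $U_{(t,x)}$ (both $e_0\pm e_1>0$ there), so \eqref{eq:technical_conditions_for_convex_integration_2} is not what allows $\rho$ to oscillate to $\pm 1$. Its actual role is to make sense of $\int_\cD e_1(\rho_{sol}-\rho_{sub})\,dx$ at all: since $e_1$ need not lie in $L^1$, your telescoping only controls the \emph{signed} integrals $\int_\cD e_1(\rho^{(j)}-\rho^{(j-1)})\,dx$ at finite stages, and nothing in your sketch shows $e_1(\rho_{sol}-\rho_{sub})\in L^1((0,T)\times\cD)$, without which $\cE^1_\delta$ in \eqref{eq:error_in_energies} is undefined for a.e.\ $t$. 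The paper resolves this by splitting according to the sign of $\rho^{(j)}\rho_{sub}$: where $\rho^{(j)}\rho_{sub}\geq -\tfrac12$ (and $|\rho^{(j)}|=1$) one has $|\rho^{(j)}-\rho_{sub}|\leq 2(1-\rho_{sub}^2)$ and invokes $(1-\rho_{sub}^2)e_1\in L^1$ from \eqref{eq:technical_conditions_for_convex_integration_1}; where $\rho^{(j)}\rho_{sub}<0$ the sign condition \eqref{eq:technical_conditions_for_convex_integration_2} forces $e_1(\rho^{(j)}-\rho_{sub})\geq 0$, so the absolute value can be dropped and the telescoped signed bound applies. The same splitting underlies the equi-integrability of $(\sigma^{(j)})_j$ needed to show that the iterates form a Cauchy sequence in $L^1\times L^2\times L^2\times L^1$, a convergence step your sketch also does not address.
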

\begin{remark}\label{rem:convex_integration_theorem1} Condition \eqref{eq:solen}, \eqref{eq:error_in_energies} allows to conclude that the induced solutions are weakly admissible provided that the subsolution satisfies
\begin{align}\label{eq:subsolution_admissibility}
    \int_{\cD}\frac{n}{2}(e_0(t,x)+\rho_{sub}(t,x) e_1(t,x))+\rho_{sub}(t,x) gA x_n\:dx< \int_{\cD}\rho_0(x)gAx_n\:dx 
\end{align}
for a.e. $t\in(0,T)$. Here the right hand side is precisely the total initial energy associated with \eqref{eq:initial_data}.
\end{remark}
\begin{remark}\label{rem:convex_integration_theorem2}
The solutions $(\rho_{sol},v_{sol})$ given by Theorem \ref{thm:CI} satisfy $\abs{\rho_{sol}(t,x)}=1$ for a.e. $(t,x)\in(0,T)\times\cD$. This is a consequence of \eqref{eq:definition_of_K}, where this condition has been imposed as part of the differential inclusion in consistency with the transport equation in \eqref{eq:bou} and the initial data $\rho_0$. Note however that the conclusion of Theorem \ref{thm:CI} remains valid if $(\rho_{sub},v_{sub})$ satisfies 
\begin{itemize}
    \item[(b$'$)] $z(t,x)\in K_{(t,x)}$ or $\rho(t,x)\in(-1,1)$, $(v,m,\sigma,e_0,e_1)(t,x)=(0,0,0,0,0)$ for almost every  $(t,x)\in ((0,T)\times\mathcal D)\setminus \mathscr U$ 
\end{itemize}
instead of Definition \ref{def:subs} (b). This is indeed true because the convex integration, cf. Appendix \ref{sec_convex_integration}, is carried out on $\mathscr{U}$ and the specified $0$-state is already a solution to the Boussinesq system. The difference is that the induced solutions $(\rho_{sol},v_{sol})$ may have regions of positive Lebesgue measure where the fluid is at rest, $v_{sol}=0$, but $\abs{\rho_{sol}}<1$. We refer to these slightly relaxed notion of subsolutions as ``subsolutions with mixed resting regions''.
\end{remark}

\subsection{One-dimensional subsolutions}\label{sec_1D_subsolutions}

Since the initial data \eqref{eq:initial_data} we consider only depends on $x_n$, one can consider subsolutions which are obtained by averaging solutions in all other spatial directions, i.e.
\begin{align}\label{eq:averaging_of_solutions}
    z(t,x_n):=\int_{[0,1]^{n-1}}z_{sol}(t,x)d(x_1,\ldots,x_{n-1}).
\end{align}
This averaging procedure suggests to consider subsolutions that only depend on $t$ and $x_n$. 
In this case one easily sees that $\divv v=0$ and $v\cdot \nu=0$ imply $v= 0$ almost everywhere, as well as $\partial_{x_j} m=0$ for $j=1,\ldots,n-1$ and $m\cdot \nu =0$ imply $m_j=0$ for $j=1,\ldots,n-1$.

\begin{definition}\label{def:1D_subs} A subsolution $z=(\rho,v,m,\sigma,p)$ is called a one-dimensional subsolution provided $z(t,x)$ depends only on $(t,x_n)\in (0,T)\times (-L,L)$ and $v\equiv 0$, $m_j\equiv 0$, $j=1,\ldots,n-1$.
\end{definition}

\begin{lemma}\label{lem:rise_of_1D_subsols}
Let $z$ be a one-dimensional subsolution w.r.t. $e_0,e_1$. Then $\rho(t,x)=\rho(t,x_n)$ and $m(t,x)=m_n(t,x_n)e_n$ enjoy the following properties
\begin{enumerate}[(i)]
    \item $\partial_t\rho+\partial_{x_n}m_n=0$ weakly, \label{eq:1D_pde}
    \item $m_n(\cdot,\pm L)=0$, $\rho(0,\cdot)=\sign$ weakly,\label{eq:1D_boundary_initial_data}
    \item there exists $\mathscr U'\subset (0,T)\times(-L,L)$ open s.t. $\rho,m_n\in\cC^0(\mathscr U')$, $\abs{\rho}<1$ on $\mathscr U'$,\label{eq:1D_mixing_zone}
    \item $\abs{\rho}=1$, $m_n=0$ a.e. outside $\mathscr U'$\label{eq:1D_outside_mixing_zone}
    \item $\frac{m_n^2}{1-\rho^2}\in L^1(\mathscr U')$,\label{eq:1D_L1}
    \item $n(e_0+\rho e_1)>\frac{m_n^2}{1-\rho^2}$ on $\mathscr U'$.\label{eq:1D_kinetic_energy}
\end{enumerate}
Conversely, let $\rho,m_n:(0,T)\times(-L,L)\rightarrow \R$ be measurable functions satisfying properties \eqref{eq:1D_pde}-\eqref{eq:1D_L1} and $\tilde{e}\in\cC^0(\mathscr U')\cap L^1(\mathscr U')$, $\tilde{e}>0$. Then for suitable $e_0,e_1$ the pair $(\rho,m_n)$ induces a one-dimensional subsolution $z$ with $\rho(t,x)=\rho(t,x_n)$, $m(t,x)=m_n(t,x_n)e_n$, $\mathscr U:=\set{(t,x):(t,x_n)\in \mathscr U'}$ and kinetic energy density given by 
\begin{align}\label{eq:kinetic_energy_density_1D_subs}
\frac{n}{2}(e_0(t,x)+\rho(t,x)e_1(t,x))=\frac{\abs{m_n(t,x_n)}^2}{2(1-\rho(t,x_n)^2)}+\tilde{e}(t,x_n)
\end{align}
for $(t,x)\in \mathscr U$.
\end{lemma}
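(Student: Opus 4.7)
\medskip

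\noindent\emph{Proof plan.} I would split the argument into the two directions indicated by the statement, attacking each via direct unpacking of Definition~\ref{def:subs} together with the explicit description of $U_{(t,x)}$ in \eqref{eq:Uconstr}.

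\emph{Forward direction.} For a one-dimensional subsolution with $v\equiv 0$ and $m_j\equiv 0$ for $j<n$, assertion (i) is just the third equation in \eqref{eq:lin_syst} read in one spatial variable, and (ii) follows from \eqref{eq:lin_syst_boundary_condition} together with \eqref{eq:initial_data}. For (iii) the point is to produce the one-dimensional mixing zone out of the possibly higher-dimensional open set $\mathscr U$ from Definition~\ref{def:subs}. I would set
\begin{equation*}
\mathscr U':=\set{(t,x_n)\in (0,T)\times(-L,L):\exists\,(x_1,\dots,x_{n-1})\text{ with }(t,x)\in\mathscr U},
\end{equation*}
which is open as the projection of an open set. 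Since $\rho,m_n$ depend only on $(t,x_n)$, continuity of $\rho,m_n$ on $\mathscr U$ upgrades to continuity on $\mathscr U'$ by localising at any preimage point and projecting a small open neighbourhood. The bound $\abs{\rho}<1$ transfers directly from $\mathscr U$. Property (iv) then follows because a point outside $\mathscr U'$ has its entire horizontal slice outside $\mathscr U$, where by Definition~\ref{def:subs}(b) one has $z\in K_{(t,x)}$ almost everywhere, forcing $\abs{\rho}=1$ and $m=\rho v=0$. Finally (v) and (vi) are extracted from the third inequality in \eqref{eq:Uconstr}: with $v=0$ and $m=m_ne_n$, the matrix in question is $\frac{m_n^2}{1-\rho^2}e_n\otimes e_n-\sigma$, and since $\sigma$ is trace free its largest eigenvalue is at least $\frac{1}{n}\frac{m_n^2}{1-\rho^2}$, giving the strict inequality (vi) pointwise on $\mathscr U'$; combining with \eqref{eq:technical_conditions_for_convex_integration_1} then yields (v) after extending by $0$ outside $\mathscr U'$ (where $m_n=0$).

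\emph{Converse direction.} Starting from $\rho,m_n$ and $\tilde e$, the task is to produce $e_0,e_1,\sigma,p$ so that all of Definition~\ref{def:subs} is satisfied. On $\mathscr U'$ I would prescribe the explicit choice
\begin{equation*}
e_0:=\frac{(1+\rho^2)m_n^2}{n(1-\rho^2)^2}+\frac{2\tilde e}{n},\qquad
e_1:=-\frac{2\rho\, m_n^2}{n(1-\rho^2)^2},\qquad
\sigma:=\frac{m_n^2}{1-\rho^2}\Bigl(e_n\otimes e_n-\tfrac{1}{n}\id\Bigr),
\end{equation*}
and extend them by $0$ outside. A short computation gives the identities
\begin{equation*}
e_0\pm e_1=\frac{m_n^2}{n(1\pm\rho)^2}+\frac{2\tilde e}{n},\qquad
e_0+\rho e_1=\frac{m_n^2}{n(1-\rho^2)}+\frac{2\tilde e}{n},
\end{equation*}
so that \eqref{eq:kinetic_energy_density_1D_subs} holds and, thanks to $\tilde e>0$, the first two strict inequalities in $U_{(t,x)}$ are verified. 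The matrix $\frac{m_n^2}{1-\rho^2}e_n\otimes e_n-\sigma$ equals $\frac{m_n^2}{n(1-\rho^2)}\id$, whose maximal eigenvalue is strictly below $e_0+\rho e_1$, giving the third inequality in $U_{(t,x)}$. The sign condition $\rho e_1\leq 0$ is immediate, and the integrability \eqref{eq:technical_conditions_for_convex_integration_1} follows from (v) together with $\tilde e\in L^1(\mathscr U')$.

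\emph{Recovering the linear system.} With the above choice, $\sigma$ depends only on $(t,x_n)$ and its only nontrivial row/column is the $n$-th, so $\divv\sigma=\partial_{x_n}\sigma_{nn}\,e_n$. I would then define the pressure as the distribution
\begin{equation*}
p(t,x_n):=-gA\int_0^{x_n}\rho(t,s)\,ds-\sigma_{nn}(t,x_n),
\end{equation*}
which, once differentiated, gives $\nabla p=-\rho gA\,e_n-\divv\sigma$; combined with $v\equiv 0$ this is exactly the momentum equation in \eqref{eq:lin_syst}. The incompressibility and boundary conditions for $v$ are trivial, while the transport equation and $m\cdot\nu=0$ are precisely (i) and (ii). Finally, outside $\mathscr U':=\set{(t,x):(t,x_n)\in\mathscr U'}$ all of $v,m,\sigma,e_0,e_1$ vanish and $\abs{\rho}=1$, so $z\in K_{(t,x)}$ a.e., completing Definition~\ref{def:subs}(b).

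\emph{Main obstacle.} The delicate point is the converse direction: one must simultaneously satisfy the three open inequalities in \eqref{eq:Uconstr} and realise the prescribed energy density \eqref{eq:kinetic_energy_density_1D_subs} while keeping $(1-\rho^2)e_1$ and $e_0+\rho e_1$ integrable across the boundary of $\mathscr U'$, where $1-\rho^2$ degenerates. The explicit ansatz above is tailored so that the numerators in $e_0\pm e_1$ and in $e_0+\rho e_1$ compactify the singular factor $(1-\rho^2)^{-2}$ into the benign $(1\pm\rho)^{-2}$ and $(1-\rho^2)^{-1}$, which is exactly what is needed to pass from \eqref{eq:1D_L1} to \eqref{eq:technical_conditions_for_convex_integration_1}.
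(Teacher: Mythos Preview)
Your proposal is correct and follows essentially the same architecture as the paper's proof: the same choice of $\mathscr U'$ as the projection of $\mathscr U$, the same explicit formulas for $e_0,e_1,\sigma,p$ in the converse direction (your coefficient $\tfrac{2}{n}\tilde e$ in $e_0$ is in fact the right one to match \eqref{eq:kinetic_energy_density_1D_subs} exactly), and the same verification of the linear system. The one genuine difference is your derivation of (vi): the paper obtains $n(e_0+\rho e_1)>\tfrac{m_n^2}{1-\rho^2}$ from the \emph{first two} inequalities in \eqref{eq:Uconstr} via the convex combination $e_0+\rho e_1=\tfrac{1+\rho}{2}(e_0+e_1)+\tfrac{1-\rho}{2}(e_0-e_1)$, whereas you extract it directly from the \emph{third} inequality using that the trace of $\tfrac{m_n^2}{1-\rho^2}e_n\otimes e_n-\sigma$ equals $\tfrac{m_n^2}{1-\rho^2}$, so its largest eigenvalue is at least $\tfrac{1}{n}$ of that. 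Both routes are valid and of comparable length; yours has the small advantage of not requiring $v=0$ in the first two inequalities at all, while the paper's has the conceptual virtue of making transparent why the later choice of $e_0,e_1$ saturates the bound.
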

\begin{remark}\label{rem:notion_of_weak_solution}
The weak notion of solution in Lemma \ref{lem:rise_of_1D_subsols} \eqref{eq:1D_pde},\eqref{eq:1D_boundary_initial_data} is understood in the sense that 
\begin{align}\label{eq:notion_of_weak_solution}
    \int_0^T\int_{-L}^L\rho \partial_t\varphi+m_n\partial_{x_n}\varphi\:dx_n\:dt+\int_{-L}^L\sign(x_n)\varphi(0,x_n)\:dx_n=0
\end{align}
for all $\varphi\in\cC^\infty_c([0,T)\times [-L,L])$.
\end{remark}

\begin{remark}\label{rem:one_dim_subs_with_resting_regions}
If $(\rho,m_n)$ satisfies \eqref{eq:1D_pde}-\eqref{eq:1D_mixing_zone}, \eqref{eq:1D_L1}, but instead of \eqref{eq:1D_outside_mixing_zone} only 
\[
m_n=0,~\abs{\rho}\leq 1\text{ a.e. outside }\mathscr{U}',
\]
then $(\rho,m_n)$ induces a one-dimensional subsolution with mixed resting regions as described in Remark \ref{rem:convex_integration_theorem2}.
\end{remark}

\begin{proof}[Proof of Lemma \ref{lem:rise_of_1D_subsols}]
If $z$ is a one-dimensional subsolution w.r.t. $e_0,e_1$ and mixing zone $\mathscr U$, then properties \eqref{eq:1D_pde}-\eqref{eq:1D_outside_mixing_zone} clearly hold true for $\mathscr U'$ being the projection of $\mathscr U$. Moreover, \eqref{eq:1D_L1} follows from \eqref{eq:1D_kinetic_energy}. Thus it remains to prove \eqref{eq:1D_kinetic_energy}. 

By definition of a subsolution, cf. \eqref{eq:Uconstr}, there holds
\begin{align}\label{eq:rm2}
     \frac{|m|^2}{n(\rho+1)^2}<e_0+e_1,\quad
      \frac{|m|^2}{n(\rho-1)^2}<e_0-e_1.
\end{align}
inside the mixing zone $\mathscr U$. It follows that 
\begin{align}
\begin{split}\label{eq:rmmmm2}
    n\left(e_0+\rho e_1\right)&=n\left(\frac{1+\rho}{2}(e_0+e_1)+\frac{1-\rho}{2} (e_0-e_1)\right)\\&>  n\left(\frac{1+\rho}{2} \frac{|m|^2}{n(\rho+1)^2}+\frac{1-\rho}{2} \frac{|m|^2}{n(\rho-1)^2}\right)=\frac{m_n^2}{1-\rho^2}.
\end{split}
\end{align}

Let now $\rho,m_n$ and $\tilde{e}$ be given as stated. We will define $z$ and suitable $e_0,e_1$ in terms of these three functions. We set $\mathscr U:=\set{(t,x)\in(0,T)\times \cD:(t,x_n)\in \mathscr U'}$.

Since $v$ has to be $0$ throughout $(0,T)\times \cD$ for a one-dimensional subsolution, we have, have to set resp., $m=\rho v=0$, $\sigma=(v\otimes v)^\circ=0$ and $n(e_0+\rho e_1)=\abs{v}^2=0$, hence $e_0=-\rho e_1$, outside the mixing zone $\mathscr U$. Without loss of generality we set $e_0=e_1=0$ on $((0,T)\times\cD)\setminus \mathscr U$. In consequence for a.e. $(t,x)\notin \mathscr U$ there holds $z(t,x)\in K_{(t,x)}$ with $K_{(t,x)}$ defined in \eqref{eq:definition_of_K} for $e_0=e_1=0$. 

On the other hand, inside the mixing zone we of course set $\rho(t,x)=\rho(t,x_n)$, $m(t,x)=m_n(t,x)e_n$ and observe that for $z(t,x)\in U_{(t,x)}$ it remains to satisfy
\begin{gather*}
      \frac{|m|^2}{n(\rho+1)^2}<e_0(t,x)+e_1(t,x),\quad
      \frac{|m|^2}{n(\rho-1)^2}<e_0(t,x)-e_1(t,x),\\ 
 \lamax\left( \frac{|m|^2}{1-\rho^2}e_n\otimes e_n-\sigma\right)<e_0(t,x)+\rho e_1(t,x).
\end{gather*}

First of all we claim that the third inequality automatically holds true provided the first two inequalities are valid and we define $\sigma\in\cS^{n\times n}_0$ such that  
$$
\frac{|m|^2}{1-\rho^2}e_n\otimes e_n-\sigma=\frac{|m|^2}{n(1-\rho^2)}\id.
$$
Indeed, in that case, reversing the calculations in \eqref{eq:rmmmm2}, there holds
\begin{align*}
    \lamax\left( \frac{|m|^2}{1-\rho^2}e_n\otimes e_n-\sigma\right)&=\frac{|m|^2}{n(1-\rho^2)}<e_0+\rho e_1.
\end{align*}

Moreover, turning to the set of linear equations \eqref{eq:lin_syst} one has
\begin{align*}
    \partial_t v +\divv \sigma +\rho g A e_n =( \partial_{x_n}\sigma_{nn}+\rho g A )e_n,
\end{align*}
which can always be written as $-\nabla p$ by setting 
$$
p(t,x):=-\sigma_{nn}(t,x)-\int_0^{x_n} \rho(t,x_n') \, dx_n' g A.
$$
The other two equations in \eqref{eq:lin_syst} with correct initial and boundary data hold true by assumption. 

In order to have a subsolution, it therefore only remains to find $e_0,e_1$ such that the two inequalities \eqref{eq:rm2} are valid. It is easy to check that these conditions and \eqref{eq:kinetic_energy_density_1D_subs} hold true for
\begin{align*}
    e_0:=\frac{ |m|^2(1+\rho^2)}{n(1-\rho^2)^2}+\tilde{e},\quad e_1:=-\frac{2\rho |m|^2}{n(1-\rho^2)^2}.
\end{align*}
Observing that conditions \eqref{eq:technical_conditions_for_convex_integration_1}, \eqref{eq:technical_conditions_for_convex_integration_2} indeed hold true finishes the construction of the subsolution induced by $\rho$ and $m_n$.
\end{proof}

\subsection{Applying the least action principle}\label{sec:applying_least_action}

In the classical case the action functional consists of the difference between kinetic and potential energy, and when minimized over suitable paths yields  the equations of motion for the described mechanical system. Going back to Arnold it is well-known that this principle can formally also be used to derive the Euler equations, see Appendix \ref{sec_brenier} and the references therein for more detail. We will now state it on the level of subsolutions.

The total potential energy of a one-dimensional subsolution $z=z_{sub}$ at time $t\in[0,T]$ is given by 
\[
E_{pot}(t):=\int_{\cD}\rho(t,x)gAx_n\:dx=\int_{-L}^L\rho(t,x_n)gAx_n\:dx_n.
\]
Note that in view of Theorem \ref{thm:CI} for any given error function $\delta(t)$ there exist solutions $z_{sol}$ whose total potential energy $\int_{\cD} \rho_{sol}gAx_n\:dx$ at time $t$ is $\delta(t)$-close to $E_{pot}(t)$.

In a similar way it follows from Theorem \ref{thm:CI} that there exist solutions having at time $t$ a total kinetic energy arbitrarily close to
\[
\int_{\cD} \frac{n}{2}\left(e_0(t,x)+\rho(t,x)e_1(t,x)\right)\:dx.
\]
Moreover,  given $\rho$ and $m$ of a one-dimensional subsolution $z=z_{sub}$, Lemma \ref{lem:rise_of_1D_subsols} shows that pointwise on $\mathscr U$ there holds
\begin{align*}
    \inf\set{\frac{n}{2}(\tilde{e}_0+\rho\tilde{e}_1):\tilde{z}\text{ subsolution w.r.t. }\tilde e_0,\tilde e_1,~\tilde{\rho}=\rho,~\tilde{m}=m}=\frac{\abs{m}^2}{2(1-\rho^2)}.
\end{align*}
Including this pointwise minimization over all possible subsolutions coinciding with $z$ in the $\rho$ and $m$ component we therefore define the total (least) kinetic energy of a one-dimensional subsolution $z$ by 
\[
E_{kin}(t):=\int_{-L}^L\frac{\abs{m(t,x_n)}^2}{2(1-\rho(t,x_n)^2)}\:dx_n.
\]
Here the integrand is understood to be $0$ when $(t,x)$ is outside the mixing zone, i.e. where $\abs{\rho}=1$, $m=0$. Indeed outside the mixing zone there holds $e_0+\rho e_1=0$ for any one-dimensional subsolution.

Note that the pointwise optimization above does not affect the potential energy $E_{pot}(t)$. It is therefore compatible with the following least action principle: 
\begin{align}\label{eq:our_least_action_principle}
    \text{minimize } \cA_0(z)\text{ over one-dimensional subsolutions }z,
\end{align}
where the action is defined as
\begin{align}
\begin{split}\label{eq:action_after_derivation}
    \cA_0(z)&:=\int_0^TE_{kin}(t)-E_{pot}(t)\:dt\\
    &=\int_0^T\int_{-L}^L\frac{m_n(t,x_n)^2}{2(1-\rho(t,x_n)^2)}-\rho(t,x_n)gAx_n\:dx_n\:dt.
\end{split}
\end{align}

As subsolutions relax the notion of solution for the Euler, Boussinesq resp., equations, the least action principle \eqref{eq:our_least_action_principle} can be seen as a generalization of the classical least action principle giving rise to the equations itself. We will show in Appendix \ref{sec_brenier} that this generalization is formally equivalent to the generalization of the least action principle given by Brenier in \cite{Brenier89,Brenier18}. 

Note that at this point we have not yet specified the final configuration for the subsolutions at the end time $T$ which is usually done in applications of the least action principle. This is postponed to Section \ref{sec_final_configuration}. 

\subsection{Reformulation}\label{sec:reformulation} Before continuing let us simplify our notation. First of all we keep from a one-dimensional subsolution $z$ only the information relevant for the action, that is the density $\rho$ and  the last component of the momentum $m$, which we again denote by $m$. 

Furthermore, let $\Omega:=(0,T)\times (-L,L)$ and identify a point $(t,x_n)\in \Omega$ simply by $x=(x_1,x_2)$. That is time is denoted now by $x_1$ and the last coordinate in the box $\cD$ by $x_2$. Thus the action functional \eqref{eq:action_after_derivation} can be written as
\begin{align*}
    \cA_0(\rho,m)=\int_\Omega \frac{m^2}{2(1-\rho^2)}-\rho gAx_2\:dx =\int_\Omega F(m,\rho)-\rho gAx_2\:dx,
\end{align*}
where $F$ is defined in \eqref{eq:definition_of_F_in_introduction}. Recall here that the kinetic energy density satisfies $\frac{m^2}{2(1-\rho^2)}=0$ whenever $m=0$.  

As seen in Lemma \ref{lem:rise_of_1D_subsols} the pair $(m,\rho):\Omega\rightarrow \R^2$ has to satisfy properties \eqref{eq:1D_pde}--\eqref{eq:1D_L1} of said lemma in order to correspond to a one-dimensional subsolution in the sense of Definitions \ref{def:subs}, \ref{def:1D_subs}. Some of these properties will be implemented directly into the variational formulation, while others will be shown a posteriori for an existing minimizer.

First of all observe that property \eqref{eq:1D_L1}, i.e. the $L^1$-integrability of $\frac{m^2}{1-\rho^2}$, as well as $\abs{\rho}\leq 1$ a.e. follows for $(m,\rho)$ measurable with finite action $\cA_0(\rho,m)$. 

Next, property \eqref{eq:1D_pde}, i.e. the equation $\partial_{x_1}\rho+\partial_{x_2}m=0$, will be encoded by introducing a stream function for the divergence free vector field $(\rho,m)$. If the action is finite we have $(\rho,m)\in L^2(\Omega;\R^2)$ and therefore find $u\in H^1(\Omega)$ with $m=-\partial_{x_1}u$, $\rho=\partial_{x_2}u$.

Moreover, in view of the needed initial and boundary data, i.e. property \eqref{eq:1D_boundary_initial_data}, we require the stream function $u\in H^1(\Omega)$ to satisfy \begin{align}\label{eq:reformulation_of_boundary_initital_data}
    u(0,x_2)=\abs{x_2}-L,~x_2\in(-L,L),\quad u(x_1,\pm L)=0,~x_1\in(0,T)
\end{align}  
in the sense of traces. Indeed one can easily check that for $\varphi\in \cC^\infty_c([0,T)\times[-L,L])$ there holds
\begin{align*}
    \int_\Omega \partial_{x_2}u\partial_{x_1}\varphi-\partial_{x_1}u\partial_{x_2}\varphi\:dx&=\int_{-L}^L(\abs{x_2}-L)\partial_{x_2}\varphi(0,x_2)\:dx_2\\
    &=-\int_{-L}^L\sign(x_2)\varphi(0,x_2)\:dx_2.
\end{align*}
Hence \eqref{eq:notion_of_weak_solution} and therefore Lemma \eqref{eq:1D_pde},\eqref{eq:1D_boundary_initial_data} are satisfied.
In Section \ref{sec_initial_data} we will in fact show that the boundary and initial data are attained in a stronger sense. Moreover, our investigation will in addition show that $m(0,\cdot)=-\partial_{x_1}u(0,\cdot)=0$ in this sense, see Lemma \ref{lem:txlim}.

The remaining properties Lemma \ref{lem:rise_of_1D_subsols} \eqref{eq:1D_mixing_zone},\eqref{eq:1D_outside_mixing_zone}, as well as the admissibility of the total energy, cf. \eqref{eq:subsolution_admissibility}, will be part of our investigation.

At this point the action functional in terms of a stream function $u$ satisfying \eqref{eq:reformulation_of_boundary_initital_data} can be written after a simple integration by parts as
\begin{align}\label{eq:action_after_reformulation}
    \cA_0(u)=\int_\Omega F(\nabla u)+gAu\:dx,
\end{align}
i.e. we have arrived at \eqref{eq:functional_in_introduction} with $V(x_2,u)=-gAu$.

\subsection{Final configuration}\label{sec_final_configuration} As mentioned above the least action principle is formulated with respect to variations over a class of trajectories connecting a given initial and target configuration. While our initial configuration is clear, there are plenty of admissible target configurations possible. In the present article we simply chose the stable interface configuration $-\rho_0$. This configuration has the overall least potential energy, thus also the overall least total energy provided the fluid is at rest, and therefore is a canonical candidate for the long-time limit of the system. 

In terms of the introduced stream function $u$ we therefore add
\begin{equation}\label{eq:final_configuration}
    u(T,x_2)=L-\abs{x_2}, ~x_2\in(-L,L)
\end{equation}
to the list of requirements \eqref{eq:reformulation_of_boundary_initital_data}. Note that this implies that in Remark \ref{rem:notion_of_weak_solution} the equation can be tested against $\varphi\in\cC^\infty(\overline{\Omega})$ while adding $\int_{-L}^L\sign(x_n)\varphi(T,x_n)\:dx_n$ to the left-hand-side of \eqref{eq:notion_of_weak_solution}.

\subsection{Energy dissipation}\label{sec_energy_dissipation}
It is a well-known built in feature of the least action principle that solutions conserve the total energy, which in our case at time $x_1$ reads
\begin{equation}\label{eq:total_energy_at_time_t}
    E_{tot}(x_1):=\int_{-L}^LF(\nabla u(x))-gAu(x)\:dx_2.
\end{equation}
Indeed, up to formally admitting the Euler-Lagrange equations 
$
\divv (\nabla F(\nabla u))=gA
$
of \eqref{eq:action_after_reformulation}, one also here has
\begin{align*}
    \frac{d}{dx_1}E_{tot} &=\int_{-L}^L \nabla F (\nabla u) \cdot \nabla (\partial_{x_1}u)-gA\partial_{x_1}u \, dx_2\\
    &=\int_{-L}^L \partial_{p_1}F(\nabla u)\partial^2_{x_1^2} u - \partial_{x_2}(\partial_{p_2}F(\nabla u))\partial_{x_1} u
-gA\partial_{x_1}u \, dx_2
    \\
    &=\int_{-L}^L \frac{d}{dx_1}(\partial_{p_1}F(\nabla u)\partial_{x_1} u) -\divv(F (\nabla u)) \partial_{x_1}u-gA\partial_{x_1}u \, dx_2\\
   &=   \int_{-L}^L \frac{d}{dx_1}(\partial_{p_1}F(\nabla u)\partial_{x_1} u) -2gA\partial_{x_1}u \, dx_2\\
   &=
   \int_{-L}^L \frac{d}{dx_1}(2F(\nabla u)) -2gA\partial_{x_1}u \, dx_2=2\frac{d}{dx_1}E_{tot}.
\end{align*}
Hence the total energy is constant in time. This formal computation will be made rigorous in Section \ref{sec_energy}.

This is of course undesirable in the context of turbulent fluid dynamics where energy is anomalously dissipated. Note also that, as it can be seen in \eqref{eq:solen}, the energy of the associated solutions obtained via convex integration differs from the energy of the subsolution with a small margin of error. However, if the energy of the subsolution is conserved, it can happen a priori that due to this margin of error, the energy of the solution will increase.

There are plenty of modifications and extensions of the least action principle in order to include energy dissipation, see for instance also the discussion in \cite{Brenier18}.

In the present article we overcome the issue of energy conservation by introducing an additional nonlinear potential energy. That is instead of $\cA_0$ we consider
\eqref{eq:functional_in_introduction},
% \begin{equation*}
% \cA(u)=\int_\Omega F(\nabla u)-V(x_2,u)\:dx,
% \end{equation*}
where now $V:[-L,L]\times \R\rightarrow \R$ has the form
\[
V(x_2,z)=-gAz+f(x_2,z).
\]
Through similar formal calculations as previously, one obtains that the total energy \eqref{eq:total_energy_at_time_t} now changes according to
\begin{align}\label{eq:new_energy_balance}
   \frac{d}{dx_1}\int_{-L}^L F (\nabla u)-gA 
    u \,dx_2 =-\int_{-L}^L \partial_z f(x_2,u) \partial_{x_1}
    u\,dx_2.
\end{align}
We will show that $\partial_{x_1} u\geq 0$, see Corollary \ref{cor:bounds_for_minimizer}, which means that the average momentum $m=-\partial_{x_1}u$ is negative. Thus if $\partial_z f> 0$, then strict energy dissipation for the subsolution, and hence also for the associated solutions is possible.

Integrating \eqref{eq:new_energy_balance} in time one obtains that 
\[
E_{kin}(x_1)+E_{pot}(x_1)+\int_{-L}^Lf(x_2,u(x))\:dx_2
\]
is constant for minimizers of \eqref{eq:functional_in_introduction}. 
Hence the dissipated kinetic and potential energy is absorbed in the new energy
\[
E_f(x_1):=\int_{-L}^Lf(x_2,u(x))\:dx_2.
\]
Furthermore, note that in \eqref{eq:new_energy_balance} only points where $\partial_{x_1}u>0$ contribute to a dissipation, i.e. no points outside the mixing zone $\mathscr U'$. In terms of the induced solutions this means that the energy is only dissipated inside the mixing zone where they are wildly oscillating.  

Of course at this point there are plenty of possible choices for $f$. A specific example is given in Section \ref{sec:example_f} below, after we have introduced one more condition in the next Section \ref{sec:initial_final_energy}. We refer also to the discussion in Section \ref{sec_summary_questions}.

\subsection{Initial and final energies}\label{sec:initial_final_energy}

Formally taking the limits $x_1\rightarrow 0$, $x_1\rightarrow T$ we deduce that
\begin{align*}
    E_f(T)-E_f(0)=-(E_{kin}(T)-E_{kin}(0)+E_{pot}(T)-E_{pot}(0)).
\end{align*}
The initial and final potential energy can easily be computed for $u$ satisfying \eqref{eq:reformulation_of_boundary_initital_data}, \eqref{eq:final_configuration}. There holds
\[
-E_{pot}(T)=E_{pot}(0)=gAL^2.
\]
Thus requiring 
\begin{align}\label{eq:difference_internal_energy}
    E_f(T)-E_f(0)=2gAL^2
\end{align}
renders the solutions to start and end with the same kinetic energy $E_{kin}(0)=E_{kin}(T)$, which in view of \eqref{eq:initial_data}, \eqref{eq:subsolution_admissibility} should be $0$. Note that then the fluid is at rest also at the final time.

In order to achieve $E_{kin}(0)=E_{kin}(T)=0$ as a consequence of minimizing the action functional \eqref{eq:functional_in_introduction} we choose $f$, such that for
\begin{align*}
    s_V:=\sup\set{\int_{-L}^LV(x_2, \varphi)\:dx_2:\varphi\in \cC^0([-L,L]),~\abs{\varphi(x_2)}\leq L-\abs{x_2}}
\end{align*}
there holds
\begin{align}\label{eq:condition_on_supremum_of_V}
    s_V=\int_{-L}^LV(x_2,\varphi)\:dx_2\quad \text{if and only if}\quad  \varphi=\pm(L-\abs{x_2}).
\end{align}

In fact we will show in Section \ref{sec_global_admissibility} that this condition allows to conclude
that the subsolution starts and ends with $0$ kinetic energy at least as $T\rightarrow+\infty$. This can be seen similarly to the existence of heteroclinic orbits for instance in pendulum equations.

\subsection{Example}\label{sec:example_f} One of the simplest examples for $f$ satisfying the requested properties is
\begin{align}\label{eq:example_for_f}
    f(x_2,z)=\frac{3gA}{4L}(z-(\abs{x_2}-L))^2,
\end{align}
which is also stated in \eqref{eq:example_for_V_in_introduction}.

Indeed, the monotonicity w.r.t. $z$ holds true for $z\geq \abs{x_2}-L$ which in view of Corollary \ref{cor:bounds_for_minimizer} turns out to be enough. Regarding \eqref{eq:condition_on_supremum_of_V} we first of all observe that 
\[
\int_{-L}^LV(x_2,L-\abs{x_2})\:dx_2=\int_{-L}^LV(x_2,\abs{x_2}-L)=gAL^2
\]
by the choice of the constant $\frac{3gA}{4L}$ in \eqref{eq:example_for_f}. It remains to show that 
\[
\cV(\varphi):=\int_{-L}^LV(x_2,\varphi(x_2))\:dx_2<gAL^2
\]
for any $\varphi:[-L,L]\rightarrow\R$ continuous with $\abs{\varphi(x_2)}\leq L-\abs{x_2}$, but $\varphi(x_2)$ not identical $L-\abs{x_2}$ or $-(L-\abs{x_2})$. Let $\varphi$ be such a function, i.e. there exists an open interval $I\subset(-L,L)$ on which $\abs{\varphi(x_2)}<L-\abs{x_2}$. Considering perturbations $\varphi+\varepsilon\psi$ with $\supp \psi\subset I$ and $\abs{\varepsilon}$ small enough, one concludes that $\varepsilon\mapsto \cV(\varphi+\varepsilon\psi)$ is a uniformly convex $\cC^2$ function, and thus can not have its supremum achieved in $\varepsilon=0$.  

We remark that for this specific example the new energy term absorbing kinetic and potential energy can be expressed in terms of the actual variables $(\rho,m)$ (instead of the potential $u$) as 
\[
E_f(x_1)=\frac{3gA}{4L}\norm{u(x_1,\cdot)-u(0,\cdot)}_{L^2(-L,L)}^2=\frac{3gA}{4L}\norm{\rho(x_1,\cdot)-\rho_0}^2_{H^{-1}(-L,L)},
\]
where $H^{-1}(-L,L)$ denotes the dual of $H^1_0(-L,L)$ with respect to the topology induced by the norm $\norm{\partial_{x_2}( \cdot)}_{L^2(-L,L)}$.

\section{A degenerate variational problem}\label{sec_main_result}

We now turn to the investigation of problem \eqref{eq:variational_problem_in_introduction}. More precisely we seek to minimize
\begin{align}\label{eq:functional_in_sec_3}
    \cA(u)=\int_\Omega F(\nabla u)-V(x,u)\:dx
\end{align}
over the class of functions $u\in X$ with $F$ defined in \eqref{eq:definition_of_F_in_introduction} and $X$ given by \eqref{eq:definition_of_X_in_introduction}.

The nonlinear potential $V:\overline{\Omega}\times \R\rightarrow \R$, $(x,z)\mapsto V(x,z)$ is supposed to satisfy the following regularity condition:
\begin{gather}\tag{$\text{V}_{\text{reg}}$}\label{eq:condition_V1} \begin{gathered}
V \text{ is $3$ times differentiable with respect to $z$,}\\
   \partial_z^k V:\overline{\Omega}\times\R\rightarrow\R,~k=0,\ldots,3 \text{ are Lipschitz continuous and bounded.}
   \end{gathered}
\end{gather}
This condition will be assumed throughout the remaining article. Note that the example given in \eqref{eq:example_for_V_in_introduction}, Section \ref{sec:example_f} resp., satisfies \eqref{eq:condition_V1} when the stated $V$ is extended smoothly outside $\overline{\Omega}\times [-L,L]$, such that $V$ and its $z$-derivatives are globally bounded. We remark that the precise extension turns out to be irrelevant in view of Corollary \ref{cor:bounds_for_minimizer}.

Moreover, we will frequently also assume that the potential is autonomous with respect to $x_1$, i.e.
\begin{equation}\label{eq:V1prime}\tag{$\text{V}_\text{aut}$}
    V(x,z)=V(x_2,z),~x\in\overline{\Omega},~z\in\R,
\end{equation}
such that then \eqref{eq:functional_in_sec_3} reduces to \eqref{eq:functional_in_introduction}.

Two other conditions on $V$, as indicated in Sections \ref{sec_energy_dissipation}, \ref{sec:initial_final_energy}, will be introduced when needed, which is only in the very last part of our investigation in Section \ref{sec_global_admissibility} when it comes to the interpretation of our minimizer as a subsolution for the Boussinesq system.

For now our main goal is to show the following existence and partial regularity result for the  variational problem 
\begin{align}\label{eq:var_prop_sec_3}
\text{find }u\in X\text{ such that }  \cA (u)=\inf_{u\in X} \cA (u),
\end{align}
as well as the associated energy balance in the autonomous case.
\begin{theorem}\label{thm:existence_and_part_regularity}
Suppose that \eqref{eq:condition_V1} holds. Then problem \eqref{eq:var_prop_sec_3} with $\cA$ defined in \eqref{eq:functional_in_sec_3} has a solution $u$ and there exists $\Omega^\prime\subset\Omega$ open, nonempty such that $u|_{\Omega^\prime}$ is of class $\cC^2$ with $\partial_{x_1}u\neq 0$, $\abs{\partial_{x_2}u}<1$ on $\Omega'$ and $\partial_{x_1}u(x)=0$, $\abs{\partial_{x_2}u(x)}\leq 1$ for a.e. $x\notin \Omega^\prime$. Moreover, if in addition \eqref{eq:V1prime} holds true, then $u\in \cC^0(\overline{\Omega})$, $\partial_{x_1}u>0$ on $\Omega'$ and there holds
\begin{equation}\label{eq:energy_balance_sec3}
    \frac{d}{dx_1}\int_{-L}^LF(\nabla u(x))+V(x_2,u(x))\:dx_2=0
\end{equation}
weakly on $(0,T)$.
\end{theorem}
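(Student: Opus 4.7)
The plan is to approximate the degenerate integrand $F$ by smooth strictly convex functionals $F_\varepsilon$, apply standard elliptic theory to the regularized minimizers $u_\varepsilon$, and then pass to the limit $\varepsilon\to 0$, deriving the partial regularity on the region where the limit is nondegenerate and exploiting translational invariance for the energy balance in the autonomous case.

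For \textbf{existence}, I would introduce smooth strictly convex $F_\varepsilon:\R^2\to\R$ approximating $F$ monotonically from below (e.g.\ by replacing $1-p_2^2$ with $\max(1-p_2^2,\varepsilon)$, mollifying, and adding $\varepsilon|p|^2$) with standard quadratic growth. Boundedness of $V$ in \eqref{eq:condition_V1} together with the availability of a competitor $u^\ast\in X$ with $\cA(u^\ast)<\infty$ (e.g.\ a self-similar profile of the type constructed in \cite{GKBou}, suitably cut off) gives coercivity of $\cA_\varepsilon$ on the affine space $X$ and a uniform bound $\cA_\varepsilon(u_\varepsilon)\leq\cA(u^\ast)$. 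Each $u_\varepsilon$ solves a uniformly elliptic Euler--Lagrange equation and is therefore $\cC^{2,\alpha}$. Weak $H^1$ compactness plus strong $L^2$ convergence yield a limit $u\in X$, and convexity of $F$ with continuity of $V$ give lower semicontinuity, so $\cA(u)\leq\liminf\cA_\varepsilon(u_\varepsilon)\leq\cA(v)$ for every $v\in X$.

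For the \textbf{partial regularity}, define $\Omega'$ intrinsically as the largest open subset of $\Omega$ on which a continuous representative of $\nabla u$ stays in the open set $\{p_1\neq 0,\,|p_2|<1\}$ on which $F$ is smooth and uniformly convex on compact subsets. On compact subsets $K\subset\Omega'$ the approximations $\nabla u_\varepsilon$ must accumulate in the same nondegenerate region (this is the place where one needs quantitative control), so the Euler--Lagrange equation for $u_\varepsilon$ is $\varepsilon$-uniformly elliptic on $K$ and classical Schauder bootstrapping gives $u\in\cC^2(\Omega')$ with $\partial_{x_1}u\neq 0$ and $|\partial_{x_2}u|<1$ there. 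Outside $\Omega'$, finiteness of $F(\nabla u)$ a.e.\ and the case distinction in \eqref{eq:definition_of_F_in_introduction} force $\partial_{x_1}u=0$ a.e.\ (the alternative $|\partial_{x_2}u|<1,\,\partial_{x_1}u\neq 0$ would place $x$ into $\Omega'$ by openness); a truncation competitor in the $x_2$-direction on $\{\partial_{x_1}u=0\}$ then yields $|\partial_{x_2}u|\leq 1$ a.e.\ everywhere. Non-emptiness of $\Omega'$ follows by contradiction: $\partial_{x_1}u\equiv 0$ a.e.\ would force $u$ to be independent of $x_1$, contradicting $u(0,\cdot)=-u(T,\cdot)=|x_2|-L$.

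For the \textbf{autonomous case}, translational invariance in $x_1$ permits the inner variation $u_s(x_1,x_2):=u(x_1+s\phi(x_1),x_2)$ with $\phi\in\cC^\infty_c(0,T)$: computing $\tfrac{d}{ds}\cA(u_s)|_{s=0}=0$ and justifying the integration by parts via the $\cC^2$-regularity on $\Omega'$, together with the vanishing of $\partial_{x_1}u$ (hence of the $x_1$-derivative of the integrand) on its complement, gives \eqref{eq:energy_balance_sec3} in the distributional sense. Continuity $u\in\cC^0(\overline{\Omega})$ and strict positivity $\partial_{x_1}u>0$ on $\Omega'$ are obtained by a monotone rearrangement in $x_1$: autonomy of $F$ and $V$ in $x_1$ allows one to replace $x_1\mapsto u(x_1,x_2)$ for fixed $x_2$ by its monotone nondecreasing rearrangement without increasing $\cA$, producing a representative that is nondecreasing in $x_1$ on $\overline{\Omega}$ and hence continuous up to the $x_1$-boundary; the strict sign on $\Omega'$ then follows from $\cC^2$-regularity combined with $\partial_{x_1}u\neq 0$. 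The \textbf{main obstacle} I anticipate is the $\varepsilon$-uniform nondegeneracy needed to transport the classical regularity of $u_\varepsilon$ onto compact subsets of $\Omega'$ --- equivalently, proving that if $\nabla u$ lies in the nondegenerate region on a neighbourhood then the same holds for $\nabla u_\varepsilon$ for $\varepsilon$ small, which is not automatic from weak $H^1$ convergence and likely requires pointwise barriers exploiting the convexity structure of $F$.
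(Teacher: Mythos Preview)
Your overall architecture --- regularize, solve, pass to the limit --- matches the paper's, and you correctly isolate the crux: transferring regularity from $u_\varepsilon$ to $u$ on the nondegenerate set. But the mechanism you propose for this step does not work, and the paper's actual resolution is substantially different.

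\textbf{The partial regularity gap.} You want to show that on compact $K\subset\Omega'$ the gradients $\nabla u_\varepsilon$ stay in the good set $G=\{p_1\neq 0,\,|p_2|<1\}$, making the $\varepsilon$-equation uniformly elliptic there. The paper never establishes this, and it is not clear it can be done directly. Instead, two ingredients are combined. First, a sharp Young measure analysis (Proposition~\ref{prop:extended_gamma_convergence}) shows that at a.e.\ point where $\partial_{x_1}u\neq 0$ the oscillation measure is a Dirac mass at $\nabla u(x)$; this upgrades weak to essentially strong convergence of gradients on $\{\partial_{x_1}u\neq 0\}$ and yields the quantitative control $\fint_{B_r}|\nabla u_\varepsilon-p_0|^2\lesssim \fint_{B_r}|\nabla u-p_0|^2$ for $p_0\in G$ (Lemma~\ref{lem:ourcase}). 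Second, Savin's small-perturbation theorem \cite{savin} is invoked: after rescaling so that $v_\varepsilon=(u_\varepsilon-\text{affine})/r$ is $L^\infty$-small on $B_1$, one gets uniform $\cC^{2,\alpha}$ bounds \emph{without} needing $\nabla u_\varepsilon$ to sit in $G$ --- Savin's result only requires the fully nonlinear operator to be uniformly elliptic near the zero solution, which here corresponds to gradients near the fixed $p_0\in G$. This is the missing idea in your proposal; ``pointwise barriers'' will not substitute for it.

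\textbf{The autonomous case.} Your monotone rearrangement in $x_1$ is problematic: rearranging $u(\cdot,x_2)$ for each fixed $x_2$ does not obviously control $\partial_{x_2}u$, so membership in $H^1(\Omega)$ and the value of $\int F(\nabla u)$ are unclear. The paper instead proves $\partial_{x_1}u_\varepsilon\geq 0$ at the regularized level via a Hopf lemma plus a first-eigenfunction argument (Lemma~\ref{lem:partial_1_u_positive}), then obtains $u\in\cC^0(\overline{\Omega})$ from a Courant--Lebesgue oscillation estimate exploiting monotonicity (Lemma~\ref{lem:mincont}). For the energy balance, your inner-variation idea is natural, but the formal computation $\frac{d}{ds}\cA(u_s)|_{s=0}$ involves $\partial_{p_1}F(\nabla u)\partial_{x_1}u$, whose integrability is not a priori clear where $|\partial_{x_2}u|$ approaches $1$. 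The paper avoids this by building into the extension $\hat F_\varepsilon$ the pointwise inequality $-\varepsilon\leq \partial_{p_1}\hat F_\varepsilon(p)\,p_1\leq 3\hat F_\varepsilon(p)$ (Lemma~\ref{lem:first_extension_of_F_epsilon}\,\eqref{eq:estimate_of_extension_for_derivative}), proving the conserved quantity for $u_\varepsilon$ classically (Lemma~\ref{lem:energy_conservation_for_approximation}), and then passing to the limit using the Young measure representation and this bound (Lemma~\ref{lem:energy_balance}).
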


The main difficulty lies in the degeneracy of the convex, lower semi-continuous integrand $F$. Indeed, denoting by $\Lambda(p)$, $\lambda(p)$ the maximal and minimal eigenvalue of $D^2F(p)$, cf. \eqref{eq:Fepsder} with $\varepsilon=0$, for $p_1\neq 0$, $\abs{p_2}<1$ there holds
\[
\Lambda(p)\rightarrow +\infty \text{ as }\abs{p_2}\rightarrow 1\text{ and }\lambda(p)\rightarrow 0 \text{ as }p_1\rightarrow 0.
\]
Thus the problem degenerates on the non-convex set $E:=\set{0}\times[-1,1]\cup \R\times\set{\pm 1}$ with indefinite behaviour for $\det D^2F(p)$ as $p\rightarrow (0,\pm 1)$.

This is in contrast to the prototype of degenerate problems, i.e. the $p$-Laplace problem with $F(\xi)=\abs{\xi}^p$, where the ellipticity constants degenerate only at the single point $E=\{0\}$ and with definite behaviour for all eigenvalues as $\xi\rightarrow 0$. In that case $\cC^{1,\alpha}$-regularity for minimizers is known, see \cite{evans_p,lewis,tolksdorf,uhlenbeck,uraltseva}.

Another proof for the regularity of $p$-harmonic maps is given by Wang \cite{wang} relying on a separation between degenerate and non-degenerate points, and the fact that near the degenerate set $\nabla u$ is  anyway small. Beyond the $p$-Laplacian Colombo and Figalli \cite{colombo_figalli} also applied a separation strategy based on ideas of \cite{wang} to problems that degenerate on a bounded convex set $E$ having $0$ in its interior. One of these ``very degenerate'' integrants is for example
$
F(\xi)=\left(\abs{\xi}-1\right)^p_+
$ which arises in problems related to traffic congestion. The main theorem of \cite{colombo_figalli} states that for these type of problems with $V(x,z)=V(x)$ the composition $\nabla F(\nabla u)$ is continuous. This extends the work \cite{santambrogio_vespri} by Santambrogio and Vespri relying on two-dimensional methods for $E=B_1(0)$ to any dimension and a general convex bounded $E$ with $0\in\text{int}(E)$.

Near nondegenerate points the proof of \cite{colombo_figalli} is based on a compactness result for small solutions to elliptic equations in the spirit of Savin \cite{savin} which renders also in the present work one of the main ingredients in the proof of Theorem \ref{thm:existence_and_part_regularity}. However, the nature of the degeneracy of our $F$ did so far not allow us to conclude a global regularity result like the one in \cite{colombo_figalli}.

In \cite{desilva_savin} De Silva and Savin considered a different type of degenerate variational problem arising for instance in questions related to limits of random surfaces. There the integrand $F$ is a bounded function defined on the closure of a bounded two-dimensional polygon $N$ and the set of degeneracy is given by the union of $\partial N$ and finitely many points inside $N$. Apart from this union $F$ is smooth and strictly convex. There is no potential term, i.e. $V=0$. In that setting \cite[Theorem 1.3]{desilva_savin} provides a partial regularity result for the unique minimizer $u$ and characterizes the behaviour at points where $\nabla u$ is not continuous. More precisely, every point of discontinuity is connected to the boundary of $\Omega$ along a straight segment perpendicular to one of the sides of $N$ and on that segment $u$ is affine linear. Besides the unboundedness of our $F$ and our degeneracy set $E$, the absence of the maximum principle due to the nonlinear potential $V(x,z)$ prevents us from applying the methods of \cite{desilva_savin}. The maximum principle together with a 2-dimensional topological argument seems to be the crucial tool on which their approach is based on.

Still, in the following sense points of discontinuity of $\nabla u$ are, for certain $V$, also in our case connected to $\partial\Omega$: 
\begin{lemma}\label{lem:good_set_property}
Suppose that $V$ satisfies \eqref{eq:condition_V1}, \eqref{eq:V1prime} and $\partial_z^2V(x_2,z)\geq 0$ for all $x_2,z\in[-L,L]$. Whenever $\Omega''\subset \Omega$ is open with $\partial\Omega''\subset \Omega'$, where $\Omega'$ is the set from Theorem 
\ref{thm:existence_and_part_regularity}, then there holds $\Omega''\subset \Omega'$.
\end{lemma}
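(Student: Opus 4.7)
I would argue by contradiction: suppose $B:=\Omega''\setminus\Omega'$ is nonempty. Since $\Omega'$ is open and $\partial\Omega''\subset\Omega'$, $B$ is relatively closed in $\Omega''$ with $B\subset\subset\Omega''$; in particular the compact set $\partial\Omega''$ lies in $\Omega'$, where Theorem \ref{thm:existence_and_part_regularity} yields $\partial_{x_1}u\geq c>0$ for some constant $c$. The strategy is to promote this strict positivity from the boundary to all of $\Omega''$ by a maximum-principle argument for $w:=\partial_{x_1}u$, thereby contradicting $\partial_{x_1}u=0$ a.e.\ on $B$.

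On $\Omega'$ the minimizer satisfies the Euler--Lagrange equation $-\divv(\nabla F(\nabla u))=\partial_zV(x_2,u)$ classically. Because of \eqref{eq:V1prime} the integrand is $x_1$-independent, and differentiating in $x_1$ gives the linear equation
\[
-\divv\bigl(D^2F(\nabla u)\nabla w\bigr)=\partial_z^2V(x_2,u)\,w\qquad\text{on }\Omega'.
\]
By the partial regularity of Theorem \ref{thm:existence_and_part_regularity} the operator is (locally) uniformly elliptic on $\Omega'$, and the sign hypothesis $\partial_z^2V\geq 0$ together with $w>0$ show that $w$ is a positive supersolution. A strong minimum principle then prevents $w$ from achieving a zero infimum inside $\Omega'$, but one still has to cross the degenerate locus $\partial\Omega'\cap\Omega''$ to reach $B$.

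To do so I would transfer the argument to the regularized minimizers $u_\varepsilon$ of $\int_\Omega F_\varepsilon(\nabla u)-V(x,u)\,dx$ that the paper uses to construct $u$. These are smooth on $\overline\Omega$, satisfy $w_\varepsilon:=\partial_{x_1}u_\varepsilon\geq 0$ by the analogue of Corollary \ref{cor:bounds_for_minimizer}, and obey, globally on $\Omega$,
\[
-\divv\bigl(D^2F_\varepsilon(\nabla u_\varepsilon)\nabla w_\varepsilon\bigr)-\partial_z^2V(x_2,u_\varepsilon)\,w_\varepsilon=0,
\]
which is a uniformly elliptic linear equation for each fixed $\varepsilon$. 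By $\cC^2_{\mathrm{loc}}(\Omega')$ convergence $u_\varepsilon\to u$, we obtain $w_\varepsilon\geq c/2$ on the compact set $\partial\Omega''$ for $\varepsilon$ small. Applying the weak minimum principle (or a Harnack inequality, which applies regardless of the sign of the zeroth-order coefficient) for $w_\varepsilon$ on $\Omega''$ then yields $w_\varepsilon\geq c'>0$ throughout $\Omega''$, with $c'$ independent of $\varepsilon$.

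Finally, integrating $w_\varepsilon\geq c'$ along horizontal segments and passing to the limit $\varepsilon\to 0$ via the uniform convergence $u_\varepsilon\to u$ on $\overline\Omega$ produces $u(x_1',x_2)-u(x_1,x_2)\geq c'(x_1'-x_1)$ whenever the segment lies in $\Omega''$. Hence $\partial_{x_1}u\geq c'$ a.e.\ on $\Omega''$, contradicting $\partial_{x_1}u=0$ a.e.\ on $B$. The principal obstacle is obtaining the lower bound $w_\varepsilon\geq c'$ with a constant independent of $\varepsilon$: since $D^2F_\varepsilon(\nabla u_\varepsilon)$ may degenerate near $B$ (where $\nabla u$ approaches the bad set $E$), a naive Harnack estimate loses its uniformity, and the argument will most likely require an explicit barrier construction on $\Omega''$ exploiting both the sign $\partial_z^2V\geq 0$ and the positive gap $w\geq c$ on $\partial\Omega''$.
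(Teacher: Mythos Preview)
Your outline matches the paper's: the paper records the differentiated equation and the one-sided minimum principle for $w_\varepsilon=\partial_{x_1}u_\varepsilon$ as Lemma~\ref{lem:good_set_property_epsilon}, and its proof of Lemma~\ref{lem:good_set_property} is a single sentence citing that lemma together with the uniform convergence of Proposition~\ref{prop:u_continuous}. The $\cC^2_{\mathrm{loc}}(\Omega')$ convergence established in the proof of Proposition~\ref{prop:parreg}, which you correctly invoke to transfer the boundary bound $\partial_{x_1}u\ge c$ on the compact set $\partial\Omega''$ to $w_\varepsilon$, is implicit in the paper's citation.

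Your ``principal obstacle'' is not one. The weak minimum principle for supersolutions asserts $\inf_{\Omega''}w_\varepsilon=\inf_{\partial\Omega''}w_\varepsilon$ for \emph{each} fixed $\varepsilon$, with no dependence whatsoever on the ellipticity constants of $D^2\hat F_\varepsilon(\nabla u_\varepsilon)$; the interior lower bound is literally the boundary lower bound, so $c'=c/2$ uniformly in $\varepsilon$. No Harnack inequality (which would indeed be $\varepsilon$-sensitive) and no barrier are needed. You already wrote this correctly in your third paragraph; the last paragraph second-guesses a step that is in fact the easiest one.

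One genuine loose end remains: your concluding contradiction ``$\partial_{x_1}u\ge c'$ a.e.\ on $\Omega''$ versus $\partial_{x_1}u=0$ a.e.\ on $B$'' only bites when $|B|>0$, whereas you assumed merely $B\neq\emptyset$. What the argument as written delivers is $|\Omega''\setminus\Omega'|=0$; upgrading this to the setwise inclusion $\Omega''\subset\Omega'$ requires feeding the a.e.\ lower bound on $\partial_{x_1}u$ back through Proposition~\ref{prop:parreg} at every point of $\Omega''$.
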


For a further more general overview on degenerate variational problems we refer to the survey \cite{mooney}.

The proof of Theorem \ref{thm:existence_and_part_regularity} is carried out in Sections \ref{sec_regular_approximation}-\ref{sec_further_properties}. We begin in Section \ref{sec:global_extension} with the construction of regular approximations $\hat{F}_\varepsilon$ for the degenerate integrant $F$. Here some extra attention has to be paid, see Lemma \ref{lem:first_extension_of_F_epsilon} \eqref{eq:estimate_of_extension_for_derivative}, in order to later conclude the energy balance \eqref{eq:energy_balance_sec3}. Having a family of regularized variational problems at hand we deduce in Sections \ref{sec:regularized_var_problem}-\ref{sec_autonomous_potentials_for_approximation} the existence of regular minimizers $u_\varepsilon$ enjoying corresponding $\varepsilon$-versions of the energy balance \eqref{eq:energy_balance_sec3} or Lemma \ref{lem:good_set_property} for instance. After that Section \ref{sec_gamma_convergence} deals with the limit $\varepsilon\rightarrow 0$. We prove $\Gamma$-convergence with respect to the weak $H^1$-topology and characterize in terms of the corresponding Young measure how strong convergence might fail. In particular we deduce the existence of a minimizer $u$ to the degenerate problem \eqref{eq:var_prop_sec_3}. Section \ref{sec_partial_regularity} contains the proof of the partial regularity property which, as mentioned earlier, uses the compactness result of Savin \cite{savin}. Section \ref{sec_further_properties} collects various additional properties of $u$, for example \eqref{eq:energy_balance_sec3}, Lemma \ref{lem:good_set_property}, but also further properties that rely on the mentioned additional conditions on $V$. Finally Section \ref{sec_summary_questions} contains a summary of all conditions on $V$ together with the corresponding conclusions, as well as a discussion of open questions in the variational problem itself, but also regarding our application to the Boussinesq system.

\section{A regular approximation}\label{sec_regular_approximation}

In order to deal with the possible singularity in the denominator of $F$ and the degeneracy of $D^2F(p)$ when $p_1=0$, we introduce in this section the following regular approximation.
For $\varepsilon>0$, let $F_\varepsilon:\set{p\in\R^2:\abs{p_2}<1+\varepsilon}\rightarrow\R$,
\begin{align}\label{eq:feps}
    F_\varepsilon(p)=\frac{p_1^2+\varepsilon^\theta}{2((1+\varepsilon)^2-p_2^2)},
\end{align}
where $\theta\in(1,2)$ is a fixed constant.
A quick calculation yields
\begin{align}\label{eq:Fepsder}
\begin{split}
    \nabla F_\varepsilon(p)&=\begin{pmatrix}\frac{p_1}{(1+\varepsilon)^2-p_2^2}&\frac{(p_1^2+\varepsilon^\theta)p_2}{((1+\varepsilon)^2-p_2^2)^2}\end{pmatrix}^T,\\
    D^2 F_\varepsilon(p)&=\begin{pmatrix}
   \frac{1}{(1+\varepsilon)^2-p_2^2} & \frac{2p_1p_2}{((1+\varepsilon)^2-p_2^2)^2}  \\
   \frac{2p_1p_2}{((1+\varepsilon)^2-p_2^2)^2} & \frac{(p_1^2+\varepsilon^\theta)((1+\varepsilon)^2+3p_2^2) }{((1+\varepsilon)^2-p_2^2)^3}   
 \end{pmatrix},\\
 \det(D^2F_\varepsilon(p))&=\frac{p_1^2}{((1+\varepsilon)^2-p_2^2)^3}+\frac{\varepsilon^\theta((1+\varepsilon)^2+3p_2^2)}{((1+\varepsilon)^2-p_2^2)^4}.
 \end{split}
\end{align}
Hence $F_\varepsilon$ is uniformly convex, e.g. via Sylvestre's criterion.

\subsection{Global extension}\label{sec:global_extension}

Next we define the compact sets 
\begin{align}\label{eq:definition_of_K_epsilon}
K^\varepsilon:=\set{p\in\R^2:\abs{p_1}\leq \varepsilon^{-4\theta},~\abs{p_2}\leq 1+\varepsilon-\varepsilon^{4\theta}}
\end{align}
and extend $F_{\varepsilon|K^\varepsilon}$ in a uniformly elliptic way onto all of $\R^2$, with some additional properties. The stated uniform bound in  \eqref{eq:uniform_lambda_min_for_first_extension} for instance will be used to achieve $\Gamma$-convergence in Section \ref{sec_gamma_convergence}, while property \eqref{eq:estimate_of_extension_for_derivative} will be needed in order to conclude the energy balance in Section \ref{sec_further_properties}.
\begin{lemma}\label{lem:first_extension_of_F_epsilon}
For every $\varepsilon\in(0,1)$ there exists a smooth extension $\hat{F}_\varepsilon:\R^2\rightarrow [0,\infty)$ of $F_{\varepsilon|K^\varepsilon}$ satisfying
\begin{enumerate}[(i)]
    \item \label{eq:uniform_elliptic_extension} $\lambda_\varepsilon\id\leq D^2\hat{F}_\varepsilon(p)\leq \Lambda_\varepsilon\id$ for all $p\in\R^2$ with some constants $0<\lambda_\varepsilon,\Lambda_\varepsilon<\infty$,
    \item \label{eq:uniform_lambda_min_for_first_extension}
    $\lambda_0\id\leq D^2\hat{F}_\varepsilon(p)$ for $\abs{p_2}\geq 1+\varepsilon-\varepsilon^{4\theta}$ or $\abs{p_1}\geq 1$ with a constant $\lambda_0>0$ independent of $\varepsilon\in(0,1)$,
    \item \label{eq:estimate_of_extension_for_derivative}
    $-\varepsilon\leq \partial_{p_1}\hat{F}_\varepsilon(p)p_1\leq 3 \hat{F}_\varepsilon(p)$ for all $p\in\R^2$.
\end{enumerate}
\end{lemma}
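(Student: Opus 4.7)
My plan is to construct $\hat F_\varepsilon$ as a standard tangent-plane-plus-paraboloid extension of $F_\varepsilon|_{K^\varepsilon}$, after first replacing the rectangle $K^\varepsilon$ by a slightly enlarged strictly convex set with $C^\infty$ boundary on which $F_\varepsilon$ is still defined and smooth. The quantitative heart of the argument is a uniform-in-$\varepsilon$ lower bound on the minimal eigenvalue of $D^2 F_\varepsilon$ along the boundary collar where property (ii) is required.

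\emph{Uniform ellipticity along the collar.} Using the explicit formulas in \eqref{eq:Fepsder} together with $\lambda_{\min}(D^2F_\varepsilon)\ge\det(D^2F_\varepsilon)/\tr(D^2F_\varepsilon)$, the rational expressions
\[
\det(D^2F_\varepsilon)=\tfrac{p_1^2}{c^3}+\tfrac{\varepsilon^\theta((1+\varepsilon)^2+3p_2^2)}{c^4},\qquad \tr(D^2F_\varepsilon)=\tfrac{1}{c}+\tfrac{(p_1^2+\varepsilon^\theta)((1+\varepsilon)^2+3p_2^2)}{c^3},
\]
where $c=(1+\varepsilon)^2-p_2^2$, permit a short case analysis. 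On $K^\varepsilon\cap\{|p_1|\ge 1\}$, the $p_1^2$-term in $\det$ dominates and, since $c\lesssim 1$, yields $\lambda_{\min}(D^2F_\varepsilon)\gtrsim 1$. On $K^\varepsilon\cap\{|p_2|\ge 1+\varepsilon-\varepsilon^{4\theta}\}$ one has $c\lesssim\varepsilon^{4\theta}$, and both $\det$ and $\tr$ scale by the same negative power of $\varepsilon$, again giving $\lambda_{\min}(D^2F_\varepsilon)\gtrsim 1$. This produces the uniform bound $\lambda_{\min}(D^2F_\varepsilon)\ge\lambda_0>0$ with $\lambda_0$ independent of $\varepsilon\in(0,1)$.

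\emph{Construction of the extension.} Let $\tilde K^\varepsilon$ be a strictly convex compact set with $C^\infty$ boundary, symmetric under $p_1\mapsto-p_1$, with $K^\varepsilon\subset\operatorname{int}(\tilde K^\varepsilon)$ and $\tilde K^\varepsilon\Subset\{|p_2|<1+\varepsilon\}$ (e.g., smoothen the corners of $K^\varepsilon$ and dilate slightly). Put $\hat F_\varepsilon:=F_\varepsilon$ on $\tilde K^\varepsilon$ and, for $p\notin\tilde K^\varepsilon$, define
\[
\hat F_\varepsilon(p):=F_\varepsilon(\pi(p))+\nabla F_\varepsilon(\pi(p))\cdot(p-\pi(p))+\tfrac{\lambda_0}{2}|p-\pi(p)|^2,
\]
where $\pi:\R^2\setminus\tilde K^\varepsilon\to\partial\tilde K^\varepsilon$ is the smooth nearest-point projection. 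This gives a globally convex $C^{1,1}$ function coinciding with $F_\varepsilon$ on a neighborhood of $K^\varepsilon$; a mollification supported strictly outside $K^\varepsilon$ and with sufficiently small radius promotes it to a $C^\infty$ function without altering values on $K^\varepsilon$, and preserves convexity.

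\emph{Verification and main obstacle.} Property \eqref{eq:uniform_elliptic_extension} follows since inside $\tilde K^\varepsilon$ the Hessian of $F_\varepsilon$ is bounded (with $\varepsilon$-dependent constants), and outside the paraboloid term contributes $\lambda_0\operatorname{id}$ modulo controlled lower-order terms. Property \eqref{eq:uniform_lambda_min_for_first_extension} combines the collar estimate above with the paraboloid contribution. For property \eqref{eq:estimate_of_extension_for_derivative} on $\tilde K^\varepsilon$, a direct computation from \eqref{eq:Fepsder} gives $\partial_{p_1}F_\varepsilon(p)\,p_1=p_1^2/c\in[0,3F_\varepsilon(p)]$, the upper bound following from $3F_\varepsilon-p_1\partial_{p_1}F_\varepsilon=(p_1^2+3\varepsilon^\theta)/(2c)\ge 0$. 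Outside $\tilde K^\varepsilon$, the $p_1$-symmetry of the construction makes $\hat F_\varepsilon$ even in $p_1$, whence $\partial_{p_1}\hat F_\varepsilon(p)\,p_1\ge 0\ge-\varepsilon$; the upper bound $\partial_{p_1}\hat F_\varepsilon(p)\,p_1\le 3\hat F_\varepsilon(p)$ follows from the envelope identity $\nabla\hat F_\varepsilon(p)=\nabla F_\varepsilon(\pi(p))+\lambda_0(p-\pi(p))$ (the tangential derivatives of $\pi$ cancel against the factor $p-\pi(p)$, which is normal to $\partial\tilde K^\varepsilon$) together with the pointwise bound just established at the base point $\pi(p)\in\partial\tilde K^\varepsilon$. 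I expect the main technical obstacle to lie in simultaneously preserving smoothness and the sharp upper bound in \eqref{eq:estimate_of_extension_for_derivative} through the mollification step, which demands that the mollification radius shrink with $\varepsilon$ fast enough not to perturb the envelope identity across $\partial\tilde K^\varepsilon$.
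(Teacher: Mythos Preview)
Your construction has a genuine gap: the tangent-plane-plus-paraboloid extension via the nearest-point projection is \emph{not} convex, so property (i) already fails before one can even discuss (ii) and (iii). The issue is the enormous anisotropy of $D^2F_\varepsilon$ on $\partial\tilde K^\varepsilon$. At a boundary point on the ``top'' side, say with $p_1=O(1)$ and $|p_2|$ slightly above $1+\varepsilon-\varepsilon^{4\theta}$, the outward normal is essentially $e_2$. Your function is $C^1$ across $\partial\tilde K^\varepsilon$, so the tangential--tangential and mixed second derivatives of $\hat F_\varepsilon$ computed from outside must agree with those of $F_\varepsilon$; only the normal--normal entry is replaced by~$\lambda_0$ (the paraboloid contribution, since $G$ is affine in the normal direction). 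Thus, with $c=(1+\varepsilon)^2-p_2^2\lesssim\varepsilon^{4\theta}$,
\[
D^2\hat F_\varepsilon\big|_{\text{outside}}=\begin{pmatrix}1/c & 2p_1p_2/c^2\\ 2p_1p_2/c^2 & \lambda_0\end{pmatrix},\qquad
\det=\frac{\lambda_0}{c}-\frac{4p_1^2p_2^2}{c^4}\sim \lambda_0\varepsilon^{-4\theta}-\varepsilon^{-16\theta}<0.
\]
The original $F_\varepsilon$ compensates the huge off-diagonal by an equally huge $\partial_{p_2}^2F_\varepsilon\sim c^{-3}$, but your paraboloid with an $\varepsilon$-independent $\lambda_0$ cannot. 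No amount of mollification will repair this.

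A second, related issue is that the ``envelope identity'' you invoke,
$\nabla\hat F_\varepsilon(p)=\nabla F_\varepsilon(\pi(p))+\lambda_0(p-\pi(p))$,
is false: differentiating the tangent-plane term $G(p)=F_\varepsilon(\pi(p))+\nabla F_\varepsilon(\pi(p))\cdot(p-\pi(p))$ produces an extra term $D\pi(p)^TD^2F_\varepsilon(\pi(p))(p-\pi(p))$, which vanishes only when the normal to $\partial\tilde K^\varepsilon$ is an eigenvector of $D^2F_\varepsilon$. So the verification of (iii) outside $\tilde K^\varepsilon$ does not go through as written (and even granting the identity, reducing $\partial_{p_1}\hat F_\varepsilon(p)\,p_1$ to the bound at the base point $\pi(p)$ requires more than you indicate, since $p_1\neq\pi_1(p)$).

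The paper circumvents both obstacles by working with the \emph{sublevel set} $K_{N_\varepsilon}=\{F_\varepsilon\le N_\varepsilon\}$ rather than a rounded rectangle, and by taking the \emph{supremum} of tangent planes over $\partial K_{N_\varepsilon}$ (which is automatically convex) rather than the tangent plane at the nearest point. The inequality $\partial_{p_1}\tilde F_\varepsilon\,p_1\le 2\tilde F_\varepsilon$ for this sup-extension is nontrivial and occupies a separate lemma. Uniform convexity is then supplied not by an isotropic paraboloid but by an \emph{anisotropic} quadratic $Q_\varepsilon$ adapted to the level set (with $\varepsilon$-dependent coefficients through $N_\varepsilon$), and the gluing is done by a regularized maximum rather than a simple mollification. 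Each of these choices is forced by the blow-up of $D^2F_\varepsilon$ near $|p_2|=1+\varepsilon$; your isotropic paraboloid with fixed $\lambda_0$ is simply too weak to control that regime.
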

The proof of Lemma \ref{lem:first_extension_of_F_epsilon} relies on Lemmas \ref{lem:smallest_convex_extension}, \ref{lem:convolution_of_smallest_extension}.
\begin{lemma}\label{lem:smallest_convex_extension}
Let $N_\varepsilon:=\max F_{\varepsilon|K^\varepsilon}+1$, $K_{N_\varepsilon}:=F_\varepsilon^{-1}((-\infty,N_\varepsilon])$.
There exists a convex and globally Lipschitz extension $\tilde{F}_\varepsilon:\R^2\rightarrow [0,\infty)$ of $F_{\varepsilon|K_{N_\varepsilon}}$ satisfying
\begin{align}\label{eq:estimate_for_smallest_extension}
0\leq \partial_{p_1}\tilde{F}_\varepsilon(p)p_1\leq 2\tilde{F}_\varepsilon(p)
\end{align}
for almost every $p\in \R^2$. Moreover, on $\partial K_{N_\varepsilon}$ the extension $\tilde{F}_\varepsilon$ is differentiable  with $\nabla \tilde{F}_\varepsilon=\nabla F_\varepsilon$.
\end{lemma}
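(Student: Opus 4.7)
The plan is to construct $\tilde F_\varepsilon$ as the pointwise supremum of the tangent planes to $F_\varepsilon$ at points of $K_{N_\varepsilon}$,
\[
\tilde F_\varepsilon(p):=\sup_{q\in K_{N_\varepsilon}}\ell_q(p),\qquad \ell_q(p):=F_\varepsilon(q)+\nabla F_\varepsilon(q)\cdot(p-q).
\]
Since $K_{N_\varepsilon}$ is the closed sublevel set $\{F_\varepsilon\leq N_\varepsilon\}$ of the uniformly convex $F_\varepsilon$ and $F_\varepsilon\to+\infty$ as $|p_2|\to 1+\varepsilon$, the set $K_{N_\varepsilon}$ is compact and strictly contained in the open strip $\{|p_2|<1+\varepsilon\}$ on which $F_\varepsilon$ is smooth. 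Hence $\nabla F_\varepsilon$ is uniformly bounded on $K_{N_\varepsilon}$, which makes $\tilde F_\varepsilon$ convex (as a supremum of affine functions) and globally Lipschitz with constant $\sup_{K_{N_\varepsilon}}|\nabla F_\varepsilon|$. Evaluating at $q=0\in K_{N_\varepsilon}$ and using $\nabla F_\varepsilon(0)=0$ (evenness of $F_\varepsilon$ in each variable) yields $\tilde F_\varepsilon(p)\geq \ell_0(p)=F_\varepsilon(0)>0$, so $\tilde F_\varepsilon$ maps into $[0,\infty)$.

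That $\tilde F_\varepsilon=F_\varepsilon$ on $K_{N_\varepsilon}$ is standard: convexity of $F_\varepsilon$ forces $\ell_q\leq F_\varepsilon$ on $K_{N_\varepsilon}$ for every $q$, while the choice $q=p$ gives equality. To obtain differentiability at $q_0\in\partial K_{N_\varepsilon}$ together with $\nabla\tilde F_\varepsilon(q_0)=\nabla F_\varepsilon(q_0)$, I would use the sandwich
\[
\ell_{q_0}(p)\leq\tilde F_\varepsilon(p)\leq F_\varepsilon(p),
\]
valid on any neighborhood of $K_{N_\varepsilon}$ contained in the domain of $F_\varepsilon$ (the upper bound because every tangent plane of a convex function lies below it). Both bounding functions agree with $\ell_{q_0}$ to first order at $q_0$, which pins down the Fréchet derivative as claimed.

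The derivative estimate $0\leq\partial_{p_1}\tilde F_\varepsilon(p)\,p_1\leq 2\tilde F_\varepsilon(p)$ (interpreted almost everywhere by Rademacher's theorem) is the core step. The lower bound follows from symmetry: $F_\varepsilon$ and $K_{N_\varepsilon}$ are invariant under $p_1\mapsto-p_1$, so $\tilde F_\varepsilon$ inherits this invariance and $p_1\mapsto\tilde F_\varepsilon(p_1,p_2)$ is convex and even, forcing $\partial_{p_1}\tilde F_\varepsilon(p)$ to have the sign of $p_1$. For the upper bound on $K_{N_\varepsilon}$ a direct computation from \eqref{eq:feps}, \eqref{eq:Fepsder} gives $2F_\varepsilon(p)-\partial_{p_1}F_\varepsilon(p)\,p_1=\varepsilon^\theta/((1+\varepsilon)^2-p_2^2)\geq 0$. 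For $p\notin K_{N_\varepsilon}$, strict convexity of $F_\varepsilon$ rules out an interior maximizer in the sup defining $\tilde F_\varepsilon(p)$, so the maximizer $q^*$ lies on $\partial K_{N_\varepsilon}$ and satisfies the Lagrange condition $p-q^*=\mu A^{-1}b$ for some $\mu\geq 0$, with $A:=D^2F_\varepsilon(q^*)$ and $b:=\nabla F_\varepsilon(q^*)$; the envelope theorem then gives $\nabla\tilde F_\varepsilon(p)=b$. A direct expansion produces
\[
2\tilde F_\varepsilon(p)-\partial_{p_1}\tilde F_\varepsilon(p)\,p_1=\frac{\varepsilon^\theta}{(1+\varepsilon)^2-(q_2^*)^2}+\mu\bigl(b_1(A^{-1}b)_1+2b_2(A^{-1}b)_2\bigr).
\]

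The main obstacle is showing that the bracketed term on the right is non-negative. I would prove this by explicit computation: writing $\psi(q_2^*):=(1+\varepsilon)^2-(q_2^*)^2$ and inserting the entries of $A$ and $b$ from \eqref{eq:Fepsder}, the combination $A_{22}b_1^2-3A_{12}b_1b_2+2A_{11}b_2^2$ collapses after algebraic simplification to $((q_1^*)^2+\varepsilon^\theta)\bigl((q_1^*)^2\psi(q_2^*)+2\varepsilon^\theta (q_2^*)^2\bigr)/\psi(q_2^*)^5$, which is manifestly non-negative; dividing by $\det A>0$ then gives the desired bracket. This somewhat hidden cancellation -- which relies on the perspective-type structure $p_1^2/\psi(p_2)$ of the principal part of $F_\varepsilon$ together with its $\varepsilon^\theta$-regularization -- is the heart of the argument. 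Once it is in place, the remaining verifications (convexity, Lipschitz bound, non-negativity, extension property and boundary differentiability) are routine features of suprema of affine functions.
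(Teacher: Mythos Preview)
Your construction and the verification of the key inequality are correct, and they follow the same overall strategy as the paper: define $\tilde F_\varepsilon$ as the supremum of the affine tangents of $F_\varepsilon$ over the sublevel set, deduce convexity, global Lipschitz continuity and the boundary differentiability from general facts about suprema of affine functions, and handle the estimate \eqref{eq:estimate_for_smallest_extension} by analysing the (boundary) maximiser via a Lagrange multiplier. The paper takes the sup over $\partial K_{N_\varepsilon}$ rather than $K_{N_\varepsilon}$, but by strict convexity these yield the same extension.

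The genuine difference is in how the upper bound $\partial_{p_1}\tilde F_\varepsilon(p)\,p_1\leq 2\tilde F_\varepsilon(p)$ is established outside $K_{N_\varepsilon}$. The paper rewrites $F_\varepsilon=\tfrac12 e^g$, derives the Lagrange conditions for the constrained maximum in terms of $g$, and then performs a case split on the sign of $p_2-\bar p_2$; each case is closed by a short but somewhat ad hoc estimate. You instead keep $F_\varepsilon$ as is, write the Lagrange condition as $p-q^*=\mu A^{-1}b$ with $A=D^2F_\varepsilon(q^*)$, $b=\nabla F_\varepsilon(q^*)$, and reduce the inequality to the positivity of $b_1(A^{-1}b)_1+2b_2(A^{-1}b)_2$, which you verify by a single explicit computation collapsing to $(q_1^{*2}+\varepsilon^\theta)(q_1^{*2}\psi+2\varepsilon^\theta q_2^{*2})/\psi^5\geq 0$. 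This avoids the case analysis entirely and makes the role of the structure of $F_\varepsilon$ more transparent; the price is one slightly longer algebraic expansion. Both routes are valid. One small point worth making explicit in your write-up: at a point $p$ where $\tilde F_\varepsilon$ is differentiable the maximiser $q^*$ is unique (since $\nabla F_\varepsilon$ is injective by strict convexity), which is what justifies the envelope identity $\nabla\tilde F_\varepsilon(p)=b$.
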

\begin{proof} We abbreviate $N=N_\varepsilon$ and consider the smallest convex extension of $F_{\varepsilon|K_N}$ defined by $\tilde F_\varepsilon:\R^2\rightarrow\R$, 
$$\tilde F_\varepsilon (p)=\sup_{\tilde p\in\partial K_N}\{F_\varepsilon (\tilde p)+\nabla F_\varepsilon (\tilde p)\cdot(p-\tilde p)\}.$$
A priori $\tilde{F}_\varepsilon$ as a convex function is only locally Lipschitz, but since the subdifferential of $\tilde{F}_\varepsilon$ is given by
\begin{align}\label{eq:characterization_of_subdifferential}
\partial \tilde{F}_\varepsilon(p)=\set{\nabla F_\varepsilon(\bar{p}):\bar{p}\in\partial K_N,~\tilde{F}_\varepsilon(p)=F_\varepsilon(\bar{p})+\nabla F_\varepsilon(\bar{p})\cdot(p-\bar{p})}^{co},
\end{align} cf. \cite[Theorem 2.4.18]{zalinescu}, and $K_N$ is compact, it follows that  
$\|\nabla \tilde{F}_\varepsilon\|_{L^\infty(\R^2)}$ is finite.

Moreover, the strict convexity of $F_\varepsilon$ and \eqref{eq:characterization_of_subdifferential} imply 
\begin{align*}
    \partial \tilde{F}_\varepsilon(p)=\set{\nabla F_\varepsilon(p)}\text{ for all }p\in\partial K_{N_{\varepsilon}}.
\end{align*}

Next, knowing that $0\leq \partial_{p_1}F_\varepsilon(p)p_1\leq 2 F_\varepsilon(p)$, $p\in \R^2$, $\abs{p_2}<1+\varepsilon$ holds true, let us prove \eqref{eq:estimate_for_smallest_extension} whenever $\tilde{F}_\varepsilon$ is differentiable at $p\in\R^2$. 

In order to do this let us suppose that the supremum is achieved at some $\bar p=\bar p(p)\in\partial K_N$ and let us write $F_\varepsilon=\frac{1}{2}e^g$, with 
\[
g(p)=g_\varepsilon (p)= \log (p_1^2+\varepsilon^\theta)-\log((1+\varepsilon)^2-p_2^2).
\]
We obtain that
\begin{align}\label{eq:achieved}
    \tilde F_\varepsilon (p)=N(1+\nabla g(\bar p)\cdot (p-\bar p))
\end{align}
and under the assumption of differentiability that $\nabla \tilde{F}_\varepsilon(p)=N\nabla g(\bar{p})$.

Since $\bar p\in \partial K_N=\{ F_\varepsilon=N\}$, we have \begin{align}\label{eq:cstr}
\bar p_1^2+\varepsilon^\theta - 2N((1+\varepsilon)^2-\bar p_2^2)=0,\end{align} and hence we may rewrite
\begin{align}\label{eq:maxble}
    \nabla g(\bar p)\cdot (p-\bar p)=\frac{\bar p_1 p_1+2N\bar p_2 p_2-(2N(1+\varepsilon)^2-\varepsilon^\theta)}{N((1+\varepsilon)^2-\bar p_2^2)}.
\end{align}
We first of all observe that 
$\partial_{p_1}\tilde{F}_\varepsilon(p)p_1=N\partial_{p_1}g(\bar{p})p_1\geq 0$ as otherwise $(-\bar{p}_1,\bar{p}_2)$ would be a better choice for the supremum.

For the upper bound we distinguish two cases.

\noindent \underline{Case (a):} $p_2-\bar p_2< 0$. Since $\bar p$ maximizes \eqref{eq:maxble} under the constraint \eqref{eq:cstr}, there exists a Lagrange-multiplier $\lambda\in \R$ such that there holds
\begin{gather}\label{eq:lagr}
\begin{gathered}
    \frac{p_1}{N((1+\varepsilon)^2-\bar p_2^2)}-2\lambda \bar p_1=0,\\ \frac{2N p_2}{N((1+\varepsilon)^2-\bar p_2^2)}+\nabla g(\bar p)\cdot (p-\bar p)\frac{2N\bar p_2}{N((1+\varepsilon)^2-\bar p_2^2)}-4N\lambda \bar p_2=0.
    \end{gathered}
\end{gather}

We may assume without loss of generality that neither $\bar p_1$ or $\bar p_2$ is zero. 
Indeed, if $\bar p_1=0$, from \eqref{eq:lagr} it follows that $p_1=0$, and hence the inequality that we want to prove follows trivially. If $\bar p_2=0$, from \eqref{eq:lagr} once more it follows that $p_2=0$, which contradicts $p_2-\bar p_2< 0$.

Expressing $\lambda$ from both equations and simplifying leads to
\begin{align}\label{eq:szep}
    \nabla g(\bar p)\cdot (p-\bar p)=\frac{p_1}{\bar p_1}-\frac{p_2}{\bar p_2}.
\end{align}
As described we need to estimate $\partial_{p_1}g(\bar p)p_1$, which is given by 
\begin{multline*}
    \partial_{p_1}g(\bar p)p_1=2\frac{\bar p_1 p_1}{\bar p_1^2+\varepsilon^\theta}\leq 2\frac{p_1}{\bar p_1} = 2\nabla g(\bar p)\cdot (p-\bar p)+ 2\frac{p_2}{\bar p_2}
    \leq 2\nabla g(\bar p)\cdot (p-\bar p)+2,
\end{multline*}
where we have used \eqref{eq:szep} and the fact that we are  in the case $\frac{p_2}{\bar p_2}< 1$.
Hence, using \eqref{eq:achieved}, it follows that
\begin{align*}
  \partial_{p_1}\tilde{F}_\varepsilon(p)p_1 = N\partial_{p_1}g(\bar p)p_1 \leq 2N (\nabla g(\bar p)\cdot (p-\bar p)+1)=2\tilde{F}_\varepsilon(p),
\end{align*}
which is the desired inequality.

\noindent \underline{Case (b):} $p_2-\bar p_2\geq 0$. Observe that since $p_2\mapsto F_\varepsilon(p)$ is even, we may assume without loss of generality that $p_2\geq 0$.
Replacing in \eqref{eq:maxble} the denominator by $\frac{1}{2}(\bar{p}_1^2+\varepsilon^\theta)$ via \eqref{eq:cstr} it follows that $\bar p_2\geq 0$ as otherwise $(\bar{p}_1,-\bar{p}_2)$ would be better.
This however implies that 
$$\partial_{p_2}g(\bar p)=\frac{2\bar p_2}{(1+\varepsilon)^2-\bar p_2^2}\geq 0.$$
We may then write
\begin{align*}
    \partial_{p_1}g(\bar p)p_1\leq \nabla g(\bar p)\cdot (p-\bar p)+ \partial_{p_1}g(\bar p)\bar p_1=\nabla g(\bar p)\cdot (p-\bar p)+2\frac{\bar p_1^2}{\bar p_1^2+\varepsilon^\theta}\\ \leq \nabla g(\bar p)\cdot (p-\bar p)+2 \leq 2\nabla g(\bar p)\cdot (p-\bar p)+2,
\end{align*}
from where we conclude as in the previous case. This finishes the proof of the lemma.
\end{proof}

\begin{lemma}\label{lem:convolution_of_smallest_extension}
Let $\varphi_\eta\in\cC^\infty_c(B_\eta(0))$, $\eta>0$ be a standard symmetric mollifier and $\tilde{F}_\varepsilon$ from Lemma \ref{lem:smallest_convex_extension}. The convolution $\tilde{F}_\varepsilon^\eta:=\varphi_\eta *\tilde{F}_\varepsilon$ satisfies $\tilde{F}_\varepsilon^\eta\geq \tilde{F}_\varepsilon$ and
\begin{align}\label{eq:estimate_for_convolution_of_smallest_extension}
    -c_\varepsilon\eta\leq \partial_{p_1}\tilde{F}_\varepsilon^\eta(p)p_1\leq 2\tilde{F}_\varepsilon^\eta(p)+c_\varepsilon \eta,
\end{align}
where $c_\varepsilon>0$ depends only on $\varepsilon$. Moreover, there exists $\Lambda_{\varepsilon,\eta}>0$ with
\begin{align}\label{eq:bound_on_2nd_derivative_of_convoluted_smallest_ext}
    0\leq D^2\tilde{F}_\varepsilon^\eta (p)\leq \Lambda_{\varepsilon,\eta}\id,~p\in\R^2.
\end{align}
\end{lemma}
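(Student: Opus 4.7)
The plan is straightforward, relying on Jensen's inequality and a convolution-decomposition trick. For $\tilde F_\varepsilon^\eta\geq \tilde F_\varepsilon$, I would invoke Jensen directly: since $\tilde F_\varepsilon$ is convex and $\varphi_\eta$ is a symmetric probability density on $B_\eta(0)$ with vanishing first moment, one has
\[
\tilde F_\varepsilon^\eta(p)=\int\varphi_\eta(q)\tilde F_\varepsilon(p-q)\,dq\geq \tilde F_\varepsilon\Bigl(\int\varphi_\eta(q)(p-q)\,dq\Bigr)=\tilde F_\varepsilon(p).
\]
For \eqref{eq:bound_on_2nd_derivative_of_convoluted_smallest_ext}, positive semidefiniteness of $D^2\tilde F_\varepsilon^\eta$ follows from convolution preserving convexity (the integral of the convex family $p\mapsto\tilde F_\varepsilon(p-q)$ is again convex). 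For the upper bound, I would use that $\nabla \tilde F_\varepsilon\in L^\infty(\R^2;\R^2)$ by Lemma \ref{lem:smallest_convex_extension} and distribute one derivative onto the mollifier, so that $D^2\tilde F_\varepsilon^\eta=\nabla\varphi_\eta\ast\nabla\tilde F_\varepsilon$ is pointwise bounded by $\Lambda_{\varepsilon,\eta}:=\|\nabla\varphi_\eta\|_{L^1}\|\nabla\tilde F_\varepsilon\|_{L^\infty}$.

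The heart of the lemma is \eqref{eq:estimate_for_convolution_of_smallest_extension}, and the key idea is to decompose $p_1=(p_1-q_1)+q_1$ inside the convolution representation of $\partial_{p_1}\tilde F_\varepsilon^\eta(p)$, so that the pointwise bound \eqref{eq:estimate_for_smallest_extension} can be invoked at the shifted argument $p-q$. Writing
\[
\partial_{p_1}\tilde F_\varepsilon^\eta(p)\,p_1=\int\varphi_\eta(q)\,\partial_{p_1}\tilde F_\varepsilon(p-q)(p_1-q_1)\,dq+\int\varphi_\eta(q)\,\partial_{p_1}\tilde F_\varepsilon(p-q)\,q_1\,dq,
\]
the pointwise bound \eqref{eq:estimate_for_smallest_extension} applied at $p-q$ a.e. gives that the first integrand lies in $[0,2\tilde F_\varepsilon(p-q)]$, so the first integral lies in $[0,2\tilde F_\varepsilon^\eta(p)]$. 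The second integral is controlled using the Lipschitz bound $|\partial_{p_1}\tilde F_\varepsilon|\leq \|\nabla\tilde F_\varepsilon\|_{L^\infty}=:c_\varepsilon$ together with $|q_1|\leq \eta$ on $\supp\varphi_\eta$, producing an absolute value bound of $c_\varepsilon\eta$. Combining the two contributions yields \eqref{eq:estimate_for_convolution_of_smallest_extension}.

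I do not anticipate any real obstacle in this lemma: each piece is a one-line application of a standard fact (Jensen, convexity preserved by convolution, Lipschitz bound distributed under convolution). The only subtle point worth flagging is that \eqref{eq:estimate_for_smallest_extension} only holds a.e., but since the exceptional null set contributes nothing to the integrals above this causes no issue; accordingly the constant $c_\varepsilon$ in \eqref{eq:estimate_for_convolution_of_smallest_extension} can simply be taken to be the global Lipschitz constant of $\tilde F_\varepsilon$ furnished by Lemma \ref{lem:smallest_convex_extension}.
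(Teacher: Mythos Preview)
Your proposal is correct and follows essentially the same approach as the paper: the same $p_1=(p_1-q_1)+q_1$ decomposition for \eqref{eq:estimate_for_convolution_of_smallest_extension}, and the same idea of distributing one derivative onto the mollifier for the upper bound in \eqref{eq:bound_on_2nd_derivative_of_convoluted_smallest_ext}. The only addition is that you explicitly justify $\tilde F_\varepsilon^\eta\geq \tilde F_\varepsilon$ via Jensen, which the paper's proof leaves implicit.
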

\begin{proof} Denoting by $c_\varepsilon>0$ the global Lipschitz constant of $\tilde{F}_\varepsilon$ and using \eqref{eq:estimate_for_smallest_extension} one concludes
\begin{align*}
    \partial_{p_1}\tilde{F}^\eta_\varepsilon(p)p_1&=\int_{\R^2}\varphi_\eta (q)\partial_{p_1}\tilde{F}_\varepsilon(p-q)(p_1-q_1)\:dq+\int_{\R^2}\varphi_\eta (q)\partial_{p_1}\tilde{F}_\varepsilon(p-q)q_1\:dq\\
    &\leq 2\tilde{F}_\varepsilon^\eta(p)+c_\varepsilon\eta
\end{align*}
and $\partial_{p_1}\tilde{F}^\eta_\varepsilon(p)p_1\geq -c_\varepsilon \eta$. 

Moreover, since $\tilde{F}^\eta_\varepsilon$ is a convex smooth function there holds
\begin{align*}
    0\leq v^TD^2\tilde{F}_\varepsilon^\eta(p)v=\int_{\R^2}(\nabla\varphi(p-q)\cdot v) (\nabla \tilde{F}_\varepsilon(q)\cdot v)\:dq\leq \tilde{c}_\varepsilon\int_{\R^2}\abs{\nabla \varphi_\eta(q)}\:dq\abs{v}^2
\end{align*}
for any $v\in\R^2$.
\end{proof}
\begin{proof}[Proof of Lemma \ref{lem:first_extension_of_F_epsilon}] We will now construct the extension $\hat{F}_\varepsilon$.
Let $N_\varepsilon$, $K_{N_\varepsilon}$, $\tilde{F}_\varepsilon$ and $\tilde{F}_\varepsilon^\eta$, $\eta>0$ be as in Lemma \ref{lem:smallest_convex_extension}, Lemma \ref{lem:convolution_of_smallest_extension} respectively. 
Furthermore, consider $Q_\varepsilon,\check F_\varepsilon,\check F^\eta_\varepsilon:\R^2\rightarrow \R$ defined by
\begin{gather*}
Q_\varepsilon(p):=\frac{p_1^2+2N_\varepsilon p_2^2}{2N_\varepsilon(1+\varepsilon)^2-\varepsilon^\theta},\\
\check F_\varepsilon(p):=\tilde{F}_\varepsilon(p)+\frac{N_\varepsilon}{4}\left(Q_\varepsilon(p)-1\right),\quad 
\check F^\eta_\varepsilon(p):=\tilde{F}^\eta_\varepsilon(p)+\frac{N_\varepsilon}{4}\left(Q_\varepsilon(p)-1\right).
\end{gather*}
Note that $Q_\varepsilon=1$ on $\partial K_{N_\varepsilon}$ by the definition of $K_{N_\varepsilon}$. 

We claim that there exist constants $C=C_\varepsilon>0$ and $\delta'=\delta'_\varepsilon>0$, such that for any $\delta\in (0,\delta')$ and $q\in \R^2$ with $\dist (q,\partial K_{N_\varepsilon})=\delta$ there holds
\begin{align}\label{eq:difference_of_extensions_estimate}
    F_\varepsilon(q)-\check{F}_\varepsilon(q)\begin{cases}
    \leq -C\delta,&q\notin K_{N_{\varepsilon}},\\
    \geq C\delta,& q\in K_{N_{\varepsilon}}.
    \end{cases}
\end{align}
For $q\in K_{N_\varepsilon}$, where also $\check F_\varepsilon$ is smooth, this is a straightforward consequence of the fact that on $\partial K_{N_\varepsilon}$ the functions $F_\varepsilon$, $\check{F}_\varepsilon$ coincide, while their gradients are related via $\nabla \check{F}_\varepsilon=\nabla F_\varepsilon+\frac{N_\varepsilon}{4}\nabla Q_\varepsilon$, cf. Lemma \ref{lem:smallest_convex_extension}.
For $q\notin K_{N_\varepsilon}$ we can argue by convexity instead. Indeed, let $p\in\partial K_{N_\varepsilon}$ and $q=p+\delta\frac{\nabla Q_\varepsilon(p)}{\abs{\nabla Q_\varepsilon(p)}}$. Note that any $q\notin K_{N_\varepsilon}$ with $\dist(q,\partial K_{N_\varepsilon})=\delta$ can be written like this. There holds
\[
\check F_\varepsilon(q)-F_\varepsilon(p)\geq (\nabla \check F_\varepsilon(p)-\nabla F_\varepsilon(p))\cdot (q-p)+o(\delta)=\frac{N_\varepsilon}{4}\abs{\nabla Q_\varepsilon(p)}\delta +o(\delta)
\]
with an error uniform in $p\in \partial K_{N_\varepsilon}$. Thus \eqref{eq:difference_of_extensions_estimate} follows.

Next we fix $\delta\in(0,\delta')$ small, such that $B_\delta (K_{N_\varepsilon})\subset\subset \set{p\in\R^2:\abs{p_2}<1+\varepsilon}$, $\dist(K^\varepsilon,B_\delta(\partial K_{N_\varepsilon}))>0$ and such that 
\begin{align}\label{eq:choice_of_deltaaaa}
    \frac{3}{4}N_\varepsilon\leq \check F_\varepsilon(p)\text{ for all }p\notin K_{N_\varepsilon}\text{ and all }p\in K_{N_\varepsilon}\text{ with }\dist(p,\partial K_{N_\varepsilon})\leq \delta.
\end{align}

We then choose the convolution scale $\eta>0$ such that $\check F_\varepsilon^\eta$ satisfies
\begin{align}\label{eq:differences_in_extensions_2}
    F_\varepsilon(q)-\check F_\varepsilon^\eta (q)\begin{cases}
    \leq -C\delta/2,&q\notin K_{N_{\varepsilon}},~\dist(q,\partial K_{N_\varepsilon})=\delta,\\
    \geq C\delta/2,& q\in K_{N_{\varepsilon}},~\dist(q,\partial K_{N_\varepsilon})=\delta.
    \end{cases}
\end{align}

Finally we define the extension $\hat{F}_\varepsilon:\R^2\rightarrow\R$ by
\[
\hat{F}_\varepsilon(p):=\begin{cases}
F_\varepsilon(p),&p\in K_{N_\varepsilon},~\dist(p,\partial K_{N_\varepsilon})>\delta,\\
\max_{\tilde\eta}\big(F_\varepsilon(p),\check F_\varepsilon^\eta(p)\big),&\dist(p,\partial K_{N_\varepsilon})\leq \delta,\\
\check F_\varepsilon^\eta(p),&p\notin K_{N_\varepsilon},~\dist(p,\partial K_{N_\varepsilon})>\delta.
\end{cases}
\]
Here $\max_{\tilde\eta}$, $\tilde\eta>0$ is a sufficiently sharp convolution of the maximum of two numbers, that is
\[
\text{max}_{\tilde \eta}(y_1,y_2):=\int_{\R^2}\varphi_{\tilde \eta}(y_1-a_1,y_2-a_2)\max\set{a_1,a_2}\:da.
\]
By this definition and \eqref{eq:differences_in_extensions_2} we see that $\hat F_\varepsilon$ is indeed a smooth function for any $\tilde \eta>0$ chosen smaller than $C\delta/4$. Also $\hat{F}_{\varepsilon|K^\varepsilon}=F_{\varepsilon|K^\varepsilon}$ by the choice of $\delta$.
It thus remains to verify properties \eqref{eq:uniform_elliptic_extension}-\eqref{eq:estimate_of_extension_for_derivative}.

We begin with \eqref{eq:estimate_of_extension_for_derivative}. $F_\varepsilon$ clearly satisfies \eqref{eq:estimate_of_extension_for_derivative} with factor $2$ on the right-hand side. Moreover, the definition of $Q_\varepsilon$ and \eqref{eq:estimate_for_convolution_of_smallest_extension}, \eqref{eq:choice_of_deltaaaa} imply
\begin{align*}
    -c_\varepsilon \eta\leq \partial_{p_1}\check{F}^\eta_\varepsilon(p)p_1&\leq 2\check{F}_\varepsilon^\eta(p)+\frac{N_\varepsilon}{2}+c_\varepsilon\eta\leq \left(2+\frac{2}{3}\right)\check{F}_\varepsilon^\eta(p)+c_\varepsilon\eta
\end{align*}
for all $p\notin K_{N_\varepsilon}$ and all $p\in K_{N_\varepsilon}$ with $\dist(p,\partial K_{N_\varepsilon})\leq \delta$. Hence by shrinking $\eta$ further we obtain that $\check F_\varepsilon^\eta(p)$ satisfies \eqref{eq:estimate_of_extension_for_derivative} with factor $2+\frac{3}{4}$ on the right-hand side for said points $p$. 

It remains to observe that in the transition zone $\dist(p,\partial K_{N_\varepsilon})\leq \delta$ the gradient of $\hat F_\varepsilon$ is given by the convex combination
\begin{align*}
    \nabla \hat F_\varepsilon(p)&=\nabla F_\varepsilon(p)\int_{\set{a_1>a_2}}\varphi_{\tilde\eta}(F_\varepsilon(p)-a_1,\check F_\varepsilon^\eta(p)-a_2)\:da\\
    &\hspace{45pt}+\nabla \check F^\eta_\varepsilon(p)\int_{\set{a_1<a_2}}\varphi_{\tilde \eta}(F_\varepsilon(p)-a_1,\check F_\varepsilon^\eta(p)-a_2)\:da\\
    &=:\lambda(p)\nabla F_\varepsilon(p)+(1-\lambda(p))\nabla \check F_\varepsilon^\eta(p),
\end{align*}
while the values satisfy
\begin{align*}
    \abs{\hat F_\varepsilon(p)-\left(\lambda(p)F_\varepsilon(p)+(1-\lambda(p))\check F_\varepsilon^\eta(p)\right)}\leq \tilde \eta.
\end{align*}
Thus by shrinking $\tilde \eta$ we deduce property \eqref{eq:estimate_of_extension_for_derivative} for $\hat{F}_\varepsilon$.

Next we turn to \eqref{eq:uniform_lambda_min_for_first_extension}. Regarding $\check F^\eta_\varepsilon(p)$ we have
\begin{align*}
    \lamin\left(D^2\check F^\eta_\varepsilon(p)\right)\geq \frac{N_\varepsilon}{4}\lamin\left(D^2Q_\varepsilon(p)\right)=\frac{N_\varepsilon}{4N_\varepsilon (1+\varepsilon)^2-2\varepsilon^\theta}\geq \frac{1}{16}.
\end{align*}

Regarding the original $F_\varepsilon$, observe that $1+\varepsilon>\abs{p_2}\geq 1+\varepsilon-\varepsilon^{4\theta}$ implies 
\begin{align}\label{eq:estimate_on_F_varepsilon}
F_\varepsilon(p)\geq \frac{\varepsilon^\theta}{2\cdot 4\varepsilon^{4\theta}}\geq \frac{1}{8}.
\end{align}
The same estimate also holds true for $p\in\R^2$, $\abs{p_2}<1+\varepsilon$ with $\abs{p_1}\geq 1$. 

For $p\in\R^2$, $\abs{p_2}<1+\varepsilon$ such that \eqref{eq:estimate_on_F_varepsilon} holds true we  abbreviate the quantity $\sigma:=(1+\varepsilon)^2-p_2^2\leq 4$ and estimate by means of \eqref{eq:Fepsder} the minimal eigenvalue as follows
\begin{align*}
    \lamin(D^2F_\varepsilon(p))&\geq \frac{\det(D^2F_\varepsilon(p))}{\tr(D^2F_\varepsilon(p))}=\frac{p_1^2+\sigma^{-1}\varepsilon^\theta ((1+\varepsilon)^2+3p_2^2)}{\sigma^2+(p_1^2+\varepsilon^\theta)((1+\varepsilon)^2+3p_2^2)}\\
    &\geq \frac{4^{-1}p_1^2+4^{-1}\varepsilon^\theta}{4\sigma +16(p_1^2+\varepsilon^\theta)}=\frac{1}{8F_\varepsilon(p)^{-1}+64}\geq \frac{1}{128}.
\end{align*}

Thus we set $\lambda_0=\frac{1}{128}$ and it remains to check the behaviour of the minimal eigenvalue in the transition zone $\dist(p,\partial K_{N_\varepsilon})\leq \delta$. From what we have seen it follows that for both functions $f_1:=F_\varepsilon$, $f_2:=\check F^\eta_\varepsilon$  there holds
\[
f_i(p)-a_i\geq f_i(p_0)-a_i+\nabla f_i(p_0)\cdot(p-p_0)+\frac{\lambda_0}{2}\abs{p-p_0}^2
\]
for all points $p,p_0\in\R^2$ which are $\delta$-close to $\partial K_{N_\varepsilon}$ and $a_i\in\R$. Therefore 
\begin{align*}
    \hat F_\varepsilon(p)&=\int_{A:=\set{f_1(p_0)-a_1>f_2(p_0)-a_2}} \varphi_{\tilde{\eta}}(a)\max\set{f_1(p)-a_1,f_2(p)-a_2}\:da\\
    &\hspace{15pt}+\int_{B:=\set{f_1(p_0)-a_1<f_2(p_0)-a_2}} \varphi_{\tilde{\eta}}(a)\max\set{f_1(p)-a_1,f_2(p)-a_2}\:da\\
    &\geq \int_A \varphi_{\tilde{\eta}}(a)\left(f_1(p_0)-a_1+\nabla f_1(p_0)\cdot (p-p_0)+\frac{\lambda_0}{2}\abs{p-p_0}^2\right)\:da\\
    &\hspace{15pt}+\int_{B} \varphi_{\tilde{\eta}}(a)\left(f_2(p_0)-a_2+\nabla f_2(p_0)\cdot (p-p_0)+\frac{\lambda_0}{2}\abs{p-p_0}^2\right)\:da\\
    &=\hat{F}_\varepsilon(p_0)+\nabla \hat{F}_\varepsilon(p_0)\cdot (p-p_0)+\frac{\lambda_0}{2}\abs{p-p_0}^2,
\end{align*}
which shows \eqref{eq:uniform_lambda_min_for_first_extension}.

Finally property \eqref{eq:uniform_elliptic_extension} follows in a similar way by observing that $F_\varepsilon$ and $\check{F}_\varepsilon^\eta$ are uniformly elliptic with $\varepsilon$-dependent bounds, cf. also \eqref{eq:bound_on_2nd_derivative_of_convoluted_smallest_ext}.
\end{proof}

\subsection{The regularized variational problem}\label{sec:regularized_var_problem}

Our regular approximation of the action functional $\cA(u)$ defined in \eqref{eq:functional_in_sec_3} then reads
\begin{equation}\label{eq:def_action_functional_approximation}
\cA_\varepsilon(u):=\int_\Omega \hat{F}_\varepsilon(\nabla u)-V(x,u)\:dx
\end{equation}
with $\hat F_\varepsilon$ from Lemma \ref{lem:first_extension_of_F_epsilon}.

Moreover, we will not only use an approximation of the integrand, but also introduce $\varepsilon$-dependent boundary data, which is used for the a priori bounds in Section \ref{sec_apriori_bounds}.  We set
\begin{align*}
    X_\varepsilon&:=\set{u\in H^1(\Omega):u(\cdot,\pm L)=0,~u(0,\cdot)=-U_\varepsilon,~u(T,\cdot)=U_\varepsilon},
\end{align*}
where $U_\varepsilon:[-L,L]\rightarrow\R$,
\begin{align}\label{eq:pertid}
U_\varepsilon(x_2):=L-\abs{x_2}+\frac{\varepsilon^\beta}{2L}(L^2-x^2_2)
\end{align}
for some fixed constant $\beta\in(1,3-\theta)$. The boundary data is again attained in the trace sense. Also observe that $\cA_\varepsilon$ is well-defined on all of $H^1(\Omega)$ due to the uniform ellipticity of $\hat{F}_\varepsilon$.
We then consider the corresponding regularized minimization problem
\begin{align}\label{eq:Dmin}
\text{find }u_\varepsilon\in X_\varepsilon\text{ such that }  \cA_\varepsilon (u_\varepsilon)=\inf_{u\in X_\varepsilon} \cA_\varepsilon (u).
\end{align}

 The uniform ellipticity of the approximations allow us to conclude the existence of sufficiently smooth minimizers in a standard way.

\begin{lemma}\label{lem:exist}
Problem \eqref{eq:Dmin} admits a  solution. Every solution belongs to $\cC^0(\overline{\Omega})$, as well as $\cC^{2,\alpha}(K)$ for any compact $K$ contained in $\overline{\Omega}$ and having a positive distance to $\set{(0,\pm L),(T,\pm L),(0,0),(T,0)}$.
\end{lemma}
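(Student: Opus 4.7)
The plan is the direct method for existence, followed by a standard elliptic regularity bootstrap for the smoothness.

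First I would check that $X_\varepsilon$ is a non-empty weakly closed affine subspace of $H^1(\Omega)$. Since $U_\varepsilon$ is Lipschitz on $[-L,L]$ with $U_\varepsilon(\pm L)=0$, the interpolant $u_0(x_1,x_2):=\frac{2x_1-T}{T}\,U_\varepsilon(x_2)$ lies in $X_\varepsilon$, so $X_\varepsilon=u_0+H^1_0(\Omega)$. The quadratic lower bound $\hat F_\varepsilon(p)\ge\frac{\lambda_\varepsilon}{2}|p|^2-C_\varepsilon$ from Lemma \ref{lem:first_extension_of_F_epsilon}, combined with the boundedness of $V$ in \eqref{eq:condition_V1}, makes $\cA_\varepsilon$ coercive on $X_\varepsilon$ via Poincar\'e. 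Weak lower semicontinuity of the gradient term follows from convexity of $\hat F_\varepsilon$, while Rellich compactness together with Lipschitz dependence of $V$ in $z$ gives strong continuity of the potential term along $H^1$-weakly convergent sequences. The direct method then produces a minimizer.

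Any minimizer weakly solves the Euler--Lagrange equation
\[
-\divv\bigl(\nabla\hat F_\varepsilon(\nabla u)\bigr)=-\partial_z V(x,u),
\]
a uniformly elliptic divergence equation with smooth integrand and bounded right-hand side. I would first establish $u\in L^\infty(\Omega)$ by Moser iteration, invoking the bounded trace and bounded forcing. De Giorgi--Nash then supplies $\cC^{0,\alpha}_{\mathrm{loc}}$ regularity of $u$. Differentiating the equation yields, for each partial derivative $\partial_k u$, a linear divergence equation with bounded coefficients $\partial^2_{pp}\hat F_\varepsilon(\nabla u)$; a second application of De Giorgi--Nash gives $\nabla u\in\cC^{0,\alpha}_{\mathrm{loc}}$, after which the coefficients themselves become H\"older continuous and Schauder estimates upgrade the regularity to $u\in\cC^{2,\alpha}_{\mathrm{loc}}(\Omega)$. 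The same bootstrap, applied up to the boundary on any straight portion of $\partial\Omega$ where the prescribed Dirichlet data is $\cC^{2,\alpha}$, delivers $\cC^{2,\alpha}$-regularity on every compact $K\subset\overline{\Omega}$ that keeps a positive distance from the four geometric corners $(0,\pm L),(T,\pm L)$ and the two kink points $(0,0),(T,0)$ of $U_\varepsilon$.

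Global continuity $u\in\cC^0(\overline{\Omega})$ is then supplied separately: the prescribed trace is continuous on all of $\partial\Omega$ (the kink of $U_\varepsilon$ at $x_2=0$ is only a loss of differentiability, not of continuity, and $U_\varepsilon(\pm L)=0$ matches the top and bottom edges), so standard barrier constructions for uniformly elliptic divergence equations on Lipschitz domains yield the claim. The main obstacle, and the reason $\cC^{2,\alpha}$ is asserted only on compacta bounded away from the six listed points, is the simultaneous breakdown there of smoothness of $\partial\Omega$ (at the four corners) and of the Dirichlet trace (at the two kink points), which removes classical up-to-boundary Schauder from the toolbox; continuity at those points is all that is asked of the minimizer and is precisely what the global $\cC^0$ step provides.
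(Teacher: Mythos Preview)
Your proposal is correct and follows essentially the same route as the paper: direct method for existence (coercivity from the quadratic growth of $\hat F_\varepsilon$, lower semicontinuity from convexity, compactness for the $V$-term), then the standard elliptic bootstrap to $\cC^{2,\alpha}$ away from the six bad boundary points. The paper's proof is terser, citing Evans and Gilbarg--Trudinger and phrasing the bootstrap as $H^1\to H^2\to\nabla u\in\cC^{0,\alpha}\to u\in\cC^{2,\alpha}$; your De Giorgi--Nash/Schauder sequence is an equivalent textbook path, the only small omission being the $H^2_{\mathrm{loc}}$ step (via difference quotients) needed before the differentiated equation for $\partial_k u$ makes sense.
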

\begin{proof}
Lemma \ref{lem:uniform_energy_bound} below in particular shows that $\inf_{X_\varepsilon}\cA_\varepsilon$ is finite. The existence of a minimizer $u_\varepsilon\in X_\varepsilon$ of $\cA_\varepsilon$ then follows by the boundedness of $V$ and the uniform convexity of $\hat{F}_\varepsilon$ (e.g. \cite[Chapter 8.2]{Evans_book}).
Due to the ellipticity condition Lemma \ref{lem:first_extension_of_F_epsilon} \eqref{eq:uniform_elliptic_extension} and \eqref{eq:condition_V1}
the regularity follows in the classical manner, e.g. from $u_\varepsilon\in H^1$ to $u_\varepsilon \in H^2$ to $\nabla u_\varepsilon\in \cC^{0,\alpha}$ to $u_\varepsilon\in\cC^{2,\alpha}$, see \cite{Evans_book,Gilbarg_Trudinger}. The points excluded are the points where either the boundary or the boundary data lacks the necessary smoothness.
\end{proof}

\begin{lemma}\label{lem:uniform_energy_bound}
There holds
\[
\sup_{\varepsilon\in(0,1)}\inf_{u\in X_\varepsilon} \cA_\varepsilon(u)<\infty.
\]
\end{lemma}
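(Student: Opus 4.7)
My plan is to exhibit an explicit competitor $u^*_\varepsilon\in X_\varepsilon$ whose action is bounded uniformly in $\varepsilon\in(0,1)$. Guided by the fact that $F$ vanishes on $\{p_1=0\}$, I work with the separated form
\[
u^*_\varepsilon(x_1,x_2):=\alpha(x_1)\,U_\varepsilon(x_2),\qquad \alpha(x_1):=-\cos(\pi x_1/T).
\]
The boundary traces defining $X_\varepsilon$ are immediate as $\alpha(0)=-1$, $\alpha(T)=1$ and $U_\varepsilon(\pm L)=0$; and $\int_\Omega V(x,u^*_\varepsilon)\,dx$ is uniformly bounded by \eqref{eq:condition_V1}. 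So the task reduces to an $\varepsilon$-uniform upper bound on $\int_\Omega \hat F_\varepsilon(\nabla u^*_\varepsilon)\,dx$.

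Since $|\partial_{x_2}u^*_\varepsilon|\leq|U_\varepsilon'|\leq 1+\varepsilon^\beta<1+\varepsilon$ (using $\beta>1$), the gradient stays in the domain of $F_\varepsilon$, and for $\varepsilon$ sufficiently small the inequalities $\varepsilon^\beta+\varepsilon^{4\theta}\leq\varepsilon$ and $|\partial_{x_1}u^*_\varepsilon|\leq 2\pi L/T\leq\varepsilon^{-4\theta}$ place it inside $K^\varepsilon$, so $\hat F_\varepsilon(\nabla u^*_\varepsilon)=F_\varepsilon(\nabla u^*_\varepsilon)$. The decisive estimate uses the identity $1-\alpha^2=(T/\pi)^2(\alpha')^2$ and the pointwise bound $U_\varepsilon'^2\geq 1$ to split the denominator:
\[
(1+\varepsilon)^2-\alpha^2 U_\varepsilon'^2\geq D_1(x_2)+D_2(x_1),\qquad D_1:=(1+\varepsilon)^2-U_\varepsilon'^2,\ \ D_2:=(T/\pi)^2(\alpha')^2.
\]
Hence
\[
F_\varepsilon(\nabla u^*_\varepsilon)\leq\frac{(\alpha')^2 U_\varepsilon^2}{2D_2}+\frac{\varepsilon^\theta}{2(D_1+D_2)}=\frac{(\pi/T)^2 U_\varepsilon^2}{2}+\frac{\varepsilon^\theta}{2(D_1+D_2)},
\]
and the first summand integrates to a constant depending only on $L$ and $T$.

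The crux is the second integral, which I would evaluate by iterated integration: first in $x_1$ via $\int_0^\pi(a+\sin^2 s)^{-1}\,ds=\pi/\sqrt{a(a+1)}$, reducing matters to $\int_{-L}^L \varepsilon^\theta T/(2\sqrt{D_1(D_1+1)})\,dx_2$; and then in $x_2$ via the substitution $y=\varepsilon^\beta|x_2|/L$ applied to the bound $D_1\geq 2(\varepsilon-\varepsilon^\beta|x_2|/L)$. This yields an upper bound of order $\varepsilon^\theta/(\sqrt{\varepsilon}+\sqrt{\varepsilon-\varepsilon^\beta})$, which the conditions $\theta>1$ and $\beta\in(1,3-\theta)$ make uniformly bounded on $(0,1)$: it behaves like $\varepsilon^{\theta-1/2}\to 0$ as $\varepsilon\to 0$, and its denominator stays $\geq 1$ as $\varepsilon\to 1$. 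The main obstacle is precisely this delicate interplay between the regularization weight $\varepsilon^\theta$ in the numerator and the boundary perturbation $\varepsilon^\beta$ in $U_\varepsilon$. For the range of $\varepsilon$ bounded away from $0$ where $\nabla u^*_\varepsilon$ may fall outside $K^\varepsilon$, the value of $\hat F_\varepsilon$ is governed by the explicit extension of Lemma~\ref{lem:first_extension_of_F_epsilon}---in particular by the growth control $\partial_{p_1}\hat F_\varepsilon(p)\,p_1\leq 3\hat F_\varepsilon(p)$ together with the uniform ellipticity away from $K^\varepsilon$---which lets one transfer the above estimate to $\hat F_\varepsilon(\nabla u^*_\varepsilon)$ with an $\varepsilon$-independent constant.
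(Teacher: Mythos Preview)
Your approach is essentially the paper's: you use the same competitor $w_\varepsilon(x)=-\cos(\pi x_1/T)\,U_\varepsilon(x_2)$ and the same reduction to bounding $\int_\Omega F_\varepsilon(\nabla w_\varepsilon)\,dx$ for small $\varepsilon$. However, your treatment of the $\varepsilon^\theta$ term is more elaborate than necessary. The paper simply bounds the full denominator from below by its global minimum $(1+\varepsilon)^2-(1+\varepsilon^\beta)^2$, yielding
\[
\int_\Omega\frac{\varepsilon^\theta}{2\bigl((1+\varepsilon)^2-\alpha^2U_\varepsilon'^2\bigr)}\,dx\ \le\ \frac{LT\,\varepsilon^\theta}{(1+\varepsilon)^2-(1+\varepsilon^\beta)^2}\ \sim\ \frac{LT}{2}\,\varepsilon^{\theta-1}\ \to\ 0,
\]
using only that $\theta,\beta>1$. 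Your iterated integration produces the sharper rate $\varepsilon^{\theta-1/2}$, but at the cost of an explicit calculation that is not needed; the condition $\beta<3-\theta$ that you invoke plays no role here.

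Your final paragraph, which attempts to extend the estimate to $\varepsilon$ near $1$ by invoking properties (ii) and (iii) of Lemma~\ref{lem:first_extension_of_F_epsilon}, is not justified as written: neither the lower ellipticity bound $\lambda_0\id\le D^2\hat F_\varepsilon$ nor the growth control $\partial_{p_1}\hat F_\varepsilon(p)\,p_1\le3\hat F_\varepsilon(p)$ yields an \emph{upper} bound for $\hat F_\varepsilon$ in terms of $F_\varepsilon$ outside $K^\varepsilon$. That said, this is a secondary point---the paper's own proof also restricts to sufficiently small $\varepsilon$, which is all that is actually used downstream.
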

\begin{proof}
Define $w_\varepsilon:\Omega\rightarrow\R$, $w_\varepsilon(x)=-U_\varepsilon(x_2)\cos\left(\pi x_1/T\right)$. Then $w_\varepsilon\in X_\varepsilon$ and  
\[
\abs{\partial_{x_1} w_\varepsilon}\leq \frac{2L\pi}{T},\quad \abs{\partial_{x_2}w_\varepsilon}\leq 1+\varepsilon^\beta.
\]
Hence for sufficiently small $\varepsilon>0$ we have $\nabla w^\varepsilon(x)\in K^\varepsilon$ and therefore
\begin{align*}
    \int_\Omega \hat{F}_\varepsilon(\nabla w_\varepsilon(x))\:dx&=\int_\Omega F_\varepsilon(\nabla w_\varepsilon(x))\:dx\\
    &=\int_\Omega \frac{U_\varepsilon(x_2)^2\sin^2(\pi x_1/T)^2\pi^2/T^2+\varepsilon^\theta}{(1+\varepsilon)^2-U_\varepsilon'(x_2)^2\cos^2(\pi x_1/T)}\:dx\\
    &\leq \frac{2L\pi^2U_\varepsilon(0)^2}{T(1+\varepsilon)^2}+\frac{2LT\varepsilon^\theta}{(1+\varepsilon)^2-(1+\varepsilon^\beta)^2}.
\end{align*}
The right-hand side is uniformly bounded as $\varepsilon\rightarrow 0$, since the exponents $\theta,\beta$ are both bigger than $1$.

The uniform boundedness of $\cA_\varepsilon(w_\varepsilon)$ follows since $V$ is uniformly bounded due to condition \eqref{eq:condition_V1}.
\end{proof}

\subsection{A priori bounds}\label{sec_apriori_bounds}

Next we establish some first a priori bounds for solutions of the regularized problem \eqref{eq:Dmin}. Let $u_\varepsilon$ be such a solution. In view of Lemma \ref{lem:exist}, $u_\varepsilon$ is a classical, and therefore in particular also a viscosity solution of the associated Euler-Lagrange equation
\begin{align}\label{eq:euler_lagrange_equ_epsilon}
   \divv(\nabla \hat{F}_\varepsilon(\nabla u))+\partial_z V(x,u)= D^2 \hat F_\varepsilon(\nabla u):D^2 u+\partial_zV(x,u)=0.
\end{align}
We quickly recall the notion of being a solution in viscosity sense.
\begin{definition}\label{def:viscosity_solution}
A viscosity subsolution of \eqref{eq:euler_lagrange_equ_epsilon} is a continuous function $u:\Omega\rightarrow \R$, such that whenever $u$ is touched in a point $x_0\in \Omega$ from above by a function $\varphi\in \cC^2(B_\delta(x_0))$, then 
\[
D^2\hat{F}_\varepsilon(\nabla\varphi(x_0)):D^2\varphi(x_0)+\partial_zV(x_0,\varphi(x_0))\geq 0.
\]
If the above inequality is strict in every such situation we say that $u$ is a strict viscosity subsolution.
The notion of a (strict) supersolution is defined analogously, and a viscosity solution is both a  viscosity sub- und supersolution.
\end{definition}
\begin{lemma}\label{lem:viscosity_sub_supersolution}
For $\varepsilon>0$ sufficiently small the functions $\Omega\ni x\mapsto U_\varepsilon(x_2)+c\in \R$, $c\in\R$ are strict viscosity supersolutions. The corresponding functions induced by $-U_\varepsilon$ are strict viscosity subsolutions. 
\end{lemma}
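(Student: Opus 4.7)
The plan is to reduce the viscosity requirement to a pointwise inequality at the smooth points of $U_\varepsilon$, exploiting monotonicity: the map $A\mapsto D^2\hat F_\varepsilon(p):A$ is nondecreasing in the matrix order because $D^2\hat F_\varepsilon>0$ by Lemma~\ref{lem:first_extension_of_F_epsilon}~\eqref{eq:uniform_elliptic_extension}. From $U_\varepsilon(x_2)=L-\abs{x_2}+\tfrac{\varepsilon^\beta}{2L}(L^2-x_2^2)$ one reads off $U'_\varepsilon(x_2)=-\sign(x_2)-\tfrac{\varepsilon^\beta}{L}x_2$ and $U''_\varepsilon(x_2)=-\tfrac{\varepsilon^\beta}{L}$ for $x_2\neq 0$, and an upward concave corner of slopes $\pm 1$ at $x_2=0$. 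I handle the corner first: if $\varphi\in\cC^2(B_\delta(x_0))$ touched $U_\varepsilon+c$ from below at some $x_0=(x_0^1,0)\in\Omega$, expanding $\varphi$ to first order in $\varphi(x_0^1,x_2)\leq c+L-\abs{x_2}+O(x_2^2)$ would force $\partial_{x_2}\varphi(x_0)\leq -1$ by testing $x_2>0$ and $\partial_{x_2}\varphi(x_0)\geq 1$ by testing $x_2<0$, a contradiction. Hence the supersolution condition is vacuous at the corner.

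At any interior touching point $x_0=(x_0^1,x_0^2)$ with $x_0^2\neq 0$, smoothness of $U_\varepsilon+c$ gives $\nabla\varphi(x_0)=(0,p_2)$ with $p_2=U'_\varepsilon(x_0^2)$ and $D^2\varphi(x_0)\leq \diag(0,-\varepsilon^\beta/L)$. Since $\abs{p_2}\leq 1+\varepsilon^\beta$ and $\beta\in(1,3-\theta)$, one has $\abs{p_2}\leq 1+\varepsilon-\varepsilon^{4\theta}$ for all sufficiently small $\varepsilon$, so $\nabla\varphi(x_0)\in K^\varepsilon$ and $\hat F_\varepsilon$ coincides with $F_\varepsilon$ (including derivatives) on a neighborhood; in particular
\[
(D^2\hat F_\varepsilon)_{22}(0,p_2)=\frac{\varepsilon^\theta\bigl((1+\varepsilon)^2+3p_2^2\bigr)}{\bigl((1+\varepsilon)^2-p_2^2\bigr)^3}\geq C\,\varepsilon^{\theta-3}
\]
by \eqref{eq:Fepsder} together with the elementary bounds $(1+\varepsilon)^2-p_2^2\leq 2\varepsilon+\varepsilon^2$ and $(1+\varepsilon)^2+3p_2^2\geq 4$, with $C>0$ independent of $x_0,c,\varepsilon$. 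The monotonicity observation then reduces the supersolution inequality to
\[
-\tfrac{\varepsilon^\beta}{L}\bigl(D^2\hat F_\varepsilon\bigr)_{22}(0,p_2)+\partial_zV(x_0,U_\varepsilon(x_0^2)+c)\leq -\tfrac{C}{L}\varepsilon^{\beta+\theta-3}+\norm{\partial_zV}_\infty,
\]
which is strictly negative for all sufficiently small $\varepsilon$ uniformly in $x_0$ and $c$, because $\beta+\theta-3<0$ and $\partial_zV$ is globally bounded by \eqref{eq:condition_V1}.

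The statement for $-U_\varepsilon+c$ as a strict viscosity subsolution then follows by symmetry: the constructions of Lemmas~\ref{lem:smallest_convex_extension}--\ref{lem:first_extension_of_F_epsilon} preserve the symmetry $F_\varepsilon(p_1,p_2)=F_\varepsilon(p_1,-p_2)$, so $\hat F_\varepsilon$ is even in $p_2$ and in particular $(D^2\hat F_\varepsilon)_{22}(0,-p_2)=(D^2\hat F_\varepsilon)_{22}(0,p_2)$. The now convex corner of $-U_\varepsilon$ at $x_2=0$ cannot be touched from above by $\cC^2$ functions by the same one-sided slope contradiction, and at $x_0^2\neq 0$ the matrix inequality $D^2\varphi(x_0)\geq \diag(0,\varepsilon^\beta/L)$ combined with monotonicity produces the reversed bound $\tfrac{C}{L}\varepsilon^{\beta+\theta-3}+\partial_zV(x_0,-U_\varepsilon(x_0^2)+c)>0$. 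The only real obstacle is balancing the two degeneracies: $(D^2F_\varepsilon)_{22}$ blows up like $\varepsilon^{\theta-3}$ near $\abs{p_2}\approx 1+\varepsilon^\beta$ while the curvature of $U_\varepsilon$ decays only like $\varepsilon^\beta$, and the choice $\beta\in(1,3-\theta)$ baked into \eqref{eq:pertid} is exactly what guarantees that the product blows up and dominates the bounded potential term.
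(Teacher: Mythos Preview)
Your proof is correct and follows essentially the same approach as the paper's: both arguments observe that the concave corner of $U_\varepsilon$ at $x_2=0$ cannot be touched from below by a $\cC^2$ function, and at smooth points both compute $D^2\hat F_\varepsilon(\nabla U_\varepsilon):D^2U_\varepsilon=-\tfrac{\varepsilon^\beta}{L}\partial_{p_2}^2F_\varepsilon(0,U'_\varepsilon)\leq -C\varepsilon^{\beta+\theta-3}$ and use boundedness of $\partial_zV$ together with $\beta+\theta<3$. You are more explicit than the paper in two places: you spell out the monotonicity step $D^2\varphi(x_0)\leq D^2U_\varepsilon(x_0)\Rightarrow D^2\hat F_\varepsilon:\!D^2\varphi\leq D^2\hat F_\varepsilon:\!D^2U_\varepsilon$ (the paper just says ``the same inequality holds for any $\varphi$ touching from below''), and you verify $\nabla\varphi(x_0)\in K^\varepsilon$ so that $\hat F_\varepsilon=F_\varepsilon$ there. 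Your symmetry remark for the subsolution case is correct but slightly more than needed, since $(0,-p_2)\in K^\varepsilon$ already gives $\hat F_\varepsilon=F_\varepsilon$ at that point and $F_\varepsilon$ is manifestly even in $p_2$.
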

\begin{proof} Consider $U_\varepsilon$ as a $x_1$-independent function defined on $\Omega$. For $x_0\in \Omega$ with $x_{0,2}\neq 0$ there holds
\begin{align*}
    D^2\hat{F}_\varepsilon(\nabla &U_\varepsilon(x_0)):\nabla^2 U_\varepsilon(x_0)=-\frac{\varepsilon^\beta}{L}\partial_{p_2}^2F_\varepsilon(\nabla U_\varepsilon(x_0))\\
&\leq \frac{-\varepsilon^{\beta+\theta}}{L\big((1+\varepsilon)^2-(\sign(x_{0,2})+\frac{\varepsilon^\beta}{L}x_{0,2})^2\big)^3}\leq -C \varepsilon^{\beta+\theta-3}
\end{align*}
for a suitable constant $C>0$ independent of $x_0$. By the choice of the exponents $\theta\in (1,2)$, $\beta \in (1,3-\theta)$ and the boundedness of $\partial_z V$, cf. \eqref{eq:condition_V1}, we deduce that 
\[
    D^2\hat{F}_\varepsilon(\nabla U_\varepsilon(x_0)):\nabla^2 U_\varepsilon(x_0)+\partial_z V(x_0,U_\varepsilon(x_0)+c)< 0
\]
provided $\varepsilon\in(0,\varepsilon_0)$ for some $\varepsilon_0>0$ small enough independent of $x_0$ and $c$. The same inequality holds true for any $\cC^2$ function $\varphi$ touching $U_\varepsilon+c$ from below in $x_0$. Note that $U_\varepsilon+c$ can only be touched by a $\cC^2$ function from below in points with $x_{0,2}\neq 0$. Thus $U_\varepsilon+c$ is a strict viscosity supersolution. Similarly one concludes that $-U_\varepsilon+c$ is a strict viscosity subsolution.
\end{proof}

\begin{corollary}\label{cor:maximum_principle}
For $\varepsilon>0$ sufficiently small any minimizer $u_\varepsilon\in X_\varepsilon$ of $\cA_\varepsilon$ satisfies $\abs{u_\varepsilon(x)}\leq U_\varepsilon(x_2)$ for all $x\in \overline{\Omega}$. Moreover, the inequality is strict for $x\in \Omega$.
\end{corollary}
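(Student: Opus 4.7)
The plan is a comparison-by-touching argument built on Lemma \ref{lem:viscosity_sub_supersolution}. Suppose by contradiction that $c^\ast := \sup_{\overline\Omega}(u_\varepsilon - U_\varepsilon) > 0$. Since $u_\varepsilon \in \cC^0(\overline\Omega)$ by Lemma \ref{lem:exist}, this supremum is attained at some $x_0 \in \overline\Omega$. A direct inspection of the boundary data gives $u_\varepsilon - U_\varepsilon = -2U_\varepsilon \leq 0$ on $\{x_1 = 0\}$ and $u_\varepsilon - U_\varepsilon = 0$ on $\{x_1 = T\} \cup \{x_2 = \pm L\}$, so $c^\ast > 0$ forces $x_0 \in \Omega$. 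Setting $w := U_\varepsilon + c^\ast$, the minimizer $u_\varepsilon$ touches $w$ from below at $x_0$, and by Lemma \ref{lem:exist} it is $\cC^{2,\alpha}$ in a neighborhood of $x_0$.

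If $x_{0,2} \neq 0$, then $w$ is $\cC^2$ near $x_0$. Touching from below yields $\nabla u_\varepsilon(x_0) = \nabla w(x_0)$ and $D^2 u_\varepsilon(x_0) \leq D^2 w(x_0)$. Combining this with the positive definiteness of $D^2\hat F_\varepsilon$ (Lemma \ref{lem:first_extension_of_F_epsilon} \eqref{eq:uniform_elliptic_extension}), the equality $u_\varepsilon(x_0) = w(x_0)$, and the Euler--Lagrange equation \eqref{eq:euler_lagrange_equ_epsilon} gives
\begin{align*}
0 &= D^2\hat F_\varepsilon(\nabla u_\varepsilon(x_0)) : D^2 u_\varepsilon(x_0) + \partial_z V(x_0, u_\varepsilon(x_0)) \\
&\leq D^2\hat F_\varepsilon(\nabla w(x_0)) : D^2 w(x_0) + \partial_z V(x_0, w(x_0)),
\end{align*}
contradicting the strict viscosity supersolution property of $w$ from Lemma \ref{lem:viscosity_sub_supersolution}.

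If $x_{0,2} = 0$, the function $U_\varepsilon$ has an upward corner with $U_\varepsilon'(0^\pm) = \mp 1$, and we argue directly. Since $u_\varepsilon$ is $\cC^1$ at $x_0$ with $u_\varepsilon \leq w$ locally and $u_\varepsilon(x_0) = w(x_0)$, examining the one-sided limits of $(u_\varepsilon(x_{0,1}, x_2) - u_\varepsilon(x_0))/x_2$ as $x_2 \to 0^\pm$ forces the contradictory pair $\partial_{x_2} u_\varepsilon(x_0) \leq -1$ and $\partial_{x_2} u_\varepsilon(x_0) \geq 1$. Hence $u_\varepsilon \leq U_\varepsilon$ throughout $\overline\Omega$, and $u_\varepsilon \geq -U_\varepsilon$ follows analogously from the strict subsolution property of $-U_\varepsilon + c$. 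For the strict interior inequality, applying the same two-case analysis with $c^\ast = 0$ at any putative interior contact point produces the same contradiction, since Lemma \ref{lem:viscosity_sub_supersolution} holds for all $c \in \R$.

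The main obstacle is that the corner of $U_\varepsilon$ at $x_2 = 0$ is not directly covered by the strict viscosity supersolution property: no $\cC^2$ test function can touch a peaked corner from below, so the supersolution condition there is vacuous. This gap is filled by the explicit one-sided derivative argument above, which exploits that $u_\varepsilon$ itself is $\cC^1$ at any such hypothetical contact point.
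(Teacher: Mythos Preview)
Your proof is correct and follows essentially the same approach as the paper: a touching argument invoking the strict viscosity supersolution property of $U_\varepsilon + c$ from Lemma \ref{lem:viscosity_sub_supersolution}, together with the observation that a $\cC^1$ function cannot touch the concave corner of $U_\varepsilon$ from below at $x_2=0$. The paper dismisses the corner case in a single parenthetical remark (``touching in a point with $x_{0,2}=0$ again is not possible''), while you spell out the one-sided derivative contradiction explicitly; otherwise the arguments are identical.
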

\begin{proof} Consider again $U_\varepsilon$ as a function defined on all of $\overline{\Omega}$. Assuming to the contrary that $u_\varepsilon$ touches $U_\varepsilon+c$ for some $c\geq 0$ from below in a point $x_0\in \Omega$ with $x_{0,2}\neq 0$ (touching in a point with $x_{0,2}=0$ again is not possible) directly gives a contradiction to the fact that $U_\varepsilon+c$ is a strict viscosity supersolution and $u_\varepsilon\in\cC^2(\Omega)$ is a solution. Hence $u_\varepsilon<U_\varepsilon$ on $\Omega$.
The analogue statement regarding the comparison of $u_\varepsilon$ with $-U_\varepsilon$ is obtained similarly.
\end{proof}

\subsection{Autonomous potentials}\label{sec_autonomous_potentials_for_approximation}
In this section we conclude under condition \eqref{eq:V1prime} the positiveness of $\partial_{x_1} u_\varepsilon$ and the existence of a first integral, which is the total energy. Moreover, we lay the basis for Lemma \ref{lem:good_set_property}.   

\begin{lemma}\label{lem:partial_1_u_positive}
Assume that $V$ in addition to \eqref{eq:condition_V1} satisfies \eqref{eq:V1prime} and let $u_\varepsilon$ be a solution of \eqref{eq:Dmin}. Then $\partial_{x_1}u_\varepsilon \geq 0$ on $\Omega$ for $\varepsilon>0$ sufficiently small.
\end{lemma}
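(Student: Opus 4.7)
The plan is to exploit the autonomy condition \eqref{eq:V1prime} in order to differentiate the Euler--Lagrange equation \eqref{eq:euler_lagrange_equ_epsilon} with respect to $x_1$, obtaining a linear elliptic equation satisfied by $w := \partial_{x_1} u_\varepsilon$, and then to conclude $w \geq 0$ by combining a boundary analysis with a maximum principle argument.

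First, by Lemma \ref{lem:exist}, $u_\varepsilon$ is $\cC^{2,\alpha}$ on any compact subset of $\overline{\Omega}$ that stays away from the four corners $(0,\pm L),(T,\pm L),(0,0),(T,0)$. Since $V$ does not depend on $x_1$ by \eqref{eq:V1prime}, differentiating \eqref{eq:euler_lagrange_equ_epsilon} in $x_1$ yields
\begin{equation*}
\divv(A(x)\nabla w)+c(x)\,w=0\quad\text{in }\Omega,
\end{equation*}
where $A(x):=D^2\hat F_\varepsilon(\nabla u_\varepsilon(x))$ is uniformly elliptic by Lemma \ref{lem:first_extension_of_F_epsilon}\eqref{eq:uniform_elliptic_extension} and $c(x):=\partial_z^2 V(x_2,u_\varepsilon(x))$ is bounded thanks to \eqref{eq:condition_V1}.

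Second, I would establish $w\geq 0$ on $\partial\Omega$ away from the corners. On the horizontal edges $\{x_2=\pm L\}$, the boundary condition $u_\varepsilon\equiv 0$ forces $w\equiv 0$. On $\{x_1=0\}$ the identity $u_\varepsilon(0,x_2)=-U_\varepsilon(x_2)$ together with Corollary \ref{cor:maximum_principle} shows that, for every $x_2\in(-L,L)$, the function $x_1\mapsto u_\varepsilon(x_1,x_2)$ attains its minimum at $x_1=0$; the $\cC^{2,\alpha}$ regularity up to the boundary then forces $w(0,x_2)\geq 0$, and analogously on $\{x_1=T\}$.

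Third, I would conclude via a maximum-principle argument. Testing the linearized equation against $w_-:=\max(-w,0)\in H^1_0(\Omega)$ (which is admissible by Step 2) gives the identity
\begin{equation*}
\int_\Omega A(x)\nabla w_-\cdot\nabla w_-\,dx=\int_\Omega c(x)\,w_-^2\,dx.
\end{equation*}
The main obstacle is here: since $c=\partial_z^2 V$ is only bounded, with no sign assumption, and the ellipticity constant $\lambda_\varepsilon$ of $A$ deteriorates as $\varepsilon\to 0$, the naive Poincaré absorption $\lambda_\varepsilon>\|c\|_\infty C_P^2$ cannot be used directly. To overcome this I plan to localize: Lemma \ref{lem:first_extension_of_F_epsilon}\eqref{eq:uniform_lambda_min_for_first_extension} provides an $\varepsilon$-independent lower bound $\lambda_0$ on the eigenvalues of $A$ outside the degenerate region $K^\varepsilon$, so outside this region a standard weak maximum principle for divergence-form operators with bounded zero-order coefficient applies (for $\varepsilon$ small enough). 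Inside the thin region $\{\nabla u_\varepsilon\in K^\varepsilon\}$, where $F_\varepsilon$ dictates the behaviour, one can construct an auxiliary barrier adapted to the explicit form of $F_\varepsilon$ to absorb the indefinite zero-order term; alternatively, one argues by contradiction, using that a negative interior minimum of $w$ would produce a competitor contradicting the minimality of $u_\varepsilon$ (exploiting that on $\{w<0\}$ one can replace $u_\varepsilon$ by its $x_1$-running-infimum envelope, which has lower kinetic part of $\hat F_\varepsilon$ because of the evenness of $\hat F_\varepsilon$ in $p_1$ and the fact that $\partial_{p_1}\hat F_\varepsilon(0,\cdot)=0$). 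The smallness of $\varepsilon$ is crucial for both routes, matching the qualifier in the statement of the lemma.
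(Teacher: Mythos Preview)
Your overall architecture --- differentiate the Euler--Lagrange equation, analyze the boundary, then test the linearized equation against $w_-$ --- matches the paper's. However, the decisive step, your Step~3, has a genuine gap: the proposed ways around the indefinite zero--order term do not work, and you are missing the key idea the paper uses.

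First, the localization is mis-aimed. The set $K^\varepsilon=\{|p_1|\le \varepsilon^{-4\theta},\ |p_2|\le 1+\varepsilon-\varepsilon^{4\theta}\}$ is \emph{large}, not thin; it is where $\hat F_\varepsilon=F_\varepsilon$, and inside it $\lamin(D^2F_\varepsilon(p))$ degenerates as $p_1\to 0$ with no $\varepsilon$-independent bound. Lemma~\ref{lem:first_extension_of_F_epsilon}\eqref{eq:uniform_lambda_min_for_first_extension} only helps \emph{outside} this set, which is irrelevant here. The ``running-infimum envelope'' competitor is also not viable: such an envelope is in general not in $H^1$, the boundary data at $x_1=T$ is lost, and the kinetic part does not simply decrease (it is $F_\varepsilon(\nabla u)$, not just $(\partial_{x_1}u)^2$).

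The idea you are missing is \emph{stability of the minimizer}. Since $u_\varepsilon$ minimizes $\cA_\varepsilon$, the second variation is nonnegative:
\[
\int_\Omega A(x)\nabla\phi\cdot\nabla\phi - c(x)\,\phi^2\,dx\;\ge\;0\qquad\forall\,\phi\in H^1_0(\Omega).
\]
Your testing identity says that $w_-$ attains this infimum with value $0$. Hence, if $w_-\not\equiv 0$, it is a first eigenfunction of $\cL\phi=-\divv(A\nabla\phi)-c\phi$, and the first eigenfunction is (up to sign) strictly positive a.e.\ on $\Omega$. To reach a contradiction one therefore needs to know that $w_-$ vanishes on a set of positive measure, i.e.\ that $\partial_{x_1}u_\varepsilon>0$ \emph{strictly} somewhere. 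Your Step~2 only gives $w\ge 0$ on the vertical edges; the paper upgrades this to strict positivity $\partial_{x_1}u_\varepsilon(0,x_2)>0$ for $x_2\neq 0$ by applying Hopf's lemma to the function $u_\varepsilon+U_\varepsilon\ge 0$, using that $-U_\varepsilon$ is a \emph{strict} subsolution (Lemma~\ref{lem:viscosity_sub_supersolution}). Finally, the paper establishes $w_-\in H^1_0(\Omega)$ rigorously through a difference-quotient argument (note that $u_\varepsilon\in\cC^{2,\alpha}$ does not by itself give $\nabla\partial_{x_1}u_\varepsilon\in L^2$, nor does your boundary analysis settle the trace at the corners); you gloss over this point.
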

\begin{proof} Consider the function $w:=u_\varepsilon+U_\varepsilon$, which is non-negative by Corollary \ref{cor:maximum_principle}. As before $U_\varepsilon$ is considered here as a $x_1$-independent function defined on $\overline{\Omega}$. For every $x\in\Omega$, $x_2\neq 0$ Lemma \ref{lem:viscosity_sub_supersolution} implies
\[
\divv (A(x)\nabla w(x))+c(x)w(x)<0,
\]
where
\[
A(x):=\int_0^1D^2\hat{F}_\varepsilon(-\nabla U_\varepsilon+s\nabla w)\:ds,\quad c(x):=\int_0^1\partial_z^2V(x,-U_\varepsilon+sw)\:ds.
\]
Splitting the $0$th order term into $c=c^+-c^-$ with $c^+,c^-\geq 0$ and neglecting $c^+w\geq 0$ the Hopf maximum principle implies that $\partial_{x_1}u_\varepsilon(0,x_2)=\partial_{x_1}w(0,x_2)>0$ for $x_2\in(-L,L)$, $x_2\neq 0$. Similarly one sees that $\partial_{x_1}u_\varepsilon(T,x_2)>0$ for $x_2\in(-L,L)$, $x_2\neq 0$.

Next we claim that $(\partial_{x_1}u_\varepsilon)^-\in H^1_0(\Omega)$. Indeed, the difference quotients 
\[
v_h(x):=\frac{u_\varepsilon(x_1+h,x_2)-u_\varepsilon(x)}{h}
\]
satisfy $v_h^-\in H^1_0(\Omega_h)$, $\Omega_h:=(0,T-h)\times(-L,L)$ by Corollary \ref{cor:maximum_principle}. We therefore can use $v_h^-$ as a test function for the equation
\begin{align}\label{eq:d1EL}
    \divv(B(x)\nabla v_h(x))+d(x)v_h(x)=0,\quad x\in\Omega_h,
\end{align}
where 
\begin{align*}
B(x)&:=\int_0^1D^2\hat{F}_\varepsilon(\nabla u_\varepsilon(x)+sh\nabla v_h(x))\:ds,\\ d(x)&:=\int_0^1\partial_z^2V(x,u_\varepsilon(x)+shv_h(x))\:ds.
\end{align*}
Note that the potential $V$ is autonomous with respect to $x_1$ by assumption \eqref{eq:V1prime}. Testing now \eqref{eq:d1EL} with $v_h^-$ we obtain
\begin{align*}
   0&= \int_{\set{v_h< 0}} B(x) \nabla v_h \cdot\nabla v_h^- - d(x) v_hv_h^- \, dx \\
   &= \int_{\Omega_h} B(x) \nabla v_h^- \cdot\nabla v_h^- - d(x)(v_h^-)^2 \, dx.
\end{align*}
Hence the uniform ellipticity of $D^2\hat{F}_\varepsilon$ (for fixed $\varepsilon$), the boundedness of $\partial_z^2 V$  and $\partial_{x_1}u_\varepsilon\in L^2(\Omega)$ imply a bound on $\norm{\nabla v_h^-}_{L^2(\Omega_h)}$, or $\norm{\nabla v_h^-}_{L^2(\Omega)}$ when extending $\nabla v_h^-$ by $0$ outside $\Omega_h$. Now by the regularity of $u_\varepsilon$ in $\Omega$ there holds $\nabla v_h^-(x)\rightarrow \nabla (\partial_{x_1}u_\varepsilon)^-(x)$ for all $x\in \Omega$ as $h\rightarrow 0$, which together with the $L^2$ bound for $\nabla v_h$ implies $\nabla (\partial_{x_1}u_\varepsilon)^-\in L^2(\Omega)$. Therefore $(\partial_{x_1}u_\varepsilon)^-\in H^1_0(\Omega)$.

Using now $\psi:=(\partial_{x_1}u_\varepsilon)^-$ as a test function for the differentiated equation
\[
\divv (C(x)\nabla\partial_{x_1}u_\varepsilon(x))+e(x)\partial_{x_1}u_\varepsilon(x)=0,\quad x\in \Omega,
\]
where this time $C(x):=D^2\hat{F}_\varepsilon(\nabla u_\varepsilon(x))$, $e(x)=\partial_z^2V(x,u_\varepsilon(x))$, we deduce
\[
0=\int_\Omega C(x)\nabla\psi\cdot\nabla\psi-e(x)\psi^2\:dx.
\]

On the other hand, since $u_\varepsilon$ is a minimizer of $\mathcal A_\varepsilon$ we also have for any $\phi\in H^1_0(\Omega)$, that
\begin{align*}
    0\leq \frac{d^2}{ds^2}|_{s=0}\mathcal{A}_\varepsilon (u_\varepsilon+s\phi)=\int_{\Omega} C(x) \nabla \phi \cdot\nabla \phi -e(x)\phi^2 \, dx.
\end{align*}
Hence if we assume $\psi\neq 0$, then the first eigenvalue of the self-adjoint operator $\mathcal L\phi:=-\divv( C(x) \nabla \phi)-e(x)\phi$ is $0$ and $\psi$ is in the associated eigenspace. However, the eigenspace associated with the first eigenvalue is one-dimensional and spanned by a function which is positive a.e. on $\Omega$. This contradicts the fact that $\psi$ is vanishing in a neighborhood of $(0,L/2)$, due to $\partial_{x_1}u_\varepsilon(0,L/2)>0$ and the continuity of $\partial_{x_1}u_\varepsilon$ at that point. In consequence $(\partial_{x_1}u_\varepsilon)^-=\psi=0$.
\end{proof}

\begin{lemma}\label{lem:energy_conservation_for_approximation}
Under the additional assumption \eqref{eq:V1prime} the quantity
\begin{equation}\label{eq:first_integral}
\int_{-L}^L\partial_{p_1}\hat{F}_\varepsilon(\nabla u_\varepsilon(x))\partial_{x_1}u_\varepsilon(x)-\hat{F}_\varepsilon(\nabla u_\varepsilon(x))+V(x_2,u_\varepsilon(x))\:dx_2
\end{equation}
is independent of $x_1\in (0,T)$ for any solution $u_\varepsilon$ of \eqref{eq:Dmin}.
\end{lemma}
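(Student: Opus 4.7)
The plan is to carry out a rigorous version of the classical Noether-type computation which shows that a Lagrangian with no explicit dependence on the ``time'' variable (here $x_1$) admits the Hamiltonian as a first integral. The integrand in \eqref{eq:first_integral} is exactly the Legendre transform (with respect to $\partial_{x_1}u_\varepsilon$) of the Lagrangian density $\hat F_\varepsilon(\nabla u)-V(x_2,u)$, and the whole point of assumption \eqref{eq:V1prime} is that it removes any explicit $\partial_{x_1}V$ term which would otherwise prevent conservation.

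First I would fix $x_1\in(0,T)$ and note that the slice $\{x_1\}\times[-L,L]$ has strictly positive distance from the corner set $\set{(0,\pm L),(T,\pm L),(0,0),(T,0)}$, so by Lemma \ref{lem:exist} $u_\varepsilon$ is of class $\cC^{2,\alpha}$ on a neighbourhood (within $\overline\Omega$) of that slice. This classical regularity up to $x_2=\pm L$ justifies differentiation of $E_\varepsilon(x_1)$ under the integral sign as well as the pointwise use of the Euler-Lagrange equation \eqref{eq:euler_lagrange_equ_epsilon}.

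Then I would compute directly. Writing $H_\varepsilon:=\partial_{p_1}\hat F_\varepsilon(\nabla u_\varepsilon)\partial_{x_1}u_\varepsilon-\hat F_\varepsilon(\nabla u_\varepsilon)+V(x_2,u_\varepsilon)$, the chain rule yields
\begin{equation*}
\partial_{x_1}H_\varepsilon=\bigl(\partial_{x_1}[\partial_{p_1}\hat F_\varepsilon(\nabla u_\varepsilon)]+\partial_zV(x_2,u_\varepsilon)\bigr)\partial_{x_1}u_\varepsilon-\partial_{p_2}\hat F_\varepsilon(\nabla u_\varepsilon)\,\partial_{x_1}\partial_{x_2}u_\varepsilon,
\end{equation*}
since the $\partial_{p_1}\hat F_\varepsilon(\nabla u_\varepsilon)\partial_{x_1}^2u_\varepsilon$ contributions from the first two summands cancel. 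Because $V$ is autonomous in $x_1$, equation \eqref{eq:euler_lagrange_equ_epsilon} rewrites as $\partial_{x_1}[\partial_{p_1}\hat F_\varepsilon(\nabla u_\varepsilon)]+\partial_zV(x_2,u_\varepsilon)=-\partial_{x_2}[\partial_{p_2}\hat F_\varepsilon(\nabla u_\varepsilon)]$, and substituting gives the clean divergence form
\begin{equation*}
\partial_{x_1}H_\varepsilon=-\partial_{x_2}\bigl[\partial_{p_2}\hat F_\varepsilon(\nabla u_\varepsilon)\,\partial_{x_1}u_\varepsilon\bigr].
\end{equation*}

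Finally I would integrate over $x_2\in[-L,L]$ and invoke the boundary condition. Since $u_\varepsilon\in\cC^{2,\alpha}$ up to $x_2=\pm L$ on the relevant neighbourhood and $u_\varepsilon(x_1,\pm L)=0$ for all $x_1\in(0,T)$, the trace $\partial_{x_1}u_\varepsilon(x_1,\pm L)$ is classical and equals $0$. Hence
\begin{equation*}
\tfrac{d}{dx_1}E_\varepsilon(x_1)=-\bigl[\partial_{p_2}\hat F_\varepsilon(\nabla u_\varepsilon)\,\partial_{x_1}u_\varepsilon\bigr]_{x_2=-L}^{x_2=L}=0,
\end{equation*}
so $E_\varepsilon$ is constant on $(0,T)$. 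There is no serious obstacle: the only delicate point is to have enough regularity to differentiate and to integrate by parts up to $x_2=\pm L$, and this is exactly what the $\cC^{2,\alpha}$-conclusion of Lemma \ref{lem:exist} provides away from the four corner points; the autonomy hypothesis \eqref{eq:V1prime} is what makes the surviving terms combine into an exact $x_2$-divergence.
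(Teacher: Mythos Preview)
Your proof is correct and follows essentially the same approach as the paper: both derive the pointwise identity $\partial_{x_1}H_\varepsilon=-\partial_{x_2}\bigl[\partial_{p_2}\hat F_\varepsilon(\nabla u_\varepsilon)\,\partial_{x_1}u_\varepsilon\bigr]$ from the Euler--Lagrange equation and the autonomy assumption \eqref{eq:V1prime}, then integrate in $x_2$ and use $\partial_{x_1}u_\varepsilon(x_1,\pm L)=0$. Your version is slightly more explicit in invoking the $\cC^{2,\alpha}$-regularity of Lemma~\ref{lem:exist} up to $x_2=\pm L$ (away from the corners) to justify differentiation under the integral and the boundary evaluation, which the paper leaves implicit.
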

\begin{proof} Let us denote the function in the integral \eqref{eq:first_integral} by $\hat{H}_\varepsilon(x)$.
Using the Euler Lagrange equation \eqref{eq:euler_lagrange_equ_epsilon} and \eqref{eq:V1prime} one easily checks that inside $\Omega$ there holds
\begin{align*}
    \partial_{x_1}&\hat{H}_\varepsilon+\partial_{x_2}\left(\partial_{p_2}\hat{F}_\varepsilon(\nabla u_\varepsilon)\partial_{x_1}u_\varepsilon\right)\\
    &=\divv\left(\nabla_p \hat{F}_\varepsilon(\nabla u_\varepsilon)\partial_{x_1}u_\varepsilon\right)+\partial_zV(\cdot,u_\varepsilon)\partial_{x_1}u_\varepsilon-\partial_{x_1}\left(\hat{F}_\varepsilon(\nabla u_\varepsilon)\right)=0.
\end{align*}
Integration and $\partial_{x_1}u_\varepsilon=0$ on $(0,T)\times\set{\pm L}$ imply the stated conservation. 
\end{proof}

\begin{lemma}\label{lem:good_set_property_epsilon}
Suppose \eqref{eq:V1prime} and $\partial^2_zV(x_2,z)\geq 0$ for $\abs{x_2}\leq L$, $\abs{z}\leq L+1$, then there holds the following one-sided maximum principle for $\partial_{x_1}u_\varepsilon$:
\[
\inf_{\Omega'}\partial_{x_1}u_\varepsilon=\inf_{\partial\Omega'}\partial_{x_1}u_\varepsilon
\] for all $\Omega'\subset\subset \Omega$.
\end{lemma}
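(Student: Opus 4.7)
The plan is to show this as a weak minimum principle for $w:=\partial_{x_1}u_\varepsilon$, which should satisfy a linear second-order elliptic equation with a nonnegative zeroth-order coefficient once the Euler--Lagrange equation \eqref{eq:euler_lagrange_equ_epsilon} is differentiated in the autonomous variable $x_1$. Heuristically, if $\partial_z^2V\geq 0$ and $w\geq 0$, then $\divv(D^2\hat F_\varepsilon(\nabla u_\varepsilon)\nabla w)=-\partial_z^2V(x_2,u_\varepsilon)\,w\leq 0$, so $w$ is a supersolution of a uniformly elliptic operator (uniform ellipticity coming from Lemma \ref{lem:first_extension_of_F_epsilon} \eqref{eq:uniform_elliptic_extension}), and the weak minimum principle gives the claim on $\Omega'\subset\subset\Omega$.

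To make the differentiation rigorous I will reuse the difference quotient argument from the proof of Lemma \ref{lem:partial_1_u_positive}. Setting $v_h(x):=h^{-1}(u_\varepsilon(x_1+h,x_2)-u_\varepsilon(x))$ on $\Omega_h:=(0,T-h)\times(-L,L)$, the autonomy condition \eqref{eq:V1prime} yields the linear equation
\begin{equation*}
\divv(B_h(x)\nabla v_h(x))+d_h(x)\,v_h(x)=0\quad\text{in }\Omega_h,
\end{equation*}
with
\begin{equation*}
B_h(x)=\int_0^1 D^2\hat F_\varepsilon(\nabla u_\varepsilon+sh\nabla v_h)\,ds,\qquad d_h(x)=\int_0^1\partial_z^2V(x_2,u_\varepsilon+shv_h)\,ds.
\end{equation*}
By Lemma \ref{lem:partial_1_u_positive} we have $v_h\geq 0$, and by the standing assumption $d_h\geq 0$; therefore $\divv(B_h\nabla v_h)\leq 0$, so $v_h$ is a supersolution of a uniformly elliptic linear operator with smooth coefficients.

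The final step is a standard weak minimum principle for supersolutions of linear uniformly elliptic equations in non-divergence-or-divergence form, applied on $\Omega'\subset\subset\Omega_h$ (valid for $h$ small enough since $\overline{\Omega'}\subset\Omega$). It gives $\inf_{\Omega'}v_h\geq \inf_{\partial\Omega'}v_h$. Passing to the limit $h\to 0^+$ using the interior $\cC^{2,\alpha}$ regularity of $u_\varepsilon$ (Lemma \ref{lem:exist}) to get uniform convergence $v_h\to \partial_{x_1}u_\varepsilon$ on $\overline{\Omega'}$, we obtain $\inf_{\Omega'}\partial_{x_1}u_\varepsilon\geq \inf_{\partial\Omega'}\partial_{x_1}u_\varepsilon$. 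The reverse inequality is immediate from continuity, so the two infima coincide.

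I do not anticipate serious obstacles: the only subtlety is that the natural operation, ``differentiate the equation in $x_1$'', requires one more derivative of $u_\varepsilon$ than Lemma \ref{lem:exist} supplies directly, but this is completely bypassed by the difference-quotient formulation, which is already the working device in Lemma \ref{lem:partial_1_u_positive}. Everything else (positivity of $v_h$, sign of $d_h$, uniform ellipticity of $B_h$, weak minimum principle) is available once \eqref{eq:V1prime} and $\partial_z^2V\geq 0$ are assumed.
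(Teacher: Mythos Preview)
Your proposal is correct and follows essentially the same route as the paper: the paper differentiates the Euler--Lagrange equation directly (which is justified weakly since $u_\varepsilon\in\cC^{2,\alpha}$ in the interior) to obtain $-\divv(D^2\hat F_\varepsilon(\nabla u_\varepsilon)\nabla w_\varepsilon)=\partial_z^2V(x_2,u_\varepsilon)w_\varepsilon\geq 0$ and then invokes the minimum principle, whereas you arrive at the same supersolution inequality through the difference-quotient equation \eqref{eq:d1EL} and a limit $h\to 0$. One small omission: to conclude $d_h\geq 0$ you need Corollary~\ref{cor:maximum_principle} to ensure that the arguments $u_\varepsilon+shv_h=(1-s)u_\varepsilon(x)+su_\varepsilon(x_1+h,x_2)$ stay in $[-L-1,L+1]$, where the convexity hypothesis on $V$ is assumed.
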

\begin{proof}
Differentiation, the imposed convexity of $V$ w.r.t. $z$, Corollary \ref{cor:maximum_principle} and Lemma \ref{lem:partial_1_u_positive} show that $w_\varepsilon:=\partial_{x_1}u_\varepsilon$ indeed satisfies
\[
-\divv\left(\nabla^2\hat{F}_\varepsilon(\nabla u_\varepsilon)\nabla w_\varepsilon\right)=\partial_z^2V(x_2,u_\varepsilon)w_\varepsilon\geq 0.
\]
\end{proof}

\section{$\Gamma$-convergence and Young measure representation}\label{sec_gamma_convergence}
In this Section we will show that the found regular minimizers converge to a minimizer of the corresponding unperturbed variational problem. 

\begin{proposition}\label{prop:gamma_convergence_for_minimizers}
Let $u_\varepsilon$, $\varepsilon>0$ be a solution of \eqref{eq:Dmin} then there ex. a solution $u$ of the variational problem  \eqref{eq:var_prop_sec_3} resp., such that $u_\varepsilon\rightharpoonup u$ in $H^1(\Omega)$ along a subsequence.
\end{proposition}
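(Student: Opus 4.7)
The plan is to derive uniform $H^1$-bounds on $u_\varepsilon$, extract a weakly convergent subsequence, and identify the limit as a minimizer of $\cA$ on $X$ by a $\Gamma$-convergence argument in the weak $H^1$-topology. From Lemma \ref{lem:uniform_energy_bound} and the boundedness of $V$ in \eqref{eq:condition_V1} there holds $\int_\Omega \hat F_\varepsilon(\nabla u_\varepsilon)\,dx \leq C$ uniformly in $\varepsilon$. I would split $\Omega=A_\varepsilon\cup B_\varepsilon$ with $A_\varepsilon:=\{|\partial_{x_2}u_\varepsilon|\leq 1+\varepsilon-\varepsilon^{4\theta}\}$. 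On $A_\varepsilon$ the lower bound $F_\varepsilon(p)\geq p_1^2/(2(1+\varepsilon)^2)$ together with $|\partial_{x_2}u_\varepsilon|\leq 2$ gives uniform $L^2$-control of $\nabla u_\varepsilon$ restricted to $A_\varepsilon$. On $B_\varepsilon$ a Taylor expansion of $\hat F_\varepsilon$ around $(0,\pm(1+\varepsilon-\varepsilon^{4\theta}))$ using Lemma \ref{lem:first_extension_of_F_epsilon} \eqref{eq:uniform_lambda_min_for_first_extension} together with the large value $\hat F_\varepsilon(0,\pm(1+\varepsilon-\varepsilon^{4\theta}))\approx \varepsilon^{-3\theta}$ yields simultaneously $|B_\varepsilon|\leq C\varepsilon^{3\theta}\to 0$ and uniform $L^2$-control of $\partial_{x_1}u_\varepsilon$ and $(|\partial_{x_2}u_\varepsilon|-1)^+$ on $B_\varepsilon$. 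Combined with Corollary \ref{cor:maximum_principle}, this gives $\|u_\varepsilon\|_{H^1(\Omega)}\leq C$, so along a subsequence $u_{\varepsilon_k}\rightharpoonup u$ in $H^1(\Omega)$ with strong $L^2$-convergence, and trace convergence $\pm U_{\varepsilon_k}\to \pm(L-|x_2|)$ in $H^{1/2}$ places $u\in X$.

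For the $\Gamma$-liminf inequality I would first decompose $\partial_{x_2}u_{\varepsilon_k}=\mathbf{1}_{A_{\varepsilon_k}}\partial_{x_2}u_{\varepsilon_k}+\mathbf{1}_{B_{\varepsilon_k}}\partial_{x_2}u_{\varepsilon_k}$; the first summand is bounded by $1+\varepsilon_k$ in $L^\infty$ while the second tends to $0$ in $L^1$ since $|B_{\varepsilon_k}|\to 0$ with $L^2$-bounded mass, so the weak limit satisfies $|\partial_{x_2}u|\leq 1$ almost everywhere. Next I would verify the pointwise $\Gamma$-liminf estimate $\liminf_\varepsilon \hat F_\varepsilon(p_\varepsilon)\geq F(p)$ for every $p_\varepsilon\to p$ by case analysis on $|p_2|$ versus $1$, and combine it with the convexity of each $\hat F_\varepsilon$ and an Ioffe/Young-measure lower-semicontinuity argument, in which the a priori bound $|\partial_{x_2}u|\leq 1$ ensures that the relevant gradient Young measure is supported where $\hat F_\varepsilon\to F$ pointwise. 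This yields
\[
\int_\Omega F(\nabla u)\,dx \leq \liminf_k \int_\Omega \hat F_\varepsilon(\nabla u_{\varepsilon_k})\,dx,
\]
while the potential part converges by dominated convergence thanks to the Lipschitz continuity of $V$ in $z$ from \eqref{eq:condition_V1} and the strong $L^2$-convergence $u_{\varepsilon_k}\to u$.

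For the recovery sequence, given any $v\in X$ with $\cA(v)<\infty$, I would set
\[
v_\varepsilon(x):=v(x)+\Bigl(\tfrac{2x_1}{T}-1\Bigr)\tfrac{\varepsilon^\beta}{2L}(L^2-x_2^2),
\]
which belongs to $X_\varepsilon$ and satisfies $\|\nabla v_\varepsilon-\nabla v\|_{L^\infty}=O(\varepsilon^\beta)$. Since $\beta\in(1,3-\theta)$ and $\theta\in(1,2)$, the extra $\varepsilon^\theta$ in the numerator of $F_\varepsilon$ and the shift $(1+\varepsilon)^2-(\partial_{x_2}v_\varepsilon)^2\sim 1-(\partial_{x_2}v)^2$ in the denominator give $\hat F_\varepsilon(\nabla v_\varepsilon)\to F(\nabla v)$ pointwise a.e.\ with an integrable dominating function, and Lebesgue's theorem yields $\cA_\varepsilon(v_\varepsilon)\to \cA(v)$. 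Chaining minimality of $u_{\varepsilon_k}$ with the liminf and the recovery,
\[
\cA(u)\leq \liminf_k \cA_\varepsilon(u_{\varepsilon_k})\leq \liminf_k \cA_\varepsilon(v_{\varepsilon_k})=\cA(v)
\]
for every $v\in X$, so $u$ minimizes $\cA$ on $X$. The main obstacle I expect is the $\Gamma$-liminf for the kinetic term: because $F(0,p_2)=0$ for all $p_2\in\R$ while $\lim_\varepsilon\hat F_\varepsilon(0,p_2)=+\infty$ whenever $|p_2|>1$, a naive Fatou argument fails, and the pointwise bound $|\partial_{x_2}u|\leq 1$---itself relying on the singular Taylor estimate on $B_\varepsilon$---is essential to localize the gradient Young measure to $\{|p_2|\leq 1\}$, where $\hat F_\varepsilon\to F$ and Jensen's inequality can be applied.
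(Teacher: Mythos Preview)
Your overall $\Gamma$-convergence strategy matches the paper's (Proposition~\ref{prop:extended_gamma_convergence}), and your splitting argument for the $H^1$-bound is correct though unnecessarily elaborate: the paper simply observes that Lemma~\ref{lem:first_extension_of_F_epsilon}\,\eqref{eq:uniform_lambda_min_for_first_extension} yields a \emph{global} quadratic lower bound $\hat F_\varepsilon(p)\geq c_1|p|^2-c_2$ with $c_1,c_2$ independent of $\varepsilon$, from which $\|\nabla u_\varepsilon\|_{L^2}\leq C$ follows in one line. Also, in the liminf step you write that the bound $|\partial_{x_2}u|\leq 1$ ``localizes the gradient Young measure to $\{|p_2|\leq 1\}$''; this reverses the logic---the barycenter lying in $[-1,1]$ does \emph{not} constrain $\supp\nu_x$. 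What you actually need (and what the paper proves in Lemma~\ref{lem:genconv}(i)) is that the blow-up of $\hat F_\varepsilon$ for $|p_2|>1$ forces $\supp\nu_x\subset\{|p_2|\leq 1\}$ directly; the barycenter bound is then a consequence, not a cause.

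The genuine gap is your recovery sequence. For a general $v\in X$ with $\cA(v)<\infty$ one only has $\partial_{x_1}v\in L^2$, not $L^\infty$, so your competitor $v_\varepsilon=v+\hat U_\varepsilon-\hat U_0$ has $|\partial_{x_1}v_\varepsilon|$ unbounded and $\nabla v_\varepsilon\notin K^\varepsilon$ on a set of positive measure. On that set $\hat F_\varepsilon(\nabla v_\varepsilon)$ is \emph{not} given by the explicit formula $F_\varepsilon$ but by the abstract extension of Lemma~\ref{lem:first_extension_of_F_epsilon}, for which no $\varepsilon$-uniform upper bound is available; your dominated-convergence claim therefore does not go through. (Even where $\hat F_\varepsilon=F_\varepsilon$, the dominating function requires the refined bound $(1+\varepsilon)^2-(\partial_{x_2}v_\varepsilon)^2\geq(1-(\partial_{x_2}v)^2)+c\varepsilon$ rather than the heuristic $\sim 1-(\partial_{x_2}v)^2$, since the latter vanishes on $\{|\partial_{x_2}v|=1\}$.) The paper's recovery construction (Lemma~\ref{lem:recovery_lemma}) avoids this by first mollifying in the $x_1$-direction at scale $\eta=\varepsilon^\theta$, which forces $|\partial_{x_1}\tilde u^{\eta,\delta}|\leq C\varepsilon^{-\theta}\ll\varepsilon^{-4\theta}$ and hence $\nabla v_\varepsilon\in K^\varepsilon$ a.e.; Jensen's inequality for the convex $F_\varepsilon$ then replaces your dominated-convergence step. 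You would need either this mollification or a separate quantitative analysis of the extension $\hat F_\varepsilon$ outside $K^\varepsilon$.
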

The proof of Proposition \ref{prop:gamma_convergence_for_minimizers} is contained in the proof of Proposition \ref{prop:extended_gamma_convergence} below, which by means of the Young measure representation will also characterize where and how strong convergence can fail. 

\subsection{The recovery sequence}\label{sec_recovery_sequence}

Recall that $X$ consists of all $u\in H^1(\Omega)$ which satisfy $\norm{\partial_{x_2}u}_{L^\infty(\Omega)}\leq 1$ and agree in trace sense with 
\begin{equation}\label{eq:definition_of_U_0_hat}
    \hat{U}_0(x):=\left(\frac{2x_1}{T}-1\right)U_0(x_2)
\end{equation}
on $\partial\Omega$, where $U_0(x_2)=L-\abs{x_2}$.

\begin{lemma}\label{lem:recovery_lemma}
For every $u\in X$ with $\cA(u)<\infty$ there exists a sequence $u^\varepsilon\in X_\varepsilon$ with $u^\varepsilon\rightarrow u$ in $H^1(\Omega)$ and 
\[
\limsup_{\varepsilon\rightarrow 0}\cA_\varepsilon(u^\varepsilon)\leq \cA(u).
\]
\end{lemma}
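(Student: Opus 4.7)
The plan is to take $u^\varepsilon := u + w_\varepsilon$, where
\[
w_\varepsilon(x) := \left(\frac{2x_1}{T}-1\right)\frac{\varepsilon^\beta}{2L}(L^2-x_2^2)
\]
is the affine boundary corrector, of $H^1$-norm $O(\varepsilon^\beta)$, that shifts the traces of $u$ at $x_1\in\{0,T\}$ from $\mp U_0$ to $\mp U_\varepsilon$ while preserving the vanishing traces at $x_2=\pm L$. Then $u^\varepsilon\in X_\varepsilon$, and $\beta>1$ gives $u^\varepsilon\to u$ in $H^1(\Omega)$. Lipschitz continuity of $V$ in $z$ via \eqref{eq:condition_V1} handles the potential term, reducing the task to $\limsup_{\varepsilon\to 0}\int_\Omega\hat F_\varepsilon(\nabla u^\varepsilon)\,dx\leq \int_\Omega F(\nabla u)\,dx$.

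Since $|\partial_{x_2}u|\leq 1$ a.e.\ and $|\partial_{x_2}w_\varepsilon|\leq\varepsilon^\beta$, one has $|\partial_{x_2}u^\varepsilon|\leq 1+\varepsilon^\beta<1+\varepsilon-\varepsilon^{4\theta}$ for small $\varepsilon$. I would split $\Omega=G_\varepsilon\cup B_\varepsilon$ according to whether $|\partial_{x_1}u^\varepsilon|\leq \varepsilon^{-4\theta}$ or not. On $G_\varepsilon$, $\nabla u^\varepsilon\in K^\varepsilon$ and therefore $\hat F_\varepsilon(\nabla u^\varepsilon)=F_\varepsilon(\nabla u^\varepsilon)$. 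Combining the weighted Young inequality $(\partial_{x_1}u^\varepsilon)^2\leq (1+\varepsilon^\beta)(\partial_{x_1}u)^2 + C\varepsilon^{\beta}$ with the elementary bound $(1+\varepsilon)^2-(\partial_{x_2}u^\varepsilon)^2\geq (1-(\partial_{x_2}u)^2)+c\varepsilon$ yields the pointwise dominating estimate
\[
F_\varepsilon(\nabla u^\varepsilon)\leq (1+\varepsilon^\beta) F(\nabla u)+C(\varepsilon^{\beta-1}+\varepsilon^{\theta-1})\quad\text{a.e.\ on }G_\varepsilon,
\]
with a.e.\ limit $F(\nabla u)$ (on $\{|\partial_{x_2}u|=1\}$ using that $\partial_{x_1}u=0$ is forced by $\cA(u)<\infty$, so $F_\varepsilon(\nabla u^\varepsilon)=O(\varepsilon^{2\beta-1}+\varepsilon^{\theta-1})\to 0$). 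Dominated convergence then delivers $\limsup_\varepsilon\int_{G_\varepsilon}\hat F_\varepsilon(\nabla u^\varepsilon)\,dx\leq \int_\Omega F(\nabla u)\,dx$.

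The main obstacle is the bad set $B_\varepsilon\subset\{|\partial_{x_1}u|\gtrsim\varepsilon^{-4\theta}\}$, where $\hat F_\varepsilon$ is defined by extension and is only controlled through the quadratic bound $\hat F_\varepsilon(p)\leq \Lambda_\varepsilon(1+|p|^2)$ from \eqref{eq:uniform_elliptic_extension}, with $\Lambda_\varepsilon$ diverging polynomially in $\varepsilon^{-1}$ (the construction relies on $N_\varepsilon\sim\sup_{K^\varepsilon}F_\varepsilon=O(\varepsilon^{-C\theta})$). Markov's inequality only delivers $|B_\varepsilon|=O(\varepsilon^{8\theta})$ from $\partial_{x_1}u\in L^2$, which need not beat $\Lambda_\varepsilon$. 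To close the gap I would first reduce to $u$ with $\|\nabla u\|_{L^\infty(\Omega)}<\infty$ by density: picking a smooth reference $\phi\in X$ with $|\partial_{x_2}\phi|\leq 1$, $\cA(\phi)<\infty$ and bounded gradient (for instance $\phi(x)=\cos(\pi x_1/T)(|x_2|-L)$, a variant of the function exploited in Lemma \ref{lem:uniform_energy_bound}), write $u=\phi+v$ with $v\in H^1_0(\Omega)$, and approximate $v$ by smooth compactly supported $v_n$ via extension by zero and mollification. After a convex combination of $\phi+v_n$ with $\phi$ to enforce $|\partial_{x_2}u_n|\leq 1$, convexity of $F$ and Jensen applied to the convolution together with $H^1$-lower semicontinuity give $\cA(u_n)\to \cA(u)$, and for each such $u_n$ the set $B_\varepsilon$ is empty for small $\varepsilon$ so the direct construction closes; a diagonal extraction $\varepsilon_n\downarrow 0$ then yields the required recovery sequence.
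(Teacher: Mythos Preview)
Your reduction to bounded-gradient competitors has a genuine gap. Write $u_n = t_n(\phi+v_n)+(1-t_n)\phi=\phi+t_n v_n$ for the convex combination you propose. Since $v_n\in C_c^\infty(\Omega)$, on $\supp v_n\subset[a_n,T-a_n]\times(-L,L)$ one has $|\partial_{x_2}\phi|=|\cos(\pi x_1/T)|\le \cos(\pi a_n/T)=1-O(a_n^2)$, whereas $\|\partial_{x_2}v_n\|_\infty$ is at best bounded by $2$ (from $|\partial_{x_2}u|,|\partial_{x_2}\phi|\le 1$). Enforcing $|\partial_{x_2}\phi+t_n\partial_{x_2}v_n|\le 1$ therefore forces $t_n=O(a_n^2)\to 0$, so $u_n\to\phi$, not $u_n\to u$. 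A convex combination toward any reference $\phi\in X$ runs into the same wall: the constraint $|\partial_{x_2}\phi|\le 1$ is saturated at the temporal boundary (it must be, since $\partial_{x_2}u(0,\cdot)=\sign$), and the margin $1-|\partial_{x_2}\phi|$ vanishes there faster than the freedom you need.

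A second issue is your claim that ``convexity of $F$ and Jensen applied to the convolution'' yield $\cA(u_n)\to\cA(u)$. Jensen would control $\int F(\varphi_n*\nabla u)$ by $\int \varphi_n*F(\nabla u)$, but your $u_n$ is not a mollification of $u$ --- you mollify $v=u-\phi$ and then add $\phi$ back, so $F(\nabla u_n)=F(\nabla\phi+\varphi_n*\nabla v)$ is not amenable to Jensen. Lower semicontinuity gives only the opposite inequality.

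The paper avoids both difficulties by constructing, for each $\varepsilon$, a function with $\nabla u^\varepsilon\in K^\varepsilon$ from the outset, so that $B_\varepsilon=\emptyset$ and no density step is needed. The mechanism is to mollify in $x_1$ only (which automatically preserves $|\partial_{x_2}|\le 1$), after first squeezing $u$ into $(\delta,T-\delta)$ and inserting explicit $\cos$-type profiles on $(0,\delta)$ and $(T-\delta,T)$; a symmetric extension past $x_1\in\{0,T\}$ then makes the $x_1$-convolution preserve the boundary data exactly. Jensen is applied to the convex function $p\mapsto p_1^2/(2(1+\varepsilon-p_2^2))$ and the $x_1$-convolution, which is legitimate precisely because the mollification acts linearly on $\nabla u$.
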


% The proof is based on the following in between step.
% \begin{lemma}
% For any $u\in X$ with $\cA(u)<\infty$ there exists a sequence $(u^{\delta})_{\delta>0}$ contained in $X\cap W^{1,\infty}(\Omega)$, with $u^\delta \rightarrow u$ in $H^1(\Omega)$ and 
% \[
% \limsup_{\delta\rightarrow 0}\cA(u^\delta)\leq \cA(u).
% \]
% \end{lemma}
\begin{proof}
We first define a $H^1(\Omega)$ function $\tilde{u}^\delta$ by setting
\[
\tilde{u}^\delta(x):=\begin{cases}
-\cos(x_1)U_0(x_2),&x_1\in(0,\delta),\\
\cos(\delta)u\left(\frac{T}{T-2\delta}(x_1-\delta),x_2\right),&x_1\in(\delta,T-\delta)\\
\cos(T-x_1)U_0(x_2),&x_1\in (T-\delta,T).
\end{cases}
\]
It is easy to see that $\tilde{u}^\delta$ actually belongs to $X$ and that $\tilde{u}^\delta\rightarrow u$ in $H^1(\Omega)$ as $\delta \rightarrow 0$. 

% Moreover, direct computation and \eqref{eq:V1}, \eqref{eq:V2} yield 
% \[
% \int_{\set{x\in \Omega:x_1\notin (\delta,T-\delta)}}F(\nabla \tilde{u}^\delta)-V(x_2,\tilde{u}^\delta)\:dx=O(\delta)
% \]
% as $\delta\rightarrow 0$, while transformation in $x_1$ onto $(0,T)$ and dominated convergence, since $F(\nabla u)\in L^1(\Omega)$, show
% \[
% \int_\delta^{T-\delta}\int_{-L}^L F(\nabla \tilde{u}^\delta)-V(x_2,\tilde{u}^\delta)\:dx\rightarrow \cA(u).
% \]

Next we improve the integrability of $\partial_{x_1}\tilde{u}^\delta$ from $L^2$ to $L^\infty$ in order to be able to evaluate the extension $\hat{F}_\varepsilon$ on the set $K^\varepsilon$ where it agrees with the old $F_\varepsilon$.
% In order to do this we fix $\delta>0$ and carefully convolute $\tilde{u}^\delta$ in $x_1$-direction while keeping the boundary data and the bound  $\norm{\partial_{x_2}\tilde{u}^\delta}_{L^\infty(\Omega)}\leq 1$. More precisely, let $\varphi\in \cC^\infty_c(-1,1)$ be a standard one-dimensional symmetric mollifying kernel and for $\sigma \in (0,\delta/2)$ let $\eta_\sigma:[0,T]\rightarrow \R$ be the function
% \[
% \eta_\sigma(x_1):=\begin{cases}
% \sigma x_1,&x_1\in(0,\sigma),\\
% \sigma^2,&x_1\in (\sigma,T-\sigma),\\
% \sigma (T-x_1),&x_1\in (T-\sigma,T).
% \end{cases}
% \]
% Then we set 
% \begin{align*}
% \tilde{u}^{\sigma,\delta}(x)&:=\int_\R \varphi(s)\tilde{u}^\delta\big(x_1-\eta_\sigma(x_1)s,x_2\big)\:ds\\
% &=\frac{1}{\eta_\sigma(x_1)}\int_\R\varphi\left(\frac{x_1-s}{\eta_\sigma(x_1)}\right)\tilde{u}^\delta(s,x_2)\:ds
% \end{align*}
In order to do this fix $\delta>0$ and extend $\tilde{u}^\delta$ onto $(-\delta,T+\delta)\times (-L,L)$ by setting 
\begin{equation}\label{eq:extension_of_tilde_u_in_recovery}
\tilde{u}^\delta(x)=\begin{cases}
-(2-\cos(x_1))U_0(x_2),&x_1\in(-\delta,0),\\
(2-\cos(T-x_1))U_0(x_2),&x_1\in(T,T+\delta).
\end{cases}
\end{equation}
Due to this symmetric extension we can now convolute $\tilde{u}^\delta$ in $x_1$-direction and conserve the boundary data, i.e. for $x\in\Omega$ we set
\[
\tilde{u}^{\eta,\delta}(x):=\big(\varphi_\eta*_1\tilde{u}^\delta\big)(x)=\int_\R\varphi_\eta(x_1-s)\tilde{u}^\delta(s,x_2)\:ds
\]
with a symmetric one-dimensional kernel $\varphi_\eta$ of scale $\eta\in (0,\delta/2)$. 

Observe that $\tilde{u}^{\eta,\delta}$ indeed agrees with $\hat{U}_0$ on $\partial\Omega$ and that for a.e. $x\in\Omega$ there holds
\[
\abs{\partial_{x_1} \tilde{u}^{\eta,\delta}(x)}\leq c\eta^{-1}\norm{\tilde{u}^{\delta}}_{L^\infty},\quad
\abs{\partial_{x_2}\tilde{u}^{\eta,\delta}(x)}\leq 2-\cos(\delta)=1+\frac{\delta^2}{2}+o(\delta^2)
\]
with a constant $c>0$ depending only on the kernel $\varphi_1$. Since $\norm{\tilde{u}^\delta}_{L^\infty}$ is bounded (as a bounded extension of a function in $X$), we will pick $\eta(\varepsilon)=\varepsilon^{\theta}$  and $\delta(\varepsilon)=\varepsilon$ in order to satisfy $\nabla \tilde{u}^{\eta(\varepsilon),\delta(\varepsilon)}\in K^\varepsilon$ a.e. for sufficiently small $\varepsilon$.

It remains to adapt the boundary data. Therefore we finally set 
\[
u^\varepsilon(x):=\tilde{u}^{\varepsilon^{\theta},\varepsilon}(x)+\hat{U}_\varepsilon(x)-\hat{U}_0(x),
\]
where in analogy with \eqref{eq:definition_of_U_0_hat} the function $\hat{U}_\varepsilon:\Omega\rightarrow\R$ is defined as
\begin{equation}\label{eq:sec4_definition_of_U_hat_epsilon}
\hat{U}_\varepsilon(x)=\left(\frac{2x_1}{T}-1\right)U_\varepsilon(x_2)=\hat{U}_0(x)+\left(\frac{2x_1}{T}-1\right)\frac{\varepsilon^\beta}{2L}(L^2-x_2^2).
\end{equation}

By our construction it is clear that $u^\varepsilon\in X_\varepsilon$ and that still $\nabla u^\varepsilon(x)\in K^\varepsilon$ for a.e. $x\in\Omega$ and $\varepsilon>0$ small enough. Hence $\hat{F}_\varepsilon\circ\nabla u^\varepsilon=F_\varepsilon\circ \nabla u^\varepsilon$ almost everywhere.

Next we will show that $u^\varepsilon\rightarrow u$ in $H^1(\Omega)$. Clearly $u^\varepsilon-\tilde{u}^{\varepsilon^{\theta},\varepsilon}\rightarrow 0$ in $H^1(\Omega)$ as $\varepsilon\rightarrow 0$. In order to see that $\tilde{u}^{\varepsilon^{\theta},\varepsilon}\rightarrow u$ let $\tilde{u}\in H^1((-1,T+1)\times(-L,L))$ be the extension of $u$ with the values given in \eqref{eq:extension_of_tilde_u_in_recovery} for $\tilde{u}^\delta$. Then
\begin{align*}
    \norm{\tilde{u}^{\varepsilon^{\theta},\varepsilon}-u}_{H^1(\Omega)}&\leq \norm{\varphi_{\varepsilon^{\theta}}*_1(\tilde{u}^\varepsilon-\tilde{u})}_{H^1(\Omega)} +\norm{\varphi_{\varepsilon^{\theta}}*_1\tilde{u}-\tilde{u}}_{H^1(\Omega)}\\
    &\leq \norm{\tilde{u}^\varepsilon-\tilde{u}}_{H^1(\Omega)}+\norm{\varphi_{\varepsilon^{\theta}}*_1\tilde{u}-\tilde{u}}_{H^1(\Omega)}\rightarrow 0.
\end{align*}

Finally it remains to look at the values of the action functionals. The just shown convergence $u^\varepsilon\rightarrow u$ in $H^1(\Omega)$ and condition \eqref{eq:condition_V1} easily imply
\[
\int_\Omega V(x,\tilde{u}^\varepsilon(x))\:dx\rightarrow \int_\Omega V(x,u(x))\:dx.
\]
Concerning $\hat{F}_\varepsilon$ we first of all observe that
\begin{align*}
    \limsup_{\varepsilon\rightarrow 0}\int_\Omega \hat{F}_\varepsilon(\nabla u^\varepsilon)\:dx&=\limsup_{\varepsilon\rightarrow 0}\int_\Omega F_\varepsilon(\nabla u^\varepsilon)\:dx\\
    &\leq \limsup_{\varepsilon\rightarrow 0}\int_\Omega \frac{\big(\partial_{x_1}\tilde{u}^{\varepsilon^{\theta},\varepsilon}\big)^2}{2\left(1+\varepsilon-\big(\partial_{x_2}\tilde{u}^{\varepsilon^{\theta},\varepsilon}\big)^2\right)}\:dx,
\end{align*}
where we have used that $u^\varepsilon-\tilde{u}^{\varepsilon^{\theta},\varepsilon}=O(\varepsilon^\beta)$ in $W^{1,\infty}(\Omega)$ as $\varepsilon\rightarrow 0$ and $\theta,\beta>1$. The function $\tilde{F}_\varepsilon:\set{p\in\R^2:\abs{p_2}< 1+\varepsilon}\rightarrow \R$,
\[
\tilde{F}_\varepsilon(p)=\frac{p_1^2}{2(1+\varepsilon-p_2^2)}
\]
is also convex. Hence Jensen's inequality implies 
\begin{align*}
    \int_\Omega \tilde{F}_\varepsilon(\nabla \tilde{u}^{\varepsilon^{\theta},\varepsilon})\:dx&=\int_\Omega \tilde{F}_\varepsilon (\varphi_{\varepsilon^{\theta}}*_1\nabla \tilde{u}^\varepsilon)\:dx\leq \int_\Omega \varphi_{\varepsilon^{\theta}}*_1\left(\tilde{F}_\varepsilon\circ \nabla \tilde{u}^\varepsilon\right)\:dx\\
    &=\int_D\varphi_1(s)\tilde{F}_\varepsilon(\nabla\tilde{u}^\varepsilon(x_1+s\varepsilon^\theta,x_2))\:d(s,x),
\end{align*}
where $D:=(-1,1)\times\Omega$. We now split the domain of integration into the following sets
\begin{gather*}
D_1=\set{(s,x)\in D: x_1+s\varepsilon^\theta <0},\quad D_2=\set{(s,x)\in D: 0<x_1+s\varepsilon^\theta <\varepsilon},\\
D_3=\set{(s,x)\in D: \varepsilon<x_1+s\varepsilon^\theta <T-\varepsilon},\\
D_4=\set{(s,x)\in D: T-\varepsilon<x_1+s\varepsilon^\theta <T},\quad D_5=\set{(s,x)\in D: x_1+s\varepsilon^\theta >T}
\end{gather*}
and estimate the corresponding integrals $I_1,\ldots,I_5$. There holds
\begin{align*}
    I_1&=\int_{D_1}\varphi_1(s)\frac{\sin^2(x_1+s\varepsilon^\theta)U_0(x_2)^2}{2(1+\varepsilon-(2-\cos(x_1+s\varepsilon^\theta))^2)}\:d(s,x)\\
    &\leq L^3\norm{\varphi_1}_{L^\infty}\int_{-1}^1\int_0^{\varepsilon^\theta}\frac{1}{1+\frac{\varepsilon-4+4\cos(x_1+s\varepsilon^\theta)}{\sin^2(x_1+s\varepsilon^\theta)}}\:dx_1\:ds\leq 2\varepsilon^\theta L^3\norm{\varphi_1}_{L^\infty}\frac{1}{1+\frac{\varepsilon}{2}}.
\end{align*}
Hence $I_1\rightarrow 0$ as $\varepsilon\rightarrow 0$. A similar reasoning also yields $I_2+I_4+I_5\rightarrow 0$ as $\varepsilon\rightarrow 0$.

For $I_3$ we use the transformation $x_1\mapsto y_1(s,x_1)=\frac{T}{T-2\varepsilon}(x_1+s\varepsilon^\theta-\varepsilon)$ in order to see that
\begin{align*}
    I_3&=\int_{-1}^1\varphi_1(s)\int_{\varepsilon-s\varepsilon^\theta}^{T-\varepsilon-s\varepsilon^\theta}\int_{-L}^L\frac{\left(\frac{T}{T-2\varepsilon}\right)^2\big(\partial_{x_1}u(y_1(s,x_1),x_2)\big)^2}{2\left(\frac{1+\varepsilon}{\cos^2(\varepsilon)}-\big(\partial_{x_2}u(y_1(s,x_1),x_2)\big)^2\right)}\:dx_2\:dx_1\:ds\\
    &\leq \frac{T}{T-2\varepsilon}\int_0^T\int_{-L}^L\tilde{F}_\varepsilon (\nabla u(y_1,x_2))\:dx_2\:dy_1\rightarrow \int_\Omega F(\nabla u)\:dx
\end{align*}
by means of monotone convergence, since $F(\nabla u)\in L^1(\Omega)$ by assumption.

Altogether we therefore have shown 
\[
\limsup_{\varepsilon\rightarrow 0}\int_\Omega \hat{F}_\varepsilon(\nabla u^\varepsilon)-V(x,u^\varepsilon)\:dx\leq \int_\Omega F(\nabla u)-V(x,u)\:dx.
\]
\end{proof}

\subsection{Young measures}\label{sec_young_measures}
Before stating a general convergence result, which contains the weak lower semi-continuity for the $\Gamma$-convergence, we quickly recall the notion of generalized Young measures, see e.g. \cite{kristensen_raita_lecture_notes}.

Let $S$ denote the unit sphere of $\mathbb V=\R^m$ and $\Omega$ be a bounded domain in $\R^l$ with $\partial\Omega$ having $0$ Lebesgue measure. Every weakly converging sequence $U_k\rightharpoonup U$ in $L^q(\Omega,\mathbb V)$ induces a $q$-Young measure $\boldsymbol{\nu}=\left((\nu_x)_{x\in\Omega},\lambda,(\nu_x^\infty)_{x\in\overline\Omega}\right)$. I.e., $(\nu_x)_{x\in\Omega}$ is a Lebesgue measurable family of probability measures on $\mathbb V$ (oscillation measure), $\lambda$ is a positive measure on $\overline{\Omega}$ (concentration measure) and $(\nu_x^\infty)_{x\in\overline{\Omega}}$ is a $\lambda$-measurable family of probability measures on $S$ (concentration angle measure), such that for all $q$-admissible integrands $\Phi$ there holds
\begin{align*}
    \int_\Omega\Phi(x,U_k(x))\:dx\rightarrow \int_\Omega\int_\mathbb V\Phi(x,v)\:d\nu_x(v)\:dx+\int_{\overline\Omega}\int_S\Phi^{q,\infty}(x,v)\:d\nu_x^\infty(v)\:d\lambda(x).
\end{align*}
A continuous function $\Phi:\overline{\Omega}\times \mathbb V\rightarrow \R $ is $q$-admissible provided the $q$-recession function
\[
\Phi^{q,\infty}(x,v):=\lim_{t\rightarrow\infty}\frac{\Phi(x,tv)}{t^q}
\]
exists, is finite and the convergence is locally uniform w.r.t. $(x,v)\in\overline{\Omega}\times (\mathbb V\setminus \{0\})$. 

In terms of the Young measure, the weak limit $U$ of $U_k$ is represented as the barycenter of $\nu_x$, i.e.
\begin{align*}
    U(x)=\int_{\mathbb V}v\:d\nu_x(v) \text{ for a.e. }x\in \Omega.
\end{align*}
Strong convergence $U_k\rightarrow U$ in $L^q(\Omega,\mathbb V)$ can equivalently be characterized by the absence of concentration and oscillation, i.e. $\lambda=0$ and $\nu_x=\delta_{U(x)}$ for a.e. $x\in \Omega$.

\subsection{A general convergence result}\label{sec_general_convergence_result}
While the existence of a recovery sequence has been a construction specific to our particular problem, we now investigate a more general class of variational problems and their approximations. The more general setting in this subsection is the following.

Let $0\in C\subset \R^m$ be open, convex and $f:\overline{C}\rightarrow [0,\infty]$ be a convex lower semi-continuous function. We suppose further that there exist convex functions $f_k:\R^m\rightarrow [0,\infty)$ with 
\begin{equation}\label{eq:sec4_condition_on_fk1}
    f_k(0)=0,\quad f_k(p)\geq c\abs{p}^q-d\text{ for all }p\in \R^m,
\end{equation}
for some constants $c,d>0$ and $q\in(1,\infty)$ independent of $k$, and 
\begin{gather}\label{eq:sec4_condition_on_fk2}
    f_k\rightarrow f\text{ uniformly on any compact subset of }C\text{ as }k\rightarrow\infty,\\\label{eq:sec4_condition_on_fk3}
    f_k(p)\rightarrow +\infty\text{ uniformly on any set with positive distance to } \overline{C}. 
\end{gather}
Observe that these conditions imply $f(0)=0$ and that $f$ is continuous and finite on $C$. 

We then consider the functionals 
\begin{align*}
    \cF_k(U):=\int_\Omega f_k(U(x))\:dx,\quad \cF(U):=\int_\Omega f(U(x))\:dx,
\end{align*}
where $U\in L^q(\Omega;\R^m)$, $\Omega\subset \R^l$ open, bounded with $\partial \Omega$ being a Lebesgue null set.

\begin{lemma}\label{lem:genconv}
Suppose $U_k\rightharpoonup U$ in $L^q(\Omega;\R^m)$ and let  $\boldsymbol{\nu}=\left((\nu_x)_{x\in\Omega},\lambda,(\nu_x^\infty)_{x\in\overline\Omega}\right)$ be the associated $q$-Young measure. Assume that $\sup_k\cF_k(U_k)<\infty$. Then
\begin{itemize}
    %\item[(i)] there holds $U(x)\in \overline{C}$ for a.e. $x\in\Omega$,
    \item[(i)] there holds $\supp\nu_x\subset \overline{C}$ for a.e. $x\in\Omega$,
    \item[(ii)] there holds $\cF(U)\leq \displaystyle\liminf_{k\to+\infty} \cF_k(U_k)$,
    % \item[(iii)] and if there holds equality in (ii), then $\lambda=0$, and for a.e. $x\in\Omega$ with $U(x)\in C$, there holds
    % $$\text{supp } \nu_x \cap \{U(x)+\mathbb R w\}=\{U(x)\},$$
    % whenever $w\in\R^m$, $2f(U(x))<f(U(x)+s w)+f(U(x)-s w)$ for all $s\neq 0$, $\abs{s}$ sufficiently small.
    \item[(iii)] and if there holds equality in (ii), then $\lambda=0$, and for a.e. $x\in\Omega$ there holds
    $$\text{supp } \nu_x \cap \{U(x)+\mathbb R w\}=\{U(x)\},$$
    whenever $w\in\R^m$ satisfies
    \begin{equation}\label{eq:sec4_strict_convex_condition}
        f(U(x)+sw)-f(U(x))>sz_0\cdot w
    \end{equation}
    for a subgradient $z_0\in\R^m$ of $f$ at $U(x)$ and any $s\neq 0$ with $U(x)+sw\in\overline{C}$.
\end{itemize}
\end{lemma}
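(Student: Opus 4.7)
The plan is to address the three assertions sequentially, with the Young measure representation as the common tool throughout.

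For part (i), I would test against bounded continuous nonnegative cutoffs $\chi$ whose support lies in an open bounded set $V$ with $\dist(V,\overline{C})>0$. Such $\chi$ are $q$-admissible with vanishing $q$-recession. By \eqref{eq:sec4_condition_on_fk3}, for every $N>0$ eventually $f_k \geq N$ on $\supp\chi$, hence Markov's inequality gives
\[
\int_\Omega \chi(U_k(x))\,dx \leq \frac{\|\chi\|_\infty}{N}\,\sup_j \cF_j(U_j).
\]
Passing to the Young measure and then $N\to\infty$ yields $\int\chi\,d\nu_x = 0$ a.e.; exhausting such $V$ gives $\supp\nu_x\subset\overline{C}$ for a.e.\ $x$.

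For part (ii), I would approximate $f$ from below by bounded continuous integrands compactly supported in $C$. Choose continuous cutoffs $\chi_n:\mathbb{R}^m\to[0,1]$ with $\supp\chi_n$ compact in $C$ and $\chi_n\nearrow \mathbf{1}_C$. On each $\supp\chi_n$, condition \eqref{eq:sec4_condition_on_fk2} gives $f_k\to f$ uniformly; moreover $\chi_n f$ is bounded continuous, hence $q$-admissible with zero recession. Therefore
\[
\int_\Omega\!\int \chi_n f\,d\nu_x\,dx = \lim_k \int_\Omega \chi_n(U_k)f_k(U_k)\,dx \leq \liminf_k \cF_k(U_k),
\]
and monotone convergence produces $\int_\Omega\!\int_C f\,d\nu_x\,dx \leq \liminf \cF_k$. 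Since $q>1$, linear integrands have vanishing $q$-recession, so $U(x)=\int v\,d\nu_x$ a.e.; combined with $\supp\nu_x\subset\overline{C}$ and convexity of $f$, Jensen gives $f(U(x))\leq \int f\,d\nu_x$. To close the chain I would absorb the $\partial C$ mass of $\nu_x$ by an enlargement of $C$ to open neighborhoods $C_\delta$ on which an analogous cutoff argument still applies via lsc of $f$ and the coercivity \eqref{eq:sec4_condition_on_fk1}, then let $\delta\to 0$.

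For part (iii), assume equality holds in (ii). Setting $g(v):=\max(0,c|v|^q-d)\geq 0$, the function $g$ is $q$-admissible with $q$-recession equal to $c|v|^q$ (which equals $c$ on the unit sphere $S$), so
\[
\lim_k \int_\Omega g(U_k)\,dx = \int_\Omega\!\int g\,d\nu_x\,dx + c\,\lambda(\overline\Omega).
\]
Applying the cutoff argument of (ii) to the nonnegative integrand $f_k - g$, which converges to $f-g$ uniformly on compact subsets of $C$, and adding back $\lim_k\int g(U_k)\,dx$, yields the refined bound
\[
\liminf_k \cF_k(U_k) \;\geq\; \int_\Omega\!\int f\,d\nu_x\,dx + c\,\lambda(\overline\Omega).
\]
Equality with $\cF(U)\leq \int\int f\,d\nu_x$ forces $\lambda = 0$. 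The collapse of Jensen's inequality further forces $\nu_x$ to be concentrated on an affine face through $U(x)$: there exists a subgradient $z_0$ of $f$ at $U(x)$ with $f(v) = f(U(x)) + z_0\cdot(v-U(x))$ for $\nu_x$-a.e.\ $v$. Hypothesis \eqref{eq:sec4_strict_convex_condition} rules out any such $v = U(x) + sw$ with $s\neq 0$, giving the stated support restriction.

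The chief obstacle I anticipate is the lack of direct control on $f_k$ at $\partial C$ in \eqref{eq:sec4_condition_on_fk2}--\eqref{eq:sec4_condition_on_fk3}: transferring the lower bound from $\int_C f\,d\nu_x$ to the full $\int f\,d\nu_x$ on $\overline{C}$, and preserving the $\lambda$-surplus through the limit in (iii), both require careful interplay between lsc of $f$ and the equi-coercivity of the $f_k$ in a thin neighborhood of $\partial C$.
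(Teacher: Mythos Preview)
Your argument for (i) is essentially the paper's. The obstacle you flag at the end is genuine, however, and your proposed patch does not close it.

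In (ii), cutoffs $\chi_n$ with support compactly contained in $C$ only yield
\[
\int_\Omega\!\int_C f\,d\nu_x\,dx \;\le\; \liminf_k\,\cF_k(U_k),
\]
whereas Jensen produces $f(U(x))\le \int_{\overline C} f\,d\nu_x$. Since $\nu_x$ may charge $\partial C$ and $f$ may equal $+\infty$ there, these two estimates do not chain. Enlarging $C$ to a neighborhood $C_\delta$ cannot help: assumptions \eqref{eq:sec4_condition_on_fk2}--\eqref{eq:sec4_condition_on_fk3} say nothing about $f_k$ on $C_\delta\setminus\overline C$ (only points at \emph{positive} distance from $\overline C$ are covered), so no uniform approximation is available there. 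The same deficit infects (iii): your split with $g(v)=\max\{0,c|v|^q-d\}$, after tracing through, gives only $c\,\lambda(\overline\Omega)\le \int_\Omega\!\int_{\partial C}(f-g)\,d\nu_x\,dx$, which does not force $\lambda=0$.

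The paper fills this gap via the radial retraction $T_j:\R^m\to C_j$ onto compact convex $C_j\subset C_{j+1}\subset\!\subset C$ with $\bigcup_j C_j=C$ and $0\in C_j$. The key observation is that convexity of $f_k$ together with $f_k(0)=0$ gives $f_k(T_j(p))\le f_k(p)$ for \emph{every} $p\in\R^m$, so
\[
g_{k,j}(p):=\max\Bigl\{f_k(T_j(p)),\ \tfrac{c}{2}|p|^q-d\Bigr\}\;\le\; f_k(p)\qquad\text{globally.}
\]
Since $T_j$ has compact range in $C$, one gets $g_{k,j}\to g_j:=\max\{f\circ T_j,\ \tfrac{c}{2}|\cdot|^q-d\}$ uniformly on all of $\R^m$, and $g_j$ is $q$-admissible with recession $\tfrac{c}{2}|\cdot|^q$. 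Testing with $g_j$ therefore delivers
\[
\liminf_k \cF_k(U_k)\;\ge\;\int_\Omega\!\int_{\overline C} g_j\,d\nu_x\,dx \;+\;\tfrac{c}{2}\,\lambda(\overline\Omega)
\]
in one stroke. Monotone convergence in $j$ (the $g_j$ are increasing) combined with lower semicontinuity of $f$, which gives $\lim_j g_j(v)\ge f(v)$ on all of $\overline C$ since $T_j(v)\to v$, produces the full $\overline C$-integral together with the $\lambda$-surplus. Equality in (ii) then forces both $\lambda=0$ and equality in Jensen, after which your concluding argument for (iii) goes through---with the remark that $z_0$ is the subgradient \emph{supplied} by the hypothesis, and that passing from a $\nu_x$-a.e.\ statement to a support statement uses that the strict inequality \eqref{eq:sec4_strict_convex_condition} persists on a ball around $U(x)+s_0 w$.
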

Part (i) implies $U(x)\in \overline{C}$ for a.e. $x\in\Omega$. Part (ii) states the weak lower semi-continuity, except that the functionals are changing along the sequence. Part (iii) shows that, if the values of the functional actually converge, then there is no concentration ($\lambda=0$). Furthermore, strict convexity of the integrand $f$ in direction $w$ excludes oscillations in that direction. In consequence if $f$ is strictly convex (in all directions) on $C$, $U(x)\in C$ for a.e. $x\in\Omega$, then $U_k\rightarrow U$ strong in $L^q(\Omega;\R^m)$. Note here that a subgradient $z_0$ of $f$ at $U(x)$, i.e. a vector $z_0$ for which \eqref{eq:sec4_strict_convex_condition} holds with $sw$ replaced by any $\tilde{w}\in \overline{C}-U(x)$ and $>$ replaced by $\geq$, always exists when $U(x)\in C$. In Part (iii) the existence of a corresponding subgradient $z_0$ is part of the condition. 

\begin{proof}[Proof of Lemma \ref{lem:genconv}]
\emph{Part (i).} For $\delta>0$ define $K_\delta:=\set{p\in\R^m:\dist(p,\overline{C})\geq \delta}$.
The uniform boundedness of $\cF_k(U_k)$ and condition \eqref{eq:sec4_condition_on_fk3} imply that
\[
\abs{\set{x\in\Omega:U_k(x)\in K_\delta}}\rightarrow 0
\]
as $k\rightarrow \infty$, where $\abs{\cdot}$ denotes the $l$-dimensional Lebesgue measure. Now for any $\Phi\in\cC^0(\R^m)$ with $\Phi=0$ on $\R^m\setminus K_\delta$ and $0\leq \Phi\leq 1$ on $K_\delta$ there holds
\begin{align*}
    0=\lim_{k\rightarrow\infty}\abs{\set{x\in\Omega:U_k(x)\in K_\delta}}\geq \lim_{k\rightarrow\infty}\int_\Omega \Phi(U_k(x))\:dx=\int_\Omega\int_{\R^m}\Phi(v)\:d\nu_x(v)\:dx
\end{align*}
by the definition of the Young measure. Hence $\supp \nu_x\subset \R^m
\setminus K_\delta$ for a.e. $x\in \Omega$, for all $\delta>0$.

% Given $\varepsilon>0$ we can in consequence find a subsequence $(U_{k_j})_j\subset (U_k)_k$ satisfying 
% \[
% \abs{\set{x\in \Omega:U_{k_j}(x)\in K_\delta}}\leq 2^{-j}\varepsilon.
% \]
% Since $U_{k_j}\rightharpoonup U$ in $L^q(\Omega;\R^m)$ there ex. a sequence of convex combinations converging strong to $U$, i.e. there ex. $\lambda_{ij}\in[0,1]$, $j=1,\ldots,N_i$ with
% \[
% \sum_{j=1}^{N_i}\lambda_{ij}=1,\quad W_i:=\sum_{j=1}^{N_i}\lambda_{ij}U_{k_j}\rightarrow U\text{ in }L^q(\Omega;\R^m)\text{ as }i\rightarrow\infty.
% \]
% Since the complement of $K_\delta$ is convex, we conclude
% \begin{align*}
%     \abs{\set{x\in \Omega:U(x)\in K_\delta}}&=\lim_{i\rightarrow\infty}\abs{\set{x\in\Omega:W_i(x)\in K_\delta}}\\
%     &\leq \sum_{j\in \N}\abs{\set{x\in\Omega:U_{k_j}(x)\in K_\delta}}\leq 2\varepsilon.
% \end{align*}
% Hence the set $\set{x\in\Omega:U(x)\notin \overline{C}}$ has measure $0$.

\emph{Part (ii).} Let $C_j\subset \R^m$, $j\in \N$ be compact convex sets with 
\begin{align*}
0\in C_j\subset C_{j+1}\subset\subset C,\quad \bigcup_{j\in\N}C_j=C
\end{align*}
and define $T_j:\R^m\rightarrow C_j$ as the radial retraction onto $C_j$, that is $T_j(p)=r_j(p)p$, where $r_j(p):=\inf\set{r>0:r^{-1}p\in C_j}$. Note that $r_j$ is convex. Thus $T_j$ is continuous.

The convexity of $f_k$ and $f_k(0)=0$, cf. \eqref{eq:sec4_condition_on_fk1}, imply that
\begin{align}\label{eq:sec4_estimates_for_fk_circ_Tj}
    f_k(T_j(p))&=f_k\big(r_j(p) p+(1-r_j(p))0\big)\leq r_j(p)f_k(p)+0\leq f_k(p) 
\end{align}
for every $k,j\in\N$, $p\in\R^m$.

In consequence also the functions $g_{k,j},g_j:\R^m\rightarrow [0,\infty)$,
\[
g_{k,j}(p):=\max\set{f_k(T_j(p)),\frac{c}{2}\abs{p}^q-d},\quad g_{j}(p):=\max\set{f(T_j(p)),\frac{c}{2}\abs{p}^q-d} 
\]
with the constants $c,d,q$ taken from \eqref{eq:sec4_condition_on_fk1}, are continuous and there holds 
\begin{align}\label{eq:sec4_estimate_g_kj}
    g_{k,j}(p)\leq f_k(p)\text{ for any }k,j\in\N,~p\in \R^m.
\end{align}
By condition \eqref{eq:sec4_condition_on_fk2} we also see that $g_{k,j}\rightarrow g_j$ uniformly on all of $\R^m$ as $k\rightarrow\infty$.

Furthermore, the functions $g_{j}$ are $q$-admissible (autonomous) integrands. Indeed $f|_{C_j}$ is bounded by the compactness of $C_j$ and therefore the $q$-regression function reads
\[
g^{q,\infty}_{j}(p)=\lim_{t\rightarrow\infty}t^{-q}g_{j}(tp)=\frac{c}{2}\abs{p}^q,
\]
while the convergence is locally uniform w.r.t. $p\in\R^m\setminus\{0\}$.

By \eqref{eq:sec4_estimate_g_kj}, the uniform convergence $g_{k,j}\rightarrow g_j$ and the definition of the Young measure associated with $(U_k)_{k\in\N}$ we therefore obtain for any $j\in\N$
\begin{align*}
    \liminf_{k\rightarrow\infty}\cF_k(U_k)&\geq \liminf_{k\rightarrow\infty}\int_\Omega g_{k,j}(U_k(x))\:dx\\
    &\geq \liminf_{k\rightarrow\infty}\int_\Omega g_{j}(U_k(x))\:dx+ \liminf_{k\rightarrow\infty}\int_\Omega g_{k,j}(U_k(x))-g_j(U_k(x))\:dx\\
    &=\int_\Omega \int_{\overline{C}}g_{j}(v)\:d\nu_x(v)\:dx+\frac{c}{2}\lambda(\overline{\Omega})+0.
\end{align*}
In the last step we have also used (i).

Now one can check similarly to \eqref{eq:sec4_estimates_for_fk_circ_Tj} that the sequence $g_j$ is monotone increasing. Thus by monotone convergence, assumption \eqref{eq:sec4_condition_on_fk1} and the lower semicontinuity of $f$ we conclude
\begin{align*}
    \liminf_{k\rightarrow\infty}\cF_k(U_k)&\geq \int_\Omega \int_{\overline C }\lim_{j\rightarrow\infty}g_j(v)\:d\nu_x(v)\:dx+\frac{c}{2}\lambda(\overline{C})\\
    &\geq \int_\Omega\int_{\overline C}f(v)\:d\nu_x(v)\:dx+\frac{c}{2}\lambda(\overline C).
\end{align*}
Therefore Jensen's inequality finally shows that 
\begin{align}\label{eq:sec4_final_(i)}
    \liminf_{k\rightarrow\infty}\cF_k(U_k)&\geq\int_\Omega f(U(x))\:dx+\frac{c}{2}\lambda(\overline{\Omega})\geq \cF(U). 
\end{align}
This finishes the proof of part (ii).

\emph{Part (iii).} We immediately see that equality in \eqref{eq:sec4_final_(i)} implies $\lambda=0$. Going one step back there also has to hold 
\begin{align}\label{eq:sec4_barycenter_U_integral}
\int_{\overline C} f(v)\:d\nu_x(v)=f(U(x))
\end{align}
for a.e. $x\in \Omega$. We fix now such a point $x$ and suppose that $f(U(x))<\infty$, $w\in \R^m\setminus\{0\}$, $z_0\in\partial f(U(x))$ (the subdifferential of $f$ at $U(x)$) satisfy \eqref{eq:sec4_strict_convex_condition} for any $s\neq 0$ with $U(x)+sw\in \overline{C}$.

Assume to the contrary of the statement that $\supp \nu_x\cap\set{U(x)+\R w}\neq \set{U(x)}$, which means that there exists $s_0\neq 0$ with 
\begin{equation}\label{eq:sec4_measure_of_balls}
\nu_x(B_r(U(x)+s_0w))>0
\end{equation} for all $r>0$. By the properties of $f$ and \eqref{eq:sec4_strict_convex_condition} with $s=s_0$ we can pick a radius $r_0>0$, such that
\begin{equation}\label{eq:sec4_strict_convexity_2}
f(v)-f(U(x))>z_0\cdot(v-U(x))
\end{equation}
for all $v\in B:=\overline{C}\cap B_{r_0}(U(x)+s_0 w)$. Combining \eqref{eq:sec4_barycenter_U_integral}, \eqref{eq:sec4_measure_of_balls}, \eqref{eq:sec4_strict_convexity_2} and the fact that $z_0$ is a subgradient yields the contradiction
\begin{align*}
    f(U(x))&=\int_{\overline{C}\cap B}f(v)\:d\nu_x(v)+\int_{\overline{C}\setminus B}f(v)\:d\nu_x(v)\\
    &>\int_{\overline{C}\cap B}f(U(x))+z_0\cdot(v-U(x))\:d\nu_x(v)+\int_{\overline{C}\setminus B}f(v)\:d\nu_x(v)\\
    &\geq \int_{\overline{C}}f(U(x))+z_0\cdot(v-U(x))\:d\nu_x(v)=f(U(x)).
\end{align*}
Hence $\supp \nu_x\cap\set{U(x)+\R w}=\set{U(x)}$.
\end{proof}

\subsection{Our case}\label{sec_convergence_result_applied_to_our_case}

Applying Lemmas \ref{lem:recovery_lemma}, \ref{lem:genconv} to our case 
% The following analysis does not depend on the boundary conditions, hence we will not treat the Dirichlet and Neumann cases separately, rather just consider a sequence of minimizers
% $(u_\varepsilon)_{\varepsilon>0}$ of $\mathcal A_\varepsilon$, and their weak limit $u$ which is a minimizer of $\mathcal A$.
we obtain the following extended version of Proposition \ref{prop:gamma_convergence_for_minimizers}.

\begin{proposition}\label{prop:extended_gamma_convergence}
Let $u_\varepsilon$, $\varepsilon>0$ be a solution of \eqref{eq:Dmin}. Then there ex. a solution $u$ of the variational problem  \eqref{eq:var_prop_sec_3}, such that $u_\varepsilon\rightharpoonup u$ in $H^1(\Omega)$, $\cA_\varepsilon(u_\varepsilon)\rightarrow \cA(u)$ and
\begin{equation}\label{eq:convergence_of_kinetic_energy_alone}
    \int_\Omega \hat{F}_\varepsilon(\nabla u_\varepsilon(x))\:dx\rightarrow\int_\Omega F(\nabla u(x))\:dx
\end{equation}
along a subsequence. Moreover, in terms of the Young measure representation of the weak limit of $\nabla u_\varepsilon$ there holds $\boldsymbol{\nu}=((\nu_x)_{x\in\Omega},0,0)$ with 
\[
\nu_x(v)=\begin{cases}\delta_{\nabla u(x)}(v),&\text{for a.e. }x\in \Omega\text{ with }\partial_{x_1}u(x)\neq 0,\\
\delta_{\partial_{x_1}u(x)}(v_1)\otimes \hat\nu_x(v_2)&\text{for a.e. }x\in \Omega\text{ with }\partial_{x_1}u(x)=0,
\end{cases}
\]
where $v\in\R^2$ and $\hat{\nu}_x$ is a probability measure on $\R$ with support in $[-1,1]$. 
\end{proposition}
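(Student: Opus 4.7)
\textbf{Weak compactness and application of Lemma \ref{lem:genconv}.} I would first extract a weakly convergent subsequence. The uniform quadratic lower bound $\hat F_\varepsilon(p)\geq c\abs{p}^2-C$ for some $c,C>0$ independent of $\varepsilon$, obtained from Lemma \ref{lem:first_extension_of_F_epsilon}(ii) together with $\hat F_\varepsilon\geq 0$, combined with Lemma \ref{lem:uniform_energy_bound}, Corollary \ref{cor:maximum_principle}, and the boundedness of $V$, yields a uniform $H^1(\Omega)$ bound on $u_\varepsilon$. Pass to a subsequence $u_{\varepsilon_k}\rightharpoonup u$ weakly in $H^1(\Omega)$ and strongly in $L^2(\Omega)$; trace convergence together with $U_{\varepsilon_k}\to U_0$ in $H^{1/2}(\partial\Omega)$ gives that $u$ has the boundary data required by $X$. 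I would then apply Lemma \ref{lem:genconv} with $U_k=\nabla u_{\varepsilon_k}$, $q=2$, $C=\set{p\in\R^2:\abs{p_2}<1}$, $f=F_{|\overline C}$ and $f_k=\hat F_{\varepsilon_k}$. The hypotheses are verified from the construction in Lemma \ref{lem:first_extension_of_F_epsilon}: $\hat F_\varepsilon=F_\varepsilon$ on $K^\varepsilon$ gives locally uniform convergence on compact subsets of $C$; the lower bound $\hat F_\varepsilon\geq \frac{N_\varepsilon}{4}(Q_\varepsilon-1)$ outside $K_{N_\varepsilon}$ together with $N_\varepsilon\to\infty$ gives uniform blow-up on any set $\set{\abs{p_2}\geq 1+\delta}$; and the quadratic growth is as above. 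The fact that $f_k(0)=\varepsilon^\theta/(2(1+\varepsilon)^2)$ is not zero but tends to zero introduces only a vanishing additive error in the retraction step \eqref{eq:sec4_estimates_for_fk_circ_Tj} and is harmless. Part (i) of the lemma gives $\supp\nu_x\subset\overline C$, hence $\abs{\partial_{x_2}u}\leq 1$ a.e.\ and $u\in X$; part (ii) gives $\int_\Omega F(\nabla u)\leq\liminf_k\int_\Omega \hat F_{\varepsilon_k}(\nabla u_{\varepsilon_k})$.

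\textbf{Minimality and convergence of the action.} Since $V$ is Lipschitz in $z$ by \eqref{eq:condition_V1} and $u_{\varepsilon_k}\to u$ in $L^2(\Omega)$, one has $\int_\Omega V(x,u_{\varepsilon_k})\:dx\to \int_\Omega V(x,u)\:dx$. For any competitor $v\in X$ with $\cA(v)<\infty$, Lemma \ref{lem:recovery_lemma} supplies $v^{\varepsilon_k}\in X_{\varepsilon_k}$ with $\limsup_k\cA_{\varepsilon_k}(v^{\varepsilon_k})\leq\cA(v)$. Using that $u_{\varepsilon_k}$ minimizes $\cA_{\varepsilon_k}$ on $X_{\varepsilon_k}$, the chain
\begin{equation*}
\cA(u)\leq\liminf_k\cA_{\varepsilon_k}(u_{\varepsilon_k})\leq\liminf_k\cA_{\varepsilon_k}(v^{\varepsilon_k})\leq\cA(v)
\end{equation*}
shows that $u$ solves \eqref{eq:var_prop_sec_3} (the case $\cA(v)=+\infty$ is trivial). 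Taking $v=u$ in the chain yields $\cA_{\varepsilon_k}(u_{\varepsilon_k})\to\cA(u)$, and cancelling the convergent $V$-integrals gives \eqref{eq:convergence_of_kinetic_energy_alone}.

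\textbf{Young measure structure and main obstacle.} Since the above yields equality in part (ii) of Lemma \ref{lem:genconv} for the kinetic part, part (iii) becomes applicable and delivers $\lambda=0$. For a.e.\ $x$ with $\partial_{x_1}u(x)\neq 0$, finiteness of $F(\nabla u(x))$ forces $\abs{\partial_{x_2}u(x)}<1$, so $\nabla u(x)\in C$, where $F$ is smooth and strictly convex by \eqref{eq:Fepsder} with $\varepsilon=0$; the strict inequality \eqref{eq:sec4_strict_convex_condition} holds in every direction $w$ with $z_0=\nabla F(\nabla u(x))$, forcing $\nu_x=\delta_{\nabla u(x)}$. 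For a.e.\ $x$ with $\partial_{x_1}u(x)=0$, choose $w=e_1$ and $z_0=0$, which is a subgradient of $F$ at $\nabla u(x)=(0,\partial_{x_2}u(x))$ for any $\abs{\partial_{x_2}u(x)}\leq 1$; for $s\neq 0$,
\begin{equation*}
F(\nabla u(x)+se_1)-F(\nabla u(x))=\begin{cases}\frac{s^2}{2(1-(\partial_{x_2}u)^2)}>0,& \abs{\partial_{x_2}u}<1,\\ +\infty,& \abs{\partial_{x_2}u}=1,\end{cases}
\end{equation*}
so $\supp\nu_x\cap(\R\times\set{\partial_{x_2}u(x)})=\set{\nabla u(x)}$. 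Combined with $\supp\nu_x\subset\overline C$, disintegration in the $v_1$-coordinate gives $\nu_x=\delta_0(v_1)\otimes\hat\nu_x(v_2)$ with $\hat\nu_x$ a probability measure on $[-1,1]$, and the barycenter identity $\partial_{x_2}u(x)=\int v_2\:d\hat\nu_x$ holds. The main obstacle is verifying that the degenerate $F$ and its extension $\hat F_\varepsilon$ from Lemma \ref{lem:first_extension_of_F_epsilon} really fit the abstract framework of Lemma \ref{lem:genconv}, in particular the uniform blow-up property outside $\overline C$ which relies on the divergence of $N_\varepsilon$.
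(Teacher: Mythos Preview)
Your overall strategy matches the paper's proof: uniform $H^1$ bound via coercivity, application of Lemma~\ref{lem:genconv} to $\nabla u_{\varepsilon_k}$, the recovery sequence for the matching upper bound, and then Part~(iii) for the Young measure structure. The handling of $f_k(0)\neq 0$ is a cosmetic difference; the paper simply shifts and sets $f_k(p)=\hat F_{1/k}(p)-\hat F_{1/k}(0)$, which is cleaner than tracking a vanishing additive error.

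There is, however, a genuine gap in your Young measure argument for points with $\partial_{x_1}u(x)=0$. Applying Part~(iii) of Lemma~\ref{lem:genconv} only in the single direction $w=e_1$ yields
\[
\supp\nu_x\cap\bigl(\R\times\{\partial_{x_2}u(x)\}\bigr)=\{\nabla u(x)\},
\]
which is information about one horizontal line only. From this together with $\supp\nu_x\subset\overline C$ one \emph{cannot} conclude $\nu_x=\delta_0\otimes\hat\nu_x$; a measure such as $\tfrac12\delta_{(1/2,1/2)}+\tfrac12\delta_{(-1/2,-1/2)}$ has barycenter $(0,0)$, sits in $\overline C$, and avoids the line $\R\times\{0\}$ entirely. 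Your appeal to ``disintegration in the $v_1$-coordinate'' does not bridge this gap. The correct argument, as in the paper, is to invoke Part~(iii) for \emph{every} direction $w$ with $w_1\neq 0$: for $p=(0,p_2)$ with $\abs{p_2}\leq 1$ one checks that $z_0=0$ is a subgradient and that $F(p+sw)>0$ whenever $s\neq 0$ and $w_1\neq 0$ (the case $\abs{p_2}=1$ included, since $F$ is $+\infty$ or strictly positive there). The union of the lines $\{\nabla u(x)+\R w:w_1\neq 0\}$ covers $\R^2\setminus(\{0\}\times\R)$, forcing $\supp\nu_x\subset\{0\}\times[-1,1]$, from which the product form is immediate.
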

\begin{proof}
Let $u_\varepsilon\in X_\varepsilon$ be a minimizer of $\cA_\varepsilon(u)$. By Lemma \ref{lem:uniform_energy_bound} the minimal values $\cA_\varepsilon(u_\varepsilon)$, $\varepsilon\in(0,1)$ are bounded. Using the uniform convexity of the extension $\hat{F}_\varepsilon$ outside the compact set $\set{p\in\R^2:\abs{p_1}\leq 1,~\abs{p_2}\leq 1+\varepsilon-\varepsilon
^{4\theta}}$, cf. Lemma \ref{lem:first_extension_of_F_epsilon}, we conclude
\begin{equation}\label{eq:coercivity_of_F_varepsilon}
    \hat{F}_\varepsilon(p)\geq c_1\abs{p}^2-c_2
\end{equation}
for all $p\in\R^2$ and some constants $c_1,c_2>0$ independent of $\varepsilon\in (0,1)$. Condition \eqref{eq:condition_V1} therefore yields
\begin{align*}
    \norm{\nabla u_\varepsilon}_{L^2(\Omega)}^2\leq c_3\cA_\varepsilon(u_\varepsilon)+c_4
\end{align*}
with $\varepsilon$-independent constants $c_3,c_4>0$. In consequence there exists a subsequence $(u_{\varepsilon_k})_k$,  as well as $u\in H^1(\Omega)$ with $u_{\varepsilon_k}\rightharpoonup u$ in $H^1(\Omega)$. Since $u_\varepsilon-\hat{U}_\varepsilon\in H^1_0(\Omega)$ with $\hat{U}_\varepsilon$ defined in \eqref{eq:sec4_definition_of_U_hat_epsilon} we conclude that $u$ coincides with $\hat{U}_0$ on $\partial \Omega$ in the trace sense. In order to see that $u\in X$ it therefore remains to show $\norm{\partial_{x_2}u}_{L^\infty(\Omega)}\leq 1$. For this we will rely on Lemma \ref{lem:genconv}.

It is easily seen, c.f. \eqref{eq:coercivity_of_F_varepsilon}, that the set $C:=\set{p\in\R^2:\abs{p_2}<1}$, the approximating functions $f_k:\R^2\rightarrow \R$, $f_k(p):=\hat{F}_{1/k}(p)-\hat{F}_{1/k}(0)$, $p\in\R^2$ and  the limiting function $f:\overline{C}\rightarrow\R$, $f(p):=F(p)$ satisfy the conditions postulated in Section \ref{sec_general_convergence_result} with exponent $q=2$. We therefore can apply Lemma \ref{lem:genconv} with \begin{align*}
    \cF_k(U)=\int_\Omega f_k(U(x))\:dx,\quad \cF(U)=\int_\Omega f(U(x))\:dx,
\end{align*}
and $U_k:=\nabla u_{\varepsilon_k}$. Observe that $\cF_k(U_k)$ is indeed uniformly bounded due to the boundedness of $\cA_\varepsilon(u_\varepsilon)$ and condition \eqref{eq:condition_V1}. In consequence Lemma \ref{lem:genconv} tells us that if $\boldsymbol{\nu}=((\nu_x)_{x\in\Omega},\lambda,(\nu^\infty_x)_{x\in\overline{\Omega}})$ denotes the Young measure representation for $(U_k)_k$, then $\supp \nu_x\subset \set{p\in\R^2:\abs{p_2}\leq 1}$ and 
\[
\liminf_{k\rightarrow \infty}\int_\Omega \hat{F}_{\varepsilon_k}(\nabla u_{\varepsilon_k})-\hat{F}_{\varepsilon_k}(0)\:dx\geq \int_\Omega F(\nabla u)\:dx. 
\]

It follows that $\abs{\partial_{x_2}u}\leq 1$ a.e. and 
\[
\liminf_{k\rightarrow\infty}\cA_{\varepsilon_k}(u_{\varepsilon_k})\geq \cA (u),
\]
since w.l.o.g. $V(\cdot,u_{\varepsilon_k})\rightarrow V(\cdot,u)$ in $L^1(\Omega)$ by \eqref{eq:condition_V1}.

In particular $u\in X$ and $\cA(u)<\infty$. Hence the recovery sequence Lemma \ref{lem:recovery_lemma} on one hand shows that $\cA(u)\leq \cA(\tilde{u})$ for all $\tilde{u}\in X$, i.e. $u$ is a minimizer of \eqref{eq:var_prop_sec_3}, on the other hand it shows that 
\[
\lim_{k\rightarrow\infty}\cA_{\varepsilon_k}(u_{\varepsilon_k})=\cA (u),
\]
which enables us to utilize Part (iii) of Lemma \ref{lem:genconv}. Doing this we first of all see that $(\nabla u_{\varepsilon_k})_k$ does not concentrate, i.e. $\lambda =0$, $\nu_x^\infty=0$, $x\in\overline{\Omega}$. Next, since
\[
F(p+sw)-F(p)-s\nabla F(p)\cdot w>0
\]
for any triple $(p,w,s)\in \set{p\in\R^2:\abs{p_2}<1}\times(\R^2\setminus \{0\})\times (\R\setminus \{0\})$ with $p_1\neq 0$ or $w_1\neq 0$, cf. \eqref{eq:Fepsder} for $\varepsilon=0$, there also holds 
$\supp \nu_x=\set{\nabla u(x)}$ for a.e. $x\in \Omega$ with $\partial_{x_1}u(x)\neq 0$, $\abs{\partial_{x_2}u(x)}<1$ and $\supp \nu_x\subset \set{(0,v_2):\abs{v_2}\leq 1}$ for a.e. $x\in\Omega $ with $\partial_{x_1} u(x)=0$.  Note here that $0$ is a subgradient of $F$, or rather $f:\overline{C}\rightarrow\R$, at $(0,\pm 1)$. Moreover, due to the fact that the set $\set{x\in \Omega:\partial_{x_1} u(x)\neq 0,\ |\partial_{x_2} u(x)|=1}$ must be of zero measure, else $\mathcal A (u)$ would be infinite, we can also in the first case simply say $\supp \nu_x=\set{\nabla u(x)}$ for a.e. $x\in \Omega$ with $\partial_{x_1}u(x)\neq 0$. This finishes the proof of Proposition \ref{prop:extended_gamma_convergence}. 
\end{proof}
Proposition \ref{prop:extended_gamma_convergence}, Corollary \ref{cor:maximum_principle} and Lemma \ref{lem:partial_1_u_positive} directly imply the following bounds.
\begin{corollary}\label{cor:bounds_for_minimizer}
The minimizer $u$ satisfies $\abs{u(x)}\leq L-\abs{x_2}$ for a.e. $x\in \Omega$. If in addition \eqref{eq:V1prime} holds true, then $\partial_{x_1}u\geq 0$ a.e..
\end{corollary}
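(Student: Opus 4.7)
The plan is to deduce both bounds simply by passing the corresponding properties of the regular minimizers $u_\varepsilon$ to the limit $\varepsilon \to 0$ along the subsequence provided by Proposition \ref{prop:extended_gamma_convergence}.

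For the pointwise bound on $u$, I would first invoke Corollary \ref{cor:maximum_principle} to obtain $|u_\varepsilon(x)| \leq U_\varepsilon(x_2)$ for every $x \in \overline{\Omega}$ and all sufficiently small $\varepsilon > 0$, where $U_\varepsilon(x_2) = L - |x_2| + \frac{\varepsilon^\beta}{2L}(L^2 - x_2^2) \to L - |x_2|$ uniformly on $[-L,L]$ as $\varepsilon \to 0$. By Proposition \ref{prop:extended_gamma_convergence} we have $u_{\varepsilon_k} \rightharpoonup u$ in $H^1(\Omega)$ along some subsequence $\varepsilon_k \to 0$, and by the Rellich--Kondrachov embedding $H^1(\Omega) \hookrightarrow L^2(\Omega)$ is compact, so, up to extracting a further subsequence, $u_{\varepsilon_k} \to u$ a.e. on $\Omega$. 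The uniform pointwise bound $|u_{\varepsilon_k}(x)| \leq U_{\varepsilon_k}(x_2)$ then passes to the limit and yields $|u(x)| \leq L - |x_2|$ for almost every $x \in \Omega$.

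For the monotonicity $\partial_{x_1} u \geq 0$ under \eqref{eq:V1prime}, I would use that Lemma \ref{lem:partial_1_u_positive} gives $\partial_{x_1} u_\varepsilon \geq 0$ almost everywhere on $\Omega$ for all sufficiently small $\varepsilon$. The weak convergence $u_{\varepsilon_k} \rightharpoonup u$ in $H^1(\Omega)$ implies in particular the weak convergence $\partial_{x_1} u_{\varepsilon_k} \rightharpoonup \partial_{x_1} u$ in $L^2(\Omega)$. Since the cone $\{v \in L^2(\Omega) : v \geq 0 \text{ a.e.}\}$ is convex and strongly closed, it is weakly closed, so the limit $\partial_{x_1} u$ inherits the sign condition: $\partial_{x_1} u \geq 0$ almost everywhere on $\Omega$.

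Neither step poses any real obstacle; the only care needed is the routine passage from the pointwise-everywhere bound on the continuous representative of $u_\varepsilon$ to an almost-everywhere bound on $u$, handled through the compact Sobolev embedding and extraction of a further a.e. convergent subsequence.
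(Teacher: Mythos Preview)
Your proposal is correct and follows exactly the approach the paper intends: the paper simply records that the bounds follow directly from Proposition \ref{prop:extended_gamma_convergence}, Corollary \ref{cor:maximum_principle}, and Lemma \ref{lem:partial_1_u_positive}, and you have filled in precisely those routine details (Rellich--Kondrachov for the pointwise bound, weak closedness of the nonnegative cone for the sign of $\partial_{x_1}u$).
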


\begin{remark}\label{rem:obstacle_problem}
In view of the bound on $\abs{u}$ problem \eqref{eq:var_prop_sec_3} can, as the degenerate variational problem arising in the study of random surfaces in \cite{desilva_savin}, be written as an obstacle problem:
\begin{align*}
    \min_{\abs{x_2}-L\leq u\leq L-\abs{x_2}} \cA(u) \quad\quad \text{ ($+$ boundary conditions)}
\end{align*}

\end{remark}

\subsection{Autonomous potentials}\label{sec_min_cont}
In the case of $\partial_{x_1}u_\varepsilon\geq 0$ we can extend the list of convergences as $\varepsilon\rightarrow 0$ as follows.
\begin{proposition}\label{prop:u_continuous} Let $u_\varepsilon$,$u$ be as in Proposition \ref{prop:extended_gamma_convergence} and suppose that in addition there holds \eqref{eq:V1prime}. Then $u_\varepsilon\rightarrow u$ uniformly on $\overline{\Omega}$. In particular the minimizer $u$ is continuous.
\end{proposition}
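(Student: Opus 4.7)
Proof plan. The plan is to upgrade the weak $H^1$-convergence of Proposition \ref{prop:extended_gamma_convergence} to uniform convergence by combining the monotonicity $\partial_{x_1}u_\varepsilon\geq 0$ obtained from Lemma \ref{lem:partial_1_u_positive} under assumption \eqref{eq:V1prime} with a H\"older-$1/2$ Arzel\`a--Ascoli compactness in the $x_2$-direction and a P\'olya monotone convergence argument in the $x_1$-direction. Continuity of $u$ on $\overline{\Omega}$ will then follow automatically as a uniform limit of continuous functions.

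First, Rellich--Kondrachov applied to the weak $H^1$-convergence yields strong $L^2(\Omega)$-convergence $u_\varepsilon\to u$, and Corollary \ref{cor:maximum_principle} gives the uniform $L^\infty$-bound $|u_\varepsilon|\leq L+1$. Each $u_\varepsilon(\cdot,x_2)$ is continuous and nondecreasing on $[0,T]$, while by Corollary \ref{cor:bounds_for_minimizer} and Fubini, for almost every $x_2$ the slice $u(\cdot,x_2)\in H^1(0,T)\subset C^0([0,T])$ is continuous and nondecreasing. Next, the Cauchy--Schwarz estimate
\begin{equation*}
\|u_\varepsilon(\cdot,x_2)-u_\varepsilon(\cdot,y_2)\|_{L^2(0,T)}^2\leq |x_2-y_2|\,\|\partial_{x_2}u_\varepsilon\|_{L^2(\Omega)}^2
\end{equation*}
together with the uniform $H^1$-bound makes $\{x_2\mapsto u_\varepsilon(\cdot,x_2)\}$ equi-H\"older-$1/2$ into $L^2(0,T)$, and Arzel\`a--Ascoli provides precompactness in $C([-L,L];L^2(0,T))$, whose limit must coincide with $x_2\mapsto u(\cdot,x_2)$; hence $\sup_{x_2\in[-L,L]}\|u_\varepsilon(\cdot,x_2)-u(\cdot,x_2)\|_{L^2(0,T)}\to 0$.

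At every good slice $x_2$ where $u(\cdot,x_2)$ is continuous, the classical P\'olya monotone convergence theorem---a sequence of monotone functions converging in $L^2$ to a continuous monotone limit must converge uniformly---then yields $\sup_{x_1\in[0,T]}|u_\varepsilon(x_1,x_2)-u(x_1,x_2)|\to 0$ for a full-measure set of slices. To close the argument and promote this fiberwise uniform convergence to uniform convergence on all of $\overline{\Omega}$, I would argue by contradiction: assuming $\|u_\varepsilon-u\|_{L^\infty}\not\to 0$, extract $\eta>0$, $\varepsilon_k\to 0$ and $(x_1^k,x_2^k)\to(x_1^*,x_2^*)$ with $u_{\varepsilon_k}(x_1^k,x_2^k)-u(x_1^k,x_2^k)>2\eta$ (the reverse inequality being analogous), approximate $x_2^k$ by nearby good slices, and combine (i)~the $1$-Lipschitz bound of $u$ in $x_2$ coming from $|\partial_{x_2}u|\leq 1$, (ii)~the uniform-in-$x_2$ $L^2$-convergence from the previous step, and (iii)~the monotonicity of $u_{\varepsilon_k}(\cdot,x_2^k)$ in $x_1$ to propagate the pointwise discrepancy onto a two-dimensional set of positive Lebesgue measure, contradicting the $L^2(\Omega)$-convergence. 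The main obstacle is precisely this last propagation step: since no uniform Lipschitz bound on $u_\varepsilon$ in the $x_2$-direction is available, one cannot simply spread a pointwise discrepancy to a neighborhood by a triangle inequality; instead the H\"older-$1/2$ estimate in $L^2(0,T)$ must be carefully interleaved with the $x_1$-monotonicity, using the latter to convert a one-dimensional discrepancy along a ray $\{(y_1,x_2^k):y_1\geq x_1^k\}$ into a positive-area discrepancy once combined with $1$-Lipschitz closeness of $u$ in $x_2$ at a neighboring good slice.
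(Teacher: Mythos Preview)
Your strategy differs from the paper's. The paper obtains equicontinuity of $\{u_\varepsilon\}$ in one stroke via the Courant--Lebesgue lemma (Lemma~\ref{lem:mincont}): for $B_{\sqrt r}(z_0)\subset\Omega$ one finds a radius $\rho\in(r,\sqrt r)$ with $\int_{\partial B_\rho}|\partial_\tau u_\varepsilon|\lesssim \|\nabla u_\varepsilon\|_{L^2}/\sqrt{|\log r|}$, and monotonicity in $x_1$ bounds $\osc(u_\varepsilon;B_r)$ by the oscillation along $\partial B_\rho$, hence by the arc-length. Since $\|\nabla u_\varepsilon\|_{L^2}$ is uniformly bounded, this yields a common modulus of continuity (the boundary variant handles points near $\partial\Omega$), and Arzel\`a--Ascoli gives uniform convergence immediately.

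Your slice-wise P\'olya route can be made to work, but the last step as written is not yet a proof; in particular the plan to ``propagate to a two-dimensional set of positive measure'' is both unnecessary and hard to carry out, since you have no $L^\infty_{x_2}$ control on $u_\varepsilon$. There is also the issue that $u(x_1^k,x_2^k)$ has no meaning before continuity of $u$ is established. The clean way to close your argument is to first use (i) to show that the nondecreasing representative of $u(\cdot,x_2)$ is continuous for \emph{every} $x_2$: if it had a jump of size $j$ at some slice $x_2^*$, then comparing with a neighboring good slice $y_2$ via $|u(x_1,x_2^*)-u(x_1,y_2)|\leq|x_2^*-y_2|$ (valid for a.e.\ $x_1$) forces $j\leq 2|x_2^*-y_2|\to 0$. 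Hence $u\in C(\overline\Omega)$, the family $\{u(\cdot,x_2)\}_{x_2}\subset C([0,T])$ is compact (being the Lipschitz image of $[-L,L]$), and therefore equicontinuous with a common modulus $\omega$. A quantitative P\'olya estimate---$|u_\varepsilon(x_0,x_2)-u(x_0,x_2)|>2\eta$ with $\omega(h)<\eta$ forces $\|u_\varepsilon(\cdot,x_2)-u(\cdot,x_2)\|_{L^2}^2\geq \eta^2 h$---is now uniform in $x_2$, and combined with your uniform $L^2$ convergence (ii) yields $\sup_{x_1,x_2}|u_\varepsilon-u|\to 0$; near $x_1\in\{0,T\}$ one uses the prescribed boundary values $u_\varepsilon=\pm U_\varepsilon$ instead. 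So the contradiction you want already lives at a single slice against (ii), not in $L^2(\Omega)$.
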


The proof is a direct consequence of Lemma \ref{lem:partial_1_u_positive}, as said, and Lemma \ref{lem:mincont} below.

\begin{lemma}\label{lem:mincont}
For any $u\in\cC^0(\overline{\Omega})\cap \cC^1(\Omega)$ with $\partial_{x_1}u\geq 0$ there holds
\begin{align*}
    \osc(u;B_r(z_0)) \lesssim
    \frac{\|\nabla u\|_{L^2(\Omega)}}{\sqrt{|\log{r}|}},
\end{align*}
for any $r\in(0,1)$ and $z_0\in\R^2$ such that $B_{\sqrt{r}}(z_0)\subset\Omega$, as well as
\begin{align*}
    \osc(u;B_r(z_0)\cap\overline{\Omega}) \lesssim
    \frac{\|\nabla u\|_{L^2(\Omega)}}{\sqrt{|\log{r}|}}+\osc(u;B_{\sqrt{r}}(z_0)\cap\partial\Omega),
\end{align*}
for $r\in(0,1)$ and arbitrary $z_0\in\R^2$.
\end{lemma}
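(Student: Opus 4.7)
The plan rests on two ingredients: a standard $H^1$ circle-averaging (Morrey-type) estimate, and the monotonicity $\partial_{x_1}u\ge 0$, which lets us trade a two-dimensional oscillation over the ball for a one-dimensional oscillation along a well-chosen circle. First, by Fubini/coarea,
\begin{equation*}
\int_r^{\sqrt r}\int_{\partial B_\rho(z_0)\cap\Omega}|\nabla u|^2\,dS\,d\rho=\int_{(B_{\sqrt r}(z_0)\setminus\overline{B_r(z_0)})\cap\Omega}|\nabla u|^2\,dx\le\|\nabla u\|_{L^2(\Omega)}^2,
\end{equation*}
and since $\int_r^{\sqrt r}d\rho/\rho=\tfrac12|\log r|$, a mean value argument produces a radius $\rho^*\in(r,\sqrt r)$ satisfying $\rho^*\int_{\partial B_{\rho^*}(z_0)\cap\Omega}|\nabla u|^2\,dS\lesssim\|\nabla u\|_{L^2(\Omega)}^2/|\log r|$. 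Cauchy--Schwarz applied to the tangential derivative on any connected arc $\gamma\subset\partial B_{\rho^*}(z_0)\cap\overline\Omega$ then yields
\begin{equation*}
\osc(u;\gamma)\le|\gamma|^{1/2}\,\|\partial_\tau u\|_{L^2(\gamma)}\lesssim\|\nabla u\|_{L^2(\Omega)}/\sqrt{|\log r|}.
\end{equation*}

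Next, for any $p\in B_r(z_0)\cap\overline\Omega$, consider the horizontal line through $p$; it meets the convex set $\overline{B_{\rho^*}(z_0)}\cap\overline\Omega$ in a closed segment $[q_L,q_R]\ni p$ whose two endpoints lie in $S:=\partial\bigl(\overline{B_{\rho^*}(z_0)}\cap\overline\Omega\bigr)=A\cup B$, where we set $A:=\partial B_{\rho^*}(z_0)\cap\overline\Omega$ and $B:=\overline{B_{\rho^*}(z_0)}\cap\partial\Omega\subset\partial\Omega\cap B_{\sqrt r}(z_0)$. Because $\partial_{x_1}u\ge 0$ on $\Omega$ and $u\in\cC^0(\overline\Omega)$, one has $u(q_L)\le u(p)\le u(q_R)$, whence $\osc(u;B_r(z_0)\cap\overline\Omega)\le\osc(u;S)$.

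Finally, every arc $\gamma$ of $A$ which is not the full circle has its two endpoints on $\partial\Omega$, hence in $B$; therefore any two points of $S$ can be joined by a chain consisting of at most two such arcs together with a detour inside $B$, which produces
\begin{equation*}
\osc(u;S)\le 2\max_\gamma\osc(u;\gamma)+\osc\bigl(u;\partial\Omega\cap B_{\sqrt r}(z_0)\bigr).
\end{equation*}
Combining with the arc estimate from the first step gives the second claim of the lemma. The first claim is the special case $B_{\sqrt r}(z_0)\subset\Omega$, for which $B=\emptyset$, $A=\partial B_{\rho^*}(z_0)$ is a single circle, and the boundary oscillation term simply drops out. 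The only delicate step is the geometric bookkeeping in the final display when $\partial B_{\rho^*}$ crosses $\partial\Omega$ several times, but this is handled cleanly by the convexity of $\overline{B_{\rho^*}(z_0)}\cap\overline\Omega$ underlying the decomposition $S=A\cup B$.
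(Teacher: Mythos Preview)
Your argument is correct and follows essentially the same approach as the paper's proof. Both use the Courant--Lebesgue averaging to find a good radius $\rho^*\in(r,\sqrt r)$, then exploit the monotonicity $\partial_{x_1}u\ge0$ to project any point of $B_r(z_0)\cap\overline\Omega$ along its horizontal line onto $\partial(B_{\rho^*}(z_0)\cap\overline\Omega)$, and finally bound the oscillation on this boundary curve by the tangential energy on the circular arcs plus the oscillation on $\partial\Omega\cap B_{\sqrt r}(z_0)$. The paper phrases the last step via the total variation $L(\rho)=\int_{\partial B_\rho\cap\Omega}|\partial_\tau u|$ rather than your per-arc Cauchy--Schwarz bound, but the two are equivalent up to constants.
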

The proof relies on the classical Courant-Lebesgue lemma, \cite{courant}.
\begin{lemma}[Courant-Lebesgue]\label{lem:CL}
Let $u\in \cC^1(\Omega)$. For $z\in\R^2$ denote the length of $u(\partial B_r(z)\cap\Omega)$ by $L(r):=\int_{\partial B_r(z)\cap\Omega}\abs{\partial_\tau u}$, where $\partial_\tau u$ is the tangential derivative. Then there holds
$$\int_0^\infty \frac{L(r)^2}{r}\, dr \leq 2\pi\|\nabla u\|_{L^2(\Omega)}^2,$$
and consequently
$$\min_{a<r<b}L(r)^2 \leq \frac{2\pi\|\nabla u\|_{L^2(\Omega)}^2}{\log(b/a)} .$$
\end{lemma}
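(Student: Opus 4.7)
The plan is to prove the integral inequality by combining Cauchy--Schwarz on each circle with a polar-coordinate decomposition of the Dirichlet integral, and then to deduce the pointwise minimum bound via a trivial comparison with $\int_a^b \frac{dr}{r}$.

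First I would express $L(r)$ as an $L^1$-norm on the one-dimensional set $\partial B_r(z)\cap\Omega$ and apply Cauchy--Schwarz:
\begin{align*}
L(r)^2 \;=\; \Bigl(\int_{\partial B_r(z)\cap\Omega}|\partial_\tau u|\,d\cH^1\Bigr)^2
\;\leq\; \cH^1\bigl(\partial B_r(z)\cap\Omega\bigr)\,\int_{\partial B_r(z)\cap\Omega}|\partial_\tau u|^2\,d\cH^1.
\end{align*}
Since $\cH^1(\partial B_r(z)\cap\Omega)\leq 2\pi r$, this gives the pointwise-in-$r$ bound
\begin{align*}
\frac{L(r)^2}{r}\;\leq\; 2\pi\int_{\partial B_r(z)\cap\Omega}|\partial_\tau u|^2\,d\cH^1.
\end{align*}

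Next I would integrate in $r$ over $(0,\infty)$ and apply Fubini together with the polar (coarea) formula centered at $z$. Because $\partial_\tau u$ is the component of $\nabla u$ tangent to the circle $\partial B_r(z)$, there holds $|\partial_\tau u|\leq |\nabla u|$ pointwise, so
\begin{align*}
\int_0^\infty\frac{L(r)^2}{r}\,dr
\;\leq\; 2\pi\int_0^\infty\!\!\int_{\partial B_r(z)\cap\Omega}|\partial_\tau u|^2\,d\cH^1\,dr
\;\leq\; 2\pi\int_\Omega|\nabla u|^2\,dx,
\end{align*}
which is the first claim.

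For the second claim I would simply estimate from below the integral $\int_a^b L(r)^2/r\,dr$ by the minimum of $L(r)^2$ on $(a,b)$ times $\int_a^b dr/r=\log(b/a)$; combining this with the inequality just proved yields the stated bound. No step here is genuinely difficult; the only subtlety worth flagging is that the tangential derivative on $\partial B_r(z)$ is indeed dominated by $|\nabla u|$ and that one is allowed to apply the coarea/polar formula despite $\partial B_r(z)\cap\Omega$ being only a piece of the full circle, both of which are standard for $u\in\cC^1(\Omega)$.
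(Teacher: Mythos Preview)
Your proof is correct and follows essentially the same approach as the paper: Cauchy--Schwarz on each circle (bounding the measure of the arc by $2\pi r$), then integration in $r$ via polar coordinates/coarea together with $|\partial_\tau u|\leq|\nabla u|$, and finally the elementary comparison with $\int_a^b dr/r$ for the second claim. The only difference is cosmetic---you phrase things in terms of $\cH^1$ and the coarea formula while the paper writes out the explicit parametrization $\theta\mapsto z+re^{i\theta}$.
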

\begin{proof}
Using the parametrization $\theta\mapsto u(z+re^{i\theta})$ of $\partial B_r(z)$, one has
\begin{align*}
    L(r)&=
    r \int_{\set{\theta: \ z+re^{i\theta}\in\Omega }}|\partial_\tau u(z+re^{i\theta})|\, d\theta \\
    &\leq r \left(2\pi \int_{\set{\theta: \ z+re^{i\theta}\in\Omega }}|\nabla u(z+re^{i\theta})|^2\, d\theta \right)^{1/2},
\end{align*}
and hence
\begin{align*}
    \int_0^\infty \frac{L(r)^2}{r}\, dr \leq 2\pi \int_0^\infty \int_{\set{\theta: \ z+re^{i\theta}\in\Omega }}|\nabla u(z+re^{i\theta})|^2\, rd\theta \, dr = 2\pi  \|\nabla u\|_{L^2(\Omega)}^2.
\end{align*}
\end{proof}

\begin{proof}[Proof of Lemma \ref{lem:mincont}]
In any of the two cases it follows from Lemma \ref{lem:CL} that there exists $\rho>0$ such that $r<\rho<\sqrt{r}$ and
$$
L(\rho)^2 \leq \frac{4\pi\|\nabla u\|_{L^2(\Omega)}^2}{\abs{\log(r)}}.
$$
If now $B_{\sqrt{r}}(z_0)\subset\Omega$ and $z_1,z_2\in B_r(z_0)$, we denote by $z_i^\pm$ the associated boundary points such that
$$\partial B_\rho(z_0)\cap\{z_i+\R e_1\}=\{z_i^-,z_i^+\}.$$
Hence we may write
\begin{align*}
    u(z_1)-u(z_2)\leq u(z_1^+)-u(z_2^-) \leq L(\rho) \leq \frac{\sqrt{4\pi}\|\nabla u\|_{L^2(\Omega)}}{\sqrt{|\log(r)|}},
\end{align*}
where for the first inequality we use $\partial_{x_1} u \geq 0$. This concludes the proof of the first part of the lemma.

%For the second part, since $\Omega$ is $\cC^0$ regular, there exists $r_0\in(0,1)$ such that 
%$\partial B_r(z_0)\cap\Omega$ has at most one component for any $r\in(0,\sqrt{r_0})$. 
%
%Hence, if $r<r_0$, there exists
%$\rho>0$ such that $r<\rho<\sqrt{r}<\sqrt{r_0}$ and $$L(\rho)^2 \leq \frac{4\pi\|\nabla u\|_{L^2(\Omega)}^2}{\log(r)}.$$
For the second part given $z_1,z_2\in B_{\sqrt{r}}(z_0)\cap\overline{\Omega}$, we denote by $z_i^\pm$ the boundary points now given by
$$\partial (B_\rho(z_0)\cap\Omega)\cap\{z_i+\R e_1\}=\{z_i^-,z_i^+\}.$$
We may once more write
\begin{align*}
    u(z_1)-u(z_2)&\leq u(z_1^+)-u(z_2^-) \leq L(\rho)+\osc(u;B_\rho(z_0)\cap\partial\Omega)\\ &\leq \frac{\sqrt{4\pi}\|\nabla u\|_{L^2(\Omega)}}{\sqrt{|\log(r)|}}+\osc(u;B_{\sqrt{r}}(z_0)\cap\partial\Omega),
\end{align*}
which finishes the proof of the lemma.
\end{proof}

\section{Partial regularity near good points}\label{sec_partial_regularity}

In this section we will apply the result of Savin \cite{savin} to the minimizers $(u_\varepsilon)_\varepsilon$, in order to obtain partial regularity for the limit $u$. Throughout the section we mean by $(u_\varepsilon)_\varepsilon$ the subsequence from Proposition \ref{prop:extended_gamma_convergence} converging to a minimizer $u$ of problem \eqref{eq:var_prop_sec_3}.

Let
$$G:=\{p\in\R^2:\ p_1\neq 0,\ |p_2|<1\},$$
and note that $D^2 F(p)$ is non-degenerate and positive definite for $p\in G$. Therefore, we would like to show that whenever $\nabla u$ takes values in this ``good'' set, one may deduce higher regularity of $u$ via the Euler-Lagrange equation. However, the set of points $x\in\Omega$ for which $\nabla u(x)\in G$ is a priori not open, hence we will need to adapt our argument and use the Euler-Lagrange equations associated with the approximation $u_\varepsilon$ in order to deduce the openness of this set, and hence the regularity.
The main result of this section is the following proposition, which allows to directly conclude the partial regularity stated in Theorem \ref{thm:existence_and_part_regularity}.

\begin{proposition}\label{prop:parreg}
For any $p_0\in G$ there exists $\delta>0$, $R_0>0$ such that whenever $B_r(x_0)\subset \Omega$, $r\in(0,R_0)$ and
$$\fint_{B_r(x_0)} |\nabla u - p_0|^2<\delta,$$
then
$u\in \cC^{2,\alpha}(B_{r/2}(x_0))$ for some $\alpha\in (0,1)$.
\end{proposition}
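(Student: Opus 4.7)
The plan is to run a small-perturbation/$\varepsilon$-regularity argument at the level of the smooth approximations $u_\varepsilon$ from Lemma \ref{lem:exist} and then to pass to the limit $\varepsilon\to 0$.

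First I would localize near $p_0\in G$. Since $(p_0)_1\neq 0$ and $|(p_0)_2|<1$, there is a compact neighborhood $V\subset\subset G$ of $p_0$ on which $F$ is $\cC^\infty$ and uniformly convex. By \eqref{eq:definition_of_K_epsilon} and Lemma \ref{lem:first_extension_of_F_epsilon}, for every sufficiently small $\varepsilon$ we have $V\subset K^\varepsilon$, whence $\hat F_\varepsilon=F_\varepsilon$ on $V$, and $F_\varepsilon\to F$ in $\cC^k(V)$ for every $k$, with $\varepsilon$-uniform ellipticity constants. Proposition \ref{prop:extended_gamma_convergence} then allows me to transfer the hypothesis to the approximations: because the Young measure of $(\nabla u_\varepsilon)$ has no concentration ($\lambda=0$, equivalent to equi-integrability of $|\nabla u_\varepsilon|^2$) and equals $\delta_{\nabla u(x)}$ on $\{\partial_{x_1}u\neq 0\}$, the convergence $\nabla u_\varepsilon\to\nabla u$ is strong in $L^2$ on that set. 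Choosing $\delta$ small depending on $(p_0)_1$, the hypothesis forces $\partial_{x_1}u\neq 0$ on the bulk of $B_r(x_0)$, so that $\fint_{B_r(x_0)}|\nabla u_\varepsilon-p_0|^2<2\delta$ for every sufficiently small $\varepsilon$.

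Next I would rescale. Picking $c\in\R$ so that $u_\varepsilon-(c+p_0\cdot(x-x_0))$ has zero average on $B_r(x_0)$, set
\begin{equation*}
v_\varepsilon(y):=\tfrac{1}{r}\bigl(u_\varepsilon(x_0+ry)-c-p_0\cdot ry\bigr),\quad y\in B_1.
\end{equation*}
The Euler--Lagrange equation for $u_\varepsilon$ rescales to
\begin{equation*}
D^2\hat F_\varepsilon(p_0+\nabla v_\varepsilon):D^2 v_\varepsilon+r\,\partial_zV\bigl(x_0+ry,\,c+p_0\cdot ry+rv_\varepsilon\bigr)=0,
\end{equation*}
whose lower-order term is now $O(r)$. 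By construction $\fint_{B_1}|\nabla v_\varepsilon|^2<2\delta$ and, since $v_\varepsilon$ has zero mean, $\fint_{B_1}|v_\varepsilon|^2\lesssim\delta$ by Poincar\'e's inequality. As long as $\nabla v_\varepsilon$ stays inside $V-p_0$, standard De~Giorgi--Nash $L^\infty$-estimates for the linear equation satisfied by $v_\varepsilon$ upgrade this to $\|v_\varepsilon\|_{L^\infty(B_{3/4})}\lesssim\sqrt{\delta}+r$. Taking $\delta$ and $R_0$ small enough brings us into the regime of Savin's small-perturbation theorem \cite{savin}: its hypotheses apply because the rescaled operator is $\cC^2$ and uniformly elliptic on a neighborhood of $(D^2v,\nabla v)=(0,0)$ by Step~1, and a smooth $O(r)$ corrector (solving the linearized PDE at $p_0$ on $B_1$) absorbs the inhomogeneity, producing an exact reference solution of size $O(r)$. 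Savin's theorem then yields $\|v_\varepsilon\|_{\cC^{2,\alpha}(B_{1/2})}\leq C$ uniformly in $\varepsilon$, which translates into uniform $\cC^{2,\alpha}$-bounds for $u_\varepsilon$ on $B_{r/2}(x_0)$. Arzel\`a--Ascoli together with $u_\varepsilon\rightharpoonup u$ then gives $u\in\cC^{2,\alpha}(B_{r/2}(x_0))$.

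The main obstacle I expect is the bootstrap in Step~2 from the $L^2$-average smallness of $\nabla u_\varepsilon-p_0$ to the $L^\infty$-smallness of $v_\varepsilon$ required by Savin's theorem, performed simultaneously with the control that $\nabla v_\varepsilon$ remain inside $V-p_0$ so the $\varepsilon$-independent ellipticity constants from Step~1 stay in force. A further technical point is that the set $\{\nabla u\in G\}$ is a priori not open, so one cannot directly appeal to the limit equation for $u$; this is precisely what motivates working at the $u_\varepsilon$ level throughout. These points are the concrete analogue in our setting of the ``non-degenerate point'' step of \cite{colombo_figalli}.
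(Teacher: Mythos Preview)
Your overall scheme---work at the level of the smooth minimizers $u_\varepsilon$, rescale to $v_\varepsilon$, apply Savin's small-perturbation theorem, pass to the limit by Arzel\`a--Ascoli---is exactly the paper's strategy. The gap is precisely the step you flag: getting $\|v_\varepsilon\|_{L^\infty}$ small \emph{uniformly in $\varepsilon$}.

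Your proposed route via De~Giorgi--Nash is circular. The coefficients $D^2\hat F_\varepsilon(p_0+\nabla v_\varepsilon)$ are uniformly elliptic, but only with the $\varepsilon$-dependent constants $\lambda_\varepsilon,\Lambda_\varepsilon$ of Lemma~\ref{lem:first_extension_of_F_epsilon}\eqref{eq:uniform_elliptic_extension}; the $\varepsilon$-independent ellipticity from Step~1 is available only when $p_0+\nabla v_\varepsilon\in V$, which is exactly what you do not know in advance. So De~Giorgi--Nash returns $\|v_\varepsilon\|_{L^\infty(B_{3/4})}\leq C(\varepsilon)(\sqrt\delta+r)$ with $C(\varepsilon)\to\infty$, useless for the limit. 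Your caveat ``as long as $\nabla v_\varepsilon$ stays inside $V-p_0$'' is a hypothesis stronger than the conclusion you are trying to reach.

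The paper breaks the circularity by an argument (Lemma~\ref{lem:L_infty_bound_on_v_epsilon}) that never appeals to interior elliptic estimates before the $L^\infty$-smallness is secured. First, by averaging, pick a radius $r_1\in(r/2,r)$ on which $\fint_{\partial B_{r_1}}|\nabla u_\varepsilon-p_0|^2$ is controlled by $\fint_{B_r}|\nabla u_\varepsilon-p_0|^2$; then the one-dimensional Morrey inequality on the circle $\partial B_{r_1}$ gives $\|v_\varepsilon\|_{L^\infty(\partial B_1)}\lesssim\sqrt\delta$ after the right choice of the additive constant $u_0$. Second, propagate to the interior by a barrier: touching $v_\varepsilon$ from above by $\bar\phi(x)=\eta+\|v_\varepsilon\|_{L^\infty(\partial B_1)}+\tfrac{r_1A_0}{2}(1-x_1^2)$ only requires evaluating $\partial_{p_1}^2\hat F_\varepsilon$ on the segment $[p_0-O(r)e_1,\,p_0+O(r)e_1]$, which lies in $G$ for $r$ small (condition \eqref{eq:condition_on_r_and_p}), so the $\varepsilon$-uniform constants \emph{are} available there. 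This yields $\|v_\varepsilon\|_{L^\infty(B_1)}\leq\|v_\varepsilon\|_{L^\infty(\partial B_1)}+O(r)$ with no circular hypothesis.

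Two minor points. The transfer of smallness from $\nabla u$ to $\nabla u_\varepsilon$ is a little more delicate than you sketch, because on $\{\partial_{x_1}u=0\}$ convergence is not strong; the paper computes this via the Young measure in Lemma~\ref{lem:ourcase}, obtaining $\lim_\varepsilon\fint_{B_r}|\nabla u_\varepsilon-p_0|^2\leq(1+(p_{0,1})^{-2})\fint_{B_r}|\nabla u-p_0|^2$. And the corrector $\phi_r$ that absorbs the $O(r)$ inhomogeneity is taken to solve the \emph{nonlinear} equation with zero boundary data (Lemma~\ref{lem:implicit_function_argument}), not the linearized one, so that Savin's hypothesis $\cF(0,0,0,x)=0$ holds exactly.
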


The idea behind the proof is the following. Let $x_0\in\Omega$ and $r>0$ be such that $B_r(x_0)\subset\Omega$, and let $u_0\in\R$, $p_0\in G$. 
For $\varepsilon>0$ we know that $u_\varepsilon:B_r(x_0)\to\R$ is a viscosity solution, c.f. Definition \ref{def:viscosity_solution}, to the Euler-Lagrange equation
$$D^2 \hat F_\varepsilon (\nabla u_\varepsilon):D^2 u_\varepsilon+\partial_z V(x,u_\varepsilon)=0.$$
We will show that after rescaling
\begin{align}\label{eq:uscale}
    u_\varepsilon(x)=u_0+p_0\cdot(x-x_0)+r v_\varepsilon((x-x_0)/r),
\end{align}
which implies that $v_\varepsilon:B_1(0)\to\R$ is a (viscosity) solution
of
\begin{align}\label{eq:vvisc}
    D^2\hat F_\varepsilon(p_0+\nabla v_\varepsilon):D^2v_\varepsilon+r\partial_zV(x_0+rx,u_0+rp_0\cdot x+rv_\varepsilon)=0,
\end{align}
and some further technical manipulations, we can apply the following regularity result of Savin. Recall that $\cS^{n\times n}$ denotes the set of symmetric $n\times n$ matrices. We also set $B_1:=B_1(0)\subset\R^n$.

\begin{theorem}[Savin \text{\cite[Theorem 1.3]{savin}}]\label{thm:savin}
Let $\cF:\cS^{n\times n}\times\R^n\times\R\times B_1\rightarrow \R$, $(M,p,z,x)\mapsto \cF(M,p,z,x)$ be a measurable map and $K,\bar\delta>0$, $\Lambda\geq \lambda>0$ constants satisfying
\begin{itemize}
    \item[(H1)] $\cF(M+N,p,z,x)\geq \cF(M,p,z,x)$ \\ for $M,N\in \cS^{n\times n}$, $N\geq 0$, $\abs{p}\leq \bar \delta,\abs{z}\leq \bar \delta$, $x\in B_1$,
    \item[(H2)] $\Lambda \norm{N}\geq \cF(M+N,p,z,x)-\cF(M,p,z,x)\geq \lambda\norm{N}$\\
    for $M,N\in \cS^{n\times n}$, $N\geq 0$, $\norm{M}\leq \bar{\delta}$, $\norm{N}\leq \bar{\delta}$, $\abs{p}\leq \bar\delta$, $\abs{z}\leq \bar \delta$, $x\in B_1$,
    \item[(H4)] $\cF(0,0,0,x)=0$, and  in the $\bar\delta$-neighborhood of  $\set{(0,0,0,x):x\in B_1}$ the map $\cF$ is of class $\cC^2$ there holds the uniform bound $\norm{D^2\cF}\leq K$.
\end{itemize}
Then there exists a constant $c_1>0$ depending only on $K,\bar{\delta},\Lambda,\lambda$ such that if the function $u:B_1\rightarrow\R$ is a viscosity solution of $\cF(D^2u,\nabla u,u,x)=0$ with $\norm{u}_{L^\infty(B_1)}\leq c_1$, then $u\in \cC^{2,\alpha}(B_{1/2})$ and $\norm{u}_{\cC^{2,\alpha}(B_{1/2})}\leq \bar{\delta}$.
\end{theorem}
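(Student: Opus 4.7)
The plan is to apply Savin's Theorem \ref{thm:savin} to rescaled versions of the smooth regularized minimizers $u_\varepsilon$ from Lemma \ref{lem:exist}, then pass to the limit $\varepsilon \to 0$. The key structural fact is that near any $p_0 \in G$ the unperturbed integrand $F$ is smooth and uniformly convex, so there exists an open neighborhood $N_0 \subset\subset G$ of $p_0$ on which $\lambda_* I \leq D^2 F \leq \Lambda_* I$. By the definition of $K^\varepsilon$ in \eqref{eq:definition_of_K_epsilon}, for all sufficiently small $\varepsilon$ the set $N_0$ lies inside $K^\varepsilon$, so $\hat F_\varepsilon|_{N_0} = F_\varepsilon|_{N_0}$ and $F_\varepsilon \to F$ in $\cC^\infty(N_0)$ thanks to \eqref{eq:feps}. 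In particular $D^2 \hat F_\varepsilon$ satisfies the uniform ellipticity bounds $\tfrac{\lambda_*}{2} I \leq D^2 \hat F_\varepsilon \leq 2\Lambda_* I$ on $N_0$ for all small $\varepsilon$.

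\textbf{From $\nabla u$ to $\nabla u_\varepsilon$.} The hypothesis is $L^2$-closeness of $\nabla u$ to $p_0$ on $B_r(x_0)$, which must first be transferred to the approximants $\nabla u_\varepsilon$. By Proposition \ref{prop:extended_gamma_convergence}, the associated Young measure $\nu_x$ is $\delta_{\nabla u(x)}$ where $\partial_{x_1}u(x)\neq 0$ and is supported in $\{0\}\times[-1,1]$ otherwise, with no concentration ($\lambda=0$). Since $p_{0,1}\neq 0$, on the "bad set" $S := \{x \in B_r(x_0) : \partial_{x_1}u(x)=0\}$ one has $|\nabla u - p_0| \geq |p_{0,1}|$, so $|S| \leq \delta |B_r|/p_{0,1}^2$. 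The Dirac structure on $B_r\setminus S$ together with no concentration yields $\nabla u_\varepsilon \to \nabla u$ strongly in $L^2(B_r\setminus S)$, while the support bound on $\nu_x$ over $S$ gives uniform $L^2$-bounds on $\nabla u_\varepsilon$ there. Combining:
\begin{equation*}
\fint_{B_r(x_0)} |\nabla u_\varepsilon - p_0|^2 \, dx \leq C \delta \quad \text{for all }\varepsilon\text{ sufficiently small.}
\end{equation*}

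\textbf{Rescaling and applying Savin.} Set $u_0 := \fint_{B_r(x_0)} u_\varepsilon$ and define $v_\varepsilon$ on $B_1(0)$ via \eqref{eq:uscale}, so that $v_\varepsilon$ has zero mean, $\fint_{B_1} |\nabla v_\varepsilon|^2 \leq C\delta$, and (by Poincar\'e) $\|v_\varepsilon\|_{L^2(B_1)} \leq C\sqrt{\delta}$. By Lemma \ref{lem:exist}, $v_\varepsilon$ is a classical solution of \eqref{eq:vvisc}. To normalize hypothesis (H4) of Savin, subtract a smooth particular solution $q_\varepsilon$ of the constant-coefficient linearized equation $D^2 F(p_0):D^2 q_\varepsilon = -r\partial_z V(x_0,u_0)$ on $B_1$ with zero boundary data; since $\|q_\varepsilon\|_{\cC^3(B_1)} = O(r)$, the shifted function $\tilde v_\varepsilon := v_\varepsilon - q_\varepsilon$ satisfies an equation $\cF_\varepsilon(D^2\tilde v_\varepsilon,\nabla \tilde v_\varepsilon,\tilde v_\varepsilon,y)=0$ with $\cF_\varepsilon(0,0,0,y)=0$, of class $\cC^2$ near the origin. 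Conditions (H1), (H2) follow from the uniform ellipticity of $D^2\hat F_\varepsilon$ on $N_0$ (valid for $|p|\leq \bar\delta$ with $\bar\delta$ small, so that $p_0+p\in N_0$), and (H4) follows from the $\cC^\infty(N_0)$ bounds on $F_\varepsilon$ and from \eqref{eq:condition_V1}. Savin's theorem then yields $\tilde v_\varepsilon \in \cC^{2,\alpha}(B_{1/2})$ with $\varepsilon$-independent bounds; hence $v_\varepsilon \in \cC^{2,\alpha}(B_{1/2})$ uniformly. Via Arzel\`a-Ascoli and the convergence $u_\varepsilon\to u$, we obtain $u \in \cC^{2,\alpha}(B_{r/2}(x_0))$ after undoing the rescaling.

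\textbf{Main obstacle.} The principal technical hurdle is verifying the smallness hypothesis $\|\tilde v_\varepsilon\|_{L^\infty(B_1)} \leq c_1$ of Theorem \ref{thm:savin} starting only from the $L^2$-bound $\|v_\varepsilon\|_{L^2(B_1)}\leq C\sqrt{\delta}$. The natural coefficient matrix $A^\varepsilon(y) := \int_0^1 D^2 \hat F_\varepsilon(p_0+t\nabla v_\varepsilon(y))\,dt$ of the divergence form of \eqref{eq:vvisc} is uniformly elliptic only on the "good set" $G_\varepsilon := \{y\in B_1 : p_0+\nabla v_\varepsilon(y)\in N_0\}$. Fixing $\eta>0$ with $B_\eta(p_0)\subset N_0$, Markov's inequality gives $|B_1\setminus G_\varepsilon|\leq C\delta/\eta^2$. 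A De Giorgi-Moser iteration on the divergence-form equation, exploiting this smallness of the bad set together with the global (albeit $\varepsilon$-dependent) ellipticity of $\hat F_\varepsilon$ from Lemma \ref{lem:first_extension_of_F_epsilon}\eqref{eq:uniform_elliptic_extension}, converts the $L^2$-smallness into $\|v_\varepsilon\|_{L^\infty(B_{3/4})}\leq c_1$ uniformly, provided $\delta$ and $R_0$ are chosen small enough. This is the step that drives the choice of $\delta$ and $R_0$ in the statement.
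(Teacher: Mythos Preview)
The statement in question, Theorem \ref{thm:savin}, is not proved in the paper; it is quoted as a black box from Savin's paper \cite{savin}. Your proposal is not a proof of Savin's theorem at all, but rather a proof strategy for Proposition \ref{prop:parreg}, the partial regularity result that \emph{applies} Savin's theorem. I will compare your approach to the paper's proof of that proposition.

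The overall architecture matches: transfer the $L^2$-closeness of $\nabla u$ to $p_0$ over to the approximants $\nabla u_\varepsilon$ via the Young measure (the paper does this in Lemma \ref{lem:ourcase}); rescale to $v_\varepsilon$ on $B_1$; normalize so that (H4) holds; apply Savin to get uniform $\cC^{2,\alpha}$ bounds; pass to the limit. Two steps differ substantially.

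\textbf{Normalization for (H4).} You subtract a solution $q_\varepsilon$ of the \emph{linearized, constant-coefficient} equation $D^2F(p_0):D^2q_\varepsilon=-r\,\partial_zV(x_0,u_0)$. This does not give $\cF_\varepsilon(0,0,0,y)=0$: the residual at zero picks up the mismatch $D^2\hat F_\varepsilon(p_0+\nabla q_\varepsilon)-D^2F(p_0)$ applied to $D^2q_\varepsilon$ and the frozen-coefficient error in $V$, so $\cF_\varepsilon(0,0,0,y)=O(r^2)+O(\varepsilon)$ rather than $0$. The paper instead uses the implicit function theorem (Lemma \ref{lem:implicit_function_argument}) to produce a small $\cC^{2,\alpha}$ solution $\phi_r$ of the \emph{full nonlinear} boundary value problem \eqref{eq:implicit_function_BVP}, so that subtracting it yields (H4) exactly.

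\textbf{The $L^2\to L^\infty$ step.} This is where your proposal has a genuine gap. You propose a De Giorgi--Moser iteration, arguing that the set where uniform ellipticity fails has small measure. But the iteration constants depend on the global ellipticity ratio on $B_1$, and Lemma \ref{lem:first_extension_of_F_epsilon}\eqref{eq:uniform_elliptic_extension} only provides $\varepsilon$-dependent bounds $\lambda_\varepsilon,\Lambda_\varepsilon$; the resulting $L^\infty$ estimate would not be uniform in $\varepsilon$, which defeats the limit. The paper avoids this entirely by a more elementary argument (Lemma \ref{lem:L_infty_bound_on_v_epsilon}): choose by Fubini a radius $r_1\in(r/2,r)$ on which the tangential energy $\int_{\partial B_{r_1}}|\nabla u_\varepsilon-p_0|^2$ is controlled, use the one-dimensional Sobolev embedding on the circle to bound $\|v_\varepsilon\|_{L^\infty(\partial B_1)}$, and then extend to the interior via a quadratic barrier $\bar\phi(x)=\eta+\|v_\varepsilon\|_{L^\infty(\partial B_1)}+\tfrac{r_1A_0}{2}(1-x_1^2)$ and the viscosity comparison principle. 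This barrier argument uses only that $\hat F_\varepsilon=F_\varepsilon$ on a fixed neighborhood of $p_0$, so the constants are independent of $\varepsilon$.
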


\begin{remark}\label{rem:savin}
We will apply  Theorem \ref{thm:savin} to maps $\cF$ that are of class $\cC^2$ with respect to $M$, $p$ and $z$, whereas they are only H\"older continuous with respect to $x$. I.e. instead of (H4) we have
\begin{itemize}
    \item[(H4$^\prime$)] $\cF(0,0,0,x)=0$, and in the $\bar\delta$-neighborhood of  $\set{(0,0,0,x):x\in B_1}$ the derivatives $D^2_{(M,p,z)}\cF$ exist, are continuous and $\norm{D^2_{(M,p,z)}\cF}\leq K$. Moreover, $\norm{\cF(M,p,z,\cdot)}_{\cC^{0,\beta}(\overline{B}_1)}\leq K$ for some $\beta\in (0,1)$ and any $(M,p,z)$ in said $\bar{\delta}$-neighborhood.
\end{itemize}
However, the proof in \cite[Sections 3 and 4]{savin} shows that the conclusions of Theorem \ref{thm:savin} remain valid for any $\alpha<\beta$.
\end{remark}

%Hence we proceed as follows.
In the next two lemmas we show that the rescaled functions $v_\varepsilon:B_1\rightarrow\R$ introduced in \eqref{eq:uscale} satisfy the needed $L^\infty$-bound for a suitable choice of the constant $u_0\in\R$. 

\begin{lemma}\label{lem:ourcase}
Let $x_0\in\Omega$, $r>0$ such that $B_r(x_0)\subset\Omega$, $p_0\in G$.
There holds
 $$\lim_{\varepsilon\to 0} \fint_{B_r(x_0)} |\nabla u_\varepsilon - p_0|^2 \, dx \leq \left(1+\frac{1}{(p_{0,1})^2}\right)\fint_{B_r(x_0)} |\nabla u - p_0|^2 \, dx.$$
\end{lemma}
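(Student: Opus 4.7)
My plan is to pass to the limit $\varepsilon\to 0$ via the Young measure $\boldsymbol{\nu}=((\nu_x)_{x\in\Omega},0,0)$ generated by $\nabla u_\varepsilon$ as identified in Proposition \ref{prop:extended_gamma_convergence}, and then to estimate the resulting integral pointwise using the explicit form of $\nu_x$. Expanding
\[
|\nabla u_\varepsilon-p_0|^2=|\nabla u_\varepsilon|^2-2p_0\cdot\nabla u_\varepsilon+|p_0|^2,
\]
the linear term converges by weak $L^2$-convergence of $\nabla u_\varepsilon$ to $\nabla u$, while for the quadratic term the fundamental theorem of Young measures applies: the absence of a concentration part ($\lambda=0$), which is part of the conclusion of Proposition \ref{prop:extended_gamma_convergence}, is precisely the $L^1$-equi-integrability of $|\nabla u_\varepsilon|^2$, so that $|\nabla u_\varepsilon|^2\rightharpoonup \int_{\R^2}|v|^2\,d\nu_x(v)$ weakly in $L^1(\Omega)$. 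Combining the three contributions yields
\[
\lim_{\varepsilon\to 0}\int_{B_r(x_0)}|\nabla u_\varepsilon-p_0|^2\,dx=\int_{B_r(x_0)}\int_{\R^2}|v-p_0|^2\,d\nu_x(v)\,dx.
\]

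I then estimate the inner integral pointwise by splitting $B_r(x_0)$ into the two sets from Proposition \ref{prop:extended_gamma_convergence}. On $\{\partial_{x_1}u\neq 0\}$ one has $\nu_x=\delta_{\nabla u(x)}$ and hence $\int_{\R^2}|v-p_0|^2\,d\nu_x(v)=|\nabla u(x)-p_0|^2$. On $\{\partial_{x_1}u=0\}$ one has $\nu_x=\delta_0\otimes\hat\nu_x$, with $\hat\nu_x$ supported in $[-1,1]$ and of barycenter $\partial_{x_2}u(x)$. Writing $\sigma(x)^2:=\int_\R v_2^2\,d\hat\nu_x(v_2)$ and expanding the square in the second coordinate gives
\[
\int_{\R^2}|v-p_0|^2\,d\nu_x(v)=|\nabla u(x)-p_0|^2+\bigl(\sigma(x)^2-(\partial_{x_2}u(x))^2\bigr),
\]
where Jensen's inequality and the support condition guarantee $0\leq \sigma(x)^2-(\partial_{x_2}u(x))^2\leq 1-(\partial_{x_2}u(x))^2\leq 1$.

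Finally, on $\{\partial_{x_1}u=0\}$ there holds $|\nabla u(x)-p_0|^2\geq p_{0,1}^2>0$, so
\[
\sigma(x)^2-(\partial_{x_2}u(x))^2\leq 1\leq \frac{1}{p_{0,1}^2}|\nabla u(x)-p_0|^2.
\]
Together with the trivial identity on $\{\partial_{x_1}u\neq 0\}$ this gives pointwise on $B_r(x_0)$
\[
\int_{\R^2}|v-p_0|^2\,d\nu_x(v)\leq \Bigl(1+\frac{1}{p_{0,1}^2}\Bigr)|\nabla u(x)-p_0|^2,
\]
and integrating this over $B_r(x_0)$, dividing by $|B_r(x_0)|$, and combining with the Young measure limit above closes the argument. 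The only nontrivial step is the passage to the limit in the quadratic quantity $|\nabla u_\varepsilon|^2$ along a merely weakly converging sequence; this is exactly where the global absence of concentration furnished by the $\Gamma$-convergence is essential, and once it is in hand, the rest is the pointwise algebraic bookkeeping above.
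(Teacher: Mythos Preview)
Your proof is correct and follows essentially the same approach as the paper: both pass to the limit using the Young measure representation from Proposition \ref{prop:extended_gamma_convergence} (the paper approximates the indicator $\mathbf{1}_{B_r(x_0)}$ by continuous integrands, you equivalently invoke that $\lambda=0$ means $|\nabla u_\varepsilon|^2$ is equi-integrable), and then perform the identical pointwise splitting into $\{\partial_{x_1}u\neq 0\}$ and $\{\partial_{x_1}u=0\}$, bounding the variance term $\sigma(x)^2-(\partial_{x_2}u(x))^2\leq 1\leq p_{0,1}^{-2}|\nabla u(x)-p_0|^2$ on the latter set.
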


\begin{proof}
Using the associated Young measure $\boldsymbol{\nu}=((\nu_x)_{x\in\Omega},0,0)$ given in Proposition \ref{prop:extended_gamma_convergence}, and in particular the fact that the concentration measure $\lambda$ vanishes, which allows to approximate the non-continuous indicator function of $B_r(x_0)$ by continuous integrands, there holds 
\begin{align*}
   &\lim_{\varepsilon\to 0} \fint_{B_r(x_0)} |\nabla u_\varepsilon - p_0|^2 \, dx = \fint_{B_r(x_0)} \int_{\R^2} |v - p_0|^2 \, d\nu_x (v) \, dx \\ 
   &\phantom{=}= \fint_{B_r(x_0)\cap \set{\partial_{x_1}u=0}} \int_{-1}^1 |(0,v_2) - p_0|^2 \, d\hat\nu_x(v_2) \, dx+\fint_{B_r(x_0) \cap \set{\partial_{x_1}u\neq 0}} |\nabla u - p_0|^2\:dx \\ 
   &\phantom{=}=
   \fint_{B_r(x_0)\cap \set{\partial_{x_1}u=0}}  \int_{-1}^1 |v_2-p_{0,2}|^2-|\partial_{x_2}u(x)-p_{0,2}|^2 \, d\hat\nu_x(v_2) \, dx \\
   &\hspace{40pt}+\fint_{B_r(x_0)} |\nabla u - p_0|^2 \, dx .
\end{align*}
We further estimate
\begin{align*}
   &\fint_{B_r(x_0)\cap \set{\partial_{x_1}u=0}}  \int_{-1}^1 |v_2-p_{0,2}|^2-|\partial_{x_2}u(x)-p_{0,2}|^2 \, d\hat\nu_x(v_2) \, dx \\
   &\phantom{=}= \fint_{B_r(x_0)\cap \set{\partial_{x_1}u=0}} \int_{-1}^1 |v_2|^2-|\partial_{x_2}u(x)|^2 \, d\hat\nu_x(v_2) \, dx\leq \fint_{B_r(x_0)\cap \set{\partial_{x_1}u=0}}  \, dx  \\
   &\phantom{=}=\frac{1}{(p_{0,1})^2} \fint_{B_r(x_0)\cap \set{\partial_{x_1}u=0}} |\partial_{x_1} u(x)-p_{0,1}|^2 \, dx\leq \frac{1}{(p_{0,1})^2}\fint_{B_r(x_0)} |\nabla u - p_0|^2 \, dx ,
\end{align*}
which concludes the proof of the lemma.
\end{proof}

\begin{lemma}\label{lem:L_infty_bound_on_v_epsilon}
Let $p_0\in G$, $x_0\in\Omega$, $r>0$ with $B_r(x_0)\subset\Omega$ and 
\begin{equation}\label{eq:condition_on_r_and_p}
\abs{p_{0,1}}-4r\norm{\partial_z V}_{L^\infty(\Omega\times \R)}>0.
\end{equation}
There exists $\varepsilon_0\in(0,1)$ such that for any $\varepsilon\in(0,\varepsilon_0)$ there exists $u_0\in\R$ and $r_1\in (r/2,r)$, such that the rescaled functions $v_\varepsilon:B_1\rightarrow \R$,
\[
v_\varepsilon(x):=\frac{u_\varepsilon(x_0+r_1x)-u_0}{r_1}-p_0\cdot x
\]
satisfy 
\begin{equation}\label{eq:L_Infty_bound_for_v_epsilon_lemma}
    \norm{v_\varepsilon}_{L^\infty(B_1)}\leq C\left(\fint_{B_r(x_0)}\abs{\nabla u-p_0}^2\:dx\right)^{\frac{1}{2}}+2r\norm{\partial_z V}_{L^\infty(\Omega\times \R)}
\end{equation}
for a constant $C>0$ depending only on $(p_{0,1})^{-1}$.
\end{lemma}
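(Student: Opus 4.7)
The plan is to combine a Fubini-type selection of $r_1$, a choice of $u_0$ realizing zero average on $\partial B_{r_1}(x_0)$, and a concave quadratic barrier in the $x_1$-direction which exploits that $\partial_{p_1}^2 F_\varepsilon$ is bounded below by $\tfrac14$ uniformly in $\varepsilon$ as long as $\abs{(\nabla\phi)_2}\leq 1$. By Fubini applied to the radial integration over $(r/2,r)$ there exists $r_1\in(r/2,r)$ with
\[
\int_{\partial B_{r_1}(x_0)}\abs{\nabla u_\varepsilon-p_0}^2\,dH^1\leq \tfrac{2}{r}\int_{B_r(x_0)}\abs{\nabla u_\varepsilon-p_0}^2\,dy,
\]
and I set $u_0:=\fint_{\partial B_{r_1}(x_0)}(u_\varepsilon-p_0\cdot(\cdot-x_0))\,dH^1$ and $\tilde w(x):=u_\varepsilon(x)-u_0-p_0\cdot(x-x_0)$. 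Parametrizing the circle by $\theta\mapsto x_0+r_1 e^{i\theta}$ and applying Cauchy--Schwarz yields
\[
\osc_{\partial B_{r_1}(x_0)}\tilde w\leq \sqrt{2\pi r_1}\bigg(\int_{\partial B_{r_1}(x_0)}\abs{\nabla u_\varepsilon-p_0}^2\,dH^1\bigg)^{1/2}\leq 2\pi r\bigg(\fint_{B_r(x_0)}\abs{\nabla u_\varepsilon-p_0}^2\,dy\bigg)^{1/2};
\]
thanks to $\fint_{\partial B_{r_1}(x_0)}\tilde w\,dH^1=0$ the same bound controls $\sup_{\partial B_{r_1}(x_0)}\abs{\tilde w}$.

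Next, set $K:=\norm{\partial_z V}_{L^\infty(\Omega\times\R)}$, fix $\delta>0$ small, and consider the quadratic $x_1$-barriers
\[
\phi^\pm(x):=u_0+p_0\cdot(x-x_0)\pm M\mp\tfrac{\beta}{2}(x_1-x_{0,1})^2,\qquad \beta:=4K+\delta,
\]
with $M:=\sup_{\partial B_{r_1}(x_0)}\abs{\tilde w}+\tfrac{\beta r_1^2}{2}$. By construction $\phi^-\leq u_\varepsilon\leq \phi^+$ on $\partial B_{r_1}(x_0)$. Hypothesis \eqref{eq:condition_on_r_and_p} gives $\beta r_1<\abs{p_{0,1}}$ for $\delta$ small, so the gradient $\nabla\phi^\pm(x)=p_0\mp\beta(x_1-x_{0,1})e_1$ stays inside a compact subset $S$ of $G:=\set{p\in\R^2:p_1\neq 0,\abs{p_2}<1}$; provided $\varepsilon_0$ is chosen small we have $S\subset K^\varepsilon$, so Lemma~\ref{lem:first_extension_of_F_epsilon} gives $\hat F_\varepsilon=F_\varepsilon$ along $\nabla\phi^\pm$ and then $\partial_{p_1}^2 F_\varepsilon(\nabla\phi^\pm)=((1+\varepsilon)^2-p_{0,2}^2)^{-1}\geq\tfrac14$. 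Consequently $D^2\hat F_\varepsilon(\nabla\phi^+):D^2\phi^+=-\beta\partial_{p_1}^2 F_\varepsilon(\nabla\phi^+)\leq-K-\tfrac\delta4$, and, together with $\abs{\partial_z V}\leq K$, this makes $\phi^+$ a strict classical supersolution of \eqref{eq:euler_lagrange_equ_epsilon}; analogously $\phi^-$ is a strict subsolution.

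The comparison argument is then standard: if $u_\varepsilon-\phi^+$ attained a positive interior maximum at some $x^*\in B_{r_1}(x_0)$, then $\nabla u_\varepsilon(x^*)=\nabla\phi^+(x^*)$ and $D^2 u_\varepsilon(x^*)\leq D^2\phi^+(x^*)$, so positive semidefiniteness of $D^2\hat F_\varepsilon(\nabla u_\varepsilon(x^*))$ yields $-\partial_z V(x^*,u_\varepsilon(x^*))=D^2\hat F_\varepsilon:D^2 u_\varepsilon(x^*)\leq D^2\hat F_\varepsilon:D^2\phi^+(x^*)\leq -K-\tfrac\delta4$, contradicting $\abs{\partial_z V}\leq K$; the symmetric argument handles $\phi^-$. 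Hence $\abs{\tilde w(x)}\leq M$ throughout $B_{r_1}(x_0)$, and the rescaling $y=x_0+r_1 x$ gives $r_1 v_\varepsilon(x)=\tilde w(y)$, whence
\[
\norm{v_\varepsilon}_{L^\infty(B_1)}\leq\tfrac{M}{r_1}\leq\tfrac{1}{r_1}\sup_{\partial B_{r_1}(x_0)}\abs{\tilde w}+\tfrac{\beta r_1}{2}\leq 4\pi\bigg(\fint_{B_r(x_0)}\abs{\nabla u_\varepsilon-p_0}^2\,dy\bigg)^{1/2}+2Kr+\tfrac{\delta r}{2},
\]
using $r_1\in(r/2,r)$. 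Letting $\delta\downarrow 0$ and shrinking $\varepsilon_0$ if necessary so that Lemma~\ref{lem:ourcase} gives $\fint_{B_r(x_0)}\abs{\nabla u_\varepsilon-p_0}^2\,dy\leq 2(1+p_{0,1}^{-2})\fint_{B_r(x_0)}\abs{\nabla u-p_0}^2\,dy$ for $\varepsilon<\varepsilon_0$, I obtain \eqref{eq:L_Infty_bound_for_v_epsilon_lemma} with a constant $C$ depending only on $p_{0,1}^{-1}$. The delicate point I expect is the barrier construction: only the $p_1$-direction offers $\varepsilon$-uniform ellipticity of $F_\varepsilon$ near $p_0\in G$ (while $\partial_{p_2}^2F_\varepsilon$ blows up as $\abs{p_{0,2}}\to 1$), which forces the barrier to depend on $x_1$ alone and explains the appearance of $\abs{p_{0,1}}$ in \eqref{eq:condition_on_r_and_p}.
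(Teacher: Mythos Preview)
Your proof is correct and takes essentially the same route as the paper: a Fubini selection of $r_1\in(r/2,r)$ with good boundary energy, the choice of $u_0$ as the circle average, boundary control via the arc-length integral (the paper phrases this as Morrey's inequality on $\partial B_1$), and then a concave quadratic barrier in the $x_1$-direction combined with the comparison principle, followed by Lemma~\ref{lem:ourcase} to replace $\nabla u_\varepsilon$ by $\nabla u$. The only differences are cosmetic: the paper works directly in the rescaled ball $B_1$ and uses the sharper barrier constant $A_0=\norm{\partial_z V}_\infty\bigl((1+\varepsilon_0)^2-p_{0,2}^2\bigr)$ in place of your cruder $\beta=4K$, but both lead to the same $2r\norm{\partial_z V}_\infty$ term in \eqref{eq:L_Infty_bound_for_v_epsilon_lemma}.
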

\begin{proof} For $p_0$ and $B_r(x_0)$ as stated we set
$$E:=\fint_{B_r(x_0)} |\nabla u - p_0|^2\:dx.$$
We assume $E>0$, otherwise our main goal, Proposition \ref{prop:parreg}, is trivial.
By Lemma \ref{lem:ourcase} there exists $\varepsilon_0>0$ small enough such that
$$\fint_{B_r(x_0)} |\nabla u_\varepsilon - p_0|^2\:dx<C_0E,$$
for  $\varepsilon\in(0,\varepsilon_0)$, where $C_0:=2+\frac{1}{(p_{0,1})^2}$. Moreover, in view of condition \eqref{eq:condition_on_r_and_p} we can, after shrinking $\varepsilon_0>0$, also assume that for $\varepsilon\in (0,\varepsilon_0)$ the extension $\hat F_\varepsilon$ coincides with the original approximation $F_\varepsilon$ in a neighborhood of the segment \begin{equation}\label{eq:sec5_good_segment}
I:=\left[p_0-4r\norm{\partial_z V}_\infty e_1,p_0+4r\norm{\partial_z V}_\infty e_1\right],
\end{equation}
which is compactly contained in the good set $G$.

It follows that there exists $r_1\in(r/2,r)$ such that
$$\frac{r}{2}\int_{\partial B_{r_1}(x_0)} |\nabla u_\varepsilon - p_0|^2\:dS\leq \int_{B_r(x_0)} |\nabla u_\varepsilon - p_0|^2\:dx<C_0E r^2\pi,$$
hence
$$\frac{1}{2\pi r_1}\int_{\partial B_{r_1}(x_0)} |\nabla u_\varepsilon - p_0|^2\:dS<2C_0E.$$
Then, defining $v_\varepsilon$ as stated with $u_0:=\fint_{\partial B_{r_1}(x_0)}u_\varepsilon\:dS$ and $r_1$ as chosen before, one obtains that
$$\fint_{\partial B_1} |\nabla v_\varepsilon|^2\:dS<2C_0E.$$
By Morrey's inequality for instance, we get
\begin{align}\label{eq:vmorrey}
    \|v_\varepsilon\|_{L^\infty(\partial B_1)}\lesssim \sqrt{E}
\end{align}
with a proportionality constant depending only on $(p_{0,1})^{-1}$.

Now define 
\[
A_0:=\|\partial_z V\|_\infty((1+\varepsilon_0)^2-(p_{0,2})^2),
\]
and note that for any $\varepsilon\in(0,\varepsilon_0)$, since $\partial^2_{1}\hat F_\varepsilon(p_0)=\partial^2_1 F_\varepsilon(p_0)=\frac{1}{(1+\varepsilon)^2-(p_{0,2})^2}$, we have
\begin{equation}\label{eq:sec5_partial_F}
\partial^2_{1}\hat F_\varepsilon(p_0)  A_0 >\|\partial_z V\|_\infty.
\end{equation}

Finally, we want to show that
\begin{align}\label{eq:vlinfty}
    \norm{v_\varepsilon}_{L^\infty(B_1)}\leq \|v_\varepsilon\|_{L^\infty(\partial B_1)}+\frac{r_1A_0}{2}.
\end{align}
We argue by contradiction, suppose that
\[
\eta:=\max_{B_1}\set{v_\varepsilon-\|v_\varepsilon\|_{L^\infty(\partial B_1)}-\frac{r_1A_0}{2}(1-x_1^2)}>0.
\]
Then, the function $\bar{\phi}:B_1\rightarrow\R$,
\[
\bar\phi(x):=\eta+\|v_\varepsilon\|_{L^\infty(\partial B_1)}+\frac{r_1A_0}{2}(1-x_1^2)
\] 
touches $v_\varepsilon$ from above at some point $\bar x\in B_1$. Since $v_\varepsilon$ is a viscosity solution to \eqref{eq:vvisc}, there must hold
\begin{align*}
    0&\leq  D^2\hat F_\varepsilon(p_0+\nabla \bar\phi(\bar{x})):D^2\bar\phi(\bar{x})+r_1\partial_zV(x_0+r_1\bar x,u_0+r_1p_0\cdot\bar x+r\bar\phi(\bar{x}))\\
    &\leq -\partial^2_{1} \hat F_\varepsilon(p_0-r_1A_0\bar{x}_1e_1)r_1A_0+r_1\|\partial_z V\|_\infty=r_1\left(-\partial_{p_1}^2F_\varepsilon(p_0)A_0+\norm{\partial_z V}_\infty\right),
\end{align*}
which contradicts \eqref{eq:sec5_partial_F}. Note here that in the last step we have made use of the fact that $p_0-r_1A_0\bar{x}_1e_1$ lies on the segment $I$ defined in \eqref{eq:sec5_good_segment}.

A similar contradiction is obtained if one assumes that 
\[
\min_{B_1}\set{v_\varepsilon+\|v_\varepsilon\|_{L^\infty(\partial B_1)}+\frac{r_1A_0}{2}(1-x_1^2)}<0.
\]
Therefore, combining \eqref{eq:vmorrey} and \eqref{eq:vlinfty} we obtain the inequality stated in \eqref{eq:L_Infty_bound_for_v_epsilon_lemma}.
\end{proof}

The next Lemma will help us to set up a family of functionals which satisfies the conditions of Theorem \ref{thm:savin}.

\begin{lemma}\label{lem:implicit_function_argument}
Let $p_0\in G$ and $\alpha\in(0,1)$. There exists $\varepsilon_1\in(0,1)$ and $r_2>0$ such that for any $u_0\in\R$, $x_0\in\R^2$, $r\in(0,r_2)$ with $B_r(x_0)\subset\Omega$, $\varepsilon\in [0,\varepsilon_1]$ the boundary value problem
\begin{align}\label{eq:implicit_function_BVP}
    \begin{cases}
    D^2\hat F_\varepsilon(p_0+\nabla \phi):\nabla^2\phi+r\partial_z V(x_0+rx,u_0+rp_0\cdot x+r\phi)=0&in~B_1,\\
    \phi=0,&on~\partial B_1,
    \end{cases}
\end{align}
has a $\cC^{2,\alpha}$ solution $\phi_r^{\varepsilon,x_0,u_0,p_0}$ satisfying
$\norm{\phi_r^{\varepsilon,x_0,u_0,p_0}}_{\cC^{2,\alpha}(\overline{B}_1)}\rightarrow 0$ as $r\rightarrow 0$ uniformly in $(\varepsilon,x_0,u_0)\in[0,\varepsilon_1]\times K$ for any $K\subset \subset \Omega\times \R$.
\end{lemma}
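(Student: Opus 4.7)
I would apply the implicit function theorem in the Banach space
\[
\cC^{2,\alpha}_0(\overline{B}_1):=\{\phi\in\cC^{2,\alpha}(\overline{B}_1):\phi|_{\partial B_1}=0\},
\]
uniformly in the parameters $(\varepsilon,x_0,u_0)\in[0,\varepsilon_1]\times K$, with $r$ playing the role of a scalar perturbation.

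The first step is to record uniform structural properties of $\hat F_\varepsilon$ near $p_0$. Since $p_{0,1}\neq 0$ and $\abs{p_{0,2}}<1$, there are constants $\varepsilon_1\in(0,1)$ and $\rho>0$ such that $B_{2\rho}(p_0)\subset K^\varepsilon$ (see \eqref{eq:definition_of_K_epsilon}) for every $\varepsilon\in(0,\varepsilon_1]$, and hence $\hat F_\varepsilon\equiv F_\varepsilon$ on $B_{2\rho}(p_0)$ by Lemma \ref{lem:first_extension_of_F_epsilon}. The explicit formulas \eqref{eq:feps}, \eqref{eq:Fepsder} show that $F_\varepsilon\to F$ in $\cC^3(\overline{B_{2\rho}(p_0)})$ as $\varepsilon\to 0^+$, and $D^2F(p_0)$ is positive definite. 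Extending by $\hat F_0:=F$ on this neighborhood and shrinking $\varepsilon_1,\rho$ if necessary, I obtain a uniform estimate $\mu I\leq D^2\hat F_\varepsilon(p)\leq\mu^{-1}I$ for all $\varepsilon\in[0,\varepsilon_1]$, $p\in\overline{B_{2\rho}(p_0)}$, with $\mu>0$ absolute, together with uniform $\cC^3$ bounds on $\hat F_\varepsilon$ there.

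I then define
\begin{align*}
\Phi(\varepsilon,x_0,u_0,r,\phi)(x):=D^2\hat F_\varepsilon\bigl(p_0+\nabla\phi(x)\bigr):\nabla^2\phi(x)+r\,\partial_zV\bigl(x_0+rx,\,u_0+rp_0\cdot x+r\phi(x)\bigr),
\end{align*}
considered on the open subset where $\|\nabla\phi\|_{L^\infty}<\rho$ and $x_0+rx\in\overline{\Omega}$. Using the uniform $\cC^3$ bound and \eqref{eq:condition_V1}, standard Nemytskii estimates show that $\Phi$ is jointly $\cC^1$ into $\cC^{0,\alpha}(\overline{B}_1)$ and vanishes identically at $r=0$, $\phi=0$. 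Its partial derivative in $\phi$ at that configuration is the constant-coefficient elliptic operator $L_\varepsilon\psi:=D^2\hat F_\varepsilon(p_0):\nabla^2\psi$, which by the classical Schauder theory (cf.\ \cite{Gilbarg_Trudinger}) is an isomorphism from $\cC^{2,\alpha}_0(\overline{B}_1)$ onto $\cC^{0,\alpha}(\overline{B}_1)$ with $\|L_\varepsilon^{-1}\|$ bounded uniformly in $\varepsilon\in[0,\varepsilon_1]$.

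Applying the quantitative (contraction-mapping) form of the implicit function theorem then produces $r_2>0$ and a continuous map $(\varepsilon,x_0,u_0,r)\mapsto\phi_r^{\varepsilon,x_0,u_0,p_0}$ on $[0,\varepsilon_1]\times K\times[0,r_2]$ solving \eqref{eq:implicit_function_BVP}, together with an estimate of the form
\[
\bigl\|\phi_r^{\varepsilon,x_0,u_0,p_0}\bigr\|_{\cC^{2,\alpha}(\overline{B}_1)}\leq C\,r,
\]
and $C$ independent of $(\varepsilon,x_0,u_0)$ in the compact set $[0,\varepsilon_1]\times K$. This yields the uniform convergence to $0$ as $r\to 0$. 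The only delicate point is verifying the uniformity of the IFT in the parameter $(\varepsilon,x_0,u_0)$; this follows from the uniform control of $\|L_\varepsilon^{-1}\|$ established above, the compactness of $K$ (which provides a positive lower bound on $\dist(x_0,\partial\Omega)$ and caps the Lipschitz moduli of $\partial_zV$ in the arguments it encounters), and the joint $\cC^1$ dependence of $\Phi$ on all its arguments.
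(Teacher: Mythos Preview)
Your proof is correct and follows essentially the same route as the paper: reduce $\hat F_\varepsilon$ to $F_\varepsilon$ near $p_0$, set up the nonlinear map $\Phi$ (the paper calls it $\fF^a_{p_0}$), linearize at $(r,\phi)=(0,0)$ to obtain the constant-coefficient elliptic operator $D^2F_\varepsilon(p_0):\nabla^2\psi$ with uniformly bounded Schauder inverse, and apply a quantitative implicit function theorem together with compactness in the remaining parameters. One minor imprecision: under \eqref{eq:condition_V1} the potential $V$ is only Lipschitz in $x$, so $\Phi$ is not literally jointly $\cC^1$ in $(x_0,r)$; the paper accordingly only asserts continuity together with Fr\'echet differentiability in $\phi$, which is all the implicit function theorem requires here (and your linear bound $\|\phi_r\|\leq Cr$ still follows from the direct estimate $\|\Phi(\cdot,r,0)\|_{\cC^{0,\alpha}}\leq Cr$).
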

\begin{proof}
Let $p_0\in G$ and $\alpha\in(0,1)$ be fixed. We first of all pick $\varepsilon_1>0$ and $\eta_0>0$ such that $\overline{B_{\eta_0}(p_0)}\subset G$ and $\hat F_\varepsilon(p_0+p)=F_\varepsilon(p_0+p)$ for all $\varepsilon\in (0,\varepsilon_1]$, $\abs{p}\leq \eta_0$.

Next let 
\[
\cB:=\set{\phi\in \cC^{2,\alpha}(\overline{B}_1):\phi|_{\partial B_1}=0,~\norm{\phi}_{\cC^{2,\alpha}(\overline{B}_1)}< \eta_0}
\]
and consider for $R>0$ the family of maps $\fF^{a}_{p_0}:[0,R)\times\cB\rightarrow \cC^{0,\alpha}(\overline{B}_1)$,
\begin{align*}
\fF_{p_0}^a(r,\phi)(x)=D^2 F_\varepsilon(p_0+\nabla \phi(x))&:\nabla^2\phi(x)+r\partial_z V(x_0+rx,u_0+rp_0\cdot x+r\phi(x)),
\end{align*}
where $a$ is an abbreviation for the tuple of parameters $a:=(\varepsilon,x_0,u_0)$ satisfying $\varepsilon\in [0,\varepsilon_1]$, $x_0\in\Omega$, $\dist(x_0,\partial\Omega)> R$, $u_0\in\R$. Observe that $\fF^a_{p_0}$ is well-defined and that for $\varepsilon>0$ the equation $\fF^a_{p_0}(r,\phi)=0$ holds true if and only if $\phi$ solves the boundary value problem \eqref{eq:implicit_function_BVP}.

Since $F_\varepsilon$ is smooth on the closure of $B_{\eta_0}(p_0)$ and by \eqref{eq:condition_V1}, one can check that 
$\fF^a_{p_0}$ is continuous and Fr\'echet-differentiable with respect to $\phi$, and that the corresponding derivative $D_\phi \fF^a_{p_0}:[0,\infty)\times\cB \rightarrow \cL\left(\set{\psi\in\cC^{2,\alpha}(\overline{B}_1):\psi|_{\partial B_1}=0};\cC^{0,\alpha}(\overline{B}_1)\right)$ is continuous. For later use we also like to point out that not only each $D_\phi \fF^a_{p_0}$ is continuous as a function of $(r,\phi)$, but that also the joint function  
\[
[0,\varepsilon_1]\times\set{\dist(x,\partial\Omega)>R}\times \R\times[0,R)\times\cB\ni (\varepsilon,x_0,u_0,r,\phi)\mapsto D_\phi \fF^{(\varepsilon,x_0,u_0)}_{p_0}(r,\phi)
\]
is continuous. The same is true for $\fF_{p_0}$ itself.

Moreover, there clearly holds $\fF^a_{p_0}(0,0)=0$ for any considered parameter triple $a$ and $\cL^a_{p_0}:=D_\phi\fF^a_{p_0}(0,0):\set{\psi\in\cC^{2,\alpha}(\overline{B}_1):\psi|_{\partial B_1}=0}\rightarrow \cC^{0,\alpha}(\overline{B}_1)$ is given by
\[
\cL^a_{p_0}[\psi](x)=D^2F_\varepsilon(p_0):\nabla^2\psi(x). 
\]
By Schauder theory, cf. \cite{Gilbarg_Trudinger}, we see that $\cL^a_{p_0}$ is an isomorphism with 
\begin{equation}\label{eq:ift_schauder_estimate}
    \norm{(\cL^a_{p_0})^{-1}[\theta]}_{\cC^{2,\alpha}(\overline{B}_1)}\leq C\norm{\theta}_{\cC^{0,\alpha}(\overline{B}_1)}
\end{equation}
for all $\theta\in \cC^{0,\alpha}(\overline{B}_1)$ and a constant $C>0$ depending only on the ellipticity constants of $D^2F_\varepsilon(p_0)$. Therefore, since $p_0\in G$ is a good point, this constant can be chosen independently of $\varepsilon\in [0,\varepsilon_1]$. Thus \eqref{eq:ift_schauder_estimate} holds with a constant $C>0$ independent of the considered parameter triple $a=(\varepsilon,x_0,u_0)$. 

Hence by using a quantitative version of the implicit function theorem we conclude the existence of $r_2\in(0,R)$ depending solely on $p_0$, as well as for every $a$ a continuous family $[0,r_2)\rightarrow \cB$, $r\mapsto \phi_r
^a$ with $\phi_0^a=0$ satisfying $\fF_{p_0}^a(r,\phi_r^a)=0$, $r\in[0,r_2)$. 

Now it only remains to show that the convergence $\phi_r^a\rightarrow 0$ in $\cC^{2,\alpha}(\overline{B}_1)$ as $r\rightarrow 0$ is in fact uniform in $a=(\varepsilon,x_0,u_0)$ for $\varepsilon\in[0,\varepsilon_1]$ and $(x_0,u_0)$ taken from a compact subset of $\Omega\times \R$. First of all note that there exists $R>0$ such that the above defined map $\fF_{p_0}^a$ is well-defined for any $(x_0,u_0)\in K$.

The uniform convergence then follows from the fact that, as observed earlier, the maps $\fF_{p_0}$, $D_\phi\fF_{p_0}$ are continuous as functions of $(\varepsilon,x_0,u_0,r,\phi)$, such that the implicit function theorem also provides us with the continuity of the joint map $[0,\varepsilon_1]\times\set{\dist(x,\partial\Omega)\geq R}\times\R\times[0,r_2)\rightarrow\cB$, 
\[
(\varepsilon,x_0,u_0,r)\mapsto \phi_r^{(\varepsilon,x_0,u_0)}.
\]
Now the stated uniform convergence is a direct consequence of the compactness of the set $[0,\varepsilon_1]\times K\times\{0\}$.
\end{proof}

\begin{proof}[Proof of Proposition \ref{prop:parreg}]
We fix $p_0\in G$. For $\varepsilon\in (0,\varepsilon_1)$, $r\in (0,r_2)$, $x_0\in\R^2$, $B_r(x_0)\subset\Omega$ and $u_0\in\R$ as in Lemma \ref{lem:implicit_function_argument} we consider the family of nonlinear maps $\cF_r^{\varepsilon,x_0,u_0}:\cS^{2\times 2}\times\R^2\times \R\times B_1\rightarrow\R$ defined by
\begin{align*}
\cF_r^{\varepsilon,x_0,u_0}(M,p,z,x):=D^2\hat{F}_\varepsilon&(p_0+p+\nabla \phi_r^a(x)):(M+D^2\phi_r^a(x))\\&+r\partial_zV(x_0+rx,u_0+rp_0\cdot x+rz+r\phi^a_r(x)).
\end{align*}
Here we have denoted by $\phi_r^a:=\phi_r^{\varepsilon,x_0,u_0,p_0}$ the $\cC^{2,\alpha}$ solution of \eqref{eq:implicit_function_BVP} provided by Lemma \ref{lem:implicit_function_argument}.

We will now show that conditions (H1), (H2) and  (H4) from Theorem \ref{thm:savin}, condition (H4$^\prime$) from Remark \ref{rem:savin} resp., are satisfied for $\cF_r^{\varepsilon,x_0,u_0}$.
Indeed, condition (H1) in fact holds even globally, while (H2) holds true with a constant $\bar{\delta}>0$ independent of $r,\varepsilon,x_0,u_0$ provided $(x_0,u_0)$ is restricted to a compact subset of $\Omega\times \R$ and $r,\varepsilon$ are chosen small enough. That this is possible is a consequence of $p_0$ being a good point and Lemma \ref{lem:implicit_function_argument}.

Furthermore, we see that $\cF_r^{\varepsilon,x_0,u_0}(0,0,0,x)=0$ by \eqref{eq:implicit_function_BVP} and the partial second derivative $D^2_{(M,p,z)}F_r^{\varepsilon,x_0,u_0}$ is bounded on a neighborhood of $\set{(0,0,0,x):x\in B_1}$. The size of the neighborhood and the bound can be chosen with the same (in)-dependencies as $\bar{\delta}>0$ above in (H2). Moreover, in view of \eqref{eq:condition_V1} and Lemma \ref{lem:implicit_function_argument} we see that any $\cF^{\varepsilon,x_0,u_0}_r$ is H\"older continuous with any exponent $\beta\in(0,1)$, where for fixed $\beta$ the corresponding H\"older norm can again assumed to be bounded uniformly in $r,\varepsilon,x_0,u_0$ for $(x_0,u_0)$ from a compact set and $r,\varepsilon$ small enough.
We therefore also have property (H4$^\prime$).

We may then apply Theorem \ref{thm:savin} to $\cF_r^{\varepsilon,x_0,u_0}$ and obtain a constant $c_1>0$ independent of $\varepsilon,r$ small and $(x_0,u_0)$ from a compact subset of $\Omega\times \R$ having the property that any viscosity solution $v:B_1\rightarrow\R$ of $\cF_r^{\varepsilon,x_0,u_0}=0$ with $\norm{v}_{L^\infty(B_1)}\leq c_1$ belongs to $\cC^{2,\alpha}(B_{1/2})$ and $\norm{v}_{\cC^{2,\alpha}(B_{1/2})}\leq \bar{\delta}$.

Let us now also fix $x_0\in \Omega$ and consider for $\varepsilon,r>0$ small, such that the above conclusions hold true, as well as the conclusions of Lemma \ref{lem:L_infty_bound_on_v_epsilon}, the function $w_\varepsilon:B_1\rightarrow\R$,
\begin{align*}
w_\varepsilon(x):=v_\varepsilon(x)-\phi_{r_1}^{\varepsilon,x_0,u_0,p_0}(x)=\frac{u_\varepsilon(x_0+r_1x)-u_0}{r_1}-p_0\cdot x-\phi_{r_1}^{\varepsilon,x_0,u_0,p_0}(x),
\end{align*}
where $r_1\in(r/2,r)$, $u_0\in\R$ are given by Lemma \ref{lem:L_infty_bound_on_v_epsilon}.

Then $w_\varepsilon$ satisfies 
$
\cF_{r_1}^{\varepsilon,x_0,u_0}(D^2w_\varepsilon,\nabla w_\varepsilon,w_\varepsilon,x)=0
$
and by Lemma \ref{lem:L_infty_bound_on_v_epsilon} there holds
\begin{align*}
    \norm{w_\varepsilon}_{L^\infty(B_1)}\leq C\left(\fint_{B_r(x_0)}\abs{\nabla u-p_0}^2\:dx\right)^{\frac{1}{2}}+2r&\norm{\partial_z V}_{L^\infty(\Omega\times \R)}\\
    &\phantom{nnnnnn}+\norm{\phi_{r_1}^{\varepsilon,x_0,u_0,p_0}}_{L^\infty(B_1)}
\end{align*}
for a constant $C>0$ depending only on $p_0$.
In view of Lemma \ref{lem:implicit_function_argument} we therefore reach
$$\|w_\varepsilon\|_{L^\infty(B_1)}\leq c_1 $$
by assuming that $r>0$ and $\fint_{B_r(x_0)}\abs{\nabla u-p_0}^2\:dx$ are small enough.
Therefore, we may conclude that $(w_\varepsilon)_{\varepsilon}$ is bounded in $\cC^{2,\alpha}(B_{1/2})$.

It then follows that $(u_\varepsilon)_{\varepsilon}$ is bounded in $\cC^{2,\alpha}(B_{r/2}(x_0))$, and hence converges to the limit $u$ in $\cC^{2,\alpha-}(B_{r/2}(x_0))$. This finishes the proof of Proposition \ref{prop:parreg}.
\end{proof}

\section{Further properties}\label{sec_further_properties}
Here we collect some additional properties for our minimizer that are important in relation to the role of $\nabla u$ as a subsolution to the Boussinesq equation. Throughout this section we consider again $(u_\varepsilon)_\varepsilon$ and $u$ as in Proposition \ref{prop:extended_gamma_convergence}.

\subsection{Topology of $\Omega'$}\label{sec:property_of_good_set}

We begin by noting that Lemma \ref{lem:good_set_property} is a direct consequence of the one-sided maximum principle in Lemma \ref{lem:good_set_property_epsilon} and the uniform convergence in Proposition \ref{prop:u_continuous}.

\subsection{Energy balance}\label{sec_energy} The Young measure representation in Proposition \ref{prop:gamma_convergence_for_minimizers} allows us to pass to the limit in the energy balance in Lemma \ref{lem:energy_conservation_for_approximation}.
\begin{lemma}\label{lem:energy_balance}
The measure $(\partial_{p_1}\hat{F}_\varepsilon(\nabla u_\varepsilon(x))\partial_{x_1} u_\varepsilon(x)-\hat{F}_\varepsilon(\nabla u_\varepsilon(x)))\:dx$ converges weakly to $F(\nabla u(x))\:dx$. In particular, if \eqref{eq:V1prime} holds true, then
\begin{equation}\label{eq:energy_balance}
    \frac{d}{dx_1}\int_{-L}^LF(\nabla u(x))+V(x_2,u(x))\:dx_2=0
\end{equation}
in the weak sense.
\end{lemma}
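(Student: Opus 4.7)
My plan splits into two steps matching the two assertions. Throughout, set $\Phi_\varepsilon(p):= \partial_{p_1}\hat F_\varepsilon(p)\,p_1 - \hat F_\varepsilon(p)$; the basic quantitative tool is
\begin{equation*}
-\varepsilon-\hat F_\varepsilon(p)\leq \Phi_\varepsilon(p)\leq 2\hat F_\varepsilon(p),\qquad p\in\R^2,
\end{equation*}
which is immediate from Lemma \ref{lem:first_extension_of_F_epsilon}~(iii). Together with $\sup_\varepsilon\int_\Omega \hat F_\varepsilon(\nabla u_\varepsilon)\,dx<\infty$ this bounds the total variation of the signed measures $\Phi_\varepsilon(\nabla u_\varepsilon)\,dx$ uniformly in $\varepsilon$.

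For the weak measure convergence I would localize on the open set $\Omega'$ provided by Theorem \ref{thm:existence_and_part_regularity}, where $\nabla u$ takes values in the good set $G:=\set{p\in\R^2:p_1\neq 0,\ \abs{p_2}<1}$ and $u$ is $\cC^2$. Given $\varphi\in C(\overline\Omega)$ and $\delta>0$, pick a compact $K_\delta\subset \Omega'$ with $\int_{\Omega'\setminus K_\delta}F(\nabla u)\,dx<\delta$. The proof of Proposition \ref{prop:parreg}, applied locally around every $x_0\in K_\delta$ with $p_0=\nabla u(x_0)$, gives $u_\varepsilon\to u$ in $C^{2,\alpha-}$ on $K_\delta$; hence $\nabla u_\varepsilon\to \nabla u$ uniformly on $K_\delta$ with values eventually in a common compact subset of $G$, on which $\hat F_\varepsilon=F_\varepsilon$ and its first partials converge uniformly to $F$. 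Consequently $\Phi_\varepsilon(\nabla u_\varepsilon)\to F(\nabla u)$ and $\hat F_\varepsilon(\nabla u_\varepsilon)\to F(\nabla u)$ uniformly on $K_\delta$, which handles the integrals on $K_\delta$.

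Off $K_\delta$ I would exploit that $F(\nabla u)\equiv 0$ a.e.\ on $\Omega\setminus\Omega'$ (Theorem \ref{thm:existence_and_part_regularity}) combined with the global convergence \eqref{eq:convergence_of_kinetic_energy_alone}: subtracting the $K_\delta$-integral from the total yields $\int_{\Omega\setminus K_\delta}\hat F_\varepsilon(\nabla u_\varepsilon)\,dx\to \int_{\Omega\setminus K_\delta} F(\nabla u)\,dx\leq \delta$, and the bound on $\abs{\Phi_\varepsilon}$ above then gives $\limsup_\varepsilon\bigl|\int_{\Omega\setminus K_\delta}\varphi\,\Phi_\varepsilon(\nabla u_\varepsilon)\,dx\bigr|\leq 3\norm{\varphi}_{L^\infty}\delta$, while trivially $\bigl|\int_{\Omega\setminus K_\delta}\varphi\,F(\nabla u)\,dx\bigr|\leq\norm{\varphi}_{L^\infty}\delta$. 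Summing both regions and letting $\delta\to 0$ concludes the weak convergence of measures.

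For the second claim, under \eqref{eq:V1prime} Lemma \ref{lem:energy_conservation_for_approximation} says $H_\varepsilon(x_1):=\int_{-L}^L\bigl[\Phi_\varepsilon(\nabla u_\varepsilon)+V(x_2,u_\varepsilon)\bigr]\,dx_2$ equals a constant $c_\varepsilon$ on $(0,T)$. Testing with arbitrary $\eta\in C_c^\infty(0,T)$, the function $\varphi(x):=\eta(x_1)$ lies in $C(\overline\Omega)$, so the measure convergence from Step 1 together with the uniform convergence $u_\varepsilon\to u$ of Proposition \ref{prop:u_continuous} (using \eqref{eq:condition_V1}) yields
\begin{equation*}
c_\varepsilon\int_0^T \eta\,dx_1\longrightarrow \int_0^T \eta(x_1)H(x_1)\,dx_1,\qquad H(x_1):=\int_{-L}^L\bigl[F(\nabla u)+V(x_2,u)\bigr]\,dx_2.
\end{equation*}
Taking $\eta$ with $\int_0^T\eta=1$ fixes $c_\varepsilon\to c$ for some $c\in\R$, and varying $\eta$ then forces $H\equiv c$ a.e.\ on $(0,T)$, equivalently \eqref{eq:energy_balance}. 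The hard part is Step 1: $\Phi_\varepsilon$ depends on $\varepsilon$ and $F$ is unbounded as $\abs{p_2}\to 1$, so one cannot simply insert $\Phi_\varepsilon$ into the Young measure representation of Proposition \ref{prop:extended_gamma_convergence}; the partial regularity inside $\Omega'$ together with the convergence of the total action has to compensate for the absence of usable equi-integrability.
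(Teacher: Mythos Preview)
Your proof is correct, and the second step is essentially identical to the paper's. In Step~1, however, you take a genuinely different route. The paper localizes in \emph{momentum space}: it fixes a cutoff $\eta\in C_c(\{|p_2|<1\})$, writes $I_\varepsilon=\int_\Omega\Phi_\varepsilon(\nabla u_\varepsilon)\psi\,\eta(\nabla u_\varepsilon)\,dx+\int_\Omega\Phi_\varepsilon(\nabla u_\varepsilon)\psi\,(1-\eta(\nabla u_\varepsilon))\,dx$, passes to the limit in the first piece directly via the Young measure of Proposition~\ref{prop:extended_gamma_convergence} (the integrand being bounded on $\supp\eta$), controls the second piece by Lemma~\ref{lem:first_extension_of_F_epsilon}(iii) together with~\eqref{eq:convergence_of_kinetic_energy_alone}, and finally lets $\eta\nearrow 1$ on $\{|p_2|\le 1\}$. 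You instead localize in \emph{physical space}, exhausting the regularity set $\Omega'$ by compacta $K_\delta$ and invoking the $\cC^{2,\alpha-}$ convergence $u_\varepsilon\to u$ established inside the proof of Proposition~\ref{prop:parreg}; your tail estimate on $\Omega\setminus K_\delta$ then matches the paper's tail estimate almost verbatim. Both arguments rest on the same two pillars, Lemma~\ref{lem:first_extension_of_F_epsilon}(iii) and~\eqref{eq:convergence_of_kinetic_energy_alone}; the paper's version is logically leaner because it avoids any appeal to Section~\ref{sec_partial_regularity} and works with the Young measure description alone, whereas your version trades that economy for a more concrete pointwise picture on $K_\delta$.
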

\begin{proof}
Let $\psi:\overline{\Omega}\rightarrow \R$ be an arbitrary continuous, bounded  function and set $G_\varepsilon(p):=\partial_{p_1}\hat{F}_\varepsilon(p)p_1-\hat{F}_\varepsilon(p)$, as well as
\begin{gather*}
    I_\varepsilon:=\int_\Omega G_\varepsilon(\nabla u_\varepsilon(x))\psi(x)\:dx,\quad I_\varepsilon^1:=\int_\Omega G_\varepsilon(\nabla u_\varepsilon(x))\psi(x)\eta(\nabla u_\varepsilon(x))\:dx,\\
    I_\varepsilon^2:=I_\varepsilon-I^1_\varepsilon=\int_\Omega G_\varepsilon(\nabla u_\varepsilon(x))\psi(x)(1-\eta(\nabla u_\varepsilon(x)))\:dx,
\end{gather*}
where $\eta:\R^2\rightarrow [0,1]$ is continuous with support compactly contained in the open strip $\set{p\in\R^2:\abs{p_2}<1}$.

By Proposition \ref{prop:extended_gamma_convergence} and the convergence of $G_\varepsilon(p)$ to $\partial_{p_1}F(p)p_1-F(p)=F(p)$, which is uniform on the support of $\eta$, it follows that 
\begin{align*}
    \lim_{\varepsilon\rightarrow 0} I_\varepsilon^1= \int_\Omega F(\nabla u)\eta(\nabla u)\psi\:dx.
\end{align*}
For $I^2_\varepsilon$ we use Lemma \ref{lem:first_extension_of_F_epsilon} \eqref{eq:estimate_of_extension_for_derivative} in order to estimate
\begin{align*}
    \abs{I_\varepsilon^2}&\leq \norm{\psi}_{L^\infty(\Omega)}\int_\Omega \left(\varepsilon+4\hat{F}_\varepsilon(\nabla u_\varepsilon)\right)(1-\eta(\nabla u_\varepsilon))\:dx\\
    &\rightarrow 4\norm{\psi}_{L^\infty(\Omega)}\int_\Omega F(\nabla u)(1-\eta(\nabla u))\:dx
\end{align*}
as $\varepsilon\rightarrow 0$. Indeed the convergence of the  integral can be seen by splitting it up, the use of the convergence of the total kinetic energies \eqref{eq:convergence_of_kinetic_energy_alone} and the same argument as above for the convergence of $I_\varepsilon^1$. We conclude
\begin{align*}
    \abs{I_\varepsilon-\int_\Omega F(\nabla u)\eta(\nabla u)\psi\:dx}\leq o(1)+ C \int_\Omega F(\nabla u)(1-\eta(\nabla u))\:dx
\end{align*}
as $\varepsilon\rightarrow 0$
with a constant $C>0$ depending on $\psi$, but not on $\eta$. 

Taking now a sequence $(\eta_j)_j$ converging pointwise to $1$ on $\set{p\in\R^2:\abs{p_2}\leq 1}$ we deduce that 
\[
\lim_{\varepsilon\rightarrow 0}I_\varepsilon=\int_\Omega F(\nabla u)\psi\:dx.
\]
Recall here that $F(p)=0$ for $p_1=0$, $\abs{p_2}=1$ and that the measure of the set $\abs{\set{x\in\Omega:\partial_{x_1} u(x)\neq 0,~\abs{\partial_{x_2}u(x)}=1}}$ is $0$ as otherwise $F(\nabla u)$ would not be integrable. Thus we have shown the stated weak convergence.

If now \eqref{eq:V1prime} holds true it easily follows from Lemma \ref{lem:energy_conservation_for_approximation} that
\begin{align*}
    \int_0^T\varphi'(x_1)\int_{-L}^LF(\nabla u(x))+V(x_2,u(x))\:dx_2\:dx_1=0
\end{align*}
for all $\varphi\in \cC_c^1(0,T)$.
\end{proof}

Note that at this point all claims of Theorem \ref{thm:existence_and_part_regularity} have been shown.

\subsection{Stronger attainment of boundary data}\label{sec_initial_data}
As already discussed in Sections \ref{sec:reformulation}, \ref{sec_final_configuration}, considering functions $u\in X$ implies that $\rho:=\partial_{x_2}u$ and $m:=-\partial_{x_1}u$ satisfy
\begin{align}\label{eq:weak_balance}
    \int_\Omega \rho\partial_{x_1}\varphi+m\partial_{x_2}\varphi\:dx+\int_{-L}^L\sign(x_2)(\varphi(0,x_2)\:dx_2+\varphi(T,x_2))\:dx_2=0
\end{align}
for all $\varphi\in H^1(\Omega)$. Concerning the boundary data for $m$ and the initial, final resp., data for $\rho$ we can from this conclude certain weak convergences, see for instance Lemma \ref{lem:weak_convergence_of_initial_data} below. The goal of this subsection is to improve these weak convergences to strong convergences. This is the statement of Lemmas \ref{lem:tlim},\ref{lem:xlim}. Moreover, the energy balance allows us to also conclude that $m$ attains $0$ initial and final data, see Lemma \ref{lem:txlim}. This information has not been encoded in the function space $X$, not even in a weaker form.

We begin with the claimed weak convergence of $\rho$ near $\set{x_1=0}$. A similar statement holds true near $\set{x_1=T}$.
\begin{lemma}\label{lem:weak_convergence_of_initial_data}
For $a>0$, let $v_a(x):=u(ax_1,x_2)-(|x_2|-L)$. Then there holds $\partial_{x_2} v_{a}\overset{\ast}{\rightharpoonup} 0$ in $L^\infty((0,1)\times(-L,L))$ as $a\rightarrow 0$.
\end{lemma}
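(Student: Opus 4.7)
The plan is as follows. First, observe that $v_a$ is well-defined as an element of $H^1((0,1)\times(-L,L))$ because $u\in H^1(\Omega)$, and there holds the pointwise bound $\abs{\partial_{x_2} v_a(x)}=\abs{\partial_{x_2}u(ax_1,x_2)-\sign(x_2)}\leq 2$ for a.e.\ $x\in(0,1)\times(-L,L)$, since by Corollary \ref{cor:bounds_for_minimizer} (or the a priori constraint $u\in X$) one has $\abs{\partial_{x_2}u}\leq 1$ a.e.. Hence the family $(\partial_{x_2}v_a)_{a>0}$ is uniformly bounded in $L^\infty((0,1)\times(-L,L))$.

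Next, I would upgrade the trace condition $u(0,\cdot)=\abs{x_2}-L$ to continuity at $x_1=0$ in $L^2$. Since $u\in H^1(\Omega)$, the slice map $x_1\mapsto u(x_1,\cdot)\in L^2(-L,L)$ is in $\cC^{0,1/2}([0,T];L^2(-L,L))$ by the elementary estimate
\begin{equation*}
    \norm{u(t_1,\cdot)-u(t_2,\cdot)}_{L^2(-L,L)}^2\leq \abs{t_1-t_2}\,\norm{\partial_{x_1}u}_{L^2(\Omega)}^2,
\end{equation*}
applied after approximation by smooth functions. Combining this with $u(0,\cdot)=\abs{x_2}-L$ and Fubini yields
\begin{equation*}
    \norm{v_a}_{L^2((0,1)\times(-L,L))}^2=\int_0^1\norm{u(ax_1,\cdot)-(\abs{x_2}-L)}_{L^2(-L,L)}^2\:dx_1\leq \tfrac{a}{2}\norm{\partial_{x_1}u}_{L^2(\Omega)}^2,
\end{equation*}
so $v_a\to 0$ strongly in $L^2((0,1)\times(-L,L))$ as $a\to 0$.

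Finally, I would combine the two ingredients. By Banach--Alaoglu, any sequence $a_k\to 0$ admits a subsequence along which $\partial_{x_2}v_{a_k}\overset{\ast}{\rightharpoonup} w$ in $L^\infty((0,1)\times(-L,L))$. For any $\psi\in\cC_c^\infty((0,1)\times(-L,L))$, integration by parts gives
\begin{equation*}
    \int_0^1\!\!\int_{-L}^L w\,\psi\:dx_2\:dx_1=\lim_{k\to\infty}\int_0^1\!\!\int_{-L}^L \partial_{x_2}v_{a_k}\,\psi\:dx_2\:dx_1=-\lim_{k\to\infty}\int_0^1\!\!\int_{-L}^L v_{a_k}\partial_{x_2}\psi\:dx_2\:dx_1=0,
\end{equation*}
the last equality by the $L^2$-convergence $v_a\to 0$. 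Thus $w\equiv 0$. By the uniqueness of the weak-$\ast$ limit and the density of $\cC_c^\infty$ in $L^1$, together with the uniform $L^\infty$-bound, the full net satisfies $\partial_{x_2}v_a\overset{\ast}{\rightharpoonup}0$ in $L^\infty$. No serious obstacle is expected: the argument is standard once one exploits the $H^1$-regularity of $u$ to get strong convergence of $v_a$ in $L^2$, which is enough to beat the cheap $L^\infty$-bound on the derivative.
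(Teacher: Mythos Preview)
Your proof is correct and takes a genuinely different route from the paper. The paper tests against a $\cC^1$ function $\psi$, builds an auxiliary test function $\varphi_a(x)=-\int_{x_1/a}^1\psi(x_1',x_2)\,dx_1'$ supported in $\{x_1<a\}$, and then invokes the weak balance \eqref{eq:weak_balance} (i.e.\ the divergence-free structure of $(\rho,m)=(\partial_{x_2}u,-\partial_{x_1}u)$ together with the initial trace) to convert $\int(\partial_{x_2}u-\sign x_2)\partial_{x_1}\varphi_a$ into $\int\partial_{x_1}u\,\partial_{x_2}\varphi_a$, which vanishes as $a\to 0$ because the domain of integration shrinks and $\partial_{x_1}u\in L^2$. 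You instead exploit the $H^1$ regularity directly to obtain the quantitative bound $\|v_a\|_{L^2}^2\le \tfrac{a}{2}\|\partial_{x_1}u\|_{L^2}^2$, and then integrate by parts in the \emph{other} variable $x_2$ against compactly supported test functions. Your argument is slightly more elementary in that it bypasses the weak balance identity entirely and needs only the trace condition $u(0,\cdot)=|x_2|-L$; the paper's approach, on the other hand, keeps the PDE structure front and center and is more in the spirit of the subsolution framework. The Banach--Alaoglu detour in your final step is harmless but unnecessary: once you know $\int\partial_{x_2}v_a\,\psi\to 0$ for all $\psi\in\cC_c^\infty$ and have the uniform $L^\infty$ bound, density in $L^1$ gives the full weak-$\ast$ convergence directly.
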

\begin{proof}
Let $\psi\in\cC^1([0,1]\times[-L,L])$ and define for $a\in(0,T)$ the function $\varphi_a:\overline{\Omega}\rightarrow\R$, 
\[
\varphi_a(x)=-\int_{x_1/a}^1\psi(x_1',x_2)\:dx_1'
\]
for $x_1\in[0,a)$ and $\varphi_a(x)=0$ for $x_1\geq a$. Then using \eqref{eq:weak_balance} one computes
\begin{align*}
    \int_0^1\int_{-L}^L\partial_{x_2}v_a(x)\psi(x)\:dx&=\int_\Omega (\partial_{x_2}u(x)-\sign(x_2))\partial_{x_1}\varphi_a(x)\:dx\\
    &=\int_\Omega \partial_{x_1}u(x)\partial_{x_2}\varphi_a(x)\:dx\rightarrow 0
\end{align*}
as $a\rightarrow 0$. 

The general case $\psi\in L^1((0,1)\times (-L,L))$ follows by observing that $(\partial_{x_2}v_a)_a$ is bounded in $L^\infty((0,1)\times(-L,L))$, cf. Proposition \ref{prop:extended_gamma_convergence}, and approximation. 
\end{proof}

This weak convergence can easily be improved.
\begin{lemma}\label{lem:tlim} For $v_a$ as in Lemma \ref{lem:weak_convergence_of_initial_data} there holds $\partial_{x_2}v_a\rightarrow 0$ in $L^1((0,1)\times (-L,L))$ as $a\rightarrow 0$.
\end{lemma}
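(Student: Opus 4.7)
The plan is to exploit the one-sided bound $|\partial_{x_2} u| \leq 1$ (established in Proposition \ref{prop:extended_gamma_convergence} and Corollary \ref{cor:bounds_for_minimizer}) in order to convert the weak-$\ast$ convergence of Lemma \ref{lem:weak_convergence_of_initial_data} into strong $L^1$ convergence. The crucial observation is that $\partial_{x_2} v_a$ is, up to sign, a non-positive function: indeed, for a.e.\ $x_2 > 0$ we have $\partial_{x_2} u(a x_1, x_2) \leq 1 = \sign(x_2)$, while for a.e.\ $x_2 < 0$ we have $\partial_{x_2} u(a x_1, x_2) \geq -1 = \sign(x_2)$. Therefore
\begin{equation*}
    -\sign(x_2)\,\partial_{x_2} v_a(x) \geq 0 \quad \text{for a.e. }x \in (0,1)\times(-L,L).
\end{equation*}

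Consequently, for the absolute value we have the exact identity
\begin{equation*}
    \int_0^1\!\int_{-L}^L |\partial_{x_2} v_a(x)|\,dx = \int_0^1\!\int_{-L}^L -\sign(x_2)\,\partial_{x_2} v_a(x)\,dx.
\end{equation*}
Now $\psi(x) := -\sign(x_2)$ lies in $L^\infty((0,1)\times(-L,L)) \subset L^1((0,1)\times(-L,L))$ since the domain is bounded. By the approximation argument at the end of the proof of Lemma \ref{lem:weak_convergence_of_initial_data}, such $\psi$ is an admissible test function for the weak-$\ast$ convergence $\partial_{x_2} v_a \overset{\ast}{\rightharpoonup} 0$ in $L^\infty$, and we conclude
\begin{equation*}
    \int_0^1\!\int_{-L}^L -\sign(x_2)\,\partial_{x_2} v_a(x)\,dx \longrightarrow 0 \quad \text{as }a \to 0.
\end{equation*}
This yields the claimed $L^1$-convergence.

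The argument is short because the one-sided sign condition does all the work: for a generic weakly-$\ast$ convergent sequence one could not pass to $L^1$ norms, but here the oscillation has only one possible sign, which transforms weak convergence into strong convergence. The only point requiring minor care is justifying that a discontinuous $L^\infty$-function like $\sign(x_2)$ is an admissible test, which is granted by the approximation step (truncation/mollification) already invoked in Lemma \ref{lem:weak_convergence_of_initial_data} together with the uniform $L^\infty$-bound on $\partial_{x_2} v_a$.
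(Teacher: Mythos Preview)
Your proof is correct and essentially identical to the paper's: both exploit $|\partial_{x_2}u|\leq 1$ to obtain the pointwise sign condition $-\sign(x_2)\,\partial_{x_2}v_a\geq 0$, rewrite the $L^1$-norm as the integral against the fixed test function $-\sign(x_2)$ (the paper writes it as $\mathbbm{1}_{\{x_2<0\}}-\mathbbm{1}_{\{x_2>0\}}$), and then invoke the weak-$\ast$ convergence of Lemma~\ref{lem:weak_convergence_of_initial_data}. Your additional remark justifying the admissibility of the discontinuous test function makes explicit what the paper leaves implicit.
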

\begin{proof}
Recalling that $|\partial_{x_2}u|\leq 1$ a.e. one may write
\begin{align*}
    \|\partial_{x_2} v_{a}\|_{L^1((0,1)\times(-L,L))}&= \int_0^1 \int_{-L}^L |\partial_{x_2}u(ax_1,x_2)-\sign(x_2)| \, dx \\&=
    \int_0^1 \int_0^L \left(\mathbbm{1}_{\set{x_2<0}}(x)-\mathbbm{1}_{\set{x_2>0}}(x)\right)\partial_{x_2} v_a(x) \, dx  \to 0,
\end{align*}
by the previous weak convergence and the fact that the indicators are in $L^1$.
\end{proof}

% Note that this implies that the boundary data for $m$ and the initial, final resp., data for $\rho$ is attained in the sense $m(\cdot,-L+b\cdot)\rightharpoonup 0$ weakly in $L^2((0,T)\times (0,1))$ as $b\rightarrow 0$ or $\rho(a\cdot,\cdot)\rightharpoonup \sign(x_2)$ weakly in $L^2((0,1)\times(-L,L))$ as $a\rightarrow 0$. Indeed, for $\psi\in\cC_c^\infty((0,T)\times (0,1))$ we set $\varphi_b:\overline{\Omega}\rightarrow\R$,
% \begin{align*}
%     \varphi_b(x)=
%     \int_0^{\frac{x_2+L}{b}}\psi(x_1,s)\:ds-\int_0^1\psi(x_1,s)\:ds
% \end{align*}
% for $x_2\in(-L,-L+b)$ and $\varphi_b=0$ for $x_2>-L+b$ and compute
% \begin{align*}
%     \int_0^T\int_0^1m(x_1,-L+bx_2)\psi(x)\:dx= -\int_0^T\int_{-L}^{-L+b}\rho(x)\partial_{x_1}\varphi_b(x)\:dx\rightarrow 0.
% \end{align*}
% Similarly one sees the corresponding weak convergence of $m$ near $\set{x_2=L}$ and of $\rho$ near $\set{x_1=0}$ and $\set{x_1=T}$.

For a similar strong convergence of $\partial_{x_1}u$ near $\set{x_2=\pm L}$ we argue directly.
\begin{lemma}\label{lem:xlim} Let \eqref{eq:V1prime} be satisfied, such that $\partial_{x_1}u\geq 0$.
There holds the convergence $\|\partial_{x_1} u(\cdot,-L+b\cdot) \|_{L^1((0,T)\times(0,1))}\to 0$ as $b\to 0$.
\end{lemma}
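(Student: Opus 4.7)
The plan rests on three ingredients: the nonnegativity $\partial_{x_1}u \geq 0$ a.e.\ from Corollary \ref{cor:bounds_for_minimizer} (which requires \eqref{eq:V1prime}); the fundamental theorem of calculus along almost every horizontal slice, which is legitimate since $u \in H^1(\Omega)$; and the explicit trace values of $u$ on $\{x_1=0\}$ and $\{x_1=T\}$ prescribed in \eqref{eq:reformulation_of_boundary_initital_data}, \eqref{eq:final_configuration}. Together these will collapse the norm into an elementary one-dimensional integral.

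Concretely, I would first drop the absolute value using the sign of $\partial_{x_1}u$, then substitute $x_2 = -L + by$ and apply Fubini to rewrite
\[
\|\partial_{x_1} u(\cdot,-L+b\cdot)\|_{L^1((0,T)\times(0,1))} = \frac{1}{b}\int_{-L}^{-L+b}\int_0^T \partial_{x_1} u(x_1,x_2)\,dx_1\,dx_2.
\]
For almost every $x_2 \in (-L,L)$, the restriction $x_1 \mapsto u(x_1,x_2)$ lies in $H^1(0,T)$ and its boundary values at $x_1 = 0, T$ agree with the prescribed traces, so the inner integral equals $u(T,x_2)-u(0,x_2) = 2(L-|x_2|)$.

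For $b \in (0,L)$, the range $x_2 \in (-L,-L+b)$ satisfies $|x_2|=-x_2$, so the remaining integral is immediate:
\[
\frac{1}{b}\int_{-L}^{-L+b}2(L+x_2)\,dx_2 = b,
\]
which tends to $0$ as $b \to 0$. There is no real obstacle here; the computation is essentially bookkeeping, and the only point deserving a word of justification is the validity of the slice-version of the fundamental theorem of calculus together with the slice-wise attainment of the boundary data, both of which are standard consequences of trace theory for $H^1$ on a rectangle.
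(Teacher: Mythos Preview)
Your proof is correct and follows essentially the same route as the paper: drop the absolute value using $\partial_{x_1}u\geq 0$, change variables to reduce to $\frac{1}{b}\int_{-L}^{-L+b}(u(T,x_2)-u(0,x_2))\,dx_2$, and evaluate using the prescribed boundary data. You even compute the final integral exactly as $b$, whereas the paper only bounds it by $2b$; the justification via slice-wise $H^1$ trace theory is equivalent to the paper's appeal to the continuity of $u$ from Proposition~\ref{prop:u_continuous}.
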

\begin{proof}
Using $\partial_{x_1} u\geq 0$, $u(\cdot,x_2)\in H^1(0,T)$ for a.e. $x_2\in (-L,L)$, as well as the continuity of $u$, cf. Proposition \ref{prop:u_continuous},  we observe that for any $b>0$ there holds
\begin{align*}
    \|\partial_{x_1} u(\cdot,-L+b\cdot) \|_{L^1((0,T)\times(0,1))}&=\int_0^T\int_0^1 \partial_{x_1} u(x_1,-L+bx_2)\, dx \\
    &=\frac{1}{b}\int_0^T\int_{-L}^{-L+b} \partial_{x_1} u(x)\, dx
    \\&=\frac{1}{b} \int_{-L}^{-L+b} u(T,x_2)-u(0,x_2)\, dx_2 \\&=
    \frac{1}{b} \int_{-L}^{-L+b} 2(L+x_2)\, dx_2 \leq \frac{1}{b} 2 \int_{-L}^{-L+b} b \, dx_2 = 2b,
\end{align*}
hence the claim follows.
\end{proof}

Utilizing the energy balance from Lemma \ref{lem:energy_balance} we in addition obtain a corresponding strong convergence of $\partial_{x_1}u$ near $\set{x_1=0}$ and $\set{x_1=T}$.

\begin{lemma}\label{lem:txlim} Let \eqref{eq:V1prime} be satisfied.
For the minimizer $u$ there holds $\partial_{x_1} u(a\cdot,\cdot)\to 0$ in $L^1((0,1)\times(-L,L))$ as $a\to 0^+$.
\end{lemma}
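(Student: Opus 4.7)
The plan combines the conserved energy of Lemma \ref{lem:energy_balance} with the $L^1$-convergence $\partial_{x_2}u(a\cdot,\cdot)\to\sign(\cdot)$ near $\set{x_1=0}$ established in Lemma \ref{lem:tlim}. First, using $\partial_{x_1}u\ge 0$ from Corollary \ref{cor:bounds_for_minimizer}, together with Fubini and $u(0,x_2)=\abs{x_2}-L$, the target norm rewrites as
\[
\|\partial_{x_1}u(a\cdot,\cdot)\|_{L^1((0,1)\times(-L,L))}=\frac{1}{a}\int_0^a\int_{-L}^L\partial_{x_1}u(t,x_2)\,dx_2\,dt.
\]
Setting $k(t):=\int_{-L}^L F(\nabla u(t,x_2))\,dx_2$, Lemma \ref{lem:energy_balance} combined with the boundedness of $V$ (from \eqref{eq:condition_V1}) produces a constant $C_0<\infty$ with $k(t)\le C_0$ for a.e.\ $t\in(0,T)$.

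The key step is to split the integration region at level $1-\eta$, $\eta\in(0,1)$: setting
\[
D_\eta(a):=\set{(t,x_2)\in(0,a)\times(-L,L):\abs{\partial_{x_2}u(t,x_2)}\ge 1-\eta}
\]
and $S_\eta(a)$ its complement in $(0,a)\times(-L,L)$, on $D_\eta(a)$ one has $1-(\partial_{x_2}u)^2\le 2\eta$ (using $\abs{\partial_{x_2}u}\le 1$ a.e.), hence the pointwise estimate $\partial_{x_1}u\le 2\sqrt{\eta F(\nabla u)}$; Cauchy--Schwarz then gives
\[
\int_{D_\eta(a)}\partial_{x_1}u\,dt\,dx_2 \le 2\sqrt{\eta}\,\sqrt{2La}\,\Bigl(\int_0^a k(t)\,dt\Bigr)^{1/2}\le 2a\sqrt{2L\eta C_0}.
\]
On $S_\eta(a)$ the bound $F\ge (\partial_{x_1}u)^2/2$ and Cauchy--Schwarz yield $\int_{S_\eta(a)}\partial_{x_1}u\le\sqrt{2aC_0\,\abs{S_\eta(a)}}$.

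The remaining input is that $\abs{S_\eta(a)}/a\to 0$ as $a\to 0^+$ for each fixed $\eta$. This I would extract from a Chebyshev-type inequality: since $\abs{\partial_{x_2}u}\le 1=\abs{\sign(x_2)}$ a.e.,
\[
\eta\,\mathbbm{1}_{S_\eta(a)}\le 1-\abs{\partial_{x_2}u}\le \abs{\sign(x_2)-\partial_{x_2}u(t,x_2)},
\]
and integration over $(0,a)\times(-L,L)$ combined with Lemma \ref{lem:tlim} gives $\eta\abs{S_\eta(a)}=o(a)$. Adding the two estimates, dividing by $a$, sending $a\to 0^+$ and then $\eta\to 0^+$ concludes. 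The main conceptual obstacle is that the naive $L^2$-estimate $\|\partial_{x_1}u(a\cdot,\cdot)\|_{L^1}/a\lesssim \sqrt{a^{-1}\int_0^a k}$ only controls the $\limsup$ by a multiple of $\sqrt{E_0-v(0)}$ (with $E_0$ the conserved energy and $v(0):=\int_{-L}^L V(x_2,\abs{x_2}-L)\,dx_2$), which need not vanish under \eqref{eq:V1prime} alone; the splitting above circumvents this by exploiting that the degeneracy factor $(1-p_2^2)^{-1}$ in $F$ heavily penalises $(\partial_{x_1}u)^2$ exactly on the set where $\partial_{x_2}u$ concentrates near $\pm 1$ as $x_1\to 0^+$.
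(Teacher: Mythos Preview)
Your proof is correct and relies on the same two inputs as the paper's: the uniform-in-time bound on $\int_{-L}^L F(\nabla u)\,dx_2$ from Lemma \ref{lem:energy_balance} and the $L^1$-convergence $\partial_{x_2}u(a\cdot,\cdot)\to\sign(\cdot)$ from Lemma \ref{lem:tlim}. The paper, however, avoids your level-set splitting at $\abs{\partial_{x_2}u}=1-\eta$ and the subsequent $\eta\to 0$ limit by using the exact pointwise identity
\[
\partial_{x_1}u=\sqrt{2F(\nabla u)}\,\sqrt{1-(\partial_{x_2}u)^2}
\]
and a single Cauchy--Schwarz on all of $(0,a)\times(-L,L)$. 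The first factor is controlled by the energy bound, while the second is handled via the factorisation $1-(\partial_{x_2}u)^2=(\sign(x_2)-\partial_{x_2}u)(\sign(x_2)+\partial_{x_2}u)$ and Lemma \ref{lem:tlim}. In effect your splitting compensates for using only the cruder inequality $\partial_{x_1}u\le\sqrt{2F(\nabla u)}$ on $S_\eta(a)$; the paper's exact identity already carries the small factor $\sqrt{1-(\partial_{x_2}u)^2}$ everywhere, making the argument one step shorter.
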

\begin{proof}
We may write
\begin{align*}
    &\int_0^1 \int_{-L}^L |\partial_{x_1} u(a x_1,x_2)| \, dx =\frac{1}{a}
    \int_0^a \int_{-L}^L \partial_{x_1} u(x) \, dx \\
    &\hspace{35pt}=\frac{1}{a}
    \int_0^a \int_{-L}^L \sqrt{2F(\nabla u(x))}\sqrt{1-\partial_{x_2}u(x)^2} \, dx 
    \\&\hspace{35pt} \leq   \left(\frac{1}{a}
    \int_0^a \int_{-L}^L 2F(\nabla u(x)) \, dx \right)^{1/2}\left(\frac{1}{a}
    \int_0^a \int_{-L}^L 1-\partial_{x_2}u(x)^2 \, dx \right)^{1/2}.
\end{align*}
Denoting by $E_0\in\R$ the constant total energy value given by the balance \eqref{eq:energy_balance} the first factor can be estimated against $\big(2E_0+2\norm{V}_{L^\infty(\Omega\times \R)}\big)^{1/2}$, while for the second factor we have
\begin{align*}
    \frac{1}{a}
    \int_0^a \int_{-L}^L 1-\partial_{x_2}u(x)^2 \, dx&=\frac{1}{a}\int_0^a\int_{-L}^L(\sign(x_2)-\partial_{x_2}u(x))(\sign(x_2)+\partial_{x_2}u(x))\:dx\\
    &\leq 
    2\norm{\partial_{x_2}v_a}_{L^1((0,1)\times(-L,L))}\rightarrow 0
\end{align*}
as $a\rightarrow 0$ by Lemma \ref{lem:tlim}.
\end{proof}

Comparing Lemmas \ref{lem:tlim}, \ref{lem:txlim} with \cite[Theorem 5.7]{evans_m} one can say that the functions 
\begin{gather*}
    x_1\mapsto\int_{-L}^L\abs{\partial_{x_2}u(x)-\sign(x_2)}\:dx_2,\quad x_1\mapsto\int_{-L}^L\abs{\partial_{x_1}u(x)}\:dx_2,
\end{gather*}
which are a priori only in $L^\infty(0,T)$, $L^2(0,T)$ resp., have a trace at $x_1=0$ and $x_1=T$.

% Note that the above lemmas imply existence of the respective traces
% $$\partial_{x_1} u=0\text{ when }t=0\text{ or }x\in\{\pm L\},\quad \partial_{x_2}u(0,x)=\text{sign}(x),\quad \partial_{x_2}u(T,x)=-\text{sign}(x),$$
% as integrable functions on the respective boundaries, for instance as done in the proof of Theorem 1 from Chapter 5.3 of \cite{evans_m}. 

\subsection{Admissibility}\label{sec_global_admissibility}

We will now discuss the actual energy balance of the Boussinesq subsolution induced by the minimizer $u$. Recall that the one-dimensional subsolution $(\rho,m):=(\partial_{x_2}u,-\partial_{x_1}u)$ is called weakly admissible provided \eqref{eq:subsolution_admissibility}, i.e.
\begin{equation}\label{eq:weak_admissibility_sec7}
E_{tot}(x_1):=\int_{-L}^LF(\nabla u(x))-gAu(x)\:dx_2<\int_{-L}^L\rho_0(x_2)gAx_2\:dx_2
\end{equation}
holds true for a.e. $x_1\in (0,T)$. 

As indicated in Section \ref{sec_energy_dissipation} we in fact will have a monotone decay of the total energy on $(0,T)$ provided $V$ satisfies besides \eqref{eq:condition_V1} also
\begin{gather}\tag{$\text{V}_{\text{dis}}$}\label{eq:V_dissipative} \begin{gathered} V\text{ has the form }
V(x,z)=-gAz+f(x_2,z) \\\text{ with }\partial_zf(x_2,z)>0\text{ whenever }\abs{z}<L-\abs{x_2}.
   \end{gathered}
\end{gather}
Note that \eqref{eq:V_dissipative} contains \eqref{eq:V1prime}.

As a direct consequence of Lemma \ref{lem:energy_balance} and the fact that our minimizer $u$ satisfies $\abs{x_2}-L\leq u(x)\leq L-\abs{x_2}$ for all $x\in \overline{\Omega}$, cf. Corollary \ref{cor:bounds_for_minimizer} and Proposition \ref{prop:u_continuous}, we indeed deduce the energy balance \eqref{eq:new_energy_balance} on the open interval $(0,T)$.
\begin{corollary}\label{cor:energy_dissipation}
If $V$ satisfies \eqref{eq:condition_V1} and \eqref{eq:V_dissipative}, then the sum of kinetic and potential energy (given only by the gravity potential) satisfies
\begin{align*}
    \frac{d}{dx_1}\int_{-L}^LF(\nabla u)-gAu\:dx_2=-\int_{-L}^L\partial_zf(x_2,u)\partial_{x_1}u\:dx_2\leq 0
\end{align*}
weakly on $(0,T)$.
\end{corollary}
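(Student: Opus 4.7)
The plan is to start from the weak energy balance in Lemma~\ref{lem:energy_balance} and rewrite it using the structural form imposed by \eqref{eq:V_dissipative}. Writing $V(x_2,z)=-gAz+f(x_2,z)$, the conclusion of Lemma~\ref{lem:energy_balance} gives, for every test function $\varphi\in \cC_c^1(0,T)$,
\begin{equation*}
\int_0^T\varphi'(x_1)\int_{-L}^L\bigl(F(\nabla u)-gAu\bigr)\,dx_2\,dx_1=-\int_0^T\varphi'(x_1)\int_{-L}^Lf(x_2,u)\,dx_2\,dx_1.
\end{equation*}
The whole task thus reduces to moving the $x_1$-derivative on the right-hand side onto $f(x_2,u)$ via a chain rule, and to checking that the resulting integrand has the right sign.

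For the chain rule, I would invoke the standard composition result for Sobolev functions. By \eqref{eq:condition_V1} the map $z\mapsto \partial_z f(x_2,z)=\partial_zV(x_2,z)+gA$ is bounded and Lipschitz in $z$, uniformly in $x_2$, so since $u\in H^1(\Omega)$ the composition $x\mapsto f(x_2,u(x))$ lies in $H^1(\Omega)$ with
\begin{equation*}
\partial_{x_1}\bigl(f(x_2,u(x))\bigr)=\partial_zf(x_2,u(x))\,\partial_{x_1}u(x)\quad\text{a.e.\ in }\Omega.
\end{equation*}
Fubini's theorem (applied to the $L^1$ function $\partial_zf(x_2,u)\partial_{x_1}u$) then yields that $x_1\mapsto \int_{-L}^L f(x_2,u(x))\,dx_2$ belongs to $W^{1,1}(0,T)$ with weak derivative $\int_{-L}^L\partial_zf(x_2,u)\partial_{x_1}u\,dx_2$. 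Integrating by parts against $\varphi$ turns the identity above into the claimed equality
\begin{equation*}
\frac{d}{dx_1}\int_{-L}^L\bigl(F(\nabla u)-gAu\bigr)\,dx_2=-\int_{-L}^L\partial_zf(x_2,u)\,\partial_{x_1}u\,dx_2
\end{equation*}
in the sense of distributions on $(0,T)$.

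For the sign, I would combine three facts already at our disposal. Corollary~\ref{cor:bounds_for_minimizer} gives $|u(x)|\leq L-|x_2|$ for a.e.\ $x\in\Omega$, and since \eqref{eq:V_dissipative} contains the autonomy assumption \eqref{eq:V1prime}, the same corollary also provides $\partial_{x_1}u\geq 0$ a.e. Finally, the monotonicity condition $\partial_zf(x_2,z)>0$ for $|z|<L-|x_2|$ extends by continuity (guaranteed by \eqref{eq:condition_V1}) to $\partial_zf(x_2,z)\geq 0$ whenever $|z|\leq L-|x_2|$; in particular $\partial_zf(x_2,u(x))\geq 0$ for a.e.\ $x\in\Omega$. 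The integrand in the right-hand side is therefore non-negative a.e., which yields the inequality.

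The only non-cosmetic point is the chain rule step, and it is entirely routine thanks to the uniform Lipschitz regularity of $\partial_z f$ built into \eqref{eq:condition_V1}; the rest is a direct consequence of Lemma~\ref{lem:energy_balance}, Corollary~\ref{cor:bounds_for_minimizer}, and the structural assumption \eqref{eq:V_dissipative}.
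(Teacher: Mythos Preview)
Your proposal is correct and follows essentially the same approach as the paper, which simply declares the result a direct consequence of Lemma~\ref{lem:energy_balance} together with the bounds $\abs{u}\leq L-\abs{x_2}$ and $\partial_{x_1}u\geq 0$ from Corollary~\ref{cor:bounds_for_minimizer}. You have just made explicit the chain rule step and the continuity extension of the sign of $\partial_z f$ to the closed range, both of which are routine under \eqref{eq:condition_V1}.
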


Thus we have strict dissipation of the total energy on any time interval $I\subset(0,T)$ on which $((L-\abs{x_2})^2-u^2)\partial_{x_1}u$ is not essentially vanishing, i.e. $u$ has to be different from the initial and final configuration and the momentum $-\partial_{x_1}u$ has to be strictly negative.

Let us now turn to the behaviour of the energy as $x_1\rightarrow 0$ or $x_1\rightarrow T$.
Regarding the potential energy Proposition \ref{prop:u_continuous} implies that 
\begin{align*}
    \lim_{x_1\rightarrow 0}E_{pot}(x_1)&:=\lim_{x_1\rightarrow 0}\int_{-L}^L-gAu(x)\:dx_2=\int_{-L}^L-gA(\abs{x_2}-L)\:dx_2\\
    &\phantom{:}=\int_{-L}^L\rho_0(x_2)gAx_2\:dx_2
\end{align*}
and similarly, including the dissipated energy,
\begin{align*}
    \lim_{x_1\rightarrow 0}\int_{-L}^LV(x_2,u(x))\:dx_2=\int_{-L}^LV(x_2,\abs{x_2}-L)\:dx_2.
\end{align*}
The corresponding limits also exist at $x_1=T$.

Due to the continuity of the potential energy $E_{pot}(t)$ at $t=0$ one sees that weak admissibility \eqref{eq:weak_admissibility_sec7} requires 
\begin{align}\label{eq:limit_at_0_kinetic_energy}
    \esslim_{x_1\rightarrow 0}E_{kin}(x_1):=\esslim_{x_1\rightarrow 0}\int_{-L}^LF(\nabla u(x))\:dx_2=0.
\end{align}

Although the initial momentum vanishes in the sense of Lemma \ref{lem:txlim} we can a priori not conclude \eqref{eq:limit_at_0_kinetic_energy}. However, we will show that for suitable $V$ the possible initial jump in kinetic energy becomes arbitrarily small when the variational problem is considered over a longer and longer time interval. 

For that we consider the final time $T>0$ no longer as a fixed constant and indicate the $T$-dependency in our variational problem by writing $\Omega_T$, $\cA_T$, $X_T$ instead of $\Omega$, $\cA$, $X$. 

Moreover, as in Section \ref{sec:initial_final_energy} we define
    \begin{align*}
    s_V:=\sup\set{\int_{-L}^LV(x_2, \varphi)\:dx_2:\varphi\in \cC^0([-L,L]),~\abs{\varphi(x_2)}\leq L-\abs{x_2}}
\end{align*}
and assume in addition to \eqref{eq:condition_V1}, \eqref{eq:V_dissipative}
\begin{align}\label{eq:V_sup}\tag{$\text{V}_{\text{sup}}$}
    s_V=0\text{ and }s_V=\int_{-L}^LV(x_2,\varphi)\:dx_2\text{ if and only if }   \varphi=\pm(L-\abs{x_2}).
\end{align}
Note that assuming $s_V$ to be $0$ is not a restriction as one can always shift $V$ by a fixed constant without changing the variational problem.

\begin{lemma}\label{lem:long_time_limit}
Assume \eqref{eq:condition_V1}, \eqref{eq:V_dissipative}, \eqref{eq:V_sup}. Let $u_T$, $T>0$ be a minimizer of $\cA_T$ over $X_T$ given by Proposition \ref{prop:extended_gamma_convergence}. For $T\geq 1$ there holds
\begin{align*}
    \esslim_{x_1\rightarrow T}\int_{-L}^LF(\nabla u_T(x))\:dx_2=\esslim_{x_1\rightarrow 0}\int_{-L}^LF(\nabla u_T(x))\:dx_2\leq \frac{\cA_1(u_1)}{T}.
\end{align*}
\end{lemma}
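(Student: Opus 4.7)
The plan is to combine the constancy of the conserved ``total energy'' from Lemma~\ref{lem:energy_balance} with a short competitor argument. The key identity will be that the constant value $H_T$ of the conserved energy both equals the essential kinetic-energy limits at the time endpoints (because the potential contribution vanishes there by \eqref{eq:V_sup}) and controls $\cA_T(u_T)/T$ from below.

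First, I will set $H_T(x_1) := \int_{-L}^L \bigl[F(\nabla u_T) + V(x_2, u_T)\bigr]\,dx_2$. Since \eqref{eq:V_dissipative} contains \eqref{eq:V1prime}, Lemma~\ref{lem:energy_balance} yields that $H_T$ agrees a.e. with a constant, still denoted $H_T$. By Proposition~\ref{prop:u_continuous} $u_T \in \cC^0(\overline{\Omega_T})$ with $u_T(0,x_2) = |x_2|-L$ and $u_T(T,x_2) = L-|x_2|$, and $V$ is Lipschitz in its arguments by \eqref{eq:condition_V1}, so the maps $x_1 \mapsto \int_{-L}^L V(x_2, u_T(x_1,\cdot))\,dx_2$ extends continuously to $\{0, T\}$ with value $s_V = 0$ by \eqref{eq:V_sup}. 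Constancy of $H_T$ then forces both essential limits in the lemma to equal $H_T$.

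Second, I will rewrite
\[
\cA_T(u_T) = \int_0^T \int_{-L}^L \bigl[F(\nabla u_T) - V(x_2,u_T)\bigr]\,dx_2\,dx_1 = T\,H_T - 2 \int_{\Omega_T} V(x_2,u_T)\,dx.
\]
Corollary~\ref{cor:bounds_for_minimizer} gives $|u_T| \leq L - |x_2|$ a.e., so \eqref{eq:V_sup} produces the slicewise bound $\int_{-L}^L V(x_2, u_T(x_1,\cdot))\,dx_2 \leq s_V = 0$ for each $x_1$, whence $T\,H_T \leq \cA_T(u_T)$.

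Finally, I will exhibit a competitor that uses $T \geq 1$ and the fact that $u_1(1,\cdot) = L - |\cdot|$ matches the prescribed final configuration \eqref{eq:final_configuration}. Namely,
\[
v_T(x_1,x_2) := \begin{cases} u_1(x_1,x_2), & x_1 \in [0,1],\\[2pt] L - |x_2|, & x_1 \in (1,T].\end{cases}
\]
The traces at $x_1=1$ match, $v_T \in H^1(\Omega_T) \cap X_T$, and on the resting slab $(1,T)\times(-L,L)$ the integrand vanishes: $\partial_{x_1} v_T \equiv 0$ forces $F(\nabla v_T) = 0$, while $\int_{-L}^L V(x_2, L-|x_2|)\,dx_2 = s_V = 0$. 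Hence $\cA_T(v_T) = \cA_1(u_1)$, and minimality of $u_T$ gives $\cA_T(u_T) \leq \cA_1(u_1)$; combined with the previous step this yields $H_T \leq \cA_1(u_1)/T$. I do not foresee a substantive obstacle; the only point requiring a little care is ensuring one-sided continuity (not merely essential continuity) of $x_1 \mapsto \int V(x_2, u_T(x_1,\cdot))\,dx_2$ at the endpoints, which is immediate from the uniform continuity of $u_T$ on $\overline{\Omega_T}$ and dominated convergence.
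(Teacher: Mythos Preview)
Your proposal is correct and follows essentially the same approach as the paper: identify the conserved quantity $H_T$ from Lemma~\ref{lem:energy_balance}, use continuity of $u_T$ and \eqref{eq:V_sup} to show the endpoint kinetic limits equal $H_T$, bound $T H_T\le\cA_T(u_T)$ via $\int V\le s_V=0$, and then control $\cA_T(u_T)$ by the competitor obtained by gluing a minimizer on $[0,1]$ with the constant-in-$x_1$ final profile on $[1,T]$. The paper phrases the last step as monotonicity of $T\mapsto\cA_T(u_T)$, but this is the same construction with the roles of $T$ and $T'$ specialized.
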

\begin{proof}
In view of Lemma \ref{lem:energy_balance} there exists a constant $c_T\in\R$ such that 
\[
\int_{-L}^LF(\nabla u_T(x))+V(x_2,u_T(x))\:dx_2=c_T
\]
for a.e. $x_1\in (0,T)$.  By \eqref{eq:V_sup} we therefore have 
\begin{align*}
    \esslim_{x_1\rightarrow 0}\int_{-L}^LF(\nabla u_T(x))\:dx_2=c_T-\lim_{x_1\rightarrow 0}\int_{-L}^LV(x_2,u_T(x))\:dx_2=c_T.
\end{align*}
Similarly we also conclude that the essential limit of the kinetic energy as $x_1\rightarrow T$ is given by $c_T$. This shows the stated equality between the two limits. Note also that $c_T\geq 0$.

Next let $T'>T$ and define $v\in X_{T'}$ by setting 
\begin{align*}
    v(x)=\begin{cases}
    u_T(x),&x_1\in(0,T),\\
    L-\abs{x_2},&x_1\in(T,T').
    \end{cases}
\end{align*}
Due to \eqref{eq:V_sup} and $F(0,\pm 1)=0$ one deduces
\[
\cA_{T'}(u_{T'})\leq \cA_{T'}(v)=\cA_T(u_T).
\]
Thus \eqref{eq:V_sup} and Corollary \ref{cor:bounds_for_minimizer} imply 
\begin{align*}
    0\leq Tc_T\leq Tc_T-2\int_{\Omega_T}V(x_2,u_T(x))\:dx=\cA_T(u_T)\leq \cA_1(u_1)
\end{align*}
and the statement follows.
\end{proof}

\section{Summary and further questions}\label{sec_summary_questions}
Let us first of all repeat the full list of used requirements regarding the nonlinear potential $V(x,z)$ and formulate an extended version of Theorem \ref{thm:existence_and_part_regularity}. That is
\begin{gather}\tag{$\text{V}_{\text{reg}}$}\begin{gathered}
   \partial_z^k V:\overline{\Omega}\times\R\rightarrow\R,~k=0,1,2,3 \text{ exist, are Lipschitz and bounded,}
   \end{gathered}\\
\begin{gathered}\tag{$\text{V}_\text{aut}$}
    V(x,z)=V(x_2,z),~x\in\overline{\Omega},~z\in\R,
\end{gathered}\\
\begin{gathered}\tag{$\text{V}_\text{con}$}
    \partial_z^2V(x,z)\geq 0,~x\in\overline{\Omega},~z\in[-L-1,L+1],
\end{gathered}\\
\begin{gathered}\tag{$\text{V}_{\text{dis}}$}  V(x,z)=-gAz+f(x_2,z) \text{ with }\partial_zf(x_2,z)>0\text{ whenever }\abs{z}<L-\abs{x_2},
   \end{gathered}\\
\begin{gathered}\tag{$\text{V}_{\text{sup}}$}
    s_V=0\text{ and }s_V=\int_{-L}^LV(x_2,\varphi)\:dx_2\text{ if and only if }   \varphi=\pm(L-\abs{x_2}),
\end{gathered}
\end{gather}
where 
\begin{align*}
    s_V:=\sup\set{\int_{-L}^LV(x_2, \varphi)\:dx_2:\varphi\in \cC^0([-L,L]),~\abs{\varphi(x_2)}\leq L-\abs{x_2}}.
\end{align*}
We have seen, cf. Section \ref{sec:example_f}, that a suitable extension of
\begin{align*}
    V(x,z)=-gAz+\frac{3gA}{4L}(z-(\abs{x_2}-L))^2+const.
\end{align*}
satisfies all of the stated conditions. Summarizing the statements of Sections \ref{sec_regular_approximation}-\ref{sec_further_properties} there holds the following extension of Theorem \ref{thm:existence_and_part_regularity}.
\begin{theorem}\label{thm:full_detail}
Suppose that $V$ satisfies all five conditions \eqref{eq:condition_V1}-\eqref{eq:V_sup}. Then
Problem \eqref{eq:var_prop_sec_3} with $\cA$ defined in \eqref{eq:functional_in_sec_3} has a solution $u$ with the following properties
\begin{enumerate}[a)]
    \item $u$ is continuous on $\overline{\Omega}$ with $\abs{u(x)}\leq L-\abs{x_2}$ and $\partial_{x_1}u\geq 0$, $\abs{\partial_{x_2}u}\leq 1$ a.e.,
    \item there exists $\Omega^\prime\subset\Omega$ open, nonempty and such that every connected component of $\Omega'$ is simply connected, on which $u$ is of class $\cC^2$ with $\partial_{x_1}u>0$, $\abs{\partial_{x_2}u}<1$, while $\partial_{x_1}u(x)=0$ for a.e. $x\notin \Omega^\prime$,
    \item $\partial_{x_1}u(\cdot,\pm L)=\partial_{x_1}u(0,\cdot)=\partial_{x_1}u(T,\cdot)=0$, $\partial_{x_2}u(0,x_2)=-\partial_{x_2}u(T,x_2)=\sign(x_2)$ in the sense specified in Lemmas \ref{lem:tlim}, \ref{lem:xlim}, \ref{lem:txlim},
    \item on $(0,T)$ the balance 
    \begin{align*}
    \frac{d}{dx_1}\int_{-L}^LF(\nabla u)-gAu\:dx_2=-\int_{-L}^L\partial_zf(x_2,u)\partial_{x_1}u\:dx_2\leq 0
\end{align*}
holds in a weak sense,
\item while at $x_1=0$, $x_1=T$ there holds
\begin{align*}
    \esslim_{x_1\rightarrow T}\int_{-L}^LF(\nabla u(x))\:dx_2=\esslim_{x_1\rightarrow 0}\int_{-L}^LF(\nabla u(x))\:dx_2\leq \frac{c_1}{T}
\end{align*}
for all $T\geq 1$ and a constant $c_1>0$ (specified in Lemma \ref{lem:long_time_limit}).
\end{enumerate}
\end{theorem}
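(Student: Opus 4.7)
The plan is to assemble Theorem \ref{thm:full_detail} directly from the results of Sections \ref{sec_regular_approximation}--\ref{sec_further_properties}; no genuinely new argument is required beyond one short topological observation. I would start from Lemma \ref{lem:exist}, which provides for each small $\varepsilon>0$ a minimizer $u_\varepsilon\in X_\varepsilon$ of $\cA_\varepsilon$. Applying Proposition \ref{prop:extended_gamma_convergence} along a subsequence $\varepsilon_k\downarrow 0$ yields a weak $H^1$-limit $u\in X$ that minimizes $\cA$ over $X$, together with a Young-measure description of $\nabla u_{\varepsilon_k}$ in which the concentration part vanishes and oscillations can occur only along rays $\set{p_1=0}$. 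Since \eqref{eq:V1prime} is contained in \eqref{eq:V_dissipative}, Proposition \ref{prop:u_continuous} upgrades this to uniform convergence on $\overline{\Omega}$, so in particular $u\in \cC^0(\overline{\Omega})$. The remaining claims in (a), namely $\abs{u}\leq L-\abs{x_2}$ and $\partial_{x_1}u\geq 0$, are Corollary \ref{cor:bounds_for_minimizer}, while $\abs{\partial_{x_2}u}\leq 1$ is built into the definition of $X$.

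For (b), I would set $\Omega'$ to be the open subset of $\Omega$ at which $u$ is $\cC^2$ with $\partial_{x_1}u>0$ and $\abs{\partial_{x_2}u}<1$. Proposition \ref{prop:parreg} guarantees that every Lebesgue point of $\nabla u$ at which $\nabla u(x)\in G$ lies in $\Omega'$, so $\Omega'$ is nonempty (otherwise $\partial_{x_1}u\equiv 0$ a.e., contradicting the trace data $u(0,\cdot)\neq u(T,\cdot)$). The complementary statement $\partial_{x_1}u=0$ a.e.\ off $\Omega'$ then follows from the definition of $\Omega'$ combined with $\partial_{x_1}u\geq 0$. The only step in (b) that is not an outright citation is the simple connectedness of each connected component of $\Omega'$. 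This I would derive from Lemma \ref{lem:good_set_property} (whose hypotheses are present through \eqref{eq:V1prime} and the separately assumed convexity of $V$ in $z$) by a standard topological argument: given a connected component $C\subset\Omega'$ and a Jordan loop $\gamma\subset C$, simple connectedness of the rectangle $\Omega$ forces $\gamma$ to bound an open disk $V\subset\Omega$. Then $\partial V=\gamma\subset \Omega'$, so Lemma \ref{lem:good_set_property} yields $V\subset\Omega'$; as $V$ is open, connected and meets $C$, we conclude $V\subset C$, and $\gamma$ is contractible in $C$.

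Parts (c)--(e) reduce to direct citations. The three trace statements in (c) are exactly Lemmas \ref{lem:tlim}, \ref{lem:xlim}, and \ref{lem:txlim}; the dissipative energy balance in (d) is Corollary \ref{cor:energy_dissipation}, which combines Lemma \ref{lem:energy_balance} with $\partial_{x_1}u\geq 0$ and the sign condition on $\partial_z f$ from \eqref{eq:V_dissipative}; and (e) is Lemma \ref{lem:long_time_limit}, valid under \eqref{eq:V_sup}, with $c_1=\cA_1(u_1)$. The main obstacle, and the only delicate step, is the topological argument for simple connectedness in (b); everywhere else the proof is essentially bookkeeping to ensure that the hypotheses of each cited result are indeed implied by the five conditions assumed in Theorem \ref{thm:full_detail}.
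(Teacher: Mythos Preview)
Your proposal is correct and follows the same approach as the paper, which explicitly presents Theorem \ref{thm:full_detail} as a summary of Sections \ref{sec_regular_approximation}--\ref{sec_further_properties} without a separate proof. The only piece not literally stated elsewhere is the simple connectedness of the components of $\Omega'$, and your Jordan-curve argument via Lemma \ref{lem:good_set_property} is exactly the intended one; one minor remark is that in (b) the vanishing of $\partial_{x_1}u$ off $\Omega'$ really comes from Proposition \ref{prop:parreg} together with the fact that $\{\partial_{x_1}u\neq 0,\ \abs{\partial_{x_2}u}=1\}$ is null (finite action), not from the sign condition $\partial_{x_1}u\geq 0$.
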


\subsection{Use of $\nabla u$ as a Boussinesq subsolution} Let $u$ be the minimizer from Theorem \ref{thm:full_detail} with partial regularity set $\Omega'$ and set $\rho,m_n:(0,T)\times(-L,L)\rightarrow\R$
\[
\rho(t,x_n)=\partial_{x_2}u(t,x_n),\quad m_n(t,x_n)=-\partial_{x_1}u(t,x_n).
\]
We indeed see that $\rho,m_n$ and $\mathscr{U}':=\Omega'$ satisfy Lemma \ref{lem:rise_of_1D_subsols} \eqref{eq:1D_pde}-\eqref{eq:1D_mixing_zone},\eqref{eq:1D_L1} while \eqref{eq:1D_outside_mixing_zone} is relaxed to $m_n=0$, $\abs{\rho}\leq 1$ a.e. outside $\Omega'$. In consequence $\nabla u$ induces a subsolution with mixed resting regions, cf. Remark \ref{rem:one_dim_subs_with_resting_regions}, and therefore via Theorem \ref{thm:CI}, Remark \ref{rem:convex_integration_theorem2} resp., infinitely many solutions  $(\rho_{sol},v_{sol})$ to the Boussinesq system \eqref{eq:bou},\eqref{eq:initial_data},\eqref{eq:boundary_condition} that are turbulently mixing on $\mathscr{U}'$, and of which the above density $\rho$, momentum $m=m_ne_n$ and velocity $v\equiv 0$ can be seen as horizontally averaged quantities.

At this point we have proven Theorem \ref{thm:introduction}. Let us however state some consequences of Theorem \ref{thm:full_detail} for the induced subsolution, solutions resp.. 
We conclude that
\begin{itemize}
    \item the average momentum $m$ is directed downwards ($m_n\leq 0$),
    \item outside the mixing zone $\mathscr{U}'$ the fluid is at rest ($v_{sol}=0$), but our investigation does not allow us to conclude that the density is in one of the two initial phases ($\abs{\rho_{sol}}<1$ not excluded),
    \item the resting regions, with or without $\rho_{sol}\in\set{\pm 1}$, can not be surrounded by the mixing zone $\mathscr{U}'$,
    \item besides the initial and boundary conditions for \eqref{eq:bou}, also $m$ vanishes in a certain trace sense as $t\rightarrow 0$, $t\rightarrow T$ and $\rho$ approaches the stable interface configuration $-\rho_0(x)$ as $t\rightarrow T$,
    \item the total energy of the subsolution 
    \[
    E_{sub}(t):=\int_{-L}^L\frac{m_n(t,x_n)^2}{2(1-\rho(t,x_n)^2)}+\rho(t,x_n)gAx_n\:dx_n
    \]
    might jump upwards from $\int_{-L}^L\rho_0(x_n)gAx_n\:dx_n$ at $t=0$, and then monotonically decays on $(0,T]$ to $-\int_{-L}^L\rho_0(x_n)gAx_n\:dx_n$ with a reversed jump at $t=T$,
    \item the height of the initial and final energy jump vanishes as the considered time interval $(0,T)$ becomes unbounded.
\end{itemize}
We recall that induced solutions can be found with total energy $E_{tot}(t)$ in an arbitrary $\delta(t)$-neighborhood of $E_{sub}(t)$, cf. Theorem \ref{thm:CI}.

\subsection{Open questions}
Here we discuss some further questions regarding the variational problem \eqref{eq:var_prop_sec_3}, properties of the induced subsolutions and the modeling in general.

Starting with the list of the previous subsection it would be of interest to see if, under suitable conditions on $V$, the possibility of mixed resting regions can be excluded, i.e. if in Theorem \ref{thm:full_detail} b) one could have $\partial_{x_1}u=0$ \emph{and} $\abs{\partial_{x_2}u}=1$ a.e. outside $\Omega'$. An analogue property for instance holds true in the setting of De Silva, Savin \cite{desilva_savin}, cf. Section \ref{sec_main_result} and Remark \ref{rem:obstacle_problem}.

Other questions for problem \eqref{eq:var_prop_sec_3} address uniqueness of minimizers, also this property holds true in \cite{desilva_savin}, as well as global regularity for instance comparable to the result of Colombo, Figalli \cite{colombo_figalli}, and any further information regarding the partial regularity set $\Omega'$ which corresponds to the turbulent mixing zone of the induced solutions. Of particular interest in applications is for instance the growth of this zone in time.

On a larger scale of questions we recall that our investigation has been motivated by the search of global in time selection criteria for subsolutions of the Euler equations. 
Here we first of all like to point out that the derivation of the non-dissipative action functional $\cA_0(u)$ in Sections \ref{sec_relaxation_boussinesq}-\ref{sec:reformulation} relies on almost no ansatzes besides the imposition of the least action principle itself. The only a priori unjustified choice made is that the kinetic energy density of the solutions $(\rho_{sol},v_{sol})$ in the turbulent zone $\mathscr{U}$ satisfies $\abs{v_{sol}}^2\in\cC^0(\mathscr{U})+\rho_{sol}\cC^0(\mathscr{U})$, cf. \eqref{eq:local_energy_density}. This choice has already been made in \cite{GKBou} and is, at least in the here considered one-dimensional initial configuration, a posteriori backed up by the fact that the functional $\cA_0$ can also be derived from a different point of view avoiding the notion of subsolutions at all, see Appendix \ref{sec_brenier}.

However, after the derivation of $\cA_0$ we introduced in Section \ref{sec_energy_dissipation} the nonlinear potential $V(x_2,z)=-gAu+f(x_2,z)$ that allowed us to haven energy dissipation (up to the initial jump controlled by $T^{-1}$) while staying within the variational framework of the least action principle. 
Note that, in terms of the subsolution components $\rho$ and $m_n$ the associated Euler-Lagrange equation (formal on all of $(0,T)\times(-L,L)$, rigorous on $\Omega'$) is given by
\begin{align}\label{eq:euler_lagrange_rho_m}
    \partial_t\left(\frac{m_n}{1-\rho^2}\right)-\partial_{x_n}\left(\frac{m_n^2\rho}{(1-\rho^2)^2}\right)-\Psi[\rho,m_n]=-gA,
\end{align}
where $\Psi$ is the nonlocal operator
\[
\Psi[\rho,m_n](t,x_n)=\partial_zf\left(x_n,\int_{-L}^{x_n}\rho(t,s)\:ds\right).
\]
Besides the here stated properties \eqref{eq:condition_V1}-\eqref{eq:V_sup} a further investigation, justification resp., concerning suitable choices of $f$, or more general, of a different type of relation $\Psi[\rho,m_n]$ consistent with energy dissipation is open.

\appendix

\section{Relation to Brenier's generalized least action principle}\label{sec_brenier}
We quickly recall the least action principle, Brenier's generalization of it, and thereafter focus on a special one-dimensional problem leading to a functional formally equivalent to our $\cA_0$ derived in Section \ref{sec_action_functional}.

\subsection{The least action principle}
Let $\cD\subset\R^n$ be a compact domain, $T>0$, $U:(0,T)\times \cD\rightarrow \R$ be a given potential and $\rho_0:\cD\rightarrow (0,\infty)$ an initial mass distribution. It is well known, originating in the work of Arnold \cite{arnold}, that the Euler equations
\begin{align}\label{eq:eulerbr}
    \begin{split}
        \partial_t (\rho v) +\divv (\rho v\otimes v) +\nabla p&=-\rho \nabla U,\\
\divv v&=0,\\
\partial_t \rho + \divv (\rho v)&=0
    \end{split}
\end{align}
can formally be derived by minimizing the action functional
\begin{align*}
    \cA(g)=\int_0^T\int_{\cD}\rho_0(x)\left(\frac{1}{2}\abs{\partial_tg(t,x)}^2-U(t,g(t,x))\right)\:dx\:dt
\end{align*}
over trajectories $t\mapsto g(t,\cdot)$ in the manifold of volume preserving diffeomorphisms $\cD\rightarrow \cD$ connecting a given initial and end state, say $g(0,\cdot)=\id$, $g(T,\cdot)=h$. Assuming the existence of a regular minimizer $g$ and an associated Lagrange multiplier $p:(0,T)\times\cD\rightarrow\R$ one derives that the tuple $(\rho,v,p)$, where $v$ is the velocity field inducing the flow $g$, i.e. $\partial_tg(t,x)=v(t,g(t,x))$, and $\rho$ is the corresponding transported density distribution, i.e. $\rho(t,g(t,x))=\rho_0(x)$,
is a solution of \eqref{eq:eulerbr} with initial mass distribution $\rho_0$. For more detail we refer to \cite{Brenier18} and the references therein. %\todo{other works \cite{Brenier89,Brenier99,lopesf_nlopes_precioso}, have a look at the latter 2, don't need to cite them necessarily}

Rigorous existence results of minimizers for suitable target diffeomorphisms $h$ not too far away from the identity are due to Ebin and Marsden \cite{ebin_marsden}. However, for a general $h$ it also has been shown by Shnirelman \cite{shnirelman} that there does not need to be a solution in the classical sense described above.

In order to overcome this, Brenier introduced in \cite{Brenier89} the aforementioned generalization of the least action principle, which allowed him to conclude the existence of a solution given the existence of at least one competitor with finite action.

\subsection{Relaxation via generalized flows}\label{sec_Brenier}

Let us recall that
Brenier's generalized action functional associated with \eqref{eq:eulerbr} is defined as
\begin{align}\label{eq:general_action_boussinesq}
    \cA(\mu):=\int_{\Omega(\cD)}\rho_0(\omega(0))\left(\int_0^T \frac{1}{2}|\omega'(t)|^2-U(t,\omega(t))\,dt\right)\,\mu(d\omega),
\end{align}
where $\Omega(\cD):=\{[0,T]\ni t\mapsto\omega(t)\in\cD\}=\cD^{[0,T]}$ is equipped with the product topology, hence compact, and $\mu$ is a generalized flow, namely a regular Borel probability measure on $\Omega(\cD)$ satisfying the incompressibility constraint
\begin{align}\label{eq:general_incompressibility_constraint}
    \int_{\Omega(\cD)}f(\omega(t))\,\mu(d\omega)=\fint_{\cD} f(x)\, dx,\ \forall f\in \cC(\cD),\ t\in[0,T],
\end{align}
as well as the initial and final data constraint
\begin{align}\label{eq:general_initial_end_data}
   \int_{\Omega(\cD)}f(\omega(0),\omega(T))\,\mu(d\omega)=\fint_{\cD} f(x,h(x))\, dx,\ \forall f\in \cC(\cD^2).
\end{align}
Here $h$ again denotes the target configuration, which in this setting is only required to be a measure preserving map $(\cD,\:dx)\rightarrow (\cD,\:dx)$, and not necessarily a diffeomorphism. %Note that $\cA$ can be seen as the difference of kinetic and potential energy associated with the system \eqref{eq:eulerbr}. Furthermore, setting $U:=gA x_n$ allows to recover the potential we are interested in in this paper.

It was shown in \cite{Brenier89} that if $\inf_{\mu} \cA(\mu)<+\infty$, then there exists a minimizer, and furthermore, if system \eqref{eq:eulerbr} has a solution (enjoying certain properties) then the minimizer corresponds to the flow associated with the fluid velocity of the solution, see \cite[Theorem 3.2, Theorem 5.1]{Brenier89} for the precise statements.

Through slight modifications one sees that the same two properties remain valid for the generalized action functional corresponding to the here investigated Euler system in Boussinesq approximation \eqref{eq:bou}, which reads
\begin{align}\label{eq:actbou}
    \cA(\mu):=\int_{\Omega(\cD)}\int_0^T\left( \frac{1}{2}|\omega'(t)|^2-\rho(0,\omega(0))\,gA\omega_n(t)\right)\,dt\,\mu(d\omega)
\end{align}
with $\rho(0,\cdot)=\sign$ being the normalized initial density distribution.

\subsection{One-dimensional two phase flows}

For the particular Rayleigh-Taylor situation we consider $\cD=[0,1]^{n-1}\times[-L,L]$, $\rho(0,x)=\sign (x_n)$ and the target transformation $h:\cD\rightarrow \cD$, $h(x',x_n)=(x',h_n(x_n))$, where 
$$
h_n(x_n)=\begin{cases}x_n-L,& x_n\geq 0,\\x_n+L,&x_n<0.\end{cases}
$$ 
I.e., $h$ swaps the upper half $\cD_+$ of the container with the lower half $\cD_-$. Note that in fact $h$ prescribes even a particle-by-particle exchange of the two halfs. 

This situation (without the first $n-1$ dimensions and without the potential) appears in Brenier's revisitation of the least action principle \cite{Brenier18} as one of the examples for generalized incompressible flows, see \cite[Section 4.3]{Brenier18}. 

Also here, i.e. with potential term, it follows formally (ignoring the conditions on $p$ and $T$) from \cite[Theorem 5.1]{Brenier89}  that the minimizer of \eqref{eq:actbou} is given by a one-dimensional two phase flow provided the associated vectorfields satisfy the corresponding Euler-Lagrange equation.

More precisely, a one-dimensional two-phase flow is a generalized flow of the type
$$
\mu(d\omega)=(\mu_+(0,x)\delta_{G_+(\cdot,x)}(\omega)+\mu_-(0,x)\delta_{G_-(\cdot,x)}(\omega))dx,
$$
where
\begin{itemize}
    \item the $\delta_{G_\pm(\cdot,x)}$ denote Dirac measures on $\Omega(\cD)$,
    \item $G_\pm(t,x)=(x',g_\pm(t,x_n))$ denote actual flows of two one-dimensional vectorfields $V_\pm(t,x)=(0,v_\pm(t,x_n))$, i.e. 
    \begin{gather*}
        \partial_t g_\pm(t,x_n)=v_\pm(t,g_\pm(t,x_n)),\\ g_\pm(0,x_n)=x_n,\quad g_\pm(T,x_n)=x_n\mp L.
    \end{gather*}
    The maps $g_\pm(t,\cdot)$ are understood as orientation preserving diffeomorphisms $\R\rightarrow \R$ with the property that 
    \begin{equation}\label{eq:two_phase_boundary_condition}
    g_+(t,[0,L])\subset [-L,L],\quad g_-(t,[-L,0])\subset [-L,L],\quad t\in[0,T].
    \end{equation}
\item the functions $\mu_{\pm}:[0,T]\times \cD\rightarrow\R$ indicate the two phases initially given by
\[
\mu_\pm(0,x)=|\cD|^{-1}\mathbbm{1}_{\cD_\pm}(x)=(2L)^{-1}\mathbbm{1}_{\cD_\pm}(x)
\]
and obeying
\begin{align}\label{eq:eulerbrg2}
\mu_++\mu_-=(2L)^{-1},\quad 
    \partial_t \mu_\pm +\partial_{x_n}(\mu_\pm v_\pm)=0.
    \end{align}
\end{itemize} 

Note that the continuity equations imply 
\begin{align}\label{eq:solution_of_continutiy_equations}
\mu_\pm(t,g_\pm(t,x_n))\partial_{x_n}g_\pm(t,x_n)=\mu_\pm(0,x_n).
\end{align}
One can then check using \eqref{eq:two_phase_boundary_condition}, \eqref{eq:eulerbrg2}, \eqref{eq:solution_of_continutiy_equations} resp., that such a two phase flow indeed satisfies the incompressibility constraint \eqref{eq:general_incompressibility_constraint}. Condition \eqref{eq:general_initial_end_data} is directly stated. Moreover, it follows from \eqref{eq:two_phase_boundary_condition}, \eqref{eq:eulerbrg2} that the average of the velocities satisfies
\begin{align}\label{eq:avg_velo_0}
    \mu_-v_-+\mu_+v_+=0\text{ on }[0,T]\times\cD .
\end{align}

As indicated above it follows formally from \cite[Theorem 5.1]{Brenier89} that such a two-phase flow is minimizing \eqref{eq:actbou} provided $v_\pm$ satisfy
 \begin{align}\begin{split}
 \label{eq:eulerbrg}
    \partial_tv_++\partial_{x_n}\left(\frac{1}{2}v_+^2+p\right)=-gA\quad &\text{for }x_n\in g_+(t,[0,L]),\\
    \partial_t v_-+\partial_{x_n}\left(\frac{1}{2}v_-^2+p\right)=gA\quad &\text{for }x_n\in g_-(t,[-L,0])
    \end{split}
\end{align}
with a pressure function $p:[0,T]\times [-L,L]\rightarrow \R$, $(t,x_n)\mapsto p(t,x_n)$ independent of the sign $\pm$.

The generalized action of a two phase flow with initial density $\rho(0,x)=\sign(x_n)$ transformed to Eulerian coordinates using \eqref{eq:eulerbrg2}, \eqref{eq:solution_of_continutiy_equations} reads
\begin{align}\label{eq:action_two_phase}
    \cA(\mu)=\int_0^T\int_{-L}^L\frac{1}{2}\left(\mu_+v_+^2+\mu_-v_-^2\right)-gA(\mu_+-\mu_-)x_n\:dx_n\:dt.
\end{align}

As a side remark we like to mention that as in \cite{Brenier18} the action \eqref{eq:action_two_phase} for a two phase flow can solely be written in terms of the flow of one of the phases, i.e.
\begin{align}\label{eq:br_actiong}\begin{split}
   \cA(\mu)= \frac{1}{2}\int_0^T\int_{0}^L   \partial_t g_+(t,x_n)^2&\left(1+ \frac{2L}{\partial_{x_n} g_+(t,x_n)-2L}\right)\:dx_n\:dt\\&\phantom{ajajajajaj}-2gA
    \int_0^T\int_{0}^L g_+(t,x_n) \, dx_n \, dt.
    \end{split}
\end{align}
The computations for the kinetic energy are not original to us, this is precisely Brenier's example in \cite[Section 4.3]{Brenier18}. The stated form of the potential energy easily follows from the incompressibility condition \eqref{eq:general_incompressibility_constraint} applied to the odd map $f(x)=x_n$. Also here one can check that condition \eqref{eq:eulerbrg} is precisely the (formal) Euler-Lagrange equation of \eqref{eq:br_actiong}.

\subsection{Comparison}
In order to see how the two phase action \eqref{eq:action_two_phase} corresponds to the functional \eqref{eq:action_after_derivation} derived for subsolutions we define $\rho,m:[0,T]\times[-L,L]\rightarrow\R$ by
\begin{align*}
    \rho&:=4L\mu_+-1=1-4L\mu_-,\\
    m&:=4L\mu_+v_+=-4L\mu_-v_-
\end{align*}
and observe that \eqref{eq:avg_velo_0} implies
\begin{align*}
    \mu_-v_-^2+\mu_+v_+^2&=-\frac{v_-v_+}{2L}=\frac{m^2}{2L(1-\rho^2)},\\
    \mu_+-\mu_-&=2\mu_+-\frac{1}{2L}=\frac{\rho}{2L}.
\end{align*}
Thus \eqref{eq:action_two_phase} becomes 
\[
\cA(\mu)=\frac{1}{2L}\int_0^T\int_{-L}^L\frac{m^2}{2(1-\rho^2)}-\rho gAx_n\:dx_n\:dt.
\]
Finally observe that the tuple $(\rho,m)$ satisfies 
\begin{align*}
    \partial_t\rho+\partial_{x_n}m=4L(\partial_t\mu_++\partial_{x_n}(\mu_+v_+))=0
\end{align*}
by \eqref{eq:eulerbrg2}. Thus we arrive at \eqref{eq:action_after_derivation} and Lemma \ref{lem:rise_of_1D_subsols} \eqref{eq:1D_pde}.

Regarding Lemma \ref{lem:rise_of_1D_subsols} \eqref{eq:1D_boundary_initial_data} and \eqref{eq:final_configuration} we like to point out that the conditions for initial and final data for $\rho$ are build into the two-phase flow framework by specifying $\mu_\pm(0,\cdot)$ and the target diffeomorphism $h$. As mentioned earlier the specification of $h$ is even stronger than requiring $\rho(T,\cdot)=-\rho_0$ via \eqref{eq:final_configuration} as it corresponds to a particle-by-particle exchange of the two fluids. 

Next we will convince ourselves that $m(t,\pm L)=0$ holds true as well. Indeed a generalized flow $\mu(d\omega)$ has to be a measure on the path space $\Omega(\cD)$. In terms of a two-phase flow this is ensured by \eqref{eq:two_phase_boundary_condition}. Now if $g_+(t,x_n)=L$ for some $x_n\in[0,L]$, then $\partial_tg_+(t,x_n)\leq 0$, and thus $v_+(t,L)\leq 0$. On the other hand if $L\notin g_+(t,[0,L])$, then $\mu_+(t,L)=0$. We therefore conclude 
$m(t,L)=4L\mu_+(t,L)v_+(t,L)\leq 0,$
and similarly $m(t,-L)\geq 0$. Now by means of \eqref{eq:solution_of_continutiy_equations} we compute
\[
\int_\cD\rho(t,x)\:dx=4L\int_\cD\mu_+(t,x)\:dx-2L=0
\]
and therefore using \eqref{eq:eulerbrg2} it follows that
\[
0=\frac{d}{dt}\int_{\cD}\rho(t,x)\:dx=-(m(t,L)-m(t,-L))\geq 0.
\]
Thus $m(t,\pm L)=0$.

Following from now on the same reformulation as in Section \ref{sec:reformulation} we therefore have shown that the variational problem \eqref{eq:variational_problem_in_introduction} considered in this article  can also be derived from Brenier's generalization of the least action principle instead of subsolutions. The relations are summarized in Figure \ref{fig:diagram}.

\begin{figure}
    \centering

\begin{tikzpicture}[font=\small,thick]
 
\begin{scope}[node distance=20mm and 2mm]
 
% Conditions test
\node[draw,
    %diamond,
    %below=of block3,
    minimum width=2.5cm,
    minimum height=2cm,
    inner sep=0] (block4){ 
    \begin{tabular}{c}
    variational problem\\
   \phantom{1111111111} $\min_{u\in X}\cA_0(u)$ \phantom{1111111111}
\end{tabular}};
 
\node[draw,
    %diamond,
   above left=of block4,
    minimum width=3cm,
    minimum height=1cm,
    inner sep=0] (block5) { Generalized L.A.P.};
 
\node[draw,
    %diamond,
  above right=of block4,
    minimum width=3.5cm,
    minimum height=1cm,
    inner sep=0] (block6) { Subsolutions from \cite{GKBou}};
 
% Increase and Decrease duty cycle
\node[draw,
    above=of block5,
    minimum width=2.5cm,
    minimum height=1cm] (block7) { L.A.P.};

\node[draw,
    above=of block6,
    minimum width=2.5cm,
    minimum height=1.5cm] (block9) { 
    \begin{tabular}{c}
    Euler equations\\
    (in Boussinesq approx.)
\end{tabular}};

 \end{scope}

% Arrows

\draw[-latex] (block5) edge node[pos=0.4,fill=white,inner sep=2pt]{\begin{tabular}{c}
    restricted to 1D two-phase flows\\ with
    $h$ as in \cite[Section 4.3]{Brenier18}
\end{tabular} }(block4);
 
\draw[-latex] (block6) edge node[pos=0.4,fill=white,inner sep=2pt]{\begin{tabular}{c}
    applying L.A.P. to\\
    1D subsolutions
\end{tabular}  }(block4);
 
\draw[-latex] (block7) edge node[pos=0.4,fill=white,inner sep=2pt]{Brenier \cite{Brenier89}}(block5);

\draw[-latex] (block9) edge node[pos=0.4,fill=white,inner sep=0]{\begin{tabular}{c}
    relaxation as\\
    differential inclusion
\end{tabular} }(block6);
   
 %\draw[latex-] (block9) edge node[pos=0.6,fill=white,inner sep=0]{Convex Integration}(block4);
\draw[-latex] (block4) |- (block9)
    node[pos=0.25,fill=white,inner sep=0]{\begin{tabular}{c}
    adding dissipation mechanism\\
    + convex integration for minimizer
\end{tabular} };
    
  \draw[-latex] (block7) .. controls +(0,2) and +(0,2) .. node[pos=0.5,fill=white,inner sep=0]{\begin{tabular}{c}
    Arnold\\
    (+ Boussinesq approx.)
\end{tabular}}(block9);

\end{tikzpicture}
\caption{Relation between relaxations of the least action principle and Euler equations.}
    \label{fig:diagram}
\end{figure}
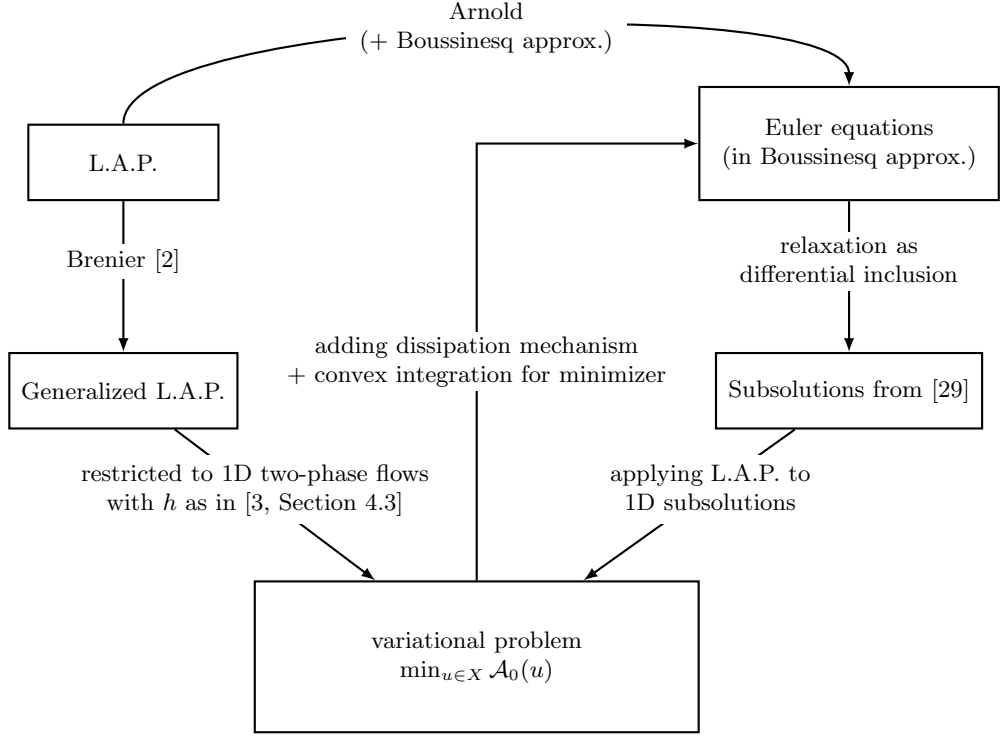

\section{Regarding the convex integration}\label{sec_convex_integration}
%\todo{convex integration ala Tartar but on growing sets}

In this section we prove Theorem \ref{thm:CI}. Let $z_{sub}$ be a subsolution with respect to $e_0,e_1$ and with mixing zone $\mathscr{U}$ and $\delta:[0,T]\rightarrow \R$ be continuous with $\delta(0)=0$, $\delta(t)>0$, for $t>0$. We also define the set of functions
\[
\cF:=\set{\frac{n}{2}e_1,(t,x)\mapsto gAx_n}
\]
and take open sets $V_j\subset\subset V_{j+1}\subset \mathscr U$ with $\bigcup_{j\geq 1}V_j=\mathscr U$, $|\partial V_j|=0$ for $j\geq 1$. We convex integrate recursively as follows.

\textbf{Step 1.} Initiation: Let $X_0^1$ be the set of tuples $z=(\rho,v,m,\sigma,p)$ satisfying
\begin{itemize}
    \item $(\rho,v,m,\sigma)\in (L^\infty\times L^2\times L^2 \times L^1)((0,T)\times\mathcal D)$,
    \item $p$ is a distribution on $(0,T)\times \cD$,
    \item $z$  satisfies the linear system \eqref{eq:lin_syst} with \eqref{eq:initial_data},\eqref{eq:lin_syst_boundary_condition},
    \item $(\rho,v,m,\sigma)$ is continuous on $V_1$ and $z(t,x)\in U_{(t,x)}$ for $(t,x)\in V_1$,
    \item $z=z_{sub}$  a.e. in $(0,T)\times\mathcal D\setminus V_1$,
    \item $\exists C(z)\in(0,1)$ with \[
    \left|\int_{\mathcal D} f(\rho-\rho_{sub})\, dx \right| \leq \frac{C(z)}{2}\delta(t)
    \] for all $t\in[0,T]$ and all $f\in\cF$.
\end{itemize}
Since $e_0$ and $e_1$ are continuous on $\overline{V_1}$, it follows that the set
$$\tilde{X}_0^1:=\set{(\rho,v|_{V_1},m|_{V_1},\sigma|_{V_1}):\ z\in X_0^1\text{ for some }p}$$
is bounded in $L^\infty((0,T);L^2(\mathcal D))\times L^2(V_1;\mathbb R^n\times \mathbb R^n\times\mathcal S_0^{n\times n})$. Moreover, as in \cite[Remark 2.4]{GKBou} it follows that for any such $z$ there holds $\rho\in \cC^0([0,T];L^2_{w}(\mathcal D))$.

Now let $B^{(1)}\subset L^2(\mathcal D)$, $B^{(2)}\subset L^2(V_1;\mathbb R^n\times \mathbb R^n\times\mathcal S_0^{n\times n})$ be bounded balls such that
$$\rho(t,\cdot)\in B^{(1)},\text{ for all }t\in[0,T],\quad (v|_{V_1},m|_{V_1},\sigma|_{V_1})\in  B^{(2)},$$
for any $(\rho,v|_{V_1},m|_{V_1},\sigma|_{V_1})\in \tilde{X}_0^1$. Furthermore let us denote by $d^{(1)}$ and $d^{(2)}$ the metrizations of the respective weak-$L^2$ topologies on these balls, and set
\begin{align*}
    d_{X^1}(y,y'):=\max\Big\{\sup_{t\in[0,T]}d^{(1)}(\rho(t,\cdot)&,\rho'(t,\cdot)),\\&
    d^{(2)}((v|_{V_1},m|_{V_1},\sigma|_{V_1}),(v'|_{V_1},m'|_{V_1},\sigma'|_{V_1}))\Big\},
\end{align*}
for $y,y'\in \tilde{X}_0^1$. We define $X^1$ as the closure of $\tilde{X}_0^1$ with respect to $d_{X^1}$, such that  $(X^1, d_{X^1})$ is a complete metric space.

Proceeding as in \cite{GKBou} one obtains that the
functional
\begin{align*}
    I_1(y):=\int_{V_1} |y(t,x)|^2 d(t,x)
\end{align*}
is Baire-1 on $X^1$ and that
\begin{align*}
    J_1(y):=\int_{V_1} d(y(t,x),\pi(K_{(t,x)})) d(t,x)
\end{align*}
is continuous with respect to the strong-$L^2$ topology on $X^1$. Here $\pi$ is the canonical projection from $\mathbb R\times\mathbb R^n\times \mathbb R^n\times\mathcal S_0^{n\times n}\times \mathbb R$ to $\mathbb R\times\mathbb R^n\times \mathbb R^n\times\mathcal S_0^{n\times n}$ eliminating the pressure component.

One may then use an argument based on a perturbation lemma (see for instance \cite[Lemma 3.13]{GKBou}) to show that $J_1^{-1}(0)$ is residual in $X^1$. Hence for every $\varepsilon_1>0$ we find $y_1\in J_1^{-1}(0)$, which after augmentation by a suitable $p^{(1)}$ gives a subsolution $z_{sub}^{(1)}$, $\varepsilon_1$-close to $(\rho_{sub},v_{sub},m_{sub},\sigma_{sub})$. More precisely, there holds
\begin{itemize}
    \item $z_{sub}^{(1)}$ solves the linear system \eqref{eq:lin_syst},\eqref{eq:initial_data},\eqref{eq:lin_syst_boundary_condition},
    \item $z_{sub}^{(1)}=z_{sub}$ outside $V_1$,
    \item $z_{sub}^{(1)}(t,x)\in K_{(t,x)}$ for a.e. $(t,x)\in V_1$,
    \item $d_{X^1}\left((\rho^{(1)}_{sub},v^{(1)}_{sub},m^{(1)}_{sub},\sigma^{(1)}_{sub}),(\rho_{sub},v_{sub},m_{sub},\sigma_{sub})\right)<\varepsilon_1$,
    \item $\displaystyle \left|\int_{\mathcal D}f \left(\rho_{sub}^{(1)}-\rho_{sub}\right)\, dx \right| \leq \frac{1}{2}\delta(t)$ for all $t\in[0,T]$, $f\in\cF$.
\end{itemize}
I.e., the mixing zone of $z_{sub}^{(1)}$ is given by $\mathscr U\setminus\overline{V_1}$.

\textbf{Step 2.} Recursion: Suppose that $j\geq 1$ and that there exists a 
subsolution $z_{sub}^{(j)}$ with mixing zone $\mathscr U\setminus \overline{V_j},$ i.e.
\begin{itemize}
    \item $z_{sub}^{(j)}$ solves the linear system \eqref{eq:lin_syst},\eqref{eq:initial_data},\eqref{eq:lin_syst_boundary_condition},
    \item $z_{sub}^{(j)}=z_{sub}$ outside $V_j$,
    \item $z_{sub}^{(j)}\in K_{(t,x)}$ for a.e. $(t,x)\in V_j$,
    \item $d_{X^j}\left((\rho^{(j)}_{sub},v^{(j)}_{sub},m^{(j)}_{sub},\sigma^{(j)}_{sub}),(\rho^{(j-1)}_{sub},v^{(j-1)}_{sub},m^{(j-1)}_{sub},\sigma^{(j-1)}_{sub})\right)<\varepsilon_j$,
    \item $\displaystyle \left|\int_{\mathcal D}f \left(\rho_{sub}^{(j)}-\rho_{sub}^{(j-1)}\right)\, dx \right| \leq \frac{1}{2^j}\delta(t)$ for all $t\in[0,T]$, $f\in\cF$.
\end{itemize}

Here the spaces $(X^j,d_{X^j})$ are defined recursively by first saying that a tuple  $z=(\rho,v,m,\sigma,p)$ belongs to $X_0^{j+1}$ if and only if
\begin{itemize}
    \item $(\rho,v,m,\sigma)\in (L^\infty\times L^2\times L^2 \times L^1)((0,T)\times\mathcal D)$,
    \item $p$ is a distribution on $(0,T)\times \cD$,
    \item $z$  satisfies the linear system \eqref{eq:lin_syst} with \eqref{eq:initial_data},\eqref{eq:lin_syst_boundary_condition},
    \item $(\rho,v,m,\sigma)$ is continuous on $W_{j+1}:=V_{j+1}\setminus \overline{V_j}$ and $z(t,x)\in U_{(t,x)}$ for $(t,x)\in W_{j+1}$,
    \item $z=z^{(j)}_{sub}$  a.e. in $(0,T)\times\mathcal D\setminus W_{j+1}$,
    \item $\exists C(z)\in(0,1)$ with \[
    \left|\int_{\mathcal D} f\left(\rho-\rho^{(j)}_{sub}\right)\, dx \right| \leq \frac{C(z)}{2^{j+1}}\delta(t)
    \] for all $t\in[0,T]$ and all $f\in\cF$,
\end{itemize}
and then constructing an appropriate completion $(X^{j+1},d_{X^{j+1}})$ as in Step 1. Note that the whole space $X^{j+1}$ depends on the previously chosen $j$th-order subsolution $z^{(j)}_{sub}$. 

As in Step 1, relying on the functionals $I_{j+1},J_{j+1}:X^{j+1}\rightarrow \R$, 
\begin{align*}
    I_{j+1}(y):=\int_{W_{j+1}} |y(t,x)|^2 d(t,x),\quad J_{j+1}(y):=\int_{W_{j+1}} d(y(t,x),\pi(K_{(t,x)})) d(t,x)
\end{align*}
we may conclude the existence of subsolutions $z_{sub}^{(j+1)}$ satisfying the properties listed in the beginning of Step 2 with $j$ replaced by $j+1$ for any given $\varepsilon_{j+1}>0$.

\textbf{Step 3.} Conclusion: In this manner we may construct (infinitely many) sequences $\{z_{sub}^{(j)}\}_{j\geq 1}$ which further satisfy 
\begin{align}\label{eq:nested_subsolutions}
    z_{sub}^{(k)}|_{V_j}=z_{sub}^{(j)}|_{V_j},\quad \text{for }k\geq j\geq 1.
\end{align}
For simplicity of notation we also set $z_{sub}^{(0)}:=z_{sub}$.
Let us show that any such a sequence converges to a solution.

First of all we claim that there exists a dimensional constant $C=C(n)>0$ such that 
\begin{align}\label{eq:bounds_in_U}
    \abs{m^{(j)}_{sub}}^2+\abs{v^{(j)}_{sub}}^2+\abs{\sigma^{(j)}_{sub}}\leq C\left(e_0+\rho_{sub}^{(j)}e_1\right)\quad \text{a.e. on }(0,T)\times\cD.
\end{align}
Indeed, outside the corresponding mixing zone this is clear by the definition of $K_{(t,x)}$, whereas inside the mixing zone the inequalities involved in the definition of $U_{(t,x)}$ imply
\begin{align*}
    \abs{m}^2+\abs{v}^2&=\frac{\abs{m+v}^2}{2}+\frac{\abs{m-v}^2}{2}<\frac{\abs{m+v}^2}{1+\rho}+\frac{\abs{m-v}^2}{1-\rho}\\
    &<n(e_0+e_1)(1+\rho)+n(e_0-e_1)(1-\rho)=2n(e_0+\rho e_1),
\end{align*}
as well as 
\begin{align*}
    -\lamin(\sigma)&=\lamax(-\sigma)<e_0+\rho e_1+\lamax\left(-\frac{v\otimes v-\rho (v\otimes m+m\otimes v)+m\otimes m}{1-\rho^2}\right)\\
    &=e_0+\rho e_1+\lamax\left(-\frac{(v+m)\otimes(v+m)}{2(1+\rho)}-\frac{(v-m)\otimes(v-m)}{2(1-\rho)}\right)\\
    &\leq e_0+\rho e_1.
\end{align*}
Note that the bound on $-\lamin(\sigma)=\abs{\lamin(\sigma)}$ is enough since $\sigma$ is trace free.

% Thus using $f=\frac{n}{2}e_1\in\cF$ we compute
% \begin{align*}
%     \int_0^T\int_{\mathcal D}|&e_0+\rho_{sub}^{(j)} e_1| \, dx \, dt=
%     \int_0^T\int_{\mathcal D}e_0+\rho_{sub}^{(j)} e_1 \, dx \, dt\\
%     &=
%      \int_0^T\int_{\mathcal D}e_0+\rho_{sub}e_1 \, dx \, dt+\frac{2}{n}\int_0^T\int_\cD f\left(\rho_{sub}^{(j)}-\rho_{sub}\right)\:dx\:dt
%      \\&\leq \norm{e_0+\rho_{sub}e_1}_{L^1((0,T)\times \cD)} + \sum_{k=0}^{j-1}\frac{2}{n}\int_0^T\left|\int_{\mathcal D}f\left(\rho_{sub}^{(k+1)}-\rho_{sub}^{(k)}\right) \, dx\right|\, dt\\
%      &\leq \norm{e_0+\rho_{sub}e_1}_{L^1((0,T)\times \cD)}+ \frac{2}{n}\sum_{k=0}^{j-1}2^{-(k+1)}\int_0^T\delta(t) \, dt,
% \end{align*}
% which is bounded since $e_0+\rho_{sub}e_1\in L^1$ by assumption and $\delta \in \cC^0([0,T])$. Hence we may conclude that $\left(\rho_{sub}^{(j)},v_{sub}^{(j)},m_{sub}^{(j)}\right)_{j\geq 1}$ converges weakly in $L^2((0,T)\times \cD)$. 

%For $\left\{\sigma_{sub}^{(j)}\right\}_{j\geq 1}$ we only have an $L^1$ bound, so we shall prove that it is also equi-integrable. We have the following lemma.
As a consequence we can conclude the following equi-integrability.
\begin{lemma} For any $\varepsilon>0$ there exists $\delta>0$ such that for any measurable set $S\subset(0,T)\times\mathcal D$ with $|S|\leq \delta$ there holds
$$\int_S\abs{\rho_{sub}^{(j)}}+\abs{m^{(j)}_{sub}}^2+\abs{v^{(j)}_{sub}}^2+\abs{\sigma_{sub}^{(j)}} \, dx \, dt\leq\varepsilon,\text{ for all }j\geq 1.$$
\end{lemma}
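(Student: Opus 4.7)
The plan is as follows. Since $|\rho^{(j)}_{sub}|\leq 1$ pointwise, the first summand is handled trivially by $\int_S|\rho^{(j)}_{sub}|\leq|S|$, uniformly in $j$. For the remaining three terms I will invoke the pointwise estimate \eqref{eq:bounds_in_U} just established, reducing the task to showing that the non-negative family $h_j:=e_0+\rho^{(j)}_{sub}e_1$ is uniformly integrable in $L^1((0,T)\times\mathcal D)$. Since under condition \eqref{eq:technical_conditions_for_convex_integration_1} neither $e_0$ nor $e_1$ need lie in $L^1$ on their own, no pointwise dominating function is available; the plan is instead to derive equi-integrability as a byproduct of strong $L^1$-convergence of $h_j$, via Scheff\'e's lemma.

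The crucial structural input is the nesting relation \eqref{eq:nested_subsolutions}: $z^{(j)}|_{V_J}=z^{(J)}|_{V_J}$ for every $j\geq J$. Consequently the sequence $\rho^{(j)}(t,x)$ is eventually constant at almost every point — on $\mathscr U=\bigcup_J V_J$ it stabilizes at some $\rho_{sol}(t,x)\in\{\pm 1\}$ (since $z^{(J)}\in K_{(t,x)}$ on $V_J$), while outside $\mathscr U$ it equals the fixed $\rho_{sub}\in\{\pm 1\}$ for every $j$. Setting $h:=e_0+\rho_{sol}e_1$, one has $h_j\to h$ a.e., with $h_j,h\geq 0$ because of the decomposition $h_j=\tfrac{1+\rho^{(j)}}{2}(e_0+e_1)+\tfrac{1-\rho^{(j)}}{2}(e_0-e_1)$. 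A uniform $L^1$-bound on $h_j$ then follows by integrating in time the inequality $\int_{\mathcal D}(e_0+\rho^{(j)}e_1)(t,\cdot)\,dx\leq \int_{\mathcal D}(e_0+\rho_{sub}e_1)(t,\cdot)\,dx+\tfrac{2}{n}\delta(t)$, itself obtained by telescoping the $\mathcal F$-test $|\int_{\mathcal D}\tfrac{n}{2}e_1(\rho^{(k)}-\rho^{(k-1)})\,dx|\leq\delta(t)/2^k$ across the recursive steps; Fatou then places $h\in L^1$.

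What remains, and what I expect to be the main obstacle, is to verify the convergence of integrals $\int h_j\to\int h$. By the nesting $h_j$ and $h$ coincide pointwise on $V_j$, so the discrepancy concentrates on the shrinking set $\mathscr U\setminus V_j$ and reads $\int h_j-\int h=\int_{\mathscr U\setminus V_j}(\rho_{sub}-\rho_{sol})e_1$. Although $e_1$ itself may not be integrable, the identity $(\rho_{sub}-\rho_{sol})e_1=(e_0+\rho_{sub}e_1)-(e_0+\rho_{sol}e_1)$ exhibits the integrand as a difference of two $L^1$ functions and hence rescues integrability; dominated convergence against $\mathbbm{1}_{\mathscr U\setminus V_j}\to 0$ a.e.\ then yields the desired convergence. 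Scheff\'e's lemma now forces $h_j\to h$ in $L^1$, from which equi-integrability is a routine extraction: given $\varepsilon>0$ I would pick $N$ so that $\|h_j-h\|_{L^1}<\varepsilon/2$ for $j\geq N$ and then $\delta>0$ small enough that $\int_S h<\varepsilon/2$ and $\int_S h_j<\varepsilon$ for each of the finitely many $j<N$, combining to give $\int_S h_j<\varepsilon$ for all $j\geq 1$.
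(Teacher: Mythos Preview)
Your proof is correct but proceeds along a genuinely different route from the paper's.

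The paper argues more directly: given $\varepsilon>0$, it fixes a threshold index $j_0$ so that both $2^{-j_0+1}\int_0^T\delta(t)\,dt\leq\varepsilon$ and $\int_{\mathscr U\setminus V_{j_0}}(e_0+\rho_{sub}e_1)\leq\varepsilon$. For $j\leq j_0$ equi-integrability is trivial (finitely many $L^1$ functions). For $j>j_0$ it splits $S$ into $S\setminus V_j$, $S\cap V_{j_0}$, and $S\cap(V_j\setminus V_{j_0})$; on the first two pieces the nesting identifies $z^{(j)}$ with $z_{sub}$ and $z^{(j_0)}$ respectively, while the middle piece is simply enlarged to $\mathscr U\setminus V_{j_0}$ and bounded via \eqref{eq:bounds_in_U} together with the same telescoping estimate you use. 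No limit function and no Scheff\'e/Fatou step is introduced.

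Your approach instead passes through the pointwise limit $h=e_0+\rho_{sol}e_1$, secures $h\in L^1$ by Fatou, identifies $\int h_j-\int h$ with $\int_{\mathscr U\setminus V_j}(h_0-h)$ (the rewriting $(\rho_{sub}-\rho_{sol})e_1=h_0-h$ is the crucial observation salvaging integrability of the integrand), and then invokes Scheff\'e's lemma. This is slightly less elementary but yields the stronger conclusion $h_j\to h$ in $L^1$, which in effect anticipates part of the subsequent Cauchy-sequence argument in Step~3 of the paper. Both proofs rest on the same three ingredients: the pointwise bound \eqref{eq:bounds_in_U}, the integrability assumption \eqref{eq:technical_conditions_for_convex_integration_1} for $e_0+\rho_{sub}e_1$, and the telescoped $\cF$-test controlling $\int_{\cD}e_1(\rho^{(j)}-\rho_{sub})$.
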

\begin{proof} The equi-integrability of the $\rho$-component is clear due to the uniform $L^\infty$-bound. 

Let us now show the equi-integrability of the $\sigma$-component. Let $\varepsilon>0$ and $j_0\geq 1$ such that 
\begin{align}\label{eq:epsconds}
    2^{-j_0+1}\int_0^T \delta(t)\, dt\leq \varepsilon\quad \text{ and }\quad\int_{\mathscr U \setminus V_{j_0}}e_0+\rho_{sub}e_1 \, dx \, dt\leq \varepsilon.
\end{align}
Then there exists $\delta>0$ such that for any measurable set $S\subset(0,T)\times\mathcal D$ with $|S|\leq \delta$ there holds
\begin{align*}
    \int_S|\sigma_{sub}| \, dx \, dt\leq\varepsilon\quad\text{ and }\quad\int_S\left|\sigma_{sub}^{(j)}\right| \, dx \, dt\leq\varepsilon \text{ for all }j\leq j_0,
\end{align*}
due to the equi-integrability of a finite family of integrable functions.

For $j>j_0$ using $f=\frac{n}{2}e_1\in\cF$ and \eqref{eq:bounds_in_U}, \eqref{eq:epsconds} we estimate
\begin{align*}
     \int_S\abs{\sigma_{sub}^{(j)}}& \, dx \, dt= \int_{S\setminus V_j}|\sigma_{sub}| \, dx \, dt+\int_{S\cap (V_j\setminus V_{j_0})}\left|\sigma_{sub}^{(j)}\right| \, dx \, dt+\int_{S\cap V_{j_0}}\left|\sigma_{sub}^{(j_0)}\right| \, dx \, dt \\
     &\leq 2\varepsilon + \int_{\mathscr U\setminus V_{j_0}}\left|\sigma_{sub}^{(j)}\right| \, dx \, dt \leq 2\varepsilon + C
     \int_{\mathscr U\setminus V_{j_0}}e_0+\rho_{sub}^{(j)}e_1 \, dx \, dt \\
     &=  
     2\varepsilon + C \left(
     \int_{\mathscr U\setminus V_{j_0}}e_0+\rho_{sub}e_1 \, dx \, dt 
     +
     \frac{2}{n}\int_0^T\int_\cD f\left(\rho_{sub}^{(j)}-\rho_{sub}\right) \, dx \, dt
     \right)
     \\& \leq 2\varepsilon + C\varepsilon + \frac{2C}{n} \sum_{k=j_0}^{j-1}\int_0^T\left|\int_{\mathcal D}f\left(\rho_{sub}^{(k+1)}-\rho_{sub}^{(k)}\right) \, dx\right|\, dt\\
    &\leq 2\varepsilon + C\varepsilon+ \frac{2C}{n}\sum_{k=j_0}^{+\infty}2^{-(k+1)}\int_0^T\delta(t) \, dt\leq \tilde{C}(n) \varepsilon.
\end{align*}
The proof for the $m$- and $v$-component is the same.
\end{proof}

The previous lemma and \eqref{eq:nested_subsolutions} allow to check that $\left(\rho_{sub}^{(j)},v_{sub}^{(j)},m_{sub}^{(j)},\sigma_{sub}^{(j)}\right)_j$ is a Cauchy sequence in $L^1\times L^2\times L^2\times L^1$ and thus converging to a limit $(\rho,v,m,\sigma)$ solving the linear system \eqref{eq:lin_syst}, \eqref{eq:initial_data}, \eqref{eq:lin_syst_boundary_condition} with some distributional pressure $p$. 

Furthermore, outside $\mathscr U$ we have $z(t,x)=z_{sub}(t,x)\in K_{(t,x)}$ a.e., while for every $(t,x)\in\mathscr U$ there exists $j\geq 1$ such that $(t,x)\in V_j$, and hence $z(t,x)=z_{sub}^{(j)}(t,x)\in K_{(t,x)}$ a.e. in $\mathscr U$. So $z$ is a solution to the Boussinesq system with $\abs{\rho}=1$ a.e. and its energy density is given by
$$
\mathcal E(t,x)=\frac{n}{2}(e_0(t,x)+\rho(t,x) e_1(t,x))+\rho(t,x) gA x_n.
$$

Next we will show \eqref{eq:error_in_energies}, i.e. we will show that
$$F(t):=\int_{\mathcal D} f(\rho-\rho_{sub}) \, dx$$
first of all is well-defined and moreover satisfies $|F(t)|\leq \delta(t)$ for a.e. $t\in(0,T)$ and every $f\in\cF$. In order to do this we define
$$ F_j(t):=\int_{\mathcal D} f\left(\rho_{sub}^{(j)}-\rho_{sub}\right) \, dx.$$
Since  $\rho_{sub},\rho_{sub}^{(j)}\in \cC^0([0,T];L^2_{w}(\mathcal D))$,
and we have that $\rho_{sub}^{(j)}-\rho_{sub}$ is supported in $V_j$, where $f$ is continuous, 
it follows that each $F_j$ is continuous.
On the other hand, for $j>j'\geq 1$ we may estimate 
\begin{align*}
    \left| F_j(t)- F_{j'}(t) \right| \leq \sum_{k=j'}^{j-1}\left|\int_{\mathcal D} f\left(\rho_{sub}^{(k+1)}-\rho_{sub}^{(k)}\right) \, dx \right|  \leq \sum_{k\geq j'}2^{-(k+1)}\delta(t),
\end{align*}
hence $\{ F_j\}_{j\geq 1}$ is Cauchy in $\cC^0([0,T])$, since $\delta$ was assumed to be continuous. Thus, $ F_j \to \tilde{F}$ uniformly for some $\tilde{F}$, which satisfies
$\abs{\tilde{F}(t)}\leq \delta(t)$ for all $t\in[0,T]$.

In order to conclude $\tilde{F}(t)=F(t)$ for a.e. $t\in(0,T)$ we will show that $f(\rho-\rho_{sub})\in L^1((0,T)\times\cD)$. This is of course clear in the case $f=gAx_n$, but not for $f=\frac{n}{2}e_1$, as $e_1$ might be not integrable. Nonetheless the claimed integrability for $f(\rho-\rho_{sub})$ follows from the following lemma and monotone convergence.

\begin{lemma} There holds
\[
\sup_{j}\int_0^T\int_\cD\abs{e_1\left(\rho_{sub}^{(j)}-\rho_{sub}\right)}\:dx\:dt<\infty.
\]
\end{lemma}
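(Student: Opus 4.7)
The plan is to prove the claim by combining a simple pointwise algebraic estimate with a uniform $L^1$-bound on the nonnegative quantity $e_0+\rho_{sub}^{(j)}e_1$. First I would note that $\rho_{sub}^{(j)}-\rho_{sub}$ vanishes outside $\mathscr U$, so it suffices to integrate over $\mathscr U$. On $\mathscr U$, the inclusion $z_{sub}\in U_{(t,x)}$ combined with the first two defining inequalities of $U_{(t,x)}$ immediately yields $e_0+e_1>0$ and $e_0-e_1>0$, i.e.\ $\abs{e_1}\leq e_0$. Moreover, taking the trace in the third inequality of $U_{(t,x)}$ and using the identity $v\otimes v-\sigma=(e_0+\rho e_1)\id$ on $K_{(t,x)}$, we see that $e_0+\rho_{sub}^{(j)}e_1\geq 0$ a.e.\ on $(0,T)\times\cD$ for every $j$, and similarly for $\rho_{sub}$.

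Next I would establish the pointwise inequality
\begin{align*}
    \abs{e_1}\,\abs{\rho_{sub}^{(j)}-\rho_{sub}}\leq (e_0+\rho_{sub}^{(j)}e_1)+(e_0+\rho_{sub}e_1)\quad\text{on }\mathscr U.
\end{align*}
On $\set{e_1>0}$ one has for any $\rho\in[-1,1]$ the elementary bound $(1+\rho)\abs{e_1}=e_1+\rho e_1\leq e_0+\rho e_1$ (using $e_1\leq e_0$). Applying this to both $\rho=\rho_{sub}^{(j)}$ and $\rho=\rho_{sub}$ and combining with the triangle inequality $\abs{\rho_{sub}^{(j)}-\rho_{sub}}\leq(1+\rho_{sub}^{(j)})+(1+\rho_{sub})$ (valid since both summands are nonnegative) gives the claim. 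On $\set{e_1<0}$ I would argue symmetrically via $(1-\rho)\abs{e_1}\leq e_0+\rho e_1$ and $\abs{\rho_{sub}^{(j)}-\rho_{sub}}\leq(1-\rho_{sub}^{(j)})+(1-\rho_{sub})$; on $\set{e_1=0}$ the bound is trivial.

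It would then remain to control $\int_0^T\!\int_\cD (e_0+\rho_{sub}^{(j)}e_1)\,dx\,dt$ uniformly in $j$, which is the only place where the structure of the convex integration scheme enters. Writing this as a telescoping sum
\begin{align*}
\int(e_0+\rho_{sub}^{(j)}e_1)=\int(e_0+\rho_{sub}e_1)+\sum_{k=0}^{j-1}\int(\rho_{sub}^{(k+1)}-\rho_{sub}^{(k)})e_1,
\end{align*}
I would estimate each summand via the recursive bound $\abs{\int_\cD f(\rho_{sub}^{(k+1)}-\rho_{sub}^{(k)})\,dx}\leq 2^{-(k+1)}\delta(t)$ applied to $f=\frac{n}{2}e_1\in\cF$. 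After integrating in time and summing the geometric series, using that $\delta$ is continuous, hence bounded, on $[0,T]$, one obtains a bound independent of $j$. Combined with $(e_0+\rho_{sub}e_1)\in L^1$ given by \eqref{eq:technical_conditions_for_convex_integration_1}, this yields the required uniform control.

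Integrating the pointwise estimate over $\mathscr U$ and invoking the uniform $L^1$-bound on both $e_0+\rho_{sub}^{(j)}e_1$ and $e_0+\rho_{sub}e_1$ finishes the proof. The main conceptual hurdle is recognizing that the potentially singular quantity $(1\pm\rho)\abs{e_1}$ is tamed by the subsolution condition $e_0+\rho e_1\geq 0$ together with $\abs{e_1}\leq e_0$, rather than by the sign condition $\rho e_1\leq 0$ alone --- the latter is not a priori preserved by the scheme for the intermediate $\rho_{sub}^{(j)}$, whose density may take either value $\pm 1$ on $V_j$ irrespective of the sign of $e_1$.
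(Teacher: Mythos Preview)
Your argument is correct and takes a genuinely different route from the paper's proof. The paper splits the integral over $V_j$ according to whether $\rho_{sub}^{(j)}\rho_{sub}\geq -\tfrac{1}{2}$ or not: on the first set it uses $\abs{\rho_{sub}^{(j)}-\rho_{sub}}\leq 2(1-\rho_{sub}^2)$ together with $(1-\rho_{sub}^2)e_1\in L^1$; on the second set it invokes the sign condition~\eqref{eq:technical_conditions_for_convex_integration_2}, i.e.\ $\rho_{sub}e_1\leq 0$, to drop the absolute value and reduce to the signed integral $\int e_1(\rho_{sub}^{(j)}-\rho_{sub})$, which is then controlled by the same telescoping sum you use. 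Your approach replaces this case analysis by the single pointwise inequality $\abs{e_1}\abs{\rho_{sub}^{(j)}-\rho_{sub}}\leq (e_0+\rho_{sub}^{(j)}e_1)+(e_0+\rho_{sub}e_1)$, which relies only on the global hypothesis $e_0\pm e_1\geq 0$ (stated just after \eqref{eq:lin_syst_boundary_condition}) and on $e_0+\rho_{sub}e_1\in L^1$ from~\eqref{eq:technical_conditions_for_convex_integration_1}. This is cleaner and, notably, avoids the sign condition~\eqref{eq:technical_conditions_for_convex_integration_2} altogether; the price is that you bring in $e_0$, whereas the paper's estimate involves $e_1$ alone. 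Both proofs ultimately feed the telescoping bound from the convex integration scheme into the same place.
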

\begin{proof}
We abbreviate $\rho^j:=\rho^{(j)}_{sub}$ and $\bar{\rho}:=\rho_{sub}$. First of all note that if $\abs{\rho^j}=1$, i.e. a.e. on $V_j$, there holds 
\begin{align*}
    \abs{\rho^j-\bar{\rho}}\leq 2(1-\bar{\rho}^2)\quad \text{if and only if} \quad \rho^j\bar{\rho}\geq -\frac{1}{2}.
\end{align*}
We therefore estimate
\begin{align*}
    \int_0^T\int_\cD &\abs{e_1}\abs{\rho^j-\bar{\rho}}\:dx\:dt=\int_{V_j}\abs{e_1}\abs{\rho^j-\bar{\rho}}\:dx\:dt\\
    &\leq \int_{V_j\cap \set{\rho^j\bar{\rho}<-\frac{1}{2}}}\abs{e_1}\abs{\rho^j-\bar{\rho}}\:dx\:dt+2\int_{V_j\cap \set{\rho^j\bar{\rho}\geq-\frac{1}{2}}}\abs{e_1}(1-\bar{\rho}^2)\:dx\:dt.
\end{align*}
Now the second term is bounded by $2\norm{e_1(1-\bar{\rho}^2)}_{L^1((0,T)\times\cD)}$ which is finite by assumption \eqref{eq:technical_conditions_for_convex_integration_1}.

Regarding the first term observe that if $\rho^j\bar{\rho}<0$, then $e_1(\rho^j-\bar{\rho})\geq 0$ by condition \eqref{eq:technical_conditions_for_convex_integration_2}. We therefore have 
\begin{align*}
    \int_{V_j\cap \set{\rho^j\bar{\rho}<-\frac{1}{2}}}&\abs{e_1}\abs{\rho^j-\bar{\rho}}\:dx\:dt=\int_{\set{\rho^j\bar{\rho}<-\frac{1}{2}}}e_1\left(\rho^j-\bar{\rho}\right)\:dx\:dt\\
    &=\int_0^T\int_\cD e_1\left(\rho^j-\bar{\rho}\right)\:dx\:dt-\int_{\set{\rho^j\bar{\rho}\geq -\frac{1}{2}}}e_1\left(\rho^j-\bar{\rho}\right)\:dx\:dt\\
    &\leq \int_0^T\delta(t)\:dt+2\norm{e_1(1-\bar{\rho}^2)}_{L^1((0,T)\times \cD)}.
\end{align*}
This finishes the proof of the lemma.
\end{proof}

In consequence $F(t)$ is well-defined for a.e. $t\in(0,T)$ and using dominated convergence, one obtains that for any $\varphi\in L^\infty(0,T)$ there holds
\begin{align*}
    \int_0^T \varphi(t)\tilde{F}(t) \, dt&=\lim_{j\to+\infty}\int_0^T \int_{\mathcal D} \varphi(t) f\left(\rho_{sub}^{(j)}-\rho_{sub}\right) \, dx \, dt\\&=\lim_{j\to+\infty}\int_{V_j} \varphi(t) f(\rho-\rho_{sub}) \, dx \, dt=\int_0^T \varphi(t) F(t) \, dt.
\end{align*}
Thus $\tilde F=F$ a.e. and therefore $\abs{F(t)}\leq \delta(t)$ for a.e. $t\in(0,T)$.

It remains to show the existence of a sequence of such solutions $(\rho_k,v_k)_k$ converging to $(\rho_{sub},v_{sub})$ weakly in $L^2((0,T)\times \cD)$. Indeed Step 2 and what we have shown so far in Step 3 allows to have solutions $(\rho_k,v_k,m_k,\sigma_k)$, $k\geq 1$ that are generated by sequences of subsolutions $\left(\rho^{(j)}_{sub,k},v^{(j)}_{sub,k},m^{(j)}_{sub,k},\sigma^{(j)}_{sub,k}\right)_j$ such that for each fixed $j\geq 1$ there holds 
\[
\left(\rho^{(j)}_{sub,k},v^{(j)}_{sub,k},m^{(j)}_{sub,k},\sigma^{(j)}_{sub,k}\right)\rightharpoonup (\rho_{sub},v_{sub},m_{sub},\sigma_{sub})
\]
weakly in $L^2((0,T)\times\cD)$.

For the $\rho$- and $v$-components this weak convergence extends as follows.
Let $\varepsilon>0$, $\varphi\in L^2((0,T)\times\cD)$ and pick a fixed $j\geq 1$ such that $\int_{\mathscr{U}\setminus V_j}\abs{\varphi}<\varepsilon$. Then 
\begin{align*}
    \int_0^T\int_\cD \varphi(\rho_k-\rho_{sub})\:dx\:dt&=\int_{\mathscr{U}\setminus V_j}\varphi\left(\rho_k-\rho_{sub}\right)\:dx\:dt\\&
    \hspace{70pt}+\int_{0}^T\int_\cD \varphi\left(\rho^{(j)}_{sub,k}-\rho_{sub}\right)\:dx\:dt\\
    &\leq 2\varepsilon +o(1)
\end{align*}
as $k\rightarrow \infty$. Thus $\rho_k\rightharpoonup \rho_{sub}$ as $k\rightarrow\infty$.

The convergence $v_k\rightharpoonup v_{sub}$ follows similarly, since for fixed $j\geq 1$ there holds
\begin{align*}
    \int_0^T\int_\cD \varphi(v_k-&v_{sub})\:dx\:dt=\int_{\mathscr{U}\setminus V_j}\varphi\left(v_k-v_{sub}\right)\:dx\:dt+o(1)\\
    &\leq \norm{\varphi}_{L^2(\mathscr U\setminus V_j)}\left(\norm{v_k}_{L^2((0,T)\times\cD)}+\norm{v_{sub}}_{L^2((0,T)\times \cD)}\right)
    +o(1),
\end{align*}
and
\begin{align*}
    \norm{v_k}_{L^2((0,T)\times\cD)}^2&=\int_0^T\int_\cD n(e_0+\rho_k e_1)\:dx\:dt\\
    &\leq n\norm{e_0+\rho_{sub}e_1}_{L^1((0,T)\times\cD)}+2\int_0^T\delta(t)\:dt.
\end{align*}
Thus we have shown the convex integration Theorem \ref{thm:CI}.

\vspace{20pt}
\noindent Mathematisches Institut,  Universit\"at Leipzig,  Augustusplatz 10, D-04109 Leipzig \\
\texttt{bjoern.gebhard@math.uni-leipzig.de}\\
\texttt{jonas.hirsch@math.uni-leipzig.de}\\
\texttt{jozsef.kolumban@math.uni-leipzig.de}

\begin{thebibliography}{99}

\bibitem{arnold} V. Arnold, Sur la g\'{e}om\'{e}trie diff\'{e}rentielle des groupes de {L}ie de
              dimension infinie et ses applications \`a l'hydrodynamique des
              fluides parfaits, Ann. Inst. Fourier (Grenoble) 16 (1966), 319--361.

%\bibitem{santambrogio} M. Bernot, A. Figalli, F. Santambrogio, Generalized solutions for the Euler equations in one and two dimensions, J. Math. Pures Appl. 91 (2009) 137–155.

\bibitem{Brenier89} Y. Brenier, The Least Action Principle and the Related Concept of Generalized Flows for Incompressible Perfect Fluids, Journal of the American Mathematical Society 2, no. 2 (1989): 225–55.

%\bibitem{Brenier99} Y. Brenier, Minimal geodesics on groups of volume-preserving maps and generalized solutions of the {E}uler equations, Comm. Pure Appl. Math. 52.4 (1999), 411--452.

\bibitem{Brenier18} Y. Brenier, Some Concepts of Generalized and Approximate Solutions in Ideal Incompressible Fluid Mechanics Related to the Least Action Principle, Nečas Center Series, Springer International Publishing; 2018; 53--75.

\bibitem{Bronzi} A. C. Bronzi, M. C. Lopes Filho, H. J. Nussenzveig Lopes, Wild solutions for 2{D} incompressible ideal flow with passive tracer, Commun. Math. Sci. 13.5 (2015), 1333--1343.

\bibitem{Buckmaster_Vicol_survey} T. Buckmaster, V. Vicol, Convex integration constructions in hydrodynamics, Bull. Amer. Math. Soc. 58.1 (2021), 1--44.

\bibitem{Castro_Cordoba_Faraco} \'A. Castro, D. C\'ordoba, D. Faraco, Mixing solutions for the Muskat problem, Invent. Math. 226.1 (2021), 251--348. 

\bibitem{Castro_Faraco_Mengual} \'A. Castro, D. Faraco, F. Mengual, Degraded mixing solutions for the Muskat problem, Calc. Var. Partial Differential Equations 58.2 (2019).

\bibitem{Castro_Faraco_Mengual_2} \'A. Castro, D. Faraco, F. Mengual, Localized mixing zone for Muskat bubbles and turned interfaces, Ann. PDE 8.7 (2022).

\bibitem{Chae-Nam} D. Chae, H. Nam, Local existence and blow-up criterion for the {B}oussinesq equations, Proc. Roy. Soc. Edinburgh Sect. A 127.5 (1997), 935--946.

\bibitem{cheng} F. Cheng, On the dissipative solutions for the inviscid Boussinesq equations, AIMS Mathematics
2020, Volume 5, Issue 4: 2869-2876.

\bibitem{Chiodaroli_Kreml_energy_dissipation} E. Chiodaroli, O. Kreml, On the energy dissipation rate of solutions to the compressible isentropic {E}uler system, Arch. Ration. Mech. Anal. 214.3 (2014), 1019--1049.

\bibitem{Chiodaroli_Michalek} E. Chiodaroli, M. Mich\'alek, Existence and Non-uniqueness of Global Weak Solutions to Inviscid Primitive and Boussinesq Equations, Commun. Math. Phys. 353 (2017), 1201--1216.

\bibitem{colombo_figalli} M. Colombo, A. Figalli, Regularity results for very degenerate elliptic equations, J. Math. Pures Appl. (9), 101.1 (2014), 94--117.

\bibitem{Cordoba} D. C\'ordoba, D. Faraco, F. Gancedo, Lack of uniqueness for weak solutions of the incompressible porous media equation, Arch. Rat. Mech. Anal. 200.3 (2011), 725--746.

\bibitem{courant} R. Courant, Plateau’s Problem and Dirichlet’s Principle, Ann. of Math. 38.3 (1937), 679--724.

\bibitem{Dafermos_entropy_rate} C. M. Dafermos, The entropy rate admissibility criterion for solutions of hyperbolic conservation laws, J. Diff. Equ. 14 (1973), 202--212.

\bibitem{Danchin} R. Danchin, Remarks on the lifespan of the solutions to some models of incompressible fluid mechanics, Proc. Amer. Math. Soc. 141.6 (2013), 1979--1993.

\bibitem{DeL-Sz-Survey} C. De Lellis, L. Sz\'ekelyhidi Jr., Weak stability and closure in turbulence, Phil. Trans. R. Soc. A. 380.2218 (2022)

\bibitem{DeL-Sz-Annals} C. De Lellis, L. Sz\'ekelyhidi Jr., The Euler equations as a differential inclusion, Ann. Math. 170.3 (2009), 1417--1436.

\bibitem{DeL-Sz-Adm} C. De Lellis, L. Sz\'ekelyhidi Jr., On admissibility criteria for weak solutions
of the Euler equations, Arch. Rat. Mech. Anal. 195.1 (2010), 225--260.

\bibitem{desilva_savin} D. De Silva, O. Savin, Minimizers of convex functionals arising in random surfaces, Duke Math. J., 151.3 (2010), 487--532.

\bibitem{ebin_marsden} D. G. Ebin, J. Marsden, Groups of Diffeomorphisms and the Motion of an Incompressible Fluid, Ann. of Math. (2) 92 (1970), 102--163.

\bibitem{Elgindi} T. M. Elgindi, I. Jeong, Finite-time singularity formation for strong solutions to the {B}oussinesq system, Ann. PDE 6.1 (2020).

\bibitem{evans_p} L. C. Evans, A new proof of local {$C^{1,\alpha }$} regularity for solutions of certain degenerate elliptic p.d.e, J. Differential Equations 45.3 (1982), 356--373.

\bibitem{evans_m} L. C. Evans, R. F. Gariepy, Measure theory and fine properties of functions,  Chapman and Hall/CRC, 2015.

\bibitem{Evans_book} L. C. Evans, Partial Differential Equations 2nd Edition, American Mathematical Society (2010) 

\bibitem{Feireisl} E. Feireisl, Maximal dissipation and well-posedness for the compressible Euler system, J. Math. Fluid Mech. 16.3 (2014), 447--461.

\bibitem{Foerster-Sz} C. F\"orster, L. Sz\'ekelyhidi Jr., Piecewise constant subsolutions for the Muskat problem, Commun. Math. Phys. 363.3 (2018), 1051--1080.

\bibitem{GKBou} B. Gebhard, J. J. Kolumb\'an, Relaxation of the Boussinesq system and applications to the Rayleigh-Taylor instability,  Nonlinear Differential Equations and Applications 29, 7 (2022).

\bibitem{GKSz} B. Gebhard, J. J. Kolumb\'an, L. Sz\'ekelyhidi Jr., A new approach to the Rayleigh-Taylor
instability, Arch. Rat. Mech. Anal. 241 (2021), 1243--1280.

\bibitem{Gilbarg_Trudinger} D. Gilbarg, N. S. Trudinger, Elliptic Partial Differential Equations of Second Order, Springer-Verlag Berlin Heidelberg (2001)

\bibitem{Hitruhin-Lindberg} L. Hitruhin, S. Lindberg, The lamination convex hull of stationary incompressible porous media equations, SIAM J. Math. Anal. 53.1 (2021), 491--508.

\bibitem{kristensen_raita_lecture_notes} J. Kristensen, B. Rai\c{t}\u{a}: An introduction to generalized Young measures. Lecture notes 45, Max-Planck-Institut f\"ur Mathematik in den Naturwissenschaften Leipzig (2020)

\bibitem{lewis} J. L. Lewis, Regularity of the derivatives of solutions to certain    degenerate elliptic equations, Indiana Univ. Math. J. 32.6 (1983), 849--858.

\bibitem{Mengual} F. Mengual, H-principle for the 2D incompressible porous media equation with viscosity jump, Analysis \& PDE 15 (2022), 429--476.

\bibitem{Mengual_Sz_vortex_sheet} F. Mengual, L. Sz\'ekelyhidi Jr., Dissipative Euler flows for vortex sheet initial data without distinguished sign, Comm. Pure Appl. Math. (2022)

\bibitem{mooney} C. Mooney, Hilbert’s 19th problem revisited, Boll Unione Mat Ital (2022).

\bibitem{Noisette-Sz} F. Noisette, L. Sz\'ekelyhidi Jr., Mixing solutions for the Muskat problem with variable speed, J. Evol. Equ. 21 (2021), 3289--3312.

\bibitem{otto-ipm} F. Otto, Evolution of microstructure in unstable porous media flow: a relaxational approach. Comm. Pure Appl. Math 52.7 (1999), 873--915.

\bibitem{santambrogio_vespri} F. Sanatambrogio, V. Vespri, Continuity in two dimensions for a very degenerate elliptic equation, Nonlinear Anal. 73.12 (2010), 3832--3841.

\bibitem{savin} O. Savin, Small perturbation solutions for elliptic equations, Comm. Part. Diff. Equ. 32.4-6 (2007), 557--578.

\bibitem{shnirelman} A. I. Shnirelman, The geometry of the group of diffeomorphisms and the dynamics of an ideal incompressible fluid, Mat. Sb. (N.S.) 128.170 (1985), 82--109.

\bibitem{Sz-KH} L. Sz\'ekelyhidi Jr., Weak solutions to the incompressible Euler equations with vortex sheet initial data, C. R. Acad. Sci. Paris, Ser. I 349 (2011), 1063--1066.

\bibitem{Sz-Muskat} L. Sz\'ekelyhidi Jr., Relaxation of the incompressible porous media equation, Ann. Scient. \'Ec. Norm. Sup. 45.3 (2012), 491--509.

\bibitem{tolksdorf} P. Tolksdorf, Regularity for a more general class of quasilinear elliptic equations, J. Differential Equations 51.1 (1984), 126--150.

\bibitem{uhlenbeck} K. Uhlenbeck, Regularity for a class of non-linear elliptic systems, Acta Math., 138.3-4 (1977), 219--240.

\bibitem{uraltseva} N. N. Uraltseva, Degenerate quasilinear elliptic systems, Zap. Nau\v{c}n. Sem. Leningrad. Otdel. Mat. Inst. Steklov. (LOMI), 7 (1968), 184--222.

\bibitem{wang} L. Wang, Compactness methods for certain degenerate elliptic equations, J. Differential Equations 107.2 (1994), 341--350.

\bibitem{Wiedemann} E. Wiedemann, Weak-strong uniqueness in fluid dynamics, London Math. Soc. Lecture Note Ser. 452, Cambridge Univ. Press (2018), 289--326.

\bibitem{zalinescu} C. Z\u{a}linescu, Convex Analysis in General Vector Spaces, World Scientific Publishing (2002)

\end{thebibliography}
\end{document}